\newcommand{\N}{\ensuremath{\mathbb{N}}}
\newcommand{\Bernshtein}{Bernshtein}
\newcommand{\Bezout}{B\'ezout}
\newcommand{\C}{\ensuremath{\mathbb{C}}}
\newcommand{\R}{\ensuremath{\mathbb{R}}}
\newcommand{\V}{\ensuremath{\mathbf{V}}}
\newcommand{\I}{\ensuremath{\mathbf{I}}}
\newcommand{\newton}[1][f]{\ensuremath{\mathcal{N}(#1)}}
\newcommand{\conv}{\ensuremath{\mathrm{conv}}}
\newcommand{\cone}{\ensuremath{\mathrm{cone}}}
\newcommand{\LC}{\ensuremath{\mathbf{LC}}}
\newcommand{\LM}{\ensuremath{\mathbf{LM}}}
\newcommand{\Vol}{\ensuremath{\mathrm{Vol}}}
\newcommand{\cornerideal}{\ensuremath{\langle f(a + c, b + d), f(a + d, b - c), f(a - c, b - d), f(a - d, b  + c) \rangle}}
\newcommand*{\fancyreflstlabelprefix}{lst}
\newcommand*{\fancyrefexlabelprefix}{ex}
\newcommand*{\fancyrefdeflabelprefix}{def}
\newcommand*{\fancyrefthmlabelprefix}{thm}
\newcommand*{\fancyreflemlabelprefix}{lem}
\newcommand*{\fancyrefcorlabelprefix}{cor}
\newtheorem*{rep@theorem}{\rep@title}
\newcommand{\newreptheorem}[2]{%
\newenvironment{rep#1}[1]{%
 \def\rep@title{#2 \ref{##1}}%
 \begin{rep@theorem}}%
 {\end{rep@theorem}}}
\newtheorem{definition}{Definition}
\newtheorem{theorem}{Theorem}[section]
\newtheorem{lemma}[theorem]{Lemma}
\newtheorem{corollary}[theorem]{Corollary}
\newtheorem{example}[theorem]{Example}
\newcommand{\lqindex}[1]{\emph{#1}\index{#1}}
\newcommand{\inscribeOne}[2]{#1 square inscribed on
\hyperlink{poly:f#2}{$f_{#2}$} in \Fref{tab:long-polys}}
\newcommand{\inscribe}[2]{#1 squares inscribed on
\hyperlink{poly:f#2}{$f_{#2}$} in \Fref{tab:long-polys}}
\providecommand\phantomcaption{\caption@refstepcounter\@captype}
\begin{document}

\author{
Wouter van Heijst}
\title{The algebraic square peg problem \\
{\small Master's thesis in mathematics, Aalto University, March 2014.}
}
\date{}

\maketitle

\newpage

\tableofcontents

\section{Introduction}

Toeplitz conjectured in 1911 that every continuous closed curve in the plane
that does not self-intersect, also known as a Jordan curve, contains all four corners of some square. More
than a hundred years have passed since the statement of Toeplitz's conjecture;
various partial results assuming the curve satisfies additional smoothness properties have been
proven, but in full generality the problem remains unsolved.

Why look at squares?  The conjecture does not hold if squares are
replaced with regular polygons with more than four vertices;
Eggleston~\cite{eggleston} gave an example of a convex curve, a curve
that is the boundary of a convex region of the plane, that does not
inscribe any regular polygon with more than four vertices.
On the other hand, the conjecture does hold if squares are replaced by triangles or
rectangles; Nielsen~\cite{triangles} showed that any Jordan curve
inscribes a triangle and Vaughan, by way of Meyerson~\cite{meyerson},
proved that every Jordan curve inscribes some rectangle. Vaughan's proof
has no control over the aspect ratio of the inscribed rectangle.  Both
these cases are discussed in Igor Pak's online book ``Lectures on Discrete and
Polyhedral Geometry''~\cite[Section~5,  ``Inscribed and
circumscribed polgons'']{pak-book}.
   We shall concern
ourselves in this thesis with the special case of inscribing a
rectangle with prescribed equal aspect ratio, otherwise known as a
square. See
Matschke's survey paper~\cite[Section~4]{Matschke} for further
problems related to the square peg problem.

Initial publications on the square peg problem, as Toeplitz's conjecture has
become known, were made by Emch; who proved the
existence of an inscribed square on convex curves~\cite{1913} in 1913 and
three years later for piecewise analytic curves with a finite number of singularieties~\cite{1916}.
According to Matschke~\cite[Emch's~proof]{Matschke}, implicit in Emch's work is
the understanding that a generic curve inscribes an odd number of squares. Since
zero is not an odd number, such a parity argument implies the existence of at
least one inscribed square, thereby proving Toeplitz's conjecture for these
restricted classes of curves. The sense of genericity is important; Popvassilev
showed that for any natural number $n$, there exists a continuous curve
that inscribes exactly $n$ squares~\cite{popvassilev}.

Further work on the square peg problem came from, among others, the
hands of Jerrard~\cite{jerrard}, and
Stromquist~\cite{stromquist}. Jerrard's proof for analytic curves and \linebreak Stromquist's proof for locally monotone curves both show
show that generically the number of squares inscribed on a smooth enough curve
is odd.  Stromquist's locally monotone curves is one of the largest
classes for which Toeplitz's conjecture is known to hold.  In
more recent years Pak~\cite{pak} has given an elementary proof for
piecewise linear curves while Matschke~\cite[Theorem~3.3]{Matschke}
has generalized the square peg problem to arbitrary metric spaces.

\hspace{-0.5em}We refer readers interested in the history of the square peg problem
to Matschke's survey paper~\cite{Matschke} or the papers of
Sagols and
Mar\'{\i}n~\cite[Section~1]{sagols} and
Pak~\cite[Section~3]{pak}.
\\

In this thesis we shall employ algebra, rather than the analytical and
topological methods of the above approaches, to count the number of
squares that may be inscribed on a curve.  Thus the class of curves we
consider is that of the algebraic plane curves, which are curves defined by the vanishing
of a polynomial in two variables. These are no longer neccessarily
Jordan curves, but exhibit interesting behaviour
nonetheless. The main result of this thesis, \Fref{thm:mine}, states
that an algebraic plane curve of degree $m$ inscribes at most $(m^4 -
5m^2 + 4m)/4$ isolated squares. \Fref{sec:experimental} provides some
evidence for the claim that a generic complex algebraic plane curve
inscribes exactly $(m^4 - 5m^2 + 4m)/4$ squares. The behaviour of real
algebraic plane curves is less clear, examples of real algebraic plane
curves of different topological types inscribing various numbers of
squares are listed in \Fref{sec:illustrative}.  Those examples form
the basis for three conjectures in \Fref{sec:conclusions}, similar to
the results from Emch, Jerrard, and Stromquist that a generic Jordan curve inscribes
an odd number of squares. The most striking of these, to the author's
eyes at least, is the conjecture that an algebraic plane curve
homeomorphic to the real line inscribes an even number of squares.
\\

The outline of this thesis is as follows: In \Fref{sec:background} we
recall some algebra, polytope theory, and algebraic geometry to support understanding of the statement of \Bernshtein's Theorem,
\Fref{thm:Bernshtein}.  In \Fref{sec:formulation} we formulate the
algebraic square peg problem; we parametrize a complex square in
\Fref{def:complex-square} as a $4$-tuple $(a, b, c, d)$ where $(a, b)$
is the center of the square and the four corners are offset from the
center by $(c, d)$, $(-d, c)$, $(-c, -d)$ and $(d, -c)$.  Evaluating a
polynomial $f$ at these four corners gives the four generators of the
corner ideal that describes all squares inscribed on the algebraic plane
curve defined by $f$.  \Bernshtein's Theorem provides an estimate on
the number of isolated solutions to this system of four polynomials.
While the immediate estimate is no better than \Bezout's bound, in
\Fref{sec:upper-bound} we show that a different choice of generators
yields Newton polytopes whose mixed volume gives exactly the bound
$(m^4 - 5m^2 + 4m)/4$ on the number of inscribed isolated squares.
That this bound is tight, at least for low degrees, is exhibited by
experimental data in \fref{sec:experimental}.    In
\Fref{sec:illustrative} we picture simple real algebraic plane curves of
degrees three to eight inscribing varying numbers of squares.  Finally
we discuss some directions for future work in \Fref{sec:conclusions}.

\section{Background}
\label{sec:background}

The square peg problem is inherently a geometric problem: Whether a
curve inscribes a square depends on the lengths of and angles between
line segments connecting pairs of points on the curve.  Considering
squares inscribed on algebraic curves allows us to view the square peg
problem as an an algebraic problem as well.  The gain of this approach
is that we can use algebraic tools, such as \Bernshtein's Theorem, to
make definite statements about the set of inscribed squares.

The main result of this thesis, \Fref{thm:mine}, states that the
number of isolated squares inscribed on an algebraic curve of degree
$m$ is at most $(m^4 - 5m^2 + 4m)/4$.  The proof of this result
depends on \Bernshtein's Theorem, \Fref{thm:Bernshtein}, which bounds the
number of solutions to a polynomial system of equations by the mixed
volume of the Newton polytopes of the generators of that polynomial
system. The purpose of this background section is to present enough
knowledge about these concepts such that readers who were not
previously familiar with them can understand the statement of
\Bernshtein's Theorem.
\\

In \Fref{sec:background-algebra} we will recall some basic facts about
polynomials and
ideals of polynomial rings. The fact that each ideal is
finitely generated is known as Hilbert's Basis Theorem
(\Fref{lem:Noetherian}).

We discuss convexity, polytopes, simplices, Minkowski sums, Schlegel diagrams,
normal fans, Newton polytopes and the definition of the mixed volume
in \Fref{sec:background-polytopes}.

In \Fref{sec:background-varieties} we mention the Nullstellensatz,
which states that over an algebraically closed field, the radical of
any ideal defining a variety is exactly the ideal of polynomials
vanishing on that variety.  We also show that varieties consist of a
finite number of irreducible components (\Fref{lem:finite-components}),
and the fact that the saturation of an ideal $I$ with respect to an
ideal $J$ corresponds to the difference in varieties of $I$ and $J$ (\Fref{lem:saturation}).
These two results will be used in
\Fref{sec:upper-bound} and \Fref{sec:experimental} to ensure that we
are counting all the non-degenerate squares inscribed on an algebraic plane curve.

The algebra and results on varieties follow the expositions of
Cox~\cite{IVA} and Eisenbud~\cite{view}.   The polytope theory derives
from Ziegler's book on polytopes~\cite[Chapters~0, 1,
2, 5 and 7]{ziegler}.  \Fref{def:mixed-volume} of the mixed volume is
taken from Schneider's book on convex bodies~\cite{schneider}.

Readers familiar with these topics can safely skip this background
section and proceed immediately to \Fref{sec:formulation}.

\subsection{Algebra}
\label{sec:background-algebra}
Algebraic plane curves are a special case of geometric objects called
varieties. Varieties are defined by the vanishing of a set of
polynomials; in the case of plane curves these are polynomials in two
variables.   Before we discuss these algebraic geometric objects in
\Fref{sec:background-varieties}, we define some basic notions
concerning polynomials and their natural environments, polynomial rings.

Let $x_1, \dots, x_n$ be $n$ independent variables and $\alpha \in
\N^n$ a tuple of nonnegative integers.
A \lqindex{monomial}
$x^\alpha = x_1^{\alpha_1} \dots x_n^{\alpha_n}$ is a product of
powers of the variables $x_i$.  The degree of a monomial $x^\alpha$ is the
sum  $\alpha_1 + \dots + \alpha_n$ of the entries of its exponent.
A \lqindex{polynomial} over
a field $\mathds{k}$ in $x_1, \dots, x_n$ is a finite sum
$\sum_{\alpha \in \N^n} c_\alpha x^\alpha$ of monomials where the
coefficients $c_\alpha$ are elements of the field $\mathds{k}$.
The \lqindex{total degree} (or simply degree) $\deg f$ of
a polynomial is the maximal degree of its monomials; the degree of $3xy^2 - xy$ is three due to the exponent $(1, 2)$ of the monomial $xy^2$.

The collection of all polynomials in $x_1, \dots, x_n$ over
$\mathds{k}$, denoted $\mathds{k}[x_1,
\dots, x_n]$, is called a \lqindex{polynomial ring}.
This terminology is justified, as multiplication and addition of polynomials equip
$\mathds{k}[x_1, \dots, x_n]$ with the structure of a ring.
A \lqindex{monomial ordering}  $<$ on a polynomial ring is a
binary relation with the following properties for any distinct exponents $\alpha, \beta
\in \N^n$,
\begin{enumerate}
\item either $x^\alpha < x^\beta$ or $x^\beta < x^\alpha$ (linear ordering)
\item $x^\alpha < x^\beta$ implies $x^{\alpha + \gamma} < x^{\beta +
    \gamma}$ for any $\gamma \in \N^n$.
\item $1 < x^\gamma$ for any nonzero $\gamma \in \N^n$ (well-ordering).
\end{enumerate}
As usual with orderings we write $x^\alpha \leq x^\beta$ if either
$x^\alpha = x^\beta$ or $x^\alpha < x^\beta$.
The leading monomial $\LM_<(f)$ of a polynomial $f$
compares greater than any other monomial of $f$ with respect to the
ordering $<$.  The coefficient of
the leading monomial is denoted $\LC_<(f)$. The explicit dependence on
the particular ordering $<$
is suppressed if no confusion is likely to arise.  There is only one monomial ordering on univariate
polynomials, $x^d < x^e$ if $d < e$, but multivariate polynomials admit
many different monomial orderings.

Certain subsets of $\mathds{k}[x_1, \dots, x_n]$ hold special interest
  for us.  A subset $I \subset \mathds{k}[x_1, \dots, x_n]$ is called an
  \lqindex{ideal} if it is closed under multiplication by elements of the
  polynomial ring and closed under addition by elements of $I$.  These
  conditions can be compactly stated with set-wise addition and
  multiplication notation, respectively $\mathds{k}[x_1, \dots, x_n]I
  \subset I$ and $I + I \subset I$.

 The
  set $\{ 0 \}$ is an ideal as $0 + 0 = 0$ and $f \cdot 0 = 0$ for any
  polynomial $f \in \mathds{k}[x_1, \dots, x_n]$. The set $\{x, y\} \subset
  \mathds{k}[x, y, z]$ on the other hand is not an
  ideal; neither $x + y$ nor $xz$ are contained in $\{x, y\}$, so
  $\{x, y\}$
  violates both closedness properties of an ideal.  The set $\{ xf
\mid f \in \mathds{k}[x, y] \}$ of ``polynomial consequences of $x$'' is
again an ideal of $\mathds{k}[x,y]$; both the addition of elements
$xg + xg' = x(g + g')$ and the multiplication of an element $xg$ with an
arbitrary polynomial $g'$ are of the form $xf$ required to be an element of the set.

Any ideal $I$ can be expressed as the consequence of an, a priori
possibily infinite, set of
\index{ideal!generators of}generators $B_I$ called a \emph{basis}\index{ideal!basis} for $I$,
\[
   I = \langle B_I \rangle = \left\{ \sum_{i=1}^r h_i g_i
     \mid r \in \N, g_i \in B_I, h_i \in \mathds{k}[x_1, \dots, x_n] \right\}.
\]
The ideals $\{ 0 \}$ and $\{ xf \mid f \in \mathds{k}[x, y] \}$ are
generated by single polynomials, $0$ and $x$ respectively.
Bases are not unique, as the examples $\langle x, y \rangle =
\langle x + y, x - y \rangle$ and $\langle x, xy, y\rangle = \langle
x, y \rangle$ show.  If $I$ has a finite, basis $I$
is \emph{finitely generated}\index{ideal!finitely generated}.

A ring with the property that every ideal is finitely generated is
called \lqindex{Noetherian}.  It is easy to see that all fields are
Noetherian; any ideal $I \subset \mathds{k}$ other than $\langle 0 \rangle$
contains some nonzero element $u$.  Since all nonzero elements of
$\mathds{k}$ are invertible and $I$ is closed under multiplication by
field elements, $r = ru^{-1}u \in I$ for all $r \in \mathds{k}$.  But
then $I$ is the entire field itself, $I = \langle 1 \rangle$.
As all ideals of a field are generated by a single element, any field
is clearly Noetherian.

  As a consequence of the next lemma, polynomial rings over a field are Noetherian
as well.
\begin{lemma}[{Hilbert's Basis Theorem~\cite[Theorem~1.2]{view}}]
  \label{lem:Noetherian}
  Let $R$ be a Noetherian ring. Then $R[x]$ is Noetherian.
\end{lemma}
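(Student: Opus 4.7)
The plan is to reduce the Noetherian property for $R[x]$ to that of $R$ by studying leading coefficients. Given an arbitrary ideal $I \subset R[x]$, I would associate to it a filtration in $R$: for each integer $d \geq 0$, let $J_d \subset R$ consist of $0$ together with the leading coefficients of all elements of $I$ of degree exactly $d$. Each $J_d$ is an ideal of $R$---closure under scaling by $r \in R$ comes from multiplying the polynomial by $r$, and closure under addition holds because the sum of two degree-$d$ polynomials either has degree $d$ (with leading coefficient the sum of the two originals) or has smaller degree (in which case the leading coefficients cancel to $0 \in J_d$). Since multiplying a polynomial in $I$ by $x$ raises its degree by one without changing its leading coefficient, $J_d \subset J_{d+1}$, so $J = \bigcup_d J_d$ is an ideal of $R$.

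Next, because $R$ is Noetherian, $J$ admits a finite generating set $a_1, \dots, a_k$, with each $a_i$ realized as the leading coefficient of some $f_i \in I$ of degree $d_i$. Setting $N = \max_i d_i$ places all the generators into $J_N$, forcing $J = J_N$ and stabilizing the chain. For each $d < N$ I would similarly finitely generate $J_d$ and lift those generators to polynomials $g_{d, j} \in I$ of degree exactly $d$. I then claim the finite collection $\{f_1, \dots, f_k\} \cup \{g_{d, j}\}_{d < N, j}$ generates $I$, which I would establish by strong induction on degree. Given a nonzero $f \in I$ of degree $D$, its leading coefficient lies in $J_D$ (when $D < N$) or in $J = J_N$ (when $D \geq N$), so subtracting a suitable $R[x]$-combination of the generators---multiplied by $x^{D - d_i}$ in the high-degree case---produces an element of $I$ of strictly smaller degree. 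Repeating terminates at the zero polynomial and yields the desired expression for $f$.

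The main difficulty is bookkeeping rather than conceptual: one must juggle the ``high-degree'' lifts $f_i$ of generators of $J$ together with the finitely many ``low-degree'' lifts $g_{d, j}$ for $d < N$, and verify at each reduction step that the resulting polynomial lies in $I$ and that its degree actually drops. The Noetherian hypothesis on $R$ is used twice---once to finitely generate $J$ (which bounds $N$), and once for each of the finitely many $J_d$ with $d < N$---and this double invocation is the real engine of the proof.
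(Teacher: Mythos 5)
Your proof is correct, but it takes a genuinely different route from the paper. You use the classical ``degree filtration'' argument: build the chain of leading-coefficient ideals $J_0 \subset J_1 \subset \dots$, stabilize it at $J_N$ using Noetherianness of $R$, then separately invoke Noetherianness for each $J_d$ with $d < N$ to get low-degree lifts $g_{d,j}$, and finally generate $I$ by the $f_i$'s together with the $g_{d,j}$'s via descending induction on degree. The paper instead runs a greedy selection process: repeatedly pick $f_{i+1} \in I \setminus \langle f_1, \dots, f_i\rangle$ of \emph{minimal} degree, so the chosen degrees are automatically non-decreasing, and then apply Noetherianness of $R$ only once, to the single ideal $\langle \LC(f_1), \LC(f_2), \dots\rangle$, to show the selection must halt. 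Your version makes the structure more transparent (the $J_d$'s are a genuine algebraic invariant of $I$) and is the form found in most textbooks; the paper's version, following Eisenbud, is leaner --- one Noetherian application instead of $N+1$ --- at the cost of having to track that the greedily chosen degrees are non-decreasing so that the exponents $\deg f_{m+1} - \deg f_j$ are non-negative. Both proofs hinge on the same reduction step: match a leading coefficient by an $R$-linear combination and subtract to drop the degree.
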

\begin{proof}
Let $I \subset R[x]$ be an ideal. Select elements $f_i \in I$ as
follows. If $I = \langle f_1, \dots, f_i \rangle$, stop.  Otherwise
choose $f_{i + 1} \in I \setminus \langle f_1, \dots, f_i \rangle$ of
minimal degree.

The leading coefficients of the $f_i$ generate an
ideal $\langle \LC(f_1), \LC(f_2), \dots
\rangle$ of $R$. This ideal is finitely generated since $R$ is Noetherian.  Let
$m$ be the smallest index such that the first $m$ leading coefficients
generate the entire ideal of leading coefficients, $\langle \LC(f_1), \dots, \LC(f_m)
\rangle = \langle \LC(f_1), \dots \rangle$.  We claim that our process must have stopped at $f_m$,
that is, $I = \langle f_1, \dots, f_m \rangle$.

Suppose we had picked an
$f_{m + 1}$.  By assumption on $m$ the leading coefficient
$\LC(f_{m+1})$ can be expressed as a linear combination $\sum_{j=1}^m u_j \LC(f_j)$ of the earlier leading coefficients.  The
polynomial $g = \sum_{j=1}^m u_j f_j x^{\deg f_{m+1} - \deg f_j}$ has
the same degree and leading term as $f_{m + 1}$ by construction.   Their difference, $f_{m + 1} -
g$, is of strictly smaller degree than $f_{m+1}$. By minimality of
$f_{m+1}$, the difference $f_{m+1} - g$ must be an element of $\langle
f_1, \dots, f_m \rangle$.  As $f_{m+1}$ is the sum of two elements of
$\langle f_1, \dots, f_m \rangle$, it must itself be an element of this
ideal, which contradicts the choice of $f_{m+1}$.
\end{proof}

Hilbert's Basis Theorem is stated for univariate polynomials with coefficients in a Noetherian ring;
as we can rewrite a polynomial $\sum c_\gamma x_1^{\gamma_1} \dots
x_n^{\gamma_n}$ as a sum $\sum_{i=0}^r ( \sum_{\gamma_n = i}
  c_\gamma x^{\gamma_1} \dots x_{n-1}^{\gamma_{n - 1}}) x_n^i$
of monomials in $x_n$ with coefficients in $\mathds{k}[x_1, \dots,
x_{n-1}]$, the polynomial ring $\mathds{k}[x_1, \dots, x_n] =
\mathds{k}[x_1, \dots, x_{n-1}][x_n]$ is Noetherian as well.

A sequence $(A_1, A_2, \dots)$ of nested sets is called
\lqindex{ascending} if $A_i \subset A_{i + 1}$ and \lqindex{descending} if $A_i
\supset A_{i + 1}$.  Such a sequence terminates, or stabilizes, if the
tail of the sequence is constant, that is,
$A_n = A_N$ for some $N \in \N$ and all $n \geq N$.
If every ascending chain of ideals of a ring $R$ terminates, $R$ is said to
satisfy the \lqindex{Ascending Chain Condition} (ACC).  The Ascending Chain Condition on a ring and a ring being Noetherian are two different ways of looking at the same property.
\begin{lemma}
  The Ascending Chain Condition and being Noetherian are equivalent.
\end{lemma}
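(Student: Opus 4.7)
The plan is a standard two-direction argument, proving each implication separately.

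For the forward direction, I would assume $R$ is Noetherian and take an arbitrary ascending chain $I_1 \subset I_2 \subset \cdots$ of ideals. The key observation is that the union $I = \bigcup_{i \geq 1} I_i$ is itself an ideal: closure under addition follows because any two elements $a \in I_i$, $b \in I_j$ both lie in $I_{\max(i,j)}$, and closure under multiplication by ring elements is inherited from each $I_i$. Since $R$ is Noetherian, $I$ admits a finite basis $\langle g_1, \dots, g_k \rangle$. Each generator $g_\ell$ lies in some $I_{n_\ell}$, so taking $N = \max_\ell n_\ell$ we get $g_1, \dots, g_k \in I_N$, hence $I \subset I_N$. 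Combined with $I_n \subset I$ for all $n$, this forces $I_n = I_N$ for every $n \geq N$, so the chain terminates.

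For the converse, I would assume ACC and show every ideal $I$ of $R$ is finitely generated. The idea is to construct a sequence of generators greedily, exactly as in the proof of \Fref{lem:Noetherian}: pick any $f_1 \in I$, and having chosen $f_1, \dots, f_i$, stop if $\langle f_1, \dots, f_i \rangle = I$, otherwise pick $f_{i+1} \in I \setminus \langle f_1, \dots, f_i \rangle$. This yields a strictly ascending chain $\langle f_1 \rangle \subsetneq \langle f_1, f_2 \rangle \subsetneq \cdots$ which, by ACC, cannot continue indefinitely. Therefore the process terminates at some stage $m$ with $I = \langle f_1, \dots, f_m \rangle$.

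There is no real obstacle; the only mild subtlety is making the greedy construction rigorous (it implicitly uses the axiom of choice, or dependent choice, but this is entirely standard and need not be belabored) and checking that the union of a chain of ideals is genuinely an ideal. Both points are routine, so the proof is essentially two short paragraphs mirroring the structure above.
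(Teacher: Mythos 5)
Your proposal is correct and mirrors the paper's own proof almost exactly: the forward direction takes the union of the chain and finds the index where all finitely many generators already appear, and the converse greedily builds generators to force a strictly ascending chain that must stabilize. The only additions — spelling out why the union is an ideal, and noting the dependence on (dependent) choice — are refinements of detail, not a different route.
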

\begin{proof}
Let $R$ be a Noetherian ring and let $I_1 \subset I_2 \subset \dots$ be an ascending chain of ideals. The union
$I = \cup_{1}^\infty I_i$ is again an ideal, since $f, g \in I$
implies that $f, g \in I_r$ for some $r$ large enough.    By
assumption $I$ is finitely generated, say $I = \langle f_1, \dots, f_m
\rangle$.  The chain terminates at the smallest index $j$ such that $f_1,
\dots, f_m \in I_j$.
\\

Assume that a ring $R$ has the Ascending Chain Condition and let $I$
be an ideal of $R$. Pick $f_1 \in I$ and $f_{i + 1} \in I \setminus \langle
f_1, \dots, f_i \rangle$. The ideals $I_i = \langle f_1, \dots, f_i
\rangle$ so constructed form an ascending chain. By the ACC the chain terminates,
providing a finite set of generators for $I$.
\end{proof}

In the sequel we separate non-degenerate squares from degenerate
squares inscribed  on a curve by taking the difference of varieties.
The corresponding algebraic operation is called saturation, which is
phrased in terms of colon ideals.
Let $I, J \subset \mathds{k}[x_1, \dots, x_n] = R$ be ideals.  The
\lqindex{colon ideal} $I : J$ is the set
   $\{ f \in R : fJ \subset I \}$.    The colon ideal $\langle xy
   \rangle : \langle y \rangle$ contains all polynomials $f$ such that
   $fy \in \langle xy \rangle$. It does not contain the polynomial
   $1$, as $y$ is not an element of $\langle xy \rangle$. It does
   contain $x$, and it is not hard to show that $\langle xy \rangle :
   \langle y \rangle = \langle x \rangle$.

Recall that the notation $J^m$ denotes the set of all products
$\prod_{i=1}^m j_i$ with $m$ factors from $J$.
The \lqindex{saturation} $I : J^\infty$ of $I$
   with respect to $J$ is the ideal $\bigcup_{m=0}^\infty I : J^m$.
The colon ideals $I : J^m$ form an ascending chain; as $I$ is an ideal and thus closed under multiplication by the ring, the condition $fJ
\subset I$ implies that $fJ^2
   \subset I$. The ascending chain $I
   \subset I : J \subset I : J^2 \subset \dots$ terminates because
   polynomial rings are Noetherian, and thus the saturation $I :
   J^\infty = I : J^M$ for some $M \in \N$.

For multivariate
polynomials it is often convenient to think about all the monomials of
a certain degree separately. The monomials of a fixed degree form a basis for the vector space of all homogenenous polynomials of that degree.
 A general approach for grouping objects
with the same properties together is to work with a grading.
A \lqindex{grading} of a ring $R$ is a decomposition of $R$ as a direct
sum $R_0 \oplus R_1 \oplus \dots$ into abelian groups $R_i$ with the
property that $R_i R_j \subset R_{i + j}$.  An element $f \in R_k$ is
called a \lqindex{homogeneous} element, or a \lqindex{form}, of degree
$k$.
A polynomial ring has a grading by total degree where the homogeneous polynomials
of degree $k$ are sums of monomials of total degree $k$.
The
homogeneous parts of a polynomial $f$ are homogeneous elements $f_i \in R_i$ such
that $f_1 + \dots + f_{\deg f} = f$.   The three homogeneous parts of $f =
3x^3y^3 +
xy + 2x^2 + 1$ are the forms $3x^3y^3$, $xy + 2x^2$ and $1$.

\subsection{Polytopes}
\label{sec:background-polytopes}
\Bernshtein's Theorem is stated in terms of polynomials, varieties,
Newton polytopes and mixed volumes.  We discussed polynomials in the
previous section and will discuss varieties in the next section.  The
current section contains the definition of mixed volume and enough polytope theory to understand the
statement of \Bernshtein's Theorem, as well as the proofs in \Fref{sec:upper-bound}.

Throughout this section $V$ denotes the ambient vector space
containing the geometric objects of interest. Its dual space $V^*$
consists of all linear functionals $\alpha\colon V \to \mathds{k}$.
The notation $\langle \alpha, v \rangle$ denotes the functional
pairing $\langle \alpha, v \rangle = \alpha(v)$ as well as the inner
product on $V$ by identifying the functional $\alpha \in V^*$ with a
suitable vector $\alpha \in V$.  As $V$ will always be
finite-dimensional in this thesis, no confusion is likely to arise.
The standard basis vectors $e_i$ of $V$ are unit vectors whose $i$-th
coordinate is one. The standard basis vectors of the plane are $e_1
= (1, 0)$ and $e_2 = (0, 1)$.
\\

Polytopes are a particular nice class of convex geometric objects.
A set $S$ is \lqindex{convex} if it contains all line segments between
its constituent points.
Equivalently, convexity of $S$ can be expressed as the property
that $S$ contains all the convex combinations of its elements. A
finite sum $\sum_{i=1}^r t_i s_i$ is a \lqindex{convex combination} of
elements $s_i$ of $S$
if all
the $t_i$ are non-negative and sum to one.
This leads us to the definition of the
\lqindex{convex hull} of $S$, the set of all convex combinations of elements of $S$,
\[
   \conv S = \left \{ \sum_{i=1}^r t_i s_i \mid r \in \N, s_i \in S,
     t_i \geq 0, \sum_1^r t_i = 1 \right\}.
\]

If we do not require that the $t_i$ are non-negative, a finite sum
$\sum_{i=1}^r t_i s_i$ such that the $t_i$ sum to one is an
\lqindex{affine combination} of the elements $s_i$.  The
\lqindex{affine hull} is defined analogously to the convex hull.  The
affine hull of a subset $S$ of $V$ is the smallest \lqindex{affine} subspace of
$V$ that contains $S$.  If the affine subspace contains the element $0$ it is also a
linear subspace of $V$. If $0$ is not contained in an affine subspace $A$, then $A$ is the translation of some linear subspace of $V$. The dimension of an affine subspace is the dimension of the linear subspace it is a translate of. Consider affine space a linear space where we have forgotten how to distinguish the zero element.

\begin{figure}[H]
\centering
  \includegraphics[width=0.6\textwidth]{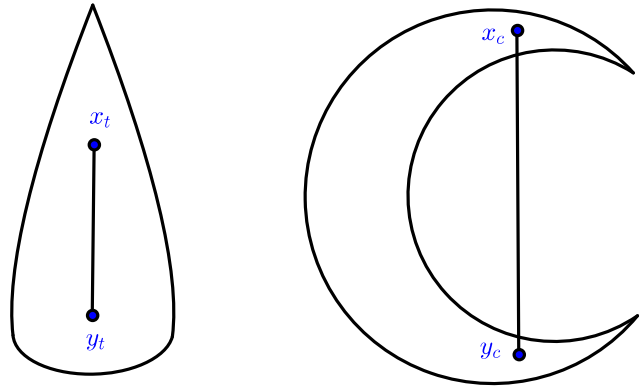}
\captionsetup{width=0.7\textwidth}
\caption{The teardrop is convex because it contains every line segment between two of its points. The crescent is not convex.}
\end{figure}

The line $y = x + 1$ is not a linear subspace of $\R^2$
since it does not contain the origin, but it is an affine subspace.
For linear subspaces we are used to the concept of linear
independence, affine subspaces have a similar concept of affine independence.
A set $\{p_1, \dots, p_r\} \subset V$ of points is \lqindex{affinely
  independent} if no $p_i$ is contained in affine hull spanned by the other $p_j$. Linear independence
implies affine independence, but not vice versa.
 The set $
\{(1, 0), (0, 1), (1, 1)\}$ is affinely independent since a line
  through two of the points does not contain the third. The set $\{(1,
    0), (0, 1), (1/2, 1/2)\}$ is affinely dependent as the three points are collinear.
These affine hulls are depicted in
  Figure  \ref{fig:affine-hulls}.
\begin{figure}[H]
\centering
  \includegraphics[width=0.4\textwidth]{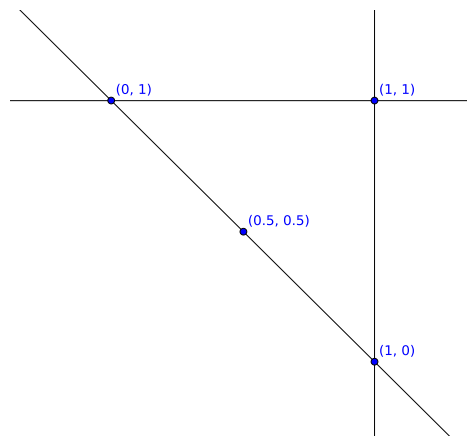}
  \caption{The affine hull of a pairs of points, or collinear points, is a line.}
  \label{fig:affine-hulls}
\end{figure}

Points, edges, triangles, tetrahedra and their higher-dimensional
generalizations have the property that their vertices are affinely
independent; an $n$-\lqindex{simplex} is the convex hull of $n + 1$
affinely independent points.  The convex hull of the origin and the
$n$ standard basis vectors $e_i$ of an $n$-dimensional vector space is
an $n$-simplex.  In one, two and three dimensions the volumes of such
simplices are $1$, $1/2$ and $1/6$.  Volume is invariant under
translation, so the volume of an $n$-simplex with vertices $v_0$,
$v_1$, \dots, $v_n$ is the same as that of the $n$-simplex with
vertices $0$, $v_1 - v_0$, \dots, $v_n - v_0$.  The matrix with colum
vectors $v_i - v_0$ maps the vertices of $\conv(0, e_1, \dots, e_n)$
to the vertices $\conv(0, v_1 - v_0, \dots, v_n - v_0)$.  The volume
of the second simplex is proportional to the volume of the first
simplex, as the determinant of a matrix can be interpreted as a
scaling factor in volume.
According to Stein~\cite{stein-simplex}, the volume of a general $n$-simplex with vertices
$v_0$, $v_1$, \dots, $v_n$ is
\[
   \frac{1}{n!}\left|\det \begin{pmatrix} v_1 - v_0 & v_2 - v_0 &
       \dots & v_n - v_0 \end{pmatrix} \right|.
\]

A \lqindex{polytope} is the convex hull of a \emph{finite} set of
points, not necessarily affinely independent. \Fref{fig:(non)-polytopes} depicts some examples and non-examples of polytopes.
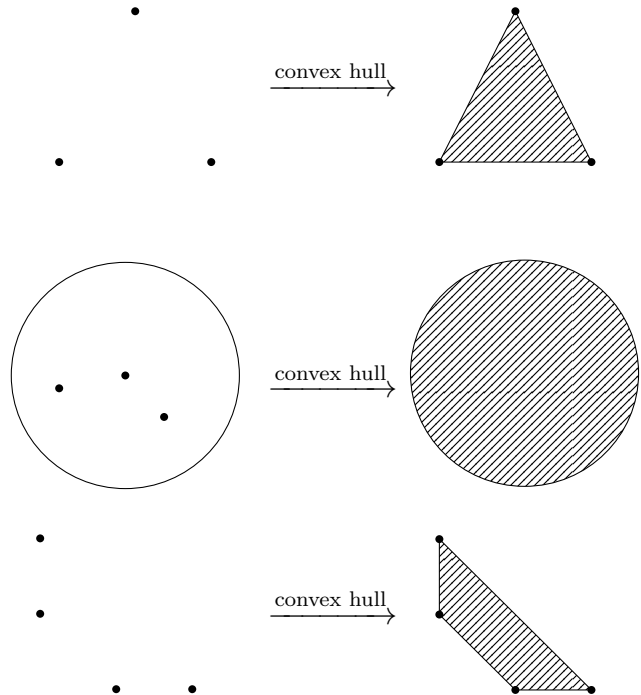
\begin{figure}
\centering
\begin{tikzpicture}[line cap=round,line join=round,>=triangle 45,x=1.0cm,y=1.0cm]
\clip(-3.98,-5.38) rectangle (6.48,4.22);
\fill[fill=black,pattern=north east lines] (2,2) -- (3,4) -- (4,2) -- cycle;
\fill[fill=black,pattern=north east lines] (2,-4) -- (2,-3) -- (4,-5) -- (3,-5) -- cycle;
\draw (2,2)-- (3,4);
\draw (3,4)-- (4,2);
\draw (4,2)-- (2,2);
\draw(-2.13,-0.83) circle (1.5cm);
\draw [fill=black,pattern=north east lines] (3.12,-0.8) circle (1.5cm);
\draw (-0.4,3.5) node[anchor=north west] {$\xrightarrow{\text{convex hull}}$};
\draw (-0.4,-0.5) node[anchor=north west] {$\xrightarrow{\text{convex hull}}$};
\draw (-0.4,-3.5) node[anchor=north west] {$\xrightarrow{\text{convex hull}}$};
\draw (2,-4)-- (2,-3);
\draw (2,-3)-- (4,-5);
\draw (4,-5)-- (3,-5);
\draw (3,-5)-- (2,-4);
\begin{scriptsize}
\fill [color=black] (-3,2) circle (1.5pt);
\fill [color=black] (-2,4) circle (1.5pt);
\fill [color=black] (-1,2) circle (1.5pt);
\fill [color=black] (2,2) circle (1.5pt);
\fill [color=black] (3,4) circle (1.5pt);
\fill [color=black] (4,2) circle (1.5pt);
\fill [color=black] (-3,-1) circle (1.5pt);
\fill [color=black] (-1.62,-1.38) circle (1.5pt);
\fill [color=black] (-2.13,-0.83) circle (1.5pt);
\fill [color=black] (-3.25,-3.99) circle (1.5pt);
\fill [color=black] (-2.25,-4.99) circle (1.5pt);
\fill [color=black] (-1.25,-4.99) circle (1.5pt);
\fill [color=black] (-3.25,-2.99) circle (1.5pt);
\fill [color=black] (2,-4) circle (1.5pt);
\fill [color=black] (3,-5) circle (1.5pt);
\fill [color=black] (4,-5) circle (1.5pt);
\fill [color=black] (2,-3) circle (1.5pt);
\end{scriptsize}
\end{tikzpicture}
  \captionsetup{width=0.7\linewidth}
  \caption{Subsets of the plane and their convex hulls. The disc is not a polytope, the other two convex hulls are polytopes.}
  \label{fig:(non)-polytopes}
\end{figure}

The \lqindex{Minkowski (or vector) sum} of two sets $S$ and $T$ is the set $S + T =
\{ s + t : s \in S, t \in T \}$ of sums of their elements.  The Minkowski sum is a well-defined binary operation on
the space of convex objects as well as the space of polytopes.
Let $S$ and $T$ be two convex
sets.   The cartesian product $S \times T$ is again convex and the map
$(s, t) \mapsto s + t$ is linear so in particular it preserves convex
combinations.
  Assume furthermore that $S$ and $T$ are the convex
hulls of finite sets of points $\{s_1, \dots, s_p\}$ and $\{t_1,
\dots, t_q \}$. An arbitrary point $s + t = \sum_1^p \lambda_i s_i  +
\sum_1^q \mu_j s_j$ is the convex combination $\sum_{i,j}
\lambda_i\mu_j (s_i + t_j)$ so $S + T$ is the convex hull of the
finite set $\{s_1, \dots, s_p\} + \{t_1, \dots, t_q \}$ and hence a polytope.

A different viewpoint defines a polytope as the bounded intersection
of a finite number of halfspaces.  The equivalence between these two
viewpoints is a fundamental result in polytope theory, see
Ziegler~\cite[Theorem~1.1]{ziegler}. Obtaining a vertex description
from a halfspaces description and vice-versa is a hard problem in
general. For the specific polytopes occurring in this thesis both
descriptions are at hand.

A hyperplane $H_{\alpha, c} = \{ x \in V : \langle \alpha, x \rangle = c \}
\subset V$ is an affine subspace of codimension one with normal vector $\alpha$. The
closed halfspaces $H_{\alpha, c}^- = \{ x \in V : \langle \alpha, x
\rangle \leq c \}$ and $H_{\alpha, c}^+ = \{x \in V : \langle
\alpha, x \rangle \geq c \}$ contain all the points to one side of $H_{\alpha, c}$ in addition to the hyperplane itself.

A hyperplane $H$ \emph{supports}\index{hyperplane!supporting} a convex
set $S$ at the point $v$ if $H$ touches $S$ at the point $v$ and $S$
lies on one side of $H$, that is,
 $v \in
H \cap S$ and
either $S \subset H^-$ or $S \subset H^+$.   It is allowed for $S$ to lie
within $H$, the line segment $\{ (x, y) \mid x \geq 0, y \geq 0, x + y
= 1 \}$ is supported by the hyperplane $x + y = 1$ at any of its points.

 If $H_{\alpha, c}$
supports $S$ and $S \subset H_{\alpha, c}^-$ then $H_{\alpha,c}^-$ is a
supporting halfspace of $S$ with outward normal vector $\alpha$.
If the convex set $S$ is also
closed, then for any $x$ outside of $S$ there is a unique point $y \in
S$ that is closest to $x$.  The hyperplane through $y$ that is
perpendicular to the line segment between $x$ and $y$ supports $S$ at
$y$.  This construction, depicted in \Fref{fig:supporting-halfspace}, shows that for each point $x$ outside of $S$
there is a halfspace $H^-$ that contains $S$ but not $x$, and thus
every nonempty closed convex set is the intersection of its supporting
halfspaces~\cite[Corollary~1.3.5]{schneider}.
\begin{figure}
\centering
\begin{tikzpicture}[line cap=round,line join=round,>=triangle 45,x=1.0cm,y=1.0cm]
\clip(-0.48,0.74) rectangle (4.59,5.69);
\draw [fill=black,pattern=north east lines] (1.26,3.75) circle (1.57cm);
\draw [->] (2.32,2.59) -- (4,1);
\draw [domain=-0.48:4.59] plot(\x,{(--0.21--1.68*\x)/1.59});
\begin{scriptsize}
\fill [color=black] (2.32,2.59) circle (1.5pt);
\draw[color=black] (2.35,2.34) node {$y$};
\draw[color=black] (0.8,5.4) node {$S$};
\fill [color=black] (4,1) circle (1.5pt);
\draw[color=black] (4.2,1) node {$x$};
\draw[color=black] (4.12,4.09) node {$H$};
\end{scriptsize}
\end{tikzpicture}
\captionsetup{width=0.7\textwidth}
   \caption{The supporting hyperplane $H$ separates the closed convex set $S$ from
  any point $x$ outside of $S$.}
  \label{fig:supporting-halfspace}
\end{figure}
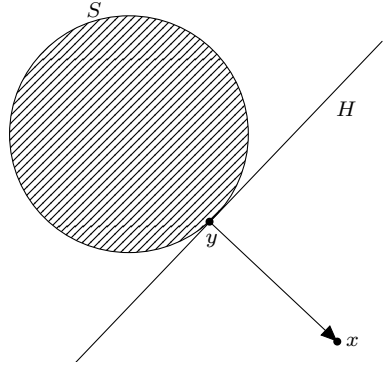
Let $P = H_1^- \cap \dots \cap H_r^-$ be a polytope defined as the
intersection of $r$ halfspaces, where $r$ is minimal.  An intersection
of $P$ with multiple halfplanes $H_i$ yields a subset of $P$ called a
\emph{face}\index{polytope!face}.  A face of dimension $i$ is called
an $i$-face.  Every polytope trivially has itself as a face. Faces
that are strict subsets of the polytope are
\emph{proper}\index{polytope!face!proper} faces.  Special terminology
is used for $0$-faces (\emph{vertices}\index{polytope!vertex}),
$1$-faces (\emph{edges}\index{polytope!edge}) and the proper faces of
largest dimension (\emph{facets}\index{polytope!facet}).  An
$n$-dimensional polytope is \emph{simple}\index{polytope!simple} if
all its vertices are contained in the minimum of $n$ facets.  A
three-dimensional cube is simple, since each vertex is contained in
three facets, but a pyramid with a square base is not simple as the
apex is contained in four facets.

There is a dual way of thinking of the faces of a polytope, for a
functional $\alpha \in V^*$ let $M_P(\alpha) = \max_{v\in P} \langle
\alpha, v \rangle$ denote the maximum value that $\alpha$ attains on
$P$. The \lqindex{maximizer} $F_P(\alpha)$ of $P$ with respect to
$\alpha$ is the subset of $P$ where $\alpha$ attains the maximal value
$M_P(\alpha)$,
\[
   F_P(\alpha) = \left\{ v \in P \mid \langle \alpha, v \rangle =
     \max_{w \in P} \langle \alpha, w \rangle \right\}.
\]
One way to envision the maximizer of $P$ with respect to $\alpha$ is
to picture sliding the halfplane
perpendicular to $\alpha$ along its normal in the positive direction,
see \Fref{fig:maximizer}.  As the hyperplane progresses along $\alpha$ there is a critical point where the intersection with $P$ becomes empty. The last non-empty intersection is
the set $F_P(\alpha)$.

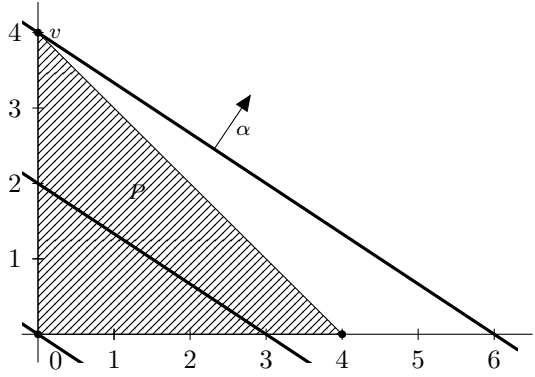
\begin{figure}
 \centering
\begin{tikzpicture}[line cap=round,line join=round,>=triangle 45,x=1.0cm,y=1.0cm]
\draw (-0.21,0) -- (6.5,0);
\foreach \x in {,1,2,3,4,5,6}
\draw[shift={(\x,0)},color=black] (0pt,2pt) -- (0pt,-2pt) node[below] {\footnotesize $\x$};
\draw (0,-0.37) -- (0,4.4);
\foreach \y in {,1,2,3,4}
\draw[shift={(0,\y)},color=black] (2pt,0pt) -- (-2pt,0pt) node[left] {\footnotesize $\y$};
\draw (0pt,-10pt) node[right] {\footnotesize $0$};
\clip(-0.21,-0.37) rectangle (6.31,4.4);
\fill[fill=black,pattern=north east lines] (0,0) -- (0,4) -- (4,0) -- cycle;
\draw [domain=-0.21:6.31] plot(\x,{(--24-4*\x)/6});
\draw [domain=-0.21:6.31] plot(\x,{(--24-4*\x)/6});
\draw [line width=1.2pt,domain=-0.21:6.31] plot(\x,{(--24-4*\x)/6});
\draw [line width=1.2pt,domain=-0.21:6.31] plot(\x,{(--12-4*\x)/6});
\draw [line width=1.2pt,domain=-0.21:6.31] plot(\x,{(-0-4*\x)/6});
\draw [->] (2.32,2.46) -- (2.8,3.18);
\draw (0,0)-- (0,4);
\draw (0,4)-- (4,0);
\draw (4,0)-- (0,0);
\begin{scriptsize}
\fill (0,0) circle (1.5pt);
\fill (4,0) circle (1.5pt);
\fill (0,4) circle (1.5pt);
\draw (0.23,3.99) node {$v$};
\draw (2.7,2.7) node {$\alpha$};
\draw (1.3,1.89) node {$\displaystyle P$};
\end{scriptsize}
\end{tikzpicture}
\captionsetup{width=0.7\textwidth}
  \caption{The face $v$ of the triangle $P$ is the maxmizer $F_P(\alpha)$ of $P$ with respect to $\alpha$.}
  \label{fig:maximizer}
\end{figure}

\begin{lemma}
The faces of a full-dimensional polytope $P$ are exactly the sets of maximizers $\{ v \in P
\mid \langle v, \alpha \rangle = \max_{w \in P} \langle w, \alpha
\rangle \}$ where $\alpha$ ranges over all functionals on the ambient
vector space containing the polytope.
\end{lemma}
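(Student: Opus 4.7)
The plan is to prove equality between two descriptions of a face: as an intersection $F = P \cap H_{i_1} \cap \dots \cap H_{i_k}$ of $P$ with some of its defining hyperplanes, and as a maximizer $F_P(\alpha)$ of $P$ with respect to a functional $\alpha$. Write the minimal halfspace representation as $P = H_1^- \cap \dots \cap H_r^-$ with $H_j = H_{\alpha_j, c_j}$, and argue by two inclusions.

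For the easy direction, take $F = P \cap H_{i_1} \cap \dots \cap H_{i_k}$ and set $\alpha = \alpha_{i_1} + \dots + \alpha_{i_k}$ with $c = c_{i_1} + \dots + c_{i_k}$. Any $v \in F$ satisfies $\langle \alpha_{i_j}, v \rangle = c_{i_j}$ for each $j$, so $\langle \alpha, v \rangle = c$. Any $w \in P$ satisfies $\langle \alpha_{i_j}, w \rangle \leq c_{i_j}$, so summing gives $\langle \alpha, w \rangle \leq c$, with equality precisely when every selected inequality is tight at $w$. Hence $M_P(\alpha) = c$ and $F_P(\alpha) = F$.

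For the harder direction, given a functional $\alpha$, set $c = M_P(\alpha)$ so that $H_{\alpha, c}$ supports $P$ and $F_P(\alpha) = P \cap H_{\alpha, c}$. Let $I = \{i \mid F_P(\alpha) \subset H_i\}$ collect the indices of the defining hyperplanes whose boundary contains the entire maximizer. The inclusion $F_P(\alpha) \subset P \cap \bigcap_{i \in I} H_i$ is immediate from the definition of $I$. The content of the lemma is the reverse inclusion, which follows from the claim that the valid inequality $\langle \alpha, \cdot \rangle \leq c$ can be expressed as a non-negative combination $\alpha = \sum_{i \in I} \lambda_i \alpha_i$ with $c = \sum_{i \in I} \lambda_i c_i$; given such a decomposition, any $w \in P \cap \bigcap_{i \in I} H_i$ would satisfy $\langle \alpha, w \rangle = \sum_{i \in I} \lambda_i c_i = c$, placing it in $F_P(\alpha)$.

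The main obstacle is establishing this conic decomposition. The standard tool is Farkas's lemma, or equivalently a separating-hyperplane argument in $V^* \oplus \mathds{k}$: full-dimensionality of $P$ ensures the minimal defining system contains no implicit equalities, and the validity of $\langle \alpha, \cdot \rangle \leq c$ on $P$, together with the fact that this inequality is tight exactly where the subsystem indexed by $I$ is tight, produces the required non-negative multipliers. Combining both inclusions identifies the two face descriptions and completes the proof.
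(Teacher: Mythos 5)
Your first inclusion (each face is a maximizer) is essentially the paper's entire proof: the paper chooses a functional in the relative interior of the cone spanned by the outward normals of the hyperplanes cutting out the face, and you choose the particular strictly positive combination with every coefficient equal to $1$. The mechanics and conclusion coincide. Where you genuinely diverge is by also attempting the converse (every maximizer is a face), which the word ``exactly'' demands and which the paper's proof does not address --- it lists the polytope, its facets, and the lower-dimensional faces, produces a maximizing functional for each, and stops. Your setup of the converse is right: collect the defining hyperplanes tight on all of $F_P(\alpha)$ into $I$, then show equality in $\langle\alpha,\cdot\rangle\leq M_P(\alpha)$ forces equality in every constraint indexed by $I$. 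You correctly flag that this hinges on a conic decomposition $\alpha=\sum_{i\in I}\lambda_i\alpha_i$ with $c=\sum_{i\in I}\lambda_i c_i$ and that Farkas (or a separating hyperplane in $V^*\oplus\mathds{k}$) is the tool, but note the decomposition you need is sharper than the bare Farkas conclusion: Farkas alone gives nonnegative multipliers over \emph{all} constraints, and pinning their support inside $I$ is exactly complementary slackness for the linear program $\max\langle\alpha,x\rangle$ over $P$, applied at each point of $F_P(\alpha)$. So your outline is the right one, just not carried out; what you buy over the paper's proof is an honest treatment of both directions of the ``exactly,'' at the cost of importing LP-duality machinery a level above the polytope toolkit the paper otherwise develops. (Both proofs also silently assume the face is nonempty so that $M_P(\alpha)$ is actually attained at value $c$; this is harmless but worth a word.)
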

\begin{proof}
Let $H_1, \dots, H_r$ be a set of facet-defining hyperplanes of $P$ with
outward normals $n_1, \dots, n_r$.
  The polytope itself maximizes the zero functional.  Facets are the
  maximizers with respect to their facet normals.  Any lower
  dimensional faces are intersections of multiple facets.

  Assume that the intersection $H_1
\cap \dots \cap H_n$ is a face $F$ of $P$.
Then for $\alpha \in \cone \{ n_1, \dots, n_r \} = \{ \sum t_i n_i
\mid t_i \geq 0 \}$ the face $F$ is a subset of the maximizer
$F_P(\alpha)$.  If one of the $t_i$ is zero, the containment is strict,
but if all $t_i$ are positive then any point $x$ outside of any of the
$H_i$ is not an element of the maximizer $F_P(\alpha)$.  Hence the
face $F$ is equal to $F_P(\alpha)$.
\end{proof}
The \emph{normal cone}\index{polytope!normal cone} of a face $F$ is the set of
functionals $\{ \alpha \in V^* \mid F_P(\alpha) = F \}$ that attain
their maximal value precisely on $F$.  Identifying the functionals
$\alpha \in V^*$ with vectors $\alpha \in V$ such that $\alpha(v) =
\langle \alpha, v \rangle$ for every $v \in V$, these normal cones can
be thought of as geometric objects living in the same space as $F$.

The \emph{normal fan}\index{polytope!normal fan} of the polytope $P$ is the collection of the
normal cones of all faces of $P$; it partitions $V^*$ into cones, see \Fref{fig:normal-fan}.
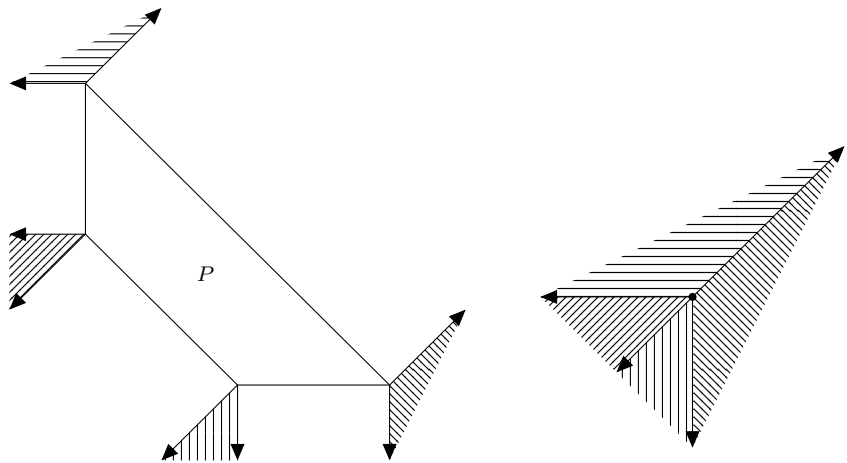
\begin{figure}
\centering
\begin{subfigure}[b]{0.45\textwidth}
\begin{tikzpicture}[line cap=round,line join=round,>=triangle 45,x=1.0cm,y=1.0cm]
    \clip(-1.32,-1.41) rectangle (5.31,5.48);
    \fill[line width=0pt,fill=black,pattern=horizontal lines] (-1,4) -- (0,4) -- (1,5) -- cycle;
    \fill[line width=0pt,fill=black,pattern=north east lines] (0,2) -- (-1,2) -- (-1,1) -- cycle;
    \fill[line width=0pt,fill=black,pattern=vertical lines] (1,-1) -- (2,0) -- (2,-1) -- cycle;
    \fill[line width=0pt,fill=black,pattern=north west lines] (4,0) -- (4,-1) -- (5,1) -- cycle;
    \draw (0,2)-- (0,4);
    \draw (0,4)-- (4,0);
    \draw (4,0)-- (2,0);
    \draw (2,0)-- (0,2);
    \draw [->] (0,2) -- (-1,2);
    \draw [->] (0,2) -- (-1,1);
    \draw [->] (2,0) -- (1,-1);
    \draw [->] (2,0) -- (2,-1);
    \draw [->] (4,0) -- (4,-1);
    \draw [->] (4,0) -- (5,1);
    \draw [->] (0,4) -- (1,5);
    \draw [->] (0,4) -- (-1,4);
    \begin{scriptsize}
        \draw (1.58,1.48) node {$P$};
    \end{scriptsize}
\end{tikzpicture}
\end{subfigure}
\begin{subfigure}[b]{0.45\textwidth}
\begin{tikzpicture}[line cap=round,line join=round,>=triangle 45,x=2.0cm,y=2.0cm]
    \clip(-1.21,-1.29) rectangle (1.66,1.23);
    \fill[line width=0pt,fill=black,pattern=horizontal lines] (0,0) -- (-1,0) -- (1,1) -- cycle;
    \fill[line width=0pt,fill=black,pattern=north west lines] (0,0) -- (0,-1) -- (1,1) -- cycle;
    \fill[line width=0pt,fill=black,pattern=north east lines] (-0.5,-0.5) -- (0,0) -- (-1,0) -- cycle;
    \fill[line width=0pt,fill=black,pattern=vertical lines] (0,-1) -- (0,0) -- (-0.5,-0.5) -- cycle;
    \draw [->] (0,0) -- (-1,0);
    \draw [->] (0,0) -- (0,-1);
    \draw [->] (0,0) -- (1,1);
    \draw [->] (0,0) -- (-0.5,-0.5);
    \begin{scriptsize}
        \fill [color=black] (0,0) circle (1.5pt);
    \end{scriptsize}
\end{tikzpicture}
\end{subfigure}
\captionsetup{width=0.8\textwidth}
   \caption{The normal fan of $P$ partitions the plane into normal cones of all the faces of $P$.}
   \label{fig:normal-fan}
\end{figure}
Scaling a polytope by a positive scalar does not change the normal
fans, as is clear from the equality $\lambda P = \{ \lambda x : Ax \leq
b \} = \{ x : Ax \leq \lambda b\}$.

Let $P$ be an $n$-dimensional polytope.  A \emph{triangulation}\index{polytope!triangulation} $S$ of
$P$ is a decomposition of $P$ into simplices of dimension $n$ with
mutually disjoint interiors, \Fref{fig:polytope-triangulation} shows
triangulations for a square and a triangular prism.
\begin{figure}
  \includegraphics[width=\linewidth]{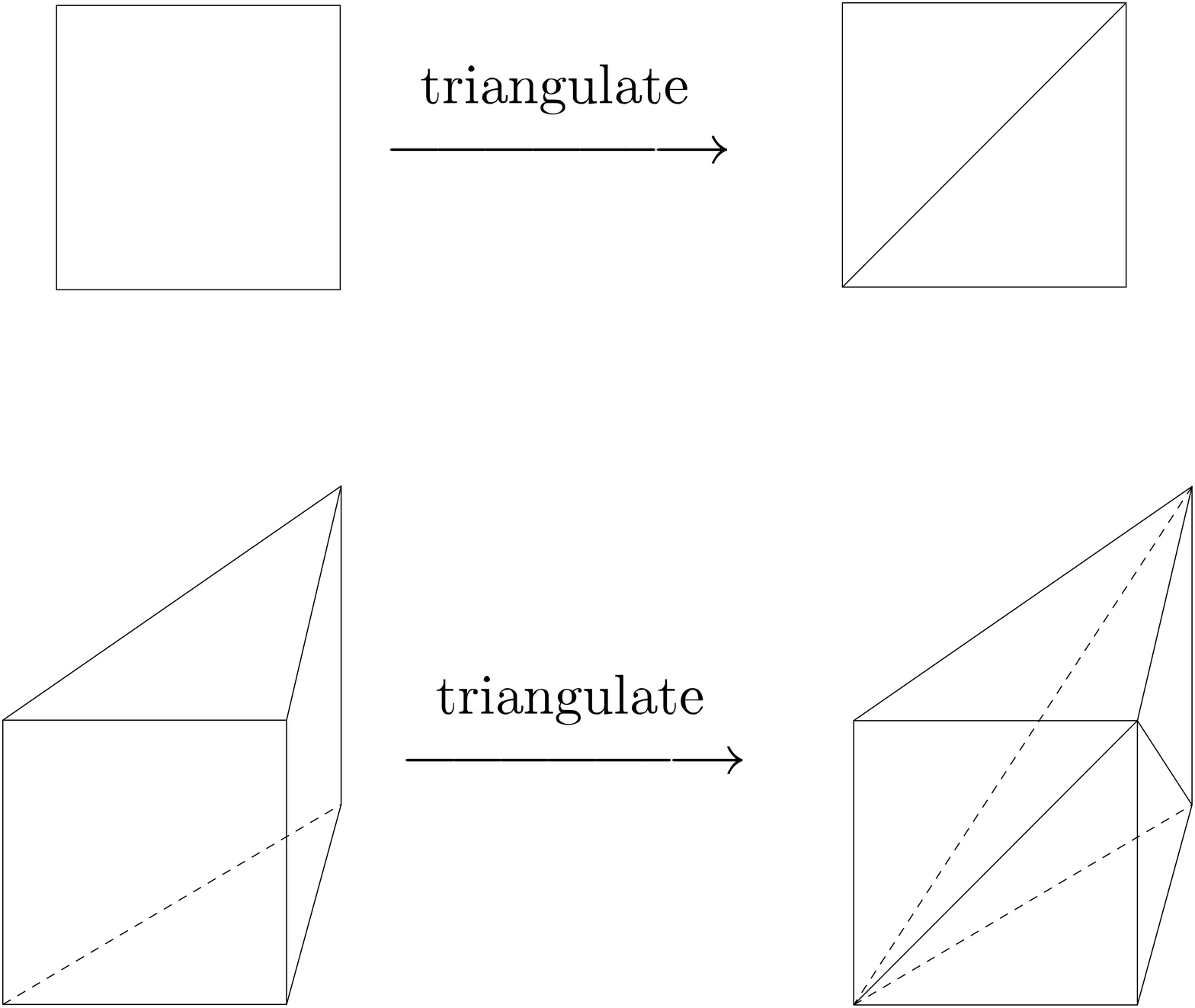}
   \caption{Triangulations of a square and a triangular prism.}
  \label{fig:polytope-triangulation}
\end{figure}

\begin{lemma}
\label{lem:triangulation}
Let $v$ be a vertex of a polytope $P$ and for $F$ a facet of $P$ not
containing $v$ let
$S_F$ be a triangulation of $F$.  Then the union
\[
   \bigcup_F \{ \conv(v, S) \mid S \in S_F \}
\]
of the convex hulls of $v$ with each simplex in a triangulation of a
face of $F$ not containing $v$,
is a triangulation of $P$.
\end{lemma}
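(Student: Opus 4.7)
The plan is to verify the three ingredients of the definition of triangulation given earlier: each piece $\conv(v, S)$ is an $n$-simplex, the pieces cover $P$, and their interiors are pairwise disjoint.

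\emph{Simplices.} Fix a facet $F$ not containing $v$ and a simplex $S \in S_F$. Because $S_F$ triangulates the $(n-1)$-dimensional polytope $F$, the simplex $S$ has $n$ affinely independent vertices lying in the affine hull of $F$. Since $v\notin F$ and the affine hull of $F$ is a hyperplane, $v$ is not in this hyperplane, hence not in the affine hull of $S$. Adjoining $v$ to the vertices of $S$ therefore gives $n+1$ affinely independent points, so $\conv(v, S)$ is an $n$-simplex.

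\emph{Covering.} Let $p \in P$; we may assume $p \neq v$. Consider the ray $r(t) = v + t(p - v)$ for $t \geq 0$. Since $P$ is convex, $r^{-1}(P)$ is an interval $[0, t^*]$ with $t^* \geq 1$, so the ray exits $P$ at $q = r(t^*)\in\partial P$. Because $q$ is the exit point, some facet-defining halfspace $H^-_{\alpha, c}\supset P$ satisfies $\langle\alpha, q\rangle = c$ and $\langle\alpha, r(t)\rangle > c$ for $t > t^*$; in particular $\langle\alpha, p - v\rangle > 0$, so $\langle\alpha, v\rangle < \langle\alpha, q\rangle = c$. Thus $v$ lies in the interior of this halfspace and the corresponding facet $F$ contains $q$ but not $v$. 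The point $q$ lies in some simplex $S \in S_F$, and $p = (1 - 1/t^*)v + (1/t^*)q \in \conv(v, q) \subset \conv(v, S)$.

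\emph{Disjoint interiors.} Suppose $p$ lies in the relative interior of both $\conv(v, S_1)$ and $\conv(v, S_2)$ with $S_i \in S_{F_i}$. Writing $p = \lambda_i v + (1-\lambda_i) s_i$ with $\lambda_i \in (0,1)$ and $s_i$ in the relative interior of $S_i$ shows that each $s_i$ lies on the ray from $v$ through $p$ and on $\partial P$. Since $P$ is convex and $v \notin S_i$, the ray meets $\partial P\setminus\{v\}$ only at the exit point $q$ identified above, so $s_1 = s_2 = q$. But then $q$ lies in the relative interior of both $S_1$ and $S_2$: if $F_1 = F_2$ this contradicts $S_1, S_2$ being distinct simplices of a triangulation, and if $F_1 \neq F_2$ it contradicts $\mathrm{relint}(F_1) \cap \mathrm{relint}(F_2) = \emptyset$.

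The main subtlety is the covering step, where one must rule out the possibility that the ray exits $P$ only through facets that also contain $v$ (which is what happens, superficially, when $p$ itself lies on a facet through $v$). The supporting-halfspace argument above is precisely what handles this case cleanly; the rest of the proof is book-keeping.
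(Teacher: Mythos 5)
Your proof is correct and takes the same ray-from-$v$ approach as the paper's (this is exactly the decomposition underlying the Cohen--Hickey algorithm invoked later in \Fref{cor:volume}). The difference is in how carefully the two steps that the paper states tersely are handled. First, for coverage, the paper simply asserts that ``the ray from $v$ to $x$ exits $P$ in some face $F$ not containing $v$''; you supply the supporting-halfspace argument that actually proves the exit facet misses $v$, which is precisely where a careless proof could go wrong when $p$ itself lies on a facet through $v$. Second, for disjointness of interiors the paper counts dimensions: the exit point lies in $\sigma\cap\tau$, so $x\in\conv(v,\sigma\cap\tau)$, and since $\dim(\sigma\cap\tau)\leq n-2$ the cone over it has dimension at most $n-1$. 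You instead argue directly that an interior point common to $\conv(v,S_1)$ and $\conv(v,S_2)$ would force the unique exit point $q$ into $\mathrm{relint}(S_1)\cap\mathrm{relint}(S_2)$, contradicting either the triangulation property of $S_F$ (when $F_1=F_2$) or the disjointness of relative interiors of distinct facets (when $F_1\neq F_2$). Both disjointness arguments are sound; yours is more explicit and splits into cases, the paper's is more compact and uniform.
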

\begin{proof}
Let $x \in P$ be distinct from $v$. The ray from $v$ to $x$ exits $P$ in
some face $F$ not containing $v$ and thus intersects some simplex $\sigma \in
S_F$.  The convex hull $\conv(v, \sigma)$ of $v$ and $\sigma$ contains $x$ by
convexity. As $v$ is affinely independent from $\sigma$, the simplex
$\conv(v, \sigma)$ is full-dimensional.

Suppose the ray through $x$ intersects two distinct simplices $\sigma$ and
$\tau$. Then $x$ is contained in $\conv(v, \sigma \cap \tau)$. Since
$\sigma$ and $\tau$ share no interior points, the dimension of the intersection $\sigma
\cap \tau$ is at most $n - 2$.  The dimension of $\conv(v, \sigma \cap
\tau)$ is then at most $n - 1$, so $\conv(v, \sigma)$ and $\conv(v,
\tau)$ have disjoint interiors.
\end{proof}

\Fref{lem:triangulation} suggests an algorithm for triangulating a
polytope.  Starting out with a pair $(P, v)$, recursively triangulate
the facets of $P$ not containing $v$ to obtain the triangulations
$S_F$.  This algorithm is known as the Cohen \& Hickey
algorithm~\cite[Section 3.1]{exact-volumes} and will be used in
\Fref{cor:volume} to calculate the volume of a Minkowski sum.

So far we have pictured polytopes of dimension zero, one, two and
three.  The polytopes playing a main role in this thesis are four-dimensional. One way to visualize four-dimensional polytopes is by
using Schlegel diagrams.  The idea is to project a polytope onto one
of its facets, see \Fref{fig:schlegel-projection}.
\begin{figure}
 \centering
  \includegraphics[width=0.5\linewidth]{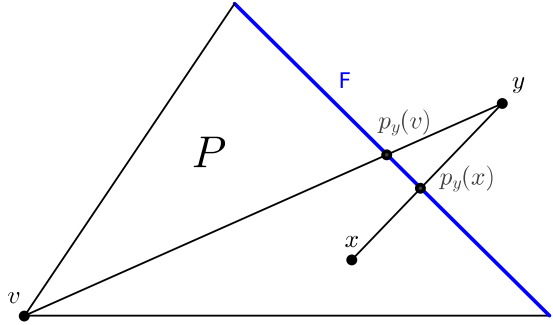}
 \captionsetup{width=0.9\textwidth}
  \caption{A Schlegel diagram of a polytope $P$ is obtained by projecting
           $P$ onto a facet $F$ using the projection $p_y$.}
  \label{fig:schlegel-projection}
\end{figure}

Let $y$ lie beyond a facet $F$ of a polytope
$P$. The projection $p_y(x)$ of $x \in P$ onto $F$ is the intersection
of the line segment between $x$ and $y$ with $F$. The
\emph{Schlegel diagram}\index{polytope!Schlegel diagram} $\mathcal{D}(P, F)$ of $P$ based at the
facet $F$ is the image of all the proper faces of $P$, other than $F$,
under the projection map $p$.  Its usefulness comes from the
fact~\cite[Proposition 5.6]{ziegler} that although $\mathcal{D}(P, F)$
is of smaller dimension than the original polytope, the combinatorial
structures of $P$ and the Schlegel diagram are equivalent.  This
allows one to read off the face structure of a four-dimensional
polytope from a three-dimensional picture.  The Schlegel diagrams in
this thesis are \Fref{fig:schlegel1} and \Fref{fig:schlegel2}.
\\

The concept of mixed volume was introduced by Minkowski in the early
1900s. For our purposes the mixed volume serves only as a computational
tool.
In the literature various definitions of the mixed volume abound.
The following definition as used by Schneider~\cite{schneider},
Bernshtein~\cite{Bernshtein} and Huber and
Sturmfels~\cite{affine-bernshtein} is convenient for root counting.
\begin{definition}[Mixed volume {\cite[Theorem~5.1.6]{schneider}}]
\label{def:mixed-volume}
Let $P_1, \dots, P_n \subset \mathbb{R}^n$ be $n$ polytopes.  Their
\lqindex{mixed volume} $MV(P_1, \dots, P_n)$ is the
coefficient of the monomial $\lambda_1\dots \lambda_n$ appearing in
the expression for the
$n$-dimensional Euclidean volume $\Vol_n(\lambda_1 P_1 + \dots + \lambda_n P_n)$ of the Minkowski sum of the $P_i$ scaled by factors
$\lambda_i$.
\end{definition}

The process of calculating the mixed volume of two rectangles is
depicted in \Fref{fig:mixed-volume}.

\begin{figure}[H]
  \includegraphics[width=\linewidth]{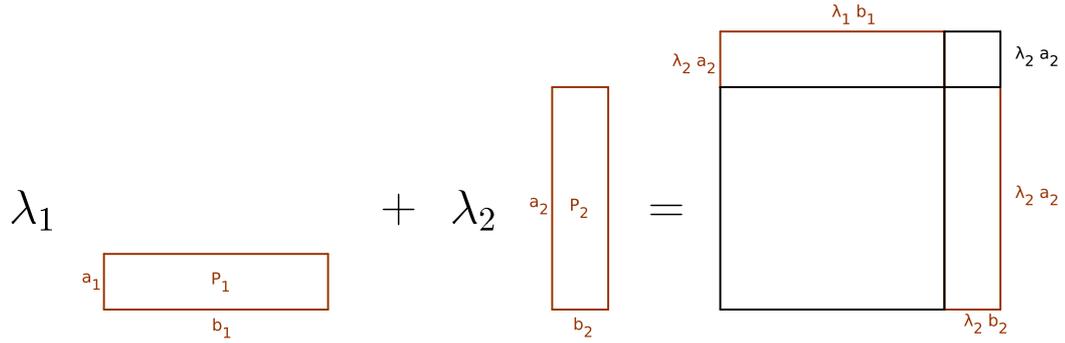}
\captionsetup{width=0.9\textwidth}
  \caption{The mixed volume $MV(P_1, P_2)$ of the polytopes $P_1$ and $P_2$ is the coefficient of $\lambda_1\lambda_2$ in the expression
$\lambda_1^2\Vol(P_1) + \lambda_2^2\Vol_2(P_2) + \lambda_1\lambda_2(a_1b_2 + a_2b_1)$ for the volume of the Minkowski sum $P_1 + P_2$. }
  \label{fig:mixed-volume}
\end{figure}

Before we move on to varieties, the last polytopal concept occuring in the statement of \Bernshtein's Theorem is the concept of a Newton polytope.
Let $f = \sum_{\gamma} c_\gamma x^\gamma \in \mathds{k}[x_1, \dots, x_n]$ be a polynomial. The \emph{Newton polytope}\index{polytope!Newton}
$\newton$ of $f$
is the convex hull of the exponents of the monomials of $f$,
$
  \newton = \conv \{ \gamma \in \mathbb{N}^n \mid c_\gamma \neq 0 \}$.

\begin{example}  The Newton polytopes of $\lambda_{00} + \lambda_{10} x
  + \lambda_{12} xy^2$ and $\mu_{10}x + \mu_{30} x^3 + \mu_{01}y +
  \mu_{03} y^3 + \mu_{11}xy$ are depicted in
  \Fref{fig:newton-polytopes}.  The points $(i, j)$ in the Newton polytopes that are an
  exponent of a monomial $x^iy^j$ are labeled with the corresponding term.
\begin{figure}[H]
\includegraphics[width=0.45\linewidth]{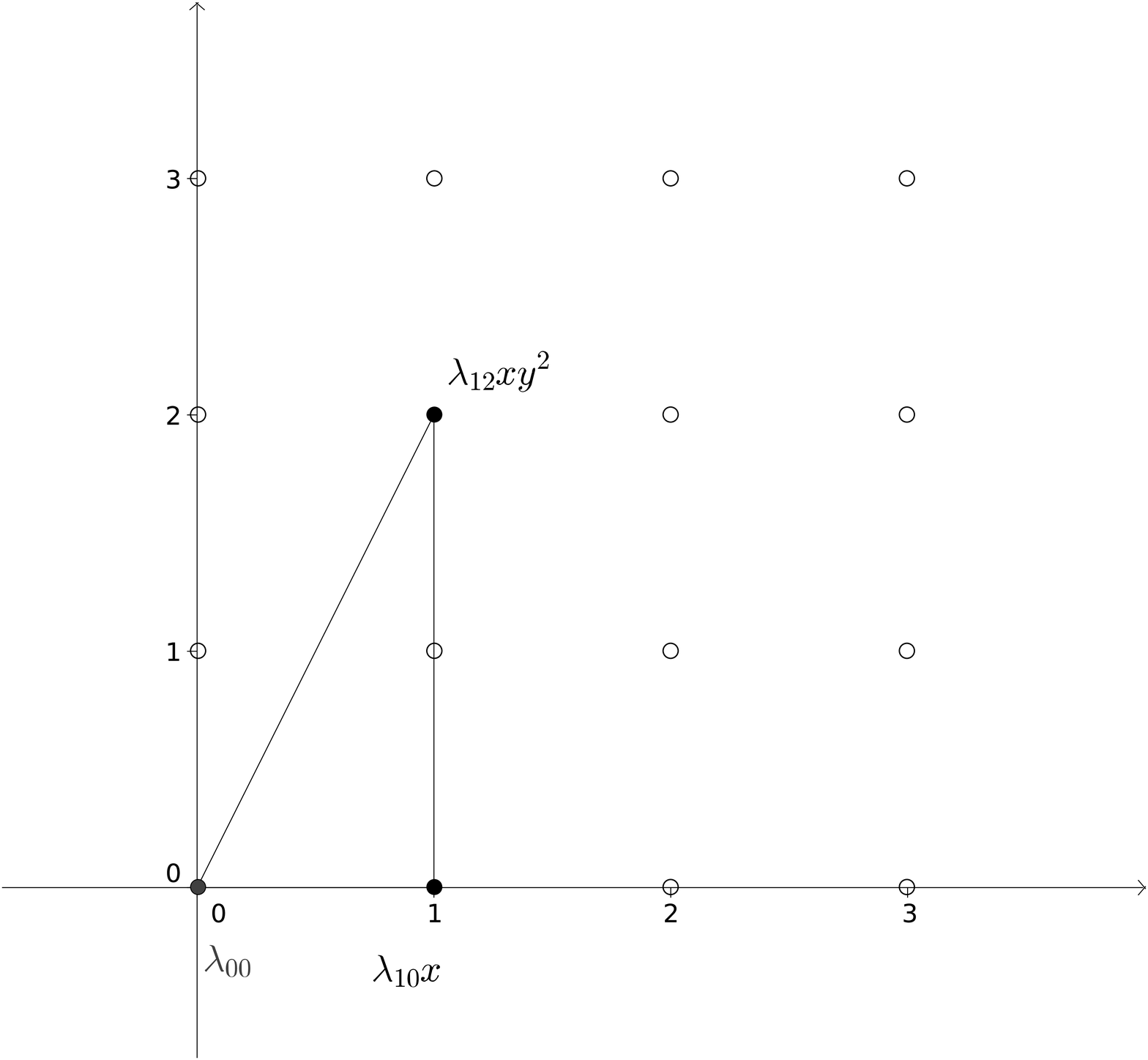}
\includegraphics[width=0.45\linewidth]{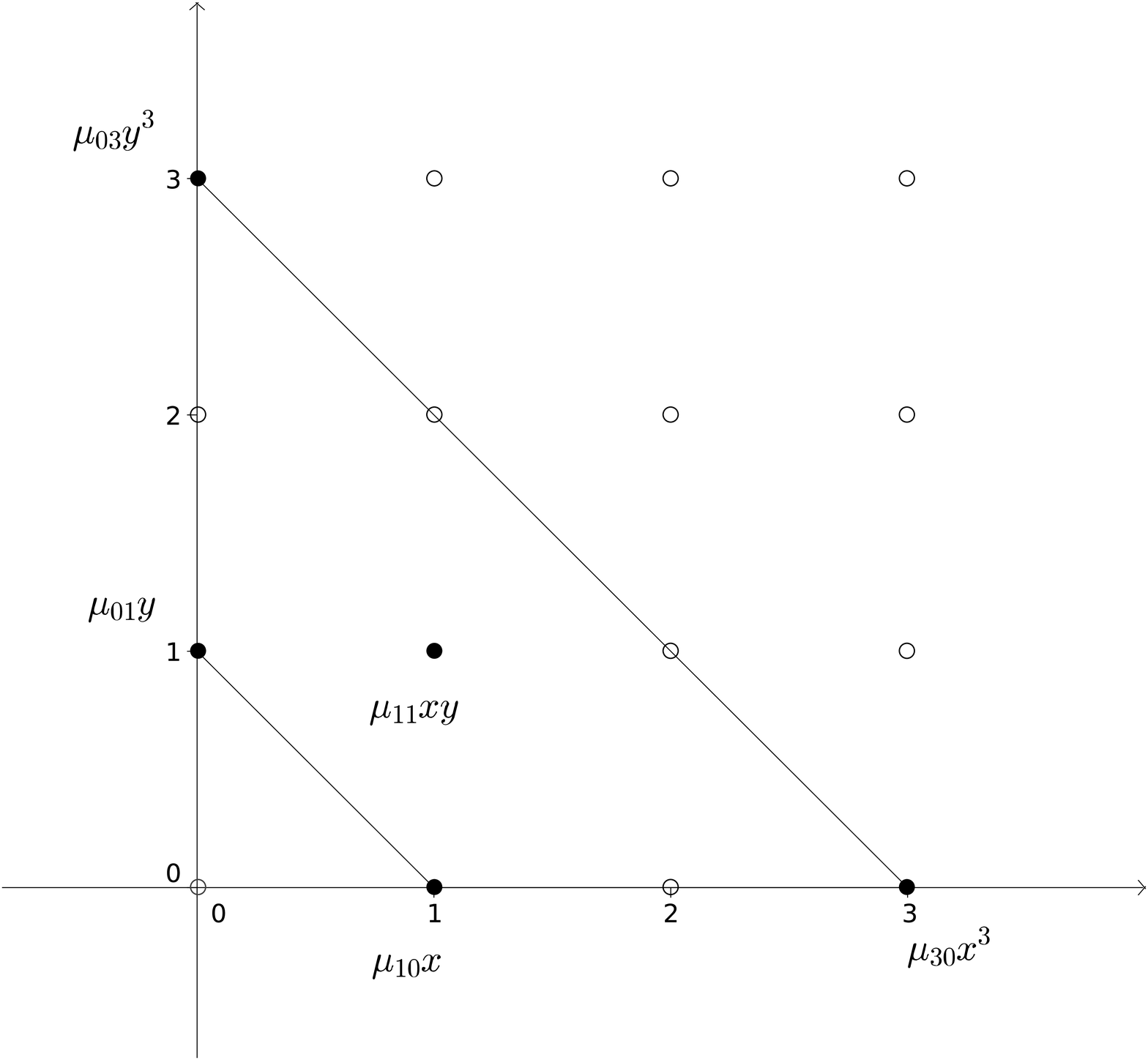}
\captionsetup{width=0.9\textwidth}
\caption{Newton polytopes of the polynomials $\lambda_{00} + \lambda_{10} x
  + \lambda_{12} xy^2$ and $\mu_{10}x + \mu_{30} x^3 + \mu_{01}y +
  \mu_{03} y^3 + \mu_{11}xy$.}
\label{fig:newton-polytopes}
\end{figure}
\end{example}

\subsection{Varieties}
\label{sec:background-varieties}
An algebraic curve and the set of squares inscribed on such a curve
are both examples of varieties.  Varieties are geometric objects we
can describe well by ideals of polynomials vanishing on the variety.
This connection enables the use of algebraic tools from the
Algebra background section to answer questions of
geometry. The Ascending Chain Condition allows us to show that
varieties consist of a finite number of irreducible components;
the difference of varieties defined by ideals $I$ and $J$ corresponds
to the variety defined by the saturation $I : J^\infty$.

Algebraic geometry is pursued over any field, be it finite or
infinite, a subfield of $\mathbb{C}$ or something more exotic. The
concrete fields used in the applications in this thesis are the rationals
$\mathbb{Q}$, the reals $\mathbb{R}$ and the complex numbers
$\mathbb{C}$. All of them are infinite fields, which makes some
reasoning easier.  The complex numbers additionally have the property
that they are \lqindex{algebraically closed}, any nonconstant polynomial with
complex coefficients has a complex root. Many proofs that work for the
complex numbers, such as the Strong Nullstellensatz, only depend on
the fact that the field of complex numbers is algebraically
closed. We shall state such results for an arbitrary algebraically
closed field.
\\

Let $f_1, \dots, f_r \in \mathds{k}[x_1, \dots, x_n]$ be
a set of polynomials.  The set of points \linebreak $(x_1, \dots, x_n)$ $\in \mathds{k}^n$ simultaneously satisfying
the system of equations
\[
   f_1(x_1, \dots, x_n) = 0, \dots, f_r(x_1, \dots, x_n) = 0,
\]
is called the \lqindex{variety} defined by $\{f_1, \dots, f_r\}$,
denoted $\V(f_1, \dots, f_r)$.  Linear and affine subspaces are
familiar examples, both defined by collections of linear polynomials.
Conics, finite sets of points, and graphs $y = f(x_1, \dots, x_n)$ of
polynomials are other examples the reader may have seen before.  Some
varieties and non-varieties are depicted in \Fref{fig:non-varieties}.
An algebraic plane curve is a variety defined by the vanishing of a
single polynomial in two variables.  The line through the origin with
slope one is an algebraic curve defined by the vanishing of the
polynomial $x - y$.  The unit circle is defined by the vanishing of
the polynomial $x^2 + y^2 - 1$.

The smallest variety $V$ that contains a set $S$ is called the
\lqindex{Zariski closure} $\overline{S}$ of $S$.  The Zariski
closure of a point is just the point, as it is already a variety.
The Zariski closure of the integers is all of $\mathbb{R}$, as any
polynomial that vanishes on all integers will vanish on all real numbers.
\begin{figure}
\begin{subfigure}[b]{0.45\linewidth}
   \includegraphics[width=\textwidth]{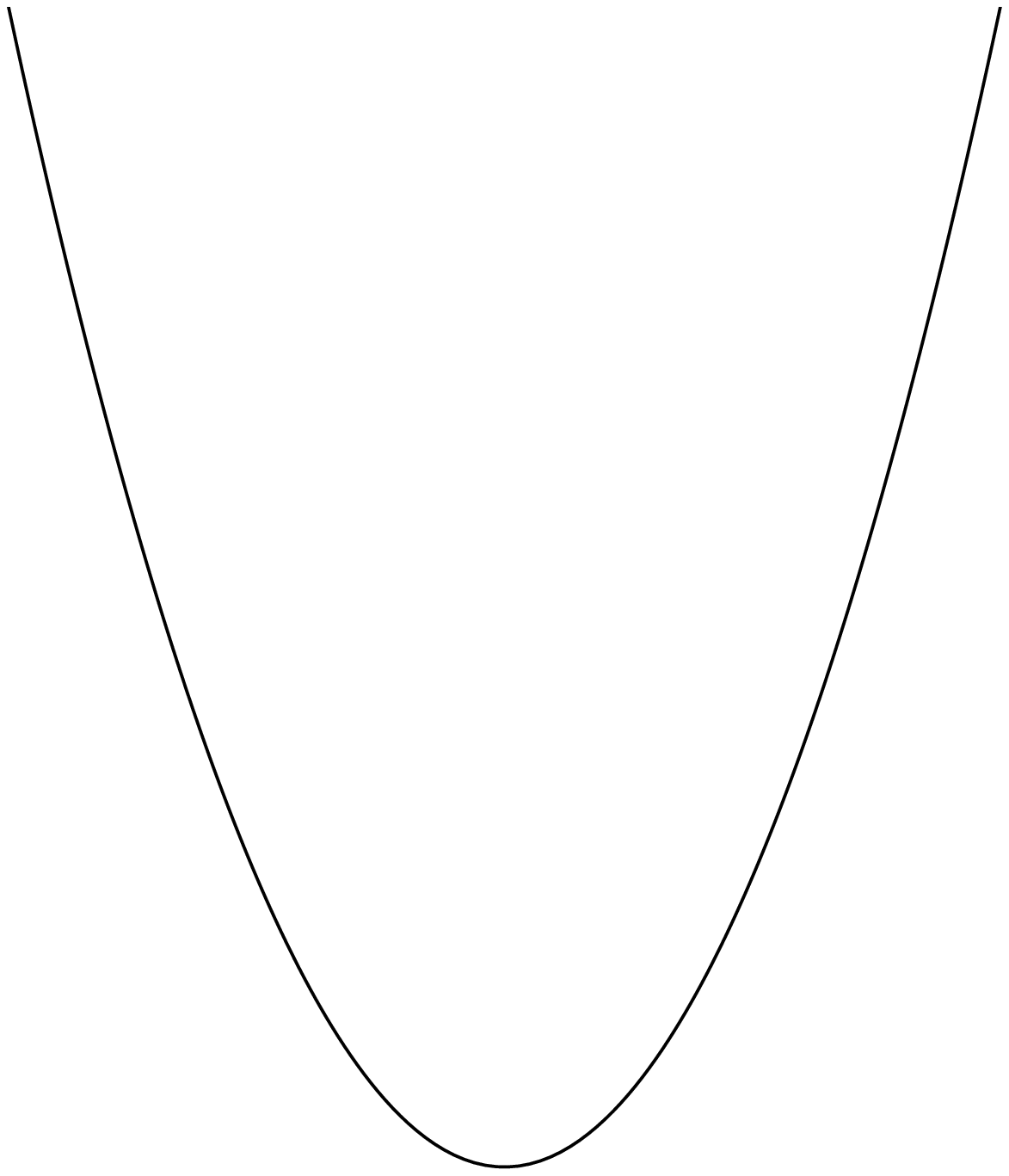}
   \caption{$\V(\frac{y}{4} - x^2)$}
\end{subfigure}
\begin{subfigure}[b]{0.45\linewidth}
\includegraphics[width=\textwidth]{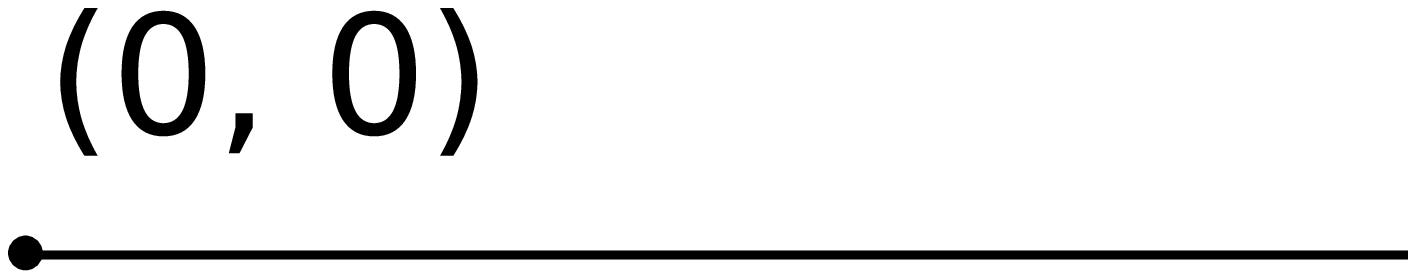}
  \caption{The positive half-line}
\end{subfigure}
\begin{subfigure}[b]{0.45\linewidth}
\includegraphics[width=\textwidth]{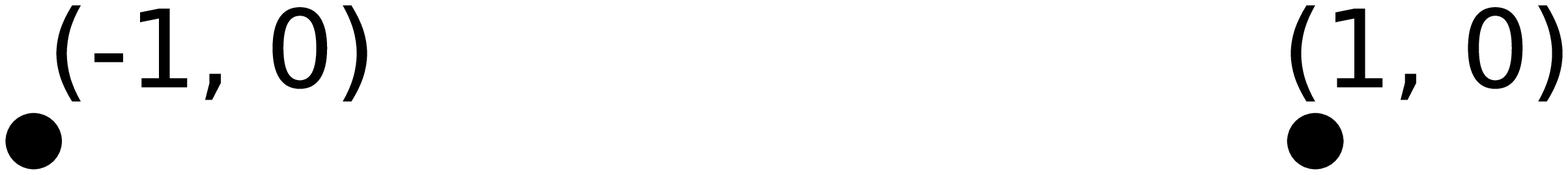}
 \caption{$\V(y, x^2 - 1)$}
\end{subfigure}
\begin{subfigure}[b]{0.45\linewidth}
\centering
\includegraphics[width=\textwidth]{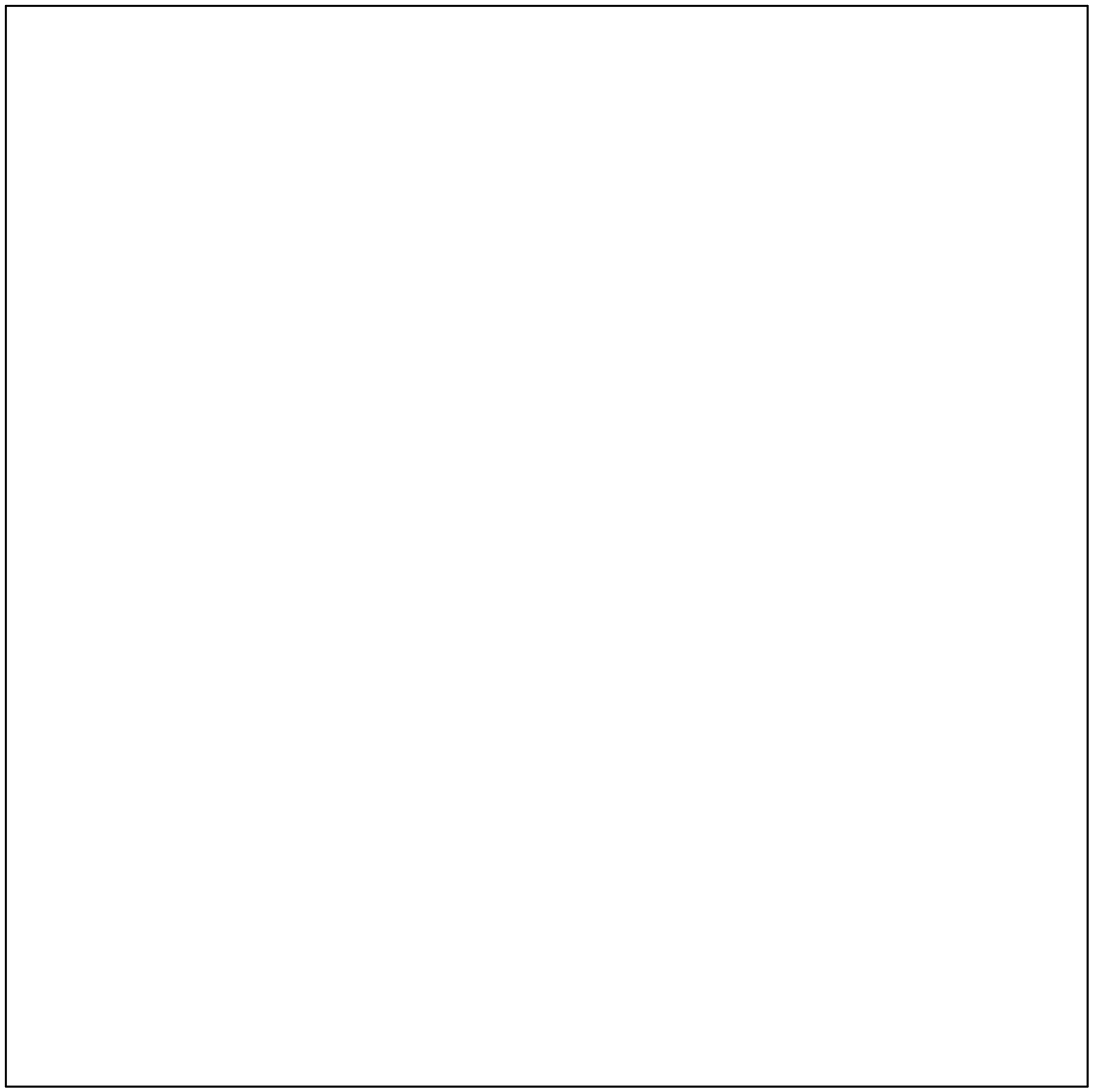}
 \caption{A square}
\end{subfigure}
\begin{subfigure}[b]{0.45\linewidth}
  \includegraphics[width=\textwidth]{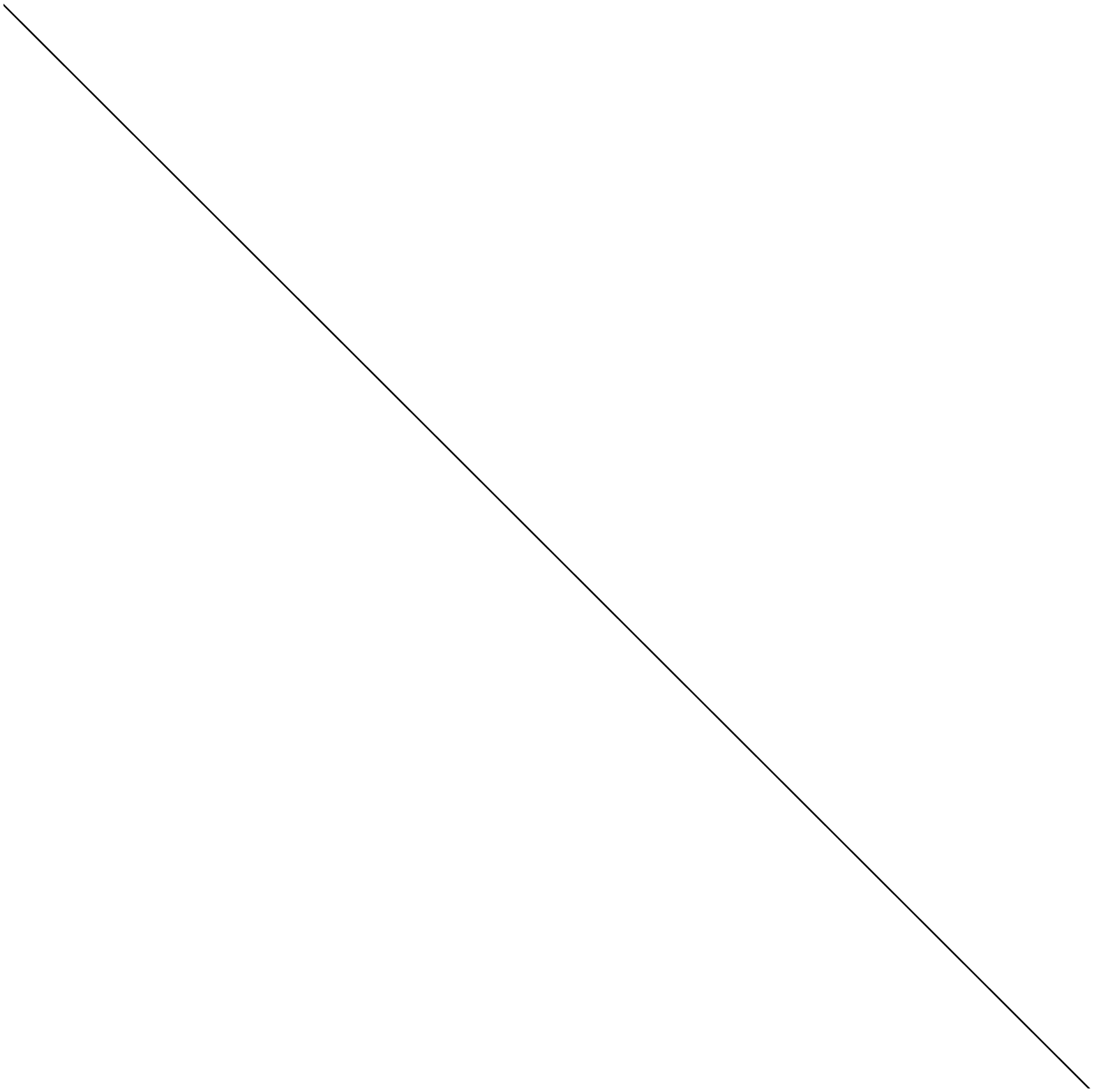}
  \caption{$\V(x + y)$}
\end{subfigure}
\begin{subfigure}[b]{0.45\linewidth}
\includegraphics[width=\textwidth]{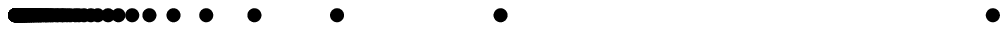}
\caption{The sequence $\left(\frac{1}{n}\right)_{n=1}^\infty$.}
\end{subfigure}
   \caption{Three varieties on the left and three non-varieties on the
     right.}
   \label{fig:non-varieties}
\end{figure}

The polynomials $f_1, \dots, f_r$ have the property that they vanish
on the variety $\V(f_1, \dots, f_r)$ by construction.  The collection
$\I(V)$ of all polynomials vanishing on a variety $V$ is called the
\lqindex{ideal of $V$}. One checks that $\I(V)$ indeed has the
structure of an ideal as defined in \Fref{sec:background-algebra}.
Any $\mathds{k}[x_1, \dots, x_n]$-linear combination of $f_1, \dots,
f_r$ vanishes on $\V(f_1, \dots, f_r)$ so we see that $\langle f_1,
\dots, f_r \rangle \subset \I(\V(f_1, \dots, f_r))$. That the
containment can be strict is illustrated by the ideal $\langle x^2
\rangle \subset \mathds{k}[x]$; the only point where $x^2$ is zero is
the origin, so $\V(x^2) = \{ 0 \}$.  The two monomials of
$\mathds{k}[x]$ not contained in $\langle x^2 \rangle$ are $x$ and
$1$. The constant monomial $1$ does not vanish anywhere, but
 $x$ also vanishes at the origin, so $\I(\{0 \}) = \langle x \rangle$.
There is another relation between the
previous two ideals: $\langle x \rangle$ is the radical of $\langle
x^2 \rangle$.   The \lqindex{radical} $\sqrt{I}$ of an ideal $I$ is the
ideal $\{ f \mid f^m \in I, m \in \N \}$ of all polynomials that occur
in $I$ to some non-negative power.  It is always true that $\sqrt{I}
\subset \I(\V(I))$, but when $\mathds{k}$ is not algebraically closed
equality is not guaranteed. If
$\mathds{k}$ is algebraically closed, it \emph{is} true that the radical of an
ideal $I$ contains all polynomials that vanish on $\V(I)$.
\begin{theorem}[Strong Nullstellensatz {\cite[Theorem~4.2.6]{IVA}}]
\label{thm:Nullstellensatz}
Let $\mathds{k}$ be an algebraically closed field. If $I$ is an ideal
in $\mathds{k}[x_1, \dots, x_n]$ then
\[
  \I(\V(I)) = \sqrt{I}.
\]
\end{theorem}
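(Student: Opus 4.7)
The plan is to prove the two containments $\sqrt{I} \subset \I(\V(I))$ and $\I(\V(I)) \subset \sqrt{I}$ separately. The first is elementary and does not actually require algebraic closure: if $f \in \sqrt{I}$ then $f^m \in I$ for some $m$, so $f^m$ vanishes on every point of $\V(I)$, and since $\mathds{k}$ is a field this forces $f$ itself to vanish on $\V(I)$.

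For the substantive direction $\I(\V(I)) \subset \sqrt{I}$ I would invoke the Weak Nullstellensatz, which states that over an algebraically closed field every proper ideal of $\mathds{k}[x_1, \dots, x_n]$ has a nonempty variety, and then apply the Rabinowitsch trick. Given $f \in \I(\V(I))$, introduce an extra indeterminate $y$ and form the ideal $J = I + \langle 1 - yf \rangle$ inside $\mathds{k}[x_1, \dots, x_n, y]$. Any point of $\V(J) \subset \mathds{k}^{n+1}$ would project to a point of $\V(I)$ at which $f = 0$, yet simultaneously satisfy $yf = 1$, which is impossible. Hence $\V(J) = \emptyset$ and by the Weak Nullstellensatz $J = \langle 1 \rangle$.

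Unfolding this equality, $1 = \sum_i h_i(x, y) g_i(x) + h(x, y)(1 - yf(x))$ for some generators $g_i$ of $I$ and polynomials $h_i, h$ in $\mathds{k}[x_1, \dots, x_n, y]$. Substituting $y = 1/f$ kills the auxiliary term and leaves a relation in the field of fractions of $\mathds{k}[x_1, \dots, x_n]$; multiplying through by a sufficiently large power $f^N$ to clear all denominators produces an identity $f^N = \sum_i \tilde h_i(x) g_i(x) \in I$, which is exactly $f \in \sqrt{I}$.

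The main obstacle in this plan is the Weak Nullstellensatz itself, whose standard proofs proceed either through Noether normalization together with the fact that finitely generated field extensions of an algebraically closed field are trivial, or through a Jacobson-ring argument showing that the residue field at any maximal ideal of $\mathds{k}[x_1, \dots, x_n]$ equals $\mathds{k}$. Once that input is granted the Rabinowitsch reduction is essentially syntactic; the only subtlety worth stating carefully is that the substitution $y = 1/f$ should be interpreted inside the localization $\mathds{k}[x_1, \dots, x_n]_f$, so that one can transparently read off the exponent $N$ needed to clear denominators.
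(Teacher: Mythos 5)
Your proof is correct and is the standard argument: the easy containment $\sqrt{I} \subset \I(\V(I))$ follows because $\mathds{k}$ has no nilpotents, and the hard containment $\I(\V(I)) \subset \sqrt{I}$ is reduced to the Weak Nullstellensatz by the Rabinowitsch trick, introducing the auxiliary variable $y$ and the ideal $I + \langle 1 - yf\rangle$, then clearing denominators after substituting $y = 1/f$. Note that the paper does not prove this theorem at all; it simply cites it as Theorem~4.2.6 of Cox, Little, and O'Shea, where the proof given is essentially the one you outline, so there is no divergence to report. Your remark that the substitution $y = 1/f$ is best read inside the localization $\mathds{k}[x_1,\dots,x_n]_f$ is a reasonable way to make the denominator-clearing step transparent, and you are right that the genuine content is the Weak Nullstellensatz, which you correctly leave as an external input with its own nontrivial proof (Noether normalization or Zariski's lemma).
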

As a result there is a one-to-one correspondence between radical
ideals and varieties, the maps $\V\colon \text{radical ideals} \to
\text{varieties}$ and $\I\colon \text{varieties} \to \text{radical
  ideals}$ are inclusion-reversing inverses to each other.

The Nullstellensatz is one reason to pass to $\C$ rather than working
over $\R$; when we start out with an ideal $I$ it may be hard to
determine the ideal $\I(\V(I))$ of polynomials vanishing on the
variety $\V(I)$ defined by $I$.  Knowing that all such polynomials lie
in the radical $\sqrt{I}$ can make proofs easier, as happens in the
proof of \Fref{lem:saturation} that $\V(I : J^\infty) =
\overline{\V(I) \setminus \V(J)}$.  Another benefit is that there are
algorithms available to compute the radical of an ideal.
\\

\begin{figure}
 \centering
  \includegraphics[width=0.7\linewidth]{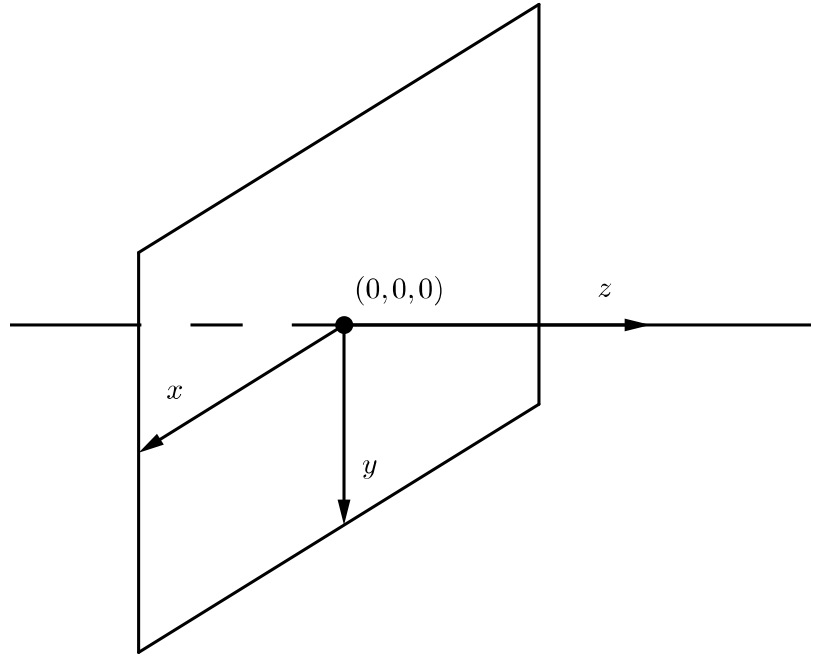}
  \caption{The variety $\V(xz, yz)$ consists of two irreducible
    components.}
  \label{fig:plane-and-axis}
\end{figure}
Some varieties are simpler than others.  Let $f$ and $g$ define two
distinct varieties $\V(f)$ and $\V(g)$. As the product $fg$ vanishes
there where at least one of the polynomials $f$ or $g$ vanish,
the variety $\V(fg)$ is the union of the two subvarieties $\V(f)$ and $\V(g)$.

Whenever a variety $V$ admits
a decomposition $V = W \cup Z$ into two proper subvarieties, $V$ is
said to be reducible.  Otherwise $V$ is \lqindex{irreducible}.  The
reducible variety $\V(xz, yz) \subset \mathds{k}^3$, depicted in
\Fref{fig:plane-and-axis}, is the union of two irreducible components:
the $z$-axis and the $xy$-planes.

As each point is itself a variety, any non-finite variety has an
infinite amount of subvarieties.  However, we can decompose
a variety into a
finite number of irreducible components.  The following proof is a
mixture of several results from Cox~\cite[Section~4.6]{IVA}. It can be
cast in the theory of primary decompositions, see
Eisenbud~\cite[Theorem~3.1a]{view}) for a more comprehensive
treatment.
\begin{lemma}
\label{lem:finite-components}
Any variety $V \subset \mathds{k}^n$ can be written as a finite union
$V = V_1 \cup \dots
\cup V_r$  of irreducible components such that
 $V_i \not\subset V_j$ for any pair $i$ and $j$.
\end{lemma}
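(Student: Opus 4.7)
The plan is to prove this in two stages: first establish the existence of \emph{some} finite decomposition into irreducibles, then prune it so that no component sits inside another.

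For the first stage I would argue by contradiction using the Descending Chain Condition on varieties. Let $\mathcal{S}$ denote the collection of varieties in $\mathds{k}^n$ that admit no finite decomposition into irreducibles, and suppose for contradiction that $\mathcal{S}$ is nonempty. Pick any $V \in \mathcal{S}$; then $V$ is itself reducible (an irreducible $V$ would trivially decompose as $V = V$), so we can write $V = W \cup Z$ with $W, Z \subsetneq V$ proper subvarieties. At least one of $W$ or $Z$ must again lie in $\mathcal{S}$, for otherwise both decompose finitely into irreducibles and their union gives a finite irreducible decomposition of $V$. Recursively extracting such a proper subvariety produces a strictly descending chain $V \supsetneq V^{(1)} \supsetneq V^{(2)} \supsetneq \dots$ of varieties in $\mathcal{S}$. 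Applying $\I(\cdot)$ yields a strictly ascending chain of ideals $\I(V) \subsetneq \I(V^{(1)}) \subsetneq \dots$ in $\mathds{k}[x_1,\dots,x_n]$ (strictness of the inclusions comes from the inclusion-reversing injectivity of $\I$ on varieties). This contradicts Hilbert's Basis Theorem (\Fref{lem:Noetherian}) together with the equivalence of the Noetherian property and the Ascending Chain Condition. Hence $\mathcal{S}$ is empty and every variety admits a finite decomposition $V = V_1 \cup \dots \cup V_s$ into irreducible subvarieties.

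For the second stage I would prune redundancy. Starting from any such finite decomposition $V = V_1 \cup \dots \cup V_s$, whenever there exist indices $i \neq j$ with $V_i \subset V_j$, simply drop $V_i$ from the list: the remaining union still equals $V$ and still consists of irreducibles. Since we start with finitely many components and each step strictly decreases the count, this pruning terminates after at most $s-1$ steps, yielding a decomposition $V = V_1 \cup \dots \cup V_r$ with $r \le s$ in which no component is contained in another.

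The main obstacle, and essentially the only delicate point, is the first stage: the proof by contradiction hinges on extracting a genuinely strictly descending chain of varieties, and for that one needs the fact that passing from a strict containment of varieties $V^{(k)} \supsetneq V^{(k+1)}$ to the corresponding ideals $\I(V^{(k)}) \subsetneq \I(V^{(k+1)})$ preserves strictness. I would justify this by noting that $\I$ is a well-defined inclusion-reversing map and that $\V(\I(W)) = W$ for any variety $W$, so $\I(V^{(k)}) = \I(V^{(k+1)})$ would force $V^{(k)} = V^{(k+1)}$. Everything else is bookkeeping around the ACC supplied by \Fref{lem:Noetherian}.
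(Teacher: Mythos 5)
Your proposal is correct and follows essentially the same route as the paper's proof: assume no finite irreducible decomposition exists, recursively extract a strictly descending chain of subvarieties, convert to an ascending chain of ideals, invoke the Noetherian property (ACC) for a contradiction, and then prune redundant components. The only cosmetic difference is that you derive the contradiction by arguing the ideal chain is strictly ascending (via injectivity of $\I$ on varieties), while the paper lets the ideal chain stabilize and then applies $\V(\I(Z_i)) = Z_i$ to conclude the variety chain must also stabilize; these are interchangeable uses of the same correspondence.
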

\begin{proof}
Suppose $V$ can not be written as a finite union of irreducible
varieties. In particular $V$ is reducible, so there exist distinct
proper subvarieties $Z_1$ and $W_1$ such that $V = Z_1 \cup W_1$.
We can assume that $Z_1$ can not be written as a finite union of
irreducible varieties either, so then $Z_1 = Z_2 \cup W_2$ is
reducible.
Repeating this process we get a
chain $V \supsetneq Z_1 \supsetneq Z_2 \supsetneq \dots$ of strictly decreasing
varieties.   By passing to the ideals of these varieties we get an
increasing chain of ideals $\I(V) \subset \I(Z_1) \subset \I(Z_2)
\subset \dots$
, as all polynomials that vanish on $Z_i$
certainly vanish on $Z_{i+1}$. As $\mathds{k}[x_1, \dots, x_n]$ is Noetherian, these
ideals stabilize, and since $\V(\I(Z_i)) = Z_i$ we observe that the
chain $V \supset Z_1 \supset Z_2 \supset \dots$ stabilizes as well.
This contradicts the assumption that $V$ can not be written as a
finite union of irreducible varieties.

   We conlude that $V$ is a
finite union $V = V_1 \cup
\dots \cup V_r$ of irreducible subvarieties. If $V_i \subset V_j$ we can
drop $V_i$ from the union, proving the statement of the lemma.
\end{proof}

The difference of two varieties in general is no longer a variety.
Consider the case of a line $L$ in the plane and a point $p$ contained in $L$.
Suppose a polynomial $f$ vanishes on $L \setminus \{p\}$, the
restriction of $f$ to $L$ defines a univariate polynomial with an
infinite amount of zeros.  By the fundamental theorem of algebra a
nonzero polynomial of degree $m$ has at most $m$ roots, so the
restriction of $f$ to $L$ must be the zero polynomial.   But then it
also vanishes on $p$, so the smallest variety containing $L \setminus
\{p\}$ is $L$.

There is a relation between the smallest variety that contains the
difference of two varieties defined by ideals $I$ and $J$, and the
variety of the colon ideal $I : J$.
Over any field it is true that $\V(I : J) \supset \overline{\V(I) \setminus
  V(J)}$. Equality holds if in
addition the field is algebraically closed and $I$ is radical
\cite[Theorem~4.4.7]{IVA}.  If $\mathds{k}$ is algebraically closed
but we can not guarantee that $I$ is
radical, the following lemma shows we can instead pass to the saturation $I : J^\infty$.
\begin{lemma}
\label{lem:saturation}
Let $\mathds{k}$ be an algebraically closed field and let $I, J
\subset \mathds{k}[x_1, \dots, x_n]$ be ideals.
Then
\[
   \V(I : J^\infty) = \overline{\V(I) \setminus \V(J) }.
\]
\end{lemma}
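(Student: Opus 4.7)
The plan is to prove both inclusions separately, with the forward direction following almost immediately from definitions and the reverse direction requiring the Strong Nullstellensatz together with a pigeonhole argument.

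For the inclusion $\V(I : J^\infty) \supset \overline{\V(I) \setminus \V(J)}$, I would pick an arbitrary $p \in \V(I) \setminus \V(J)$ and verify that every $h \in I : J^\infty$ vanishes at $p$. If $h \in I : J^m$, then in particular $hg^m \in I$ for any $g \in J$. Choosing $g \in J$ with $g(p) \neq 0$ (possible since $p \notin \V(J)$) gives $(hg^m)(p) = h(p) g(p)^m = 0$, forcing $h(p) = 0$. Since $\V(I : J^\infty)$ is a variety, hence closed in the Zariski topology, it must then contain the closure $\overline{\V(I) \setminus \V(J)}$.

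For the reverse inclusion, let $W = \overline{\V(I) \setminus \V(J)}$ and take any $f \in \I(W)$. For every $g \in J$, the product $fg$ vanishes on all of $\V(I)$: at points of $\V(I) \cap \V(J)$ the factor $g$ vanishes, and at points of $\V(I) \setminus \V(J) \subset W$ the factor $f$ vanishes. The Strong Nullstellensatz (\Fref{thm:Nullstellensatz}) therefore yields $fg \in \sqrt{I}$ for every $g \in J$. Fixing generators $g_1, \dots, g_s$ of $J$ (which exist by \Fref{lem:Noetherian}), we find exponents with $(fg_i)^{m_i} \in I$, and taking $M = \max_i m_i$ gives $f^M g_i^M \in I$ for each $i$.

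The main obstacle is promoting these individual containments into a uniform containment of the form $f^M J^N \subset I$, and this is exactly where the pigeonhole argument enters. Choose $N$ large enough that every monomial $g_1^{a_1} \cdots g_s^{a_s}$ with $a_1 + \dots + a_s = N$ has some $a_i \geq M$; for example $N = s(M-1) + 1$ works. Then each such monomial contains the factor $g_i^M$, so multiplying by $f^M$ gives an element of $I$ via $f^M g_i^M \in I$. Since $J^N$ is spanned by these monomials, $f^M J^N \subset I$, which means $f^M \in I : J^\infty$ and hence $f \in \sqrt{I : J^\infty}$. Applying the Nullstellensatz once more, $f$ vanishes on $\V(I : J^\infty)$. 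Since this holds for every $f \in \I(W)$, we conclude $\V(I : J^\infty) \subset \V(\I(W)) = W$, where the last equality uses that $W$ is itself a variety.
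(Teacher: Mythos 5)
Your proof is correct and follows essentially the same route as the paper's: both directions rely on the same observations, and in particular the reverse inclusion uses the Strong Nullstellensatz, finite generation of $J$, and a pigeonhole argument on exponents to produce a single power of $f$ annihilating a power of $J$. The only cosmetic difference is that you argue directly with the generating monomials of $J^N$ rather than via the multinomial expansion of $(fj)^{ks}$ for an arbitrary $j \in J$, which makes the final step $f^M J^N \subset I$ a little more transparent.
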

\begin{proof}
Let $f \in I : J^\infty$, that is, for every $j \in J$ the product
$fj^k$ is an element of $I$, for some $k \in \N$.   Since for every $x
\in \V(I) \setminus \V(J)$ there is a $j \in J$ that is nonzero at
$x$,  the condition $fj^k \in I$ implies that $f(x) = 0$, as $\V(I)$
is per definition the set of points where \emph{all} elements of $I$ vanish.
Thus every element of $I : J^\infty$ vanishes on $\V(I) \setminus
\V(J)$.  Since $\overline{\V(I) \setminus \V(J)}$ is the smallest
variety containing $\V(I) \setminus \V(J)$, we have shown the inclusion
$\V(I : J^\infty) \supset \overline{\V(I) \setminus \V(J) }$.

For the reverse inclusion, let $f \in \I(\V(I) \setminus \V(J))$. For
any $j \in J$ the product $fj$ vanishes on the entirety of $\V(I)$ as
$j$ vanishes on $\V(J)$ and $f$ vanishes on the complement of $\V(J)$ in
$\V(I)$.    Since
we assumed that $\mathds{k}$ is algebraically closed, it follows that
$fj \in \sqrt{I}$ and thus
$(fj)^k \in I$ for some integer $k$.   If $f^kj^k \in I$ for all $j$ we can
conclude that $f^k \in I : J^\infty$.  We will use the fact that $J$
is finitely generated to argue that there is indeed an integer $k$
such that $f^kj^k \in I$ for all $j \in J$.
\\

Let $j_1, \dots, j_s$ be a finite set of generators for $J$.
 By the reasoning in the previous paragraph,
$(fj_i)^{k_i} \in I$ for some $k_i \in \N$.  Let $k$ be the
minimal integer such that $(fj_i)^k \in I$ for all $i \in
\{1, \dots, s\}$.     Let $j = \sum_{i=1}^s h_i j_i$ be an arbitrary
element of $J$, then
\[
     (fj)^{ks} = \sum_{|\alpha| = ks} g_{\alpha} f^{ks}
     j_1^{\alpha_1} \dots j_s^{\alpha_s},
\]
where the $g_\alpha$ are products of the $h_i$ and multinomial coefficients.
For each term $g_{\alpha} f^{ks}j_1^{\alpha_1} \dots j_s^{\alpha_s}$
at least one of the $\alpha_i \geq k$, otherwise $|\alpha| < ks$.
As $f^{ks} j_1^{\alpha_1} \dots j_s^{\alpha_s}$ is a multiple of
$f^kj_i^{\alpha_i}$, which is an element of $I$ by construction, the
product $(fj)^{ks}$ is a sum of elements of $I$ and thus an element of
$I$ itself.

Thus $f^{ks} \in I : J^\infty$ as $j$ was arbitrary.  We have shown that
every polynomial $f$ that vanishes on $\V(I) \setminus \V(J)$ is present to some
power in $I : J^\infty$, thus the radical $\sqrt{I : J^\infty}$
contains $\I(\V(I) \setminus \V(J))$ and we get the reverse inclusion
$\V(I : J^\infty) \subset \overline{\V(I) \setminus \V(J)}$.
\end{proof}

A formal definition of dimension of a variety requires some work,
see Chapter 9 ``The Dimension of a Variety'' of Cox~\cite{IVA}.
For this thesis our intuition that points, curves and surfaces are respectively of
dimensions zero, one and two will suffice to reason about
dimensionality.  Experimental computations of dimensions will rely on the {\bf dim} command
provided by Macaulay2.

\section{Problem formulation}
\label{sec:formulation}

Toeplitz's conjecture asks whether every Jordan curve inscribes
a square.  This existence question has eluded a complete answer for
over a hundred years; the class of continuous curves contains rather
pathological specimens.

In the algebraic square peg problem we consider algebraic plane curves
rather than Jordan curves; what can we say about the set of squares
inscribed on an algebraic plane curve?  A straight line does not
inscribe any squares, whereas a circle inscribes an uncountable amount
of squares.  In this thesis our aim is to count the number of
inscribed squares that do not come in infinite families, a circle inscribes
zero ``finite'' squares.

With a suitable concept of a
square, the set of inscribed squares has the structure of a variety.
We will see in \Fref{sec:upper-bound} that we can use \Bernshtein's
Theorem to bound the size of the finite part of this variety.  Before
we state how many squares one can maximally inscribe, let us consider
the variety of inscribed squares in some more detail.  The first issue
we should address is settling on a notion of a square that is
compatible with our algebraic worldview. \Fref{fig:square-param} is
the picture to keep in mind.
\\

Let $f \in \mathbb{R}[x, y]$ define an algebraic plane curve
$\V_\mathbb{R}(f) = \{ (x, y) \in \mathbb{R}^2 \mid f(x, y) = 0 \}$.
If we parametrize a square by a center $(a, b)$ and an offset $(c, d)$
to a distinguished corner, then the variety $\V_\mathbb{R}(f(a + c, b
+ d), f(a - c, b - d), f(a + d, b - c), f(a - d, b + d)) \subset
\mathbb{R}^4$ captures all the squares inscribed on $\V(f)$.  We
consider this variety as the real part of a complex variety defined by the same algebraic relations.
These relations motivate our definition of a complex square.
\begin{definition}[Parametrization of a complex square]
\label{def:complex-square}
A $4$-tuple $(a, b, c, d) \in \mathbb{C}^4$ parametrizes a
\lqindex{complex square} with center $(a, b)$ and corners
$(a + c, b + d), (a + d, b - c), (a - c, b - d), (a - d, b
+ c)$, depicted in \Fref{fig:square-param}.  As there are four choices of $(c, d)$ corresponding to
distinguishing a particular corner, there is a four-to-one
correspondence between $4$-tuples and complex squares with distinct corners.

\end{definition}
\begin{figure}[H]
   \begin{center}
\captionsetup{width=0.7\textwidth}
     \includegraphics[width=0.5\linewidth]{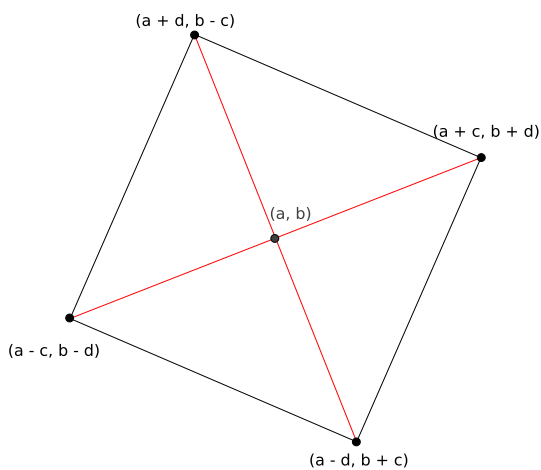}
\caption{Center $(a, b)$ and
  offset $(c, d)$ to a distinguished corner $(a + c, b + d)$
  parametrize a complex square.}
\label{fig:square-param}
\end{center}
\end{figure}
\vspace{-2em}
When constrained to $\mathbb{R}^2 \subset \mathbb{C}^2$ this
definition reduces
to the familiar definition of a square: the diagonals are two
perpendicular line segments of equal length intersecting each other in
their midpoints.
The four corners of a square are distinct as long as $(c, d) \neq (0,
0)$. If $(c, d) = (0, 0)$ the resulting square is \lqindex{degenerate},
it has collapsed to a single point. We combine the definition of a
complex square with a polynomial definining a plane curve to investigate
the set of squares inscribed on that curve.

Let $f \in \mathbb{C}[x, y]$ define an algebraic plane curve $\V(f)
\subset \mathbb{C}^2$ . The \lqindex{corner ideal} $I_f$ of $f$
is the ideal generated by the four polynomials that result from
evaluating $f$ at the four corners of a complex square,
\[
  I_f = \cornerideal \subset \mathbb{C}[a, b, c, d].
\]
The variety $\V(I_f)$ encodes all the squares inscribed on $\V(f)$,
both degenerate and non-degenerate squares.  All of the degenerate
squares are contained in the part of $\V(I_f)$ where the $c$ and $d$
coordinates are both zero.  There is one degenerate square $(a, b, 0, 0)
\in \V(I_f)$ for every point $(a, b) \in \V(f)$.
Thus we identify the degenerate squares $\V(I_f) \cap \{c = d = 0 \}$ with the
original plane curve $\V(f)$.  In the complement $\V(I_f) \setminus \V(f)$ all squares are
non-degenerate.
\\

There might be positive-dimensional components of $\V(I_f)$
other than the one containing $\V(f)$;  consider a plane curve
consisting of two parallel lines. The non-degenerate squares inscribed
on such a curve have two vertices on each component of the curve and are centered on a third line parallel to these two
components.  The sidelengths of the
squares equal the distance between the two parallel lines.

In this thesis we are mainly interested in counting the number of
inscribed squares that lie in the zero-dimensional parts of $\V(I_f)$.
Such squares are \emph{isolated} as they lie in a
neighbourhood that contains no other squares inscribed on $\V(f)$.
Our main result is the
following theorem, proven in the next section.

\begin{reptheorem}{thm:mine}
Let $f \in \mathbb{C}[x, y]$ of degree $m$ define an algebraic plane curve $\V(f)
\subset \mathbb{C}^2$. The number of isolated squares inscribed on
$\V(f)$ is at most $(m^4 - 5m^2 - 4m)/4$.
\end{reptheorem}

\section{An upper bound on the number of isolated squares}
\label{sec:upper-bound}

The variety $\V(I_f)$ of squares inscribed on an algebraic plane curve
$\V(f)$ consists of a finite number of irreducible components and hence
contains a finite number of isolated points by \Fref{lem:finite-components}.
   How do we count
or estimate the number of these isolated points? We will state and use a theorem by
\Bernshtein{} to provide an upper bound on the isolated squares inscribed on an algebraic plane curve.
\\

A classical result from algebraic geometry, called \Bezout{}'s Theorem,
supplies a bound on the cardinality of a variety in terms of the
degrees of the defining polynomials:   If $\V(f_1, \dots, f_s)$ is
finite, then its cardinality is at most the product $\prod \deg f_i$ of
the degrees of the defining polynomials.
The four generators of
$I_f = \cornerideal$ all have the same degree as $f$, say $m$. Ignoring for
a moment the technicality that $\V(I_f)$ is not finite, from \Bezout{} we
would expect that $\V(I_f)$ contains at most $m^4$ points.

\Bezout{}'s Theorem is best
stated in the context of projective space, and considering intersection
multiplicities, see Cox~\cite[Section~8.7]{IVA}.  Apart from
being a very useful theoretical tool, \Bezout's bound acts as a
baseline against which we can judge other root counting methods.

A more refined estimate than \Bezout{}'s bound makes use of more structure of the
polynomials defining a variety than just their degrees.
\Bernshtein{} in his paper ``The number of roots of a system
of equations''~\cite{Bernshtein}, and Kushnirenko and
Khovanskii in related papers, developed theorems to count the number of isolated roots of a polynomial system by
exploiting the sparsity structure of
the monomials appearing in the defining polynomials.  In deference to all
three mathematicians, the resulting bound is often called the
BKK-bound.
\begin{theorem}[\Bernshtein
  {\cite{Bernshtein,UAG,affine-bernshtein,Rojas97toricintersection}}]
\label{thm:Bernshtein}\index{Bernshtein's Theorem}
  Let $f_1, \dots, f_n \in \mathbb{C}[x_1, \dots, x_n]$.  Then the
  number of isolated zeros in $\mathbf{V}(f_1, \dots, f_n) \cap
  (\mathbb{C} \setminus \{0\})^n$ is bounded from above by the mixed volume
  $MV(\newton[f_1], \dots, \newton[f_n])$ of the Newton polytopes of
  the generators $f_i$.
\end{theorem}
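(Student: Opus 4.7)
The plan is to prove the bound in two stages: first reduce the inequality to an exact count by upper semicontinuity, then carry out that count through a polyhedral homotopy on the Newton polytopes. The strategy follows \Bernshtein's original paper, with the combinatorial half re-packaged in the spirit of Huber and Sturmfels, and the algebro-geometric half matching Rojas' toric-intersection picture.

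For the reduction, fix the supports $N_i = \newton[f_i]$ and consider the affine space of all polynomial systems whose monomials are indexed by lattice points of $N_1, \dots, N_n$. The number of isolated zeros in the torus $(\C \setminus \{0\})^n$ is upper semicontinuous in these coefficients, because such zeros are cut out by a proper flat family and a limit of roots either remains an isolated root, merges with another, or escapes the torus. Hence it suffices to exhibit a \emph{single} system with supports $N_i$ whose isolated torus root count equals $MV(N_1, \dots, N_n)$: every other system with the same supports, our $(f_1, \dots, f_n)$ included, then satisfies the claimed bound.

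For the exact count I would introduce a generic lifting $\omega_i \colon N_i \cap \N^n \to \R$ and attach the weight $t^{\omega_i(\alpha)}$ to each monomial $x^\alpha$ of $f_i$, producing a one-parameter family $f_{i,t}$. The lower envelope of the lifted Minkowski sum projects to a coherent \emph{mixed subdivision} of $N_1 + \dots + N_n$. As $t \to 0$, each torus root of $f_{i,t}$ follows a Newton--Puiseux path whose leading exponent is an inner normal to a cell of this subdivision, and an analysis of the initial forms shows that only the \emph{mixed cells} (those expressible as a Minkowski sum of one positive-dimensional contribution from each $N_i$) can carry such paths. A cell-by-cell appeal to Kushnirenko's unmixed case shows that every mixed cell $C$ contributes $n!\,\Vol_n(C)$ roots to the $t = 1$ system, and summing over all mixed cells assembles exactly the coefficient of $\lambda_1 \cdots \lambda_n$ in the polynomial of \Fref{def:mixed-volume}.

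The main obstacle is the toric boundary: one must argue both that the generic start system has no root on a coordinate hyperplane and that no homotopy path drifts to such a hyperplane while carrying a solution count with it. The cleanest handling, which I would invoke rather than reproduce, compactifies $(\C \setminus \{0\})^n$ to the toric variety whose fan refines the normal fans of all the $N_i$; each $f_i$ becomes a section of a line bundle on that variety, and the mixed volume reappears as a Chern-class intersection number. It is exactly this compactification that forces the qualifier $(\C \setminus \{0\})^n$ in the statement, since isolated zeros with a vanishing coordinate correspond to intersections with toric divisors that the mixed-volume tally simply does not see, and it is the reason the same theorem applied in \Fref{sec:upper-bound} will need a further saturation step to recover the non-degenerate squares on $\V(f)$.
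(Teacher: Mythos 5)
The paper does not prove \Bernshtein's Theorem; it is quoted verbatim from the cited references (Bernshtein, Cox--Little--O'Shea, Huber--Sturmfels, Rojas) and used as a black box. There is therefore no ``paper's own proof'' to compare against, so what follows assesses your sketch on its own terms against the standard literature arguments.

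Your outline of the polyhedral-homotopy half (generic lifting, coherent mixed subdivision, mixed cells feeding into Kushnirenko's unmixed case, volumes summing to the coefficient of $\lambda_1 \cdots \lambda_n$) and the toric-compactification picture at the end are both recognizable as the Huber--Sturmfels and Rojas routes, respectively, and they are sound as far as a sketch can be.

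The gap is in the reduction step, and it is a real one. You claim the number of isolated torus zeros is upper semicontinuous in the coefficients, and that exhibiting a \emph{single} system with supports $N_i$ achieving the count $MV(N_1, \dots, N_n)$ forces the bound for every other system with the same supports. Neither half of this holds as stated. First, the family of torus zeros over coefficient space is \emph{not} proper --- the torus $(\C \setminus \{0\})^n$ is open --- so there is no flat proper family to invoke; this is precisely why roots can escape, and why the theorem must exclude the coordinate hyperplanes in its statement. Second, upper semicontinuity ($\{\phi \geq k\}$ closed) together with one example where $\phi = MV$ does not give $\phi \leq MV$ everywhere: the locus $\{\phi > MV\}$ would be closed and avoid a point, but on an irreducible parameter space a proper closed set can still be nonempty. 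The standard argument runs in the opposite direction and uses a different mechanism: given an arbitrary system $f$ with $k$ isolated torus roots, one shows each such root persists under small generic perturbation (spawning at least one nearby root, by a conservation-of-number or Rouch\'e-type argument carried out on the toric compactification so that escapes to the boundary are controlled), whence a nearby generic system has $\geq k$ torus roots; since the generic count is $MV$, one gets $k \leq MV$. Your own phrase ``merges with another, or escapes the torus'' actually describes this generic-to-special limit, which is a \emph{lower} semicontinuity phenomenon for the naive root count, not an upper one. Fixing the direction of the deformation argument, and making the persistence-of-isolated-roots step precise by working on the toric variety rather than on the open torus, would close the gap.
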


A priori \Bernshtein's Theorem has two drawbacks: it provides no
information about positive-dimensional components of $\V(I_f)$, and it
may miss isolated solutions that lie in a coordinate hyperplane, a linear subspace where one or more coordinates are zero. We
relegate the study of the positive-dimensional components to future
work.

We will argue that the interference of the coordinate hyperplanes turns out to not
be a restriction for counting the zero-dimensional part of $\V(I_f)$;
let $f$ be a plane curve and suppose one of the isolated points $p$ of
$\V(I_f)$ lies in a coordinate hyperplane.  Two phenomena can cause
$p$ to lie in a coordinate hyperplane:   the square inscribed by
$\V(f)$ corresponding to $p$ either has
\begin{enumerate}
 \item a center located on the
union of the $x$- and $y$-axes $\V(xy)$, or
\item corners lying on
the translate $\V((x - a)(y - b))$ of the coordinate-axes to its
center.
\end{enumerate}
Note that both phenomena can occur at the same time,
\Fref{fig:square-in-hyperplane} depicts the square $(0, 0, 0, 1)$
inscribed by $\V(xy)$.
\begin{figure}[H]
\centering
 \includegraphics[width=0.4\linewidth]{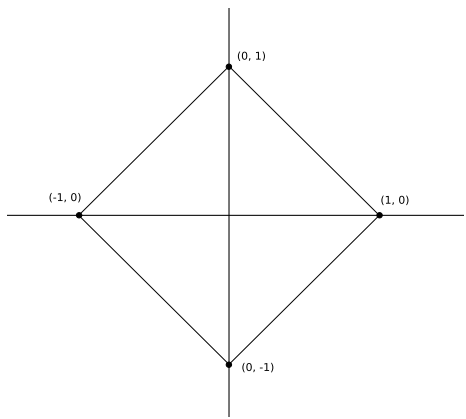}
  \caption{The square $(0, 0, 0, 1)$ lies in three coordinate
    hyperplanes.}
  \label{fig:square-in-hyperplane}
\end{figure}
Both these situations are an artifact of choosing coordinates for the
geometric object that is the curve.   By translating the curve we can
ensure the center of the square corresponding to $p$ no longer lies on
$\V(xy)$.  A rotation suffices to ensure the corners and the center do
not lie on the same translate of $\V(xy)$.

As $\V(I_f)$ has a finite number of irreducible components, there exists
a curve $f'$ obtainable from $f$ by translations and rotations so that
none of the zero-dimensional components of $\V(I_{f'})$ lie in a
coordinate hyperplane.    For the purpose of counting the number of
isolated squares inscribed on a curve we can safely assume
\Bernshtein's Theorem acounts for all of them.
\\

We want to bound the number of isolated squares in $\V(I_f)$ using
\Bernshtein's Theorem; What are the concrete objects appearing in the
expression for the mixed volume
$MV(\newton[f_1], \newton[f_2], \newton[f_3], \newton[f_4])$ for the
algebraic square peg problem?  It is straightforward to calculate the
mixed volume for the polynomials of the form $f(a + c, b + d)$ that
generate $I_f = \cornerideal$, but we show in \Fref{sec:us} that these
generators do not provide a useful BKK bound in general.

We pursue a five step program
to obtain the bound $(m^4 - 5m^2 + 4m)/4$ on the number of isolated
squares inscribed on an algebraic plane curve of degree $m$.
The first step is a better choice of generators $g_i$ of $I_f$ in
\Fref{sec:ideal-rewrite}. In \Fref{sec:monomial-presence} we will see
that this choice will allow for more control on the monomials present
in the generators.  That control translates into smaller Newton
polytopes in the third step discussed in
\Fref{sec:newton-polytope-shapes}.  The Minkowski sum of these smaller
Newton polytopes is described in \Fref{sec:minkowski-sum-shape}. In
the fifth and final step of our program we calculate the volume of the
Minkowski sum $\sum \lambda_i \newton[g_i]$ and extract the mixed volume of the $\newton[g_i]$.

The fact that an algebraic plane curve of degree $m$ inscribes at most
$(m^4 - 5m^2 + 4m)/4$ isolated squares is then an immediate
consequence of invoking \Bernshtein's Theorem, \Fref{thm:Bernshtein},
with the data $MV(\newton[g_1], \newton[g_2], \newton[g_3],
\newton[g_4])$ as calculated by the five step program.

\subsection{The effect of naive generators}
\label{sec:us}

Let $f = \sum c_{i,j} x^i y^j$ of degree $m$ define a plane curve.  We
saw that an application of \Bezout's Theorem to $I_f = \cornerideal$
only tells us that the finite part of $\V(I_f)$ is at most of size
$m^4$.  An application of \Bernshtein's Theorem will bound the number
of isolated squares inscribed on $\V(f)$, up to the squares that lie
in a coordinate hyperplane.  Can we do better than \Bezout's bound by
applying \Bernshtein's Theorem?  Unfortunately, not immediately.

Suppose that the monomials $1$, $x^m$ and $y^m$ appear in $f$
with nonzero coefficients, that is, the Newton polytope of $f$ is as
large as it can be for a curve of degree $m$.  To calculate the BKK bound
we first determine what the Newton polytopes of $f(a
+ c,  b + d)$, $f(a - c, b - d)$, $f(a + d, b - c)$, and $f(a - d, b +
c)$ are by looking at the monomials occuring in them.
\\

Substituting the corner $(a - c, b - d)$ into $f$ and expanding
$f(a -c, b - d)$, the monomial $x^m$ gets mapped to
$\sum_{j=0}^m {m \choose j} a^j (-1)^{m - j} c^{m - j}$, which
establishes that $a^m$ and $c^m$ appear with nonzero coefficients in
$f(a - c, b - d)$.  Similar reasoning applied to $y^m$ guarantees the
presence of the monomials $b^m$ and $d^m$.  As presence of the
monomial $1$ is unaffected by the substitution,
we see that the Newton polytope
$\newton[f(a - c, b - d)]$ contains at least $\conv \{a^m, b^m, c^m,
d^m, 1\} = m \conv\{0, e_1, e_2, e_3, e_4 \} = m\Delta$.  All
monomials of degree at most $m$ are contained in $m\Delta$, so we
conclude that $\newton[f(a - c, b - d)] = m\Delta$. The same argument
goes through for the other Newton polytopes.
Calculating the volume of the Minkowski sum $\sum_1^4 \lambda_i
m\Delta$ we see that
\[
  \Vol_4\left( \sum_1^4 \lambda_i m \Delta\right) = \left(\sum_1^4 \lambda_i\right)^n\!\!\!\!
  \Vol_4(m\Delta),
\]
so the mixed volume of the Newton polytopes is $4!$ times the volume of $m\Delta$.
That is, $4! m^4/4! = m^4$.

The resulting estimate is the same as the one supplied by \Bezout.
To
overcome this problem it is necessary that we pick a set of generators
for $I_f$ whose Newton polytopes are smaller than $m\Delta$. This is
the first step of our five step program, which we undertake in \Fref{sec:ideal-rewrite}.

\subsection{A better choice of generators}
\label{sec:ideal-rewrite}

The issue with the naive generators of $I_f = \cornerideal$ not providing
a BKK bound different from \Bezout's bound is that they contain a lot
of redundant information.  By reducing the redundancy in the
generators of $I_f$ we get a set of generators for which we will be
able to show in the next two sections that their Newton polytopes are
smaller than those of the original generators.

Define polynomials $g_1, g_2, g_3, g_4$ by
\begin{equation}
\label{eq:gi}
\begin{array}{l}
    g_1 = f(a + c, b + d) + f(a - c, b - d) - f(a - d, b + c) - f(a + d, b - c), \\
    g_2 = f(a + c, b + d) - f(a - c, b - d), \\
    g_3 = \phantom{f(a + c, b + d) + f(a - c, b - d) - ~ } f(a - d, b + c) - f(a + d, b - c), \\
    g_4 = \phantom{f(a + c, b + d) + f(a - c, b - d)  - f(a - d, b +
      c) - ~ } f(a + d, b - c).
\end{array}
\end{equation}
As the $g_i$ are linear combinations of the generators of $I_f$,
it is clear that they generate a subideal of $I_f$.  It is easily checked
that the original generators are contained in this subideal as well, so
$\langle g_1, g_2, g_3, g_4 \rangle = \cornerideal$. It may not be
immediately clear that we have gained anything by this different
choice of generators.  Over the course of
\Fref{sec:monomial-presence}, \Fref{sec:newton-polytope-shapes},
\Fref{sec:minkowski-sum-shape} and \Fref{sec:minkowski-sum-volumes} we
will show that $MV(\newton[g_1], \newton[g_2], \newton[g_3],
\newton[g_4]) = m^4 - 5m^2 + 4m$, a definite improvement over the
previous estimate $m^4$.

\subsection{Monomials present in $g_i$}
\label{sec:monomial-presence}

We have shown that the Newton polytopes $\newton[f(a + c, b + d)]$
of the generators of $I_f$ all equal the simplex $m\Delta$ by showing
that they contain the vertices $(0, 0, 0, 0)$ and $me_i$ for $i = 1,
2, 3, 4$.  Since $g_4 = f(a + d, b - c)$ we know that $\newton[g_4] =
m\Delta$.

The construction of the generators $g_1$, $g_2$,
and $g_3$ causes the constant term to disappear, but it is less clear
which monomials of the $g_i$ then will be vertices of the Newton
polytopes.   Which monomials are even present in the generators $g_i$?

Since our five step program has the aim of proving the bound $(m^4 -
5m^2 + 4m)/4$ for all curves of degree $m$, we can assume that the
coefficients of
$f = \sum_{i + j \leq m} C_{i,j}x^i y^j$ are not related in such a way
that they cause cancellation in the $g_i$.
After some
algebraic manipulation we will see that the presence of
$a^{\gamma_1}b^{\gamma_2}c^{\gamma_3}d^{\gamma_4}$ in $g_i$ then only
depends on $i$ and the parity of $\gamma_3 + \gamma_4$, barring the
exceptional case for $g_1$ whenever $\gamma_3 = \gamma_4$ is an even number. The presence
of the monomial $a^{\gamma_1}b^{\gamma_2}c^{\gamma_3}d^{\gamma_4}$ in $g_i$ can be
read off from \Fref{eq:presence} and is summarized in \Fref{tab:monomial-presence}.
An example of the monomials present in a fourth degree curve is displayed in \Fref{sec:fourth-example}.
\begin{table}[H]
\centering
\begin{tabular} {l|ccc}
                 &  $\gamma_3 + \gamma_4$ odd                           &   \multicolumn{2}{c}{$\gamma_3 + \gamma_4$ even}  \\
                 &  &   $\gamma_3 = \gamma_4$, even               &
                 otherwise  \\
\hline
$g_1$            &  absent                     &   absent                                    & present \\
$g_2$ and $g_3$  &  present                    &   absent                                    & absent \\
$g_4$            &  present                    &   present                                   & present \\
\end{tabular}
\caption{Presence of monomials $a^{\gamma_1} b^{\gamma_2} c^{\gamma_3}
d^{\gamma_4}$ in $g_i$ depends on the parity of $\gamma_3 + \gamma_4$.}
\label{tab:monomial-presence}
\end{table}

Substituting the expressions for the corners into the variables $x$ and
$y$ transforms monomials $x^iy^j$ of degree $k$ to monomials
$a^{\gamma_1}b^{\gamma_2}c^{\gamma_3}d^{\gamma_4}$ of the same degree
$k$, as seen from the binomial expansion
\[
   (a + c)^i (b + d)^j  = \sum_{p=0}^i {i \choose p} a^p c^{i - p}
   \sum_{q=0}^j {j \choose q} b^q d^{j - q}.
\]
To establish the presence or absence of monomials in $g_i$ of degree
$k$ it thus suffices to consider the $k$-th
homogeneous part of $f$.
We consider $(g_i)_k = h_{i1}f(a + c, b + d)_k + h_{i2}f(a - c, b -
d)_k + h_{i3}f(a - d, b + c)_k
+ h_{i4}f(a + d, b - c)_k$, where $h_{ij} \in \{-1, 0, 1\}$ according to the
choices in \Fref{eq:gi}.  Expanding the definitions
results in the equations
\[
\begin{array}{lr}
   f(a \pm c, b  \pm d)_k = \sum_{j=0}^k C_{k-j, j} (a \pm c)^{k-j} (b
   \pm d)^j,
   \\
   f(a \pm d, b \mp c)_k = \sum_{j=0}^k C_{k-j, j} (a \pm d)^{k-j} (b \mp c)^j.
\end{array}
\]
In addition to expanding the binomial terms $(a \pm d)^{k - j}$ and
$(b \mp c)^j$ in $f(a \pm d, b \mp c)_k$ as before, we keep track of the
coefficients $C_{k -j, j}$ and minus signs.  Gathering monomials we get
\begin{align*}
 f(a \pm d, b \mp c)_k &= \sum_{j=0}^k C_{k - j,j} \sum_{i=0}^{k - j} {k -j \choose i}a^i d^{k -j - i}
  (\pm)^{k - j -i} \sum_{l = 0}^j {j \choose l } b^l c^{j - l}
  (\mp)^{j -l } \\
 &=
\sum_{j=0}^k \sum_{i=0}^{k - j} \sum_{l = 0}^j C_{k - j,j}{k -j
  \choose i} {j \choose l } (\pm)^{k - j -i}
  (\mp)^{j -l } a^i b^l c^{j - l} d^{k -j - i}
 .
\end{align*}
Summing up $h_{i3}f(a - d,  b + c) + h_{i4}f(a + d, b -c )$ we can read off the coefficient of
the monomial with exponent $\gamma = (i, l, j - l, k - j -i)$ as
\[
C_{\gamma_1 + \gamma_4, \gamma_2 + \gamma_3} {\gamma_1
  + \gamma_4 \choose \gamma_1} {\gamma_2 + \gamma_3 \choose
  \gamma_2}(h_{i3}(-1)^{\gamma_4} + h_{i4}(-1)^{\gamma_3}).
\]
The derivation for
$h_{i1}f(a + c, b + d) + h_{i2}f(a - c, b - d)$ is analogous.
The constant term $C_{0,0}$ disappears from $g_i$ as
long as the sum $h_{i1} + h_{i2} + h_{i3} + h_{i4}$ vanishes.  With our choice
of generators this is the case.
For $k > 0$ the degree $k$ monomial $a^{\gamma_1}b^{\gamma_2}c^{\gamma_3}d^{\gamma_4}$ occurs
in $g_i$ in the term
\begin{equation}
\label{eq:presence}
\begin{aligned}
 &\left[ {\gamma_1 + \gamma_3 \choose \gamma_1}{\gamma_2 + \gamma_4 \choose \gamma_2}C_{\gamma_1 + \gamma_3, \gamma_2 + \gamma_4} \left( h_{i1} + h_{i2}(-1)^{\gamma_3 + \gamma_4} \right)\right.
+ \\
 &~ ~ \left.{\gamma_1 + \gamma_4 \choose \gamma_1}{\gamma_2 + \gamma_3 \choose \gamma_2}C_{\gamma_1 + \gamma_4, \gamma_2 + \gamma_3} \left( h_{i3}(-1)^{\gamma_4} + h_{i4}(-1)^{\gamma_3} \right) \right] a^{\gamma_1}b^{\gamma_2}c^{\gamma_3}d^{\gamma_4}.
\end{aligned}
\end{equation}
Here we see that for particular values of the coefficients $C_\alpha$
some extra cancellation may occur that does not happen in the general
case.  However, for a generic choice of coefficients, if $\gamma_3
\neq \gamma_4$ the two summands between brackets in \Fref{eq:presence}
are independent.  Both $g_2$ and $g_3$ have two of the $h_{ij}$ set to
zero, so then the bracketed term is zero if, respectively,
\[
  1 + (-1)(-1)^{\gamma_3 + \gamma_4} = 0, \quad \text{or} \quad
  (-1)^{\gamma_4} + (-1)(-1)^{\gamma_3} = 0.
\]
Multiplying the second equation with $(-1)^{\gamma_3}$ we obtain the
equation $(-1)^{\gamma_3 + \gamma_4} - 1 = 0$.   Thus for both $g_2$
and $g_3$ if $\gamma_3 + \gamma_4$ is even the monomial
$a^{\gamma_1}b^{\gamma_2}c^{\gamma_3}d^{\gamma_4}$ is absent,
otherwise it is present.

A similar argument for $g_1$ shows that
$a^{\gamma_1}b^{\gamma_2}c^{\gamma_3}d^{\gamma_4}$ is absent from
$g_1$ if $\gamma_3 + \gamma_4$ is odd, since $h_{11} = h_{12}$ and
$h_{13} = h_{14}$.   When $\gamma_3 + \gamma_4$ is even there are two
further cases to distinguish;
when $\gamma_3 = \gamma_4$ is an even number, \Fref{eq:presence}
collapses to
\[
   {\gamma_1 + \gamma_3 \choose \gamma_1}{\gamma_2 + \gamma_4 \choose
     \gamma_2} C_{\gamma_1 + \gamma_3, \gamma_2 + \gamma_4} \left( 1 +
     1 - 1 - 1\right) a^{\gamma_1}b^{\gamma_2}c^{\gamma_3}d^{\gamma_4}
   = 0.
\]
Otherwise, either $\gamma_3 = \gamma_4$ is odd and \Fref{eq:presence}
evaluates to
\[
4  {\gamma_1 + \gamma_3 \choose \gamma_1}{\gamma_2 + \gamma_4 \choose
     \gamma_2} C_{\gamma_1 + \gamma_3, \gamma_2 + \gamma_4}
   a^{\gamma_1}b^{\gamma_2}c^{\gamma_3}d^{\gamma_4},
\] or $\gamma_3 \neq \gamma_4$ and the two equations $1 + (-1)^{\gamma_3 + \gamma_4} = 1$ and
$(-1)(-1)^{\gamma_4} + (-1)(-1)^{\gamma_3} $ need to be simultaneously
zero.
\\

In conclusion: monomials of odd $c, d$-degree are present in $g_2$ and
$g_3$ but absent in $g_1$.  Monomials of even $c, d$-degree are absent
in $g_2$ and $g_3$ but present in $g_1$ when the degrees of $c$ and
$d$ are not both even.   These relations are tabulated in \Fref{tab:monomial-presence}.

\subsubsection{Example for a fourth degree curve}
\label{sec:fourth-example}

The presence of monomials in the $g_i$ so far is a little abstract.
Let us look at a somewhat more concrete example by considering a
generic fourth degree curve $f =
{C}_{4,0} x^{4}+{C}_{3,1} x^{3} y+{C}_{{2,2}} x^{2}
      y^{2}+{C}_{{1,3}} x y^{3}+{C}_{{0,4}} y^{4}+{C}_{{3,0}}
      x^{3}+{C}_{{2,1}} x^{2} y+{C}_{{1,2}} x y^{2}+{C}_{{0,3}}
      y^{3}+{C}_{{2,0}} x^{2}+{C}_{{1,1}} x y+{C}_{{0,2}}
      y^{2}+{C}_{{1,0}} x+{C}_{{0,1}} y+{C}_{{0,0}}$.
According to
 Table \ref{tab:monomial-presence}, the monomials in $g_1$ should be all
 even $c, d$-degree  monomials of total degree at most four, excluding
 the monomials $1$ and $c^2d^2$, which is indeed the case:
\begin{align*}
  g_1 &= (-2 {C}_{{2,2}}+12 {C}_{{4,0}}) a^{2} c^{2}+(-6 {C}_{{1,3}}+6
  {C}_{{3,1}}) a b c^{2}+(-12 {C}_{{0,4}}+2 {C}_{{2,2}}) b^{2}c^{2} \\
  &+(-2 {C}_{{0,4}}+2 {C}_{{4,0}}) c^{4}+12 {C}_{{3,1}} a^{2} c d+16
  {C}_{{2,2}} a b c d+12 {C}_{{1,3}} b^{2} c d \\
  & +(2 {C}_{{1,3}}+2{C}_{{3,1}}) c^{3} d +(2 {C}_{{2,2}}-12 {C}_{{4,0}}) a^{2}
  d^{2}+(6 {C}_{{1,3}}-6 {C}_{{3,1}}) a b d^{2} \\ &+(12 {C}_{{0,4}}-2
  {C}_{{2,2}}) b^{2} d^{2} +(2 {C}_{{1,3}}+2 {C}_{{3,1}}) c d^{3} + (2
  {C}_{{0,4}}-2 {C}_{{4,0}}) d^{4} \\
  &+(-2 {C}_{{1,2}}+6 {C}_{{3,0}})
  a c^{2}+(2 {C}_{{2,1}}-6 {C}_{{0,3}}) b c^{2}+8 {C}_{{2,1}} a c d+8
  {C}_{{1,2}} b c d \\ &+(2 {C}_{{1,2}}-6 {C}_{{3,0}}) a d^{2}+(-2
  {C}_{{2,1}}+6 {C}_{{0,3}}) b d^{2}+(2 {C}_{{2,0}}-2 {C}_{{0,2}})
  c^{2} \\ &+4 {C}_{{1,1}} c d+(-2 {C}_{{2,0}}+2 {C}_{{0,2}}) d^{2}.
\end{align*}
Of the list of monomials $\{a^2c^2, abc^2, b^2c^2, c^4, a^2cd, abcd,
  b^2cd, c^3d, a^2d^2,  abd^2, b^2d^2$, $cd^3$, $d^4$, $ac^2$, $bc^2$, $acd$, $bcd$, $ad^2$, $bd^2$, $c^2$,
      $cd$, $d^2\}$ occuring in $g_1$, those with only the variables $c$ and $d$ are depicted in \Fref{fig:squares}.
\begin{figure}[H]
  \begin{subfigure}[b]{0.45\linewidth}
\includegraphics[width=\textwidth]{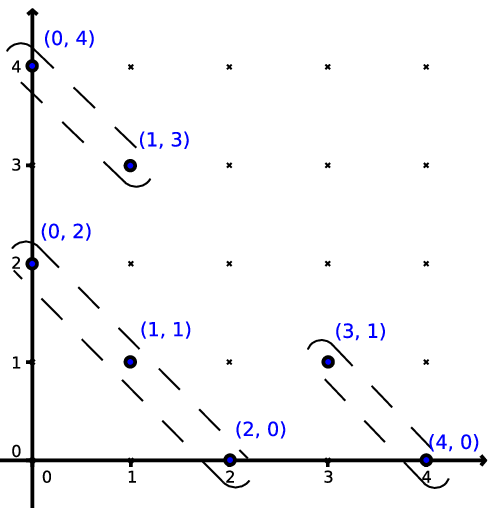}
     \caption{Monomials $c^{\gamma_3}d^{\gamma_4}$ present in $g_1$ are represented by blue circles.}
  \end{subfigure}
 \quad
  \begin{subfigure}[b]{0.45\linewidth}
 \includegraphics[width=\textwidth]{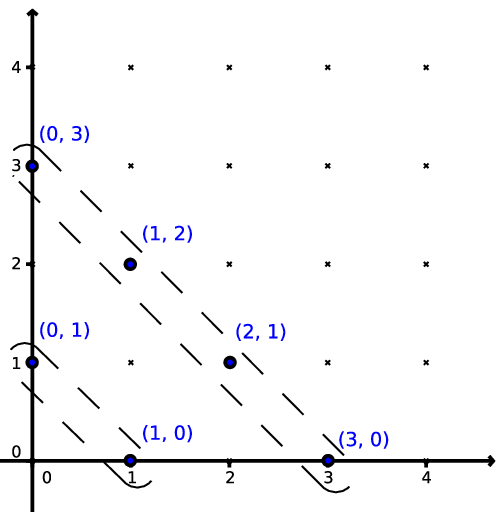}
     \caption{Monomials $c^{\gamma_3}d^{\gamma_4}$ present in $g_2$ are represented by blue circles.}
  \end{subfigure}
  \caption{The parity of $\gamma_3 + \gamma_4$ determines whether
    monomials $c^{\gamma_3}d^{\gamma_4}$ are present in the generators $g_1$ and $g_2$.}
  \label{fig:squares}
\end{figure}

\subsection{Newton polytope shapes}
\label{sec:newton-polytope-shapes}

In the previous two sections we have shown which monomials are present in the $g_i$.
In the third step of our five step program to prove that
the mixed volume $MV(\newton[g_1], \newton[g_2], \newton[g_3], \newton[g_4]) = m^4
-5m^2 + 4m$ we describe the Newton polytopes $\newton[g_i]$.
We already know that $\newton[g_4] = m\Delta$ and $\newton[g_i]
\subset m\Delta$ since the $g_i$ are of degree $m$. We also saw from \Fref{tab:monomial-presence} that
$\newton[g_2] = \newton[g_3]$.

In this section we prove that the Newton polytopes $\newton[g_1]$ and
$\newton[g_2]$ alternate between the two types of simple polytopes
$P_1$ and $P_2$ from \Fref{def:polytopes-pi}, according to the parity
of $m$. This dependence is summarized in
Table \ref{tab:shape-alternation}. Their Schlegel diagrams are depicted in
\Fref{fig:schlegel1} and \Fref{fig:schlegel2}; the vertex descriptions
of $P_1$ and $P_2$ as well as expressions of the vertices as
intersections of facets are given in \Fref{lem:polytope-shape} and
\Fref{lem:second-polytope-shape}.
\\

  The Newton polytopes $\newton[g_i]$
are the convex hulls of the monomials appearing in the $g_i$; the
pertinent information about $g_1$, $g_2$ and $g_3$ is shown in
\Fref{tab:monomial-presence}. Let us rewrite this information in a
form convenient for thinking about polytopes as intersections of halfspaces,
\begin{align*}
  \{ \text{exponents of } g_1 \} &= m\Delta \cap \bigcup_{n=0}^\infty \ \{x_3 + x_4
    = 2n + 2\} \setminus \{ x_3 = x_4 \text{ even} \}, \\
  \{ \text{exponents of } g_2 \} &= m\Delta \cap \bigcup_{n=0}^\infty \ \{x_3 + x_4 = 2n + 1\}.
\end{align*}
The extreme monomials determine the convex hull, so we can express
$\newton[g_1]$ and $\newton[g_2]$ as the following intersections of halfspaces:
\[
\begin{array}{l}
    \newton[g_1] = m\Delta \cap H_{x_3 + x_4 \geq 2} \cap H_{x_3 + x_4 \leq 2n_1 + 2}, \\
    \newton[g_2] = m\Delta \cap H_{x_3 + x_4 \geq 1} \cap H_{x_3 + x_4 \leq 2n_2 + 1},
\end{array}
\]
where $n_1$ and $n_2$ are the largest integers $n_1$ and $n_2$ such
that $2n_1 + 2$ and $2n_2 + 1$ are both smaller than or equal to $m$
. If $m$ is even, then the halfspace $H_{x_3+x_4 \leq 2n_1 + 2}$ is redundant as the hyperplane $H_{x_3 + x_4 = 2n_1 + 2}$ intersects $m\Delta$ in the
facet defined by the hyperplane $H_{\sum x_i = m}$.  When $m$ is odd,
$H_{x_3+x_4 \leq 2n_2 + 1}$ is redundant. These polytopes are central to the rest of this
section, so let us fix some notation.
\begin{definition}
\label{def:polytopes-pi}
\index{polytope!$P_1(m, l)$}
\index{polytope!$P_2(m, l, k)$}
The three types of polytopes $P_0$, $P_1$ and $P_2$
  are obtained from $m\Delta$ by successively adding a facet-defining hyperplane
  parallel to $H_{(0, 0, 1, 1)}$ so that
\[
\begin{array}{l}
P_0 = P_0(m) = m\Delta,  \\
P_1 = P_1(m, l)  = P_0 \cap H_{x_3 + x_4 \geq l}, \\
P_2 = P_2(m, l, k) = P_1(m, l)
  \cap H_{x_3 + x_4 \leq k}.
\end{array}
\]
\end{definition}
The polytopes $P_1$ and $P_2$ are both four-dimensional when
$m \geq 4$ but not for $m \in \{2, 3\}$. Schlegel diagrams for $m=4$ are depicted in \Fref{fig:schlegel1} and
\Fref{fig:schlegel2}.
With the notation from \Fref{def:polytopes-pi} we can summarize the
Newton polytopes of $g_1$ and $g_2$ for even and odd $m$ as
\begin{table}[H]
\[
\begin{array}{lll}
& m = 2n + 2  & \phantom{ugly} m = 2n + 1 \vspace{0.5em}\\
\newton[g_1] & P_1(m, 2)    & \phantom{ugly}  P_2(m, 2, m - 1) \\
\newton[g_2] & P_2(m, 1, m - 1) & \phantom{ugly} P_1(m, 1).  \\
\end{array}
\]
\caption{$\newton[g_1]$ and $\newton[g_2]$ alternate between the
  polytopes $P_1$ and $P_2$}
\label{tab:shape-alternation}
\end{table}

\begin{figure}
\centering
  \includegraphics[width=0.6\linewidth]{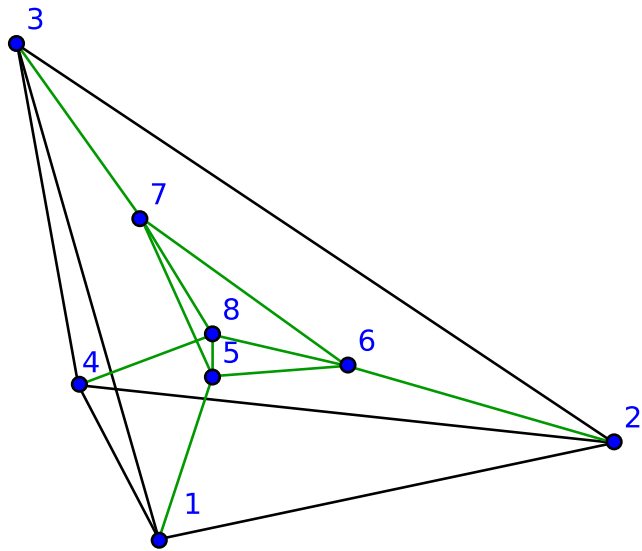}
\caption{Schlegel diagram of $P_1$ projected onto its facet where $x_4 = 0$.}
\label{fig:schlegel1}
\end{figure}
\begin{figure}
\centering
\includegraphics[width=0.5\linewidth]{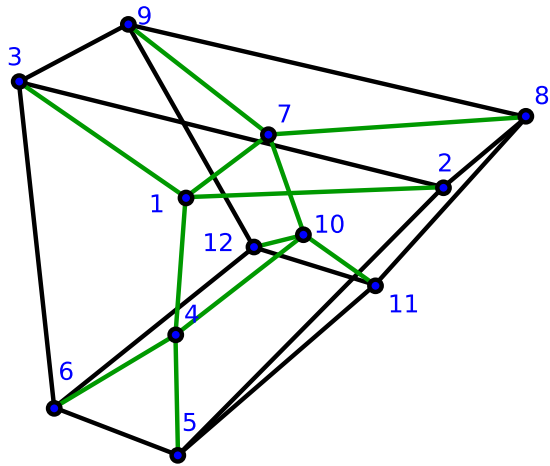}
\caption{Schlegel diagram of $P_2$ projected onto its facet where $\sum x_i= m$.}
\label{fig:schlegel2}
\end{figure}
\index{polytope!Schlegel diagram}
The combinatorial structure of the polytopes $P_1$ and $P_2$, that is, which
vertices are included in which faces, can be read off from the
Schlegel diagrams. For those unconvinced that the Schlegel diagrams
are correct,
the next two lemmas establish vertex descriptions and the
facet-vertex incidences of $P_1(m, l)$ and $P_2(m, l, k)$, without the
visual aid.
\begin{lemma}
\label{lem:polytope-shape}
\index{polytope!$P_1(m, l)$}
Let $m \geq 4$ and $0 < l < m$. Then
$P_1 = P_1(m, l)$, as defined in \Fref{def:polytopes-pi}, is a simple polytope
with eight labeled vertices given by the columns of the matrix
\[
\bordermatrix{
 &  1      & 2        & 3       & 4    & 5    & 6        & 7       & 8 \cr
  & 0      & m - l  & 0       & 0    & 0    & m - l  & 0       & 0 \cr
  & 0      & 0        & m - l & 0    & 0    & 0        & m - l & 0 \cr
  & l       & l         & l        & m   & 0    & 0        & 0       & 0 \cr
  & 0      & 0        & 0       & 0    & l     & l         & l        & m \cr
 }.
 \]
The vertices are expressed as intersections of hyperplanes in the
following way,
\begin{align}
\begin{array}{l}
  H_{-e_1,0} \cap H_{-e_2,0} \cap H_{-e_i,0} \cap H_{\sum e_k,m} = \{
  me_j \},  
 \\
  H_{-e_1,0} \cap H_{-e_2,0} \cap H_{-e_i,0} \cap H_{-e_3 - e_4,l} =
  \{l e_j \}, 
\\
  H_{-e_{1 + j_1},0} \cap H_{-e_{3 + j_2},0} \cap H_{\sum e_i,m} \cap H_{-e_3 - e_4,l} = \{
  (m -l) e_{2 - j_1} +l e_{4 - j_2} \}, 
\end{array}
\label{eq:p1-combinatorics}
\end{align}
where $i, j \in \{3, 4\}$, $i \neq j$ and $j_1, j_2 \in \{0, 1\}$.
\end{lemma}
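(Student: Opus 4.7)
The plan is to realize $P_1(m,l)$ as the simplex $m\Delta$ cut by the single halfspace $x_3 + x_4 \geq l$, and then read off the vertices geometrically from what this cut does to the vertex set of the simplex. The simplex $m\Delta$ has five vertices, namely $0, me_1, me_2, me_3, me_4$, and five facets: the four coordinate hyperplanes $H_{-e_i,0}$ together with the sum-hyperplane $H_{\sum e_k,m}$. Since $0 < l < m$, exactly three of the simplex vertices---$0$, $me_1$, $me_2$---violate the constraint $x_3 + x_4 \geq l$, while $me_3$ and $me_4$ satisfy it with strict inequality.

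First I would enumerate the facets of $P_1(m,l)$. The cutting hyperplane contributes one new facet, $H_{-e_3 - e_4, l}$, and each of the five original simplex facets survives as a facet because it still contains a three-dimensional piece of the boundary after the cut---each such facet contains at least one of the retained vertices $me_3$, $me_4$ together with some of the edge-cut points described below. This yields exactly six facet-defining hyperplanes, matching those listed in the statement.

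Next I would list the vertices. Any vertex of $P_1(m,l)$ either lies on the relative interior of an edge of $m\Delta$ crossed by the cutting hyperplane, or is itself a vertex of $m\Delta$ lying in the closed halfspace. The latter case contributes columns $me_3$ and $me_4$. The six edges of $m\Delta$ joining one of $0, me_1, me_2$ to one of $me_3, me_4$ each meet $x_3 + x_4 = l$ in a single point; parametrizing an edge as $(1-t)u + tv$ and solving $x_3 + x_4 = l$ produces the remaining six columns of the matrix. With both the facet list and the vertex list in hand, the three displayed intersection formulas reduce to six $4\times 4$ linear systems whose solutions are verified by direct substitution. Simplicity then follows at once, since each of the eight vertices lies in exactly four of the six facet hyperplanes, which is the minimum possible in dimension four.

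The main obstacle, such as it is, is confirming that each of the five original simplex facets genuinely remains a facet of $P_1(m,l)$, rather than collapsing to a lower-dimensional face; for instance, the facet $x_3 = 0$ of $m\Delta$ loses three of its four defining vertices to the cut. This reduces to exhibiting four affinely independent points of $P_1$ on each original facet hyperplane, which is straightforward from the explicit vertex list: on $x_3 = 0$ one finds the four affinely independent points $(0,0,0,l), (m-l,0,0,l), (0,m-l,0,l), me_4$, and similarly for the other original facets.
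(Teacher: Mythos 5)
Your argument is correct but follows a genuinely different route from the paper. The paper's proof is a purely combinatorial enumeration: it takes as given that there are six facet-defining hyperplanes, lists all $\binom{6}{4}=15$ quadruples, rules out the seven that involve both $H_{-e_3,0}\cap H_{-e_4,0}$ or the infeasible quadruple $H_{-e_1,0}\cap H_{-e_2,0}\cap H_{-e_3-e_4,l}\cap H_{\sum e_i,m}$, and observes that the eight surviving quadruples each pin down one vertex; simplicity drops out because five-fold intersections are always empty. Your proof instead views $P_1$ geometrically as the simplex $m\Delta$ truncated by a single halfspace, and derives the eight vertices directly: the two simplex vertices $me_3,me_4$ that survive the cut, plus the six points where the hyperplane $x_3+x_4=l$ crosses the edges joining $\{0,me_1,me_2\}$ to $\{me_3,me_4\}$. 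Your approach buys an explanation of \emph{why} the vertex coordinates take the form they do, rather than simply verifying them after the fact, and it explicitly settles a point the paper leaves tacit — namely that all five facets of the simplex survive the cut (your affine-independence check on, e.g., the $x_3=0$ facet). The paper's approach buys a quicker, self-contained argument for simplicity (no vertex can lie on five hyperplanes because any five must contain one of the two forbidden intersections). Both are fine; yours is somewhat more explanatory and closes a small gap the paper glosses over.
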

\begin{proof}
  The polytope $P_1(m, l)$ has six facet-defining hyperplanes.  There
  are ${6 \choose 4}$ ways to form intersections of four of these
  hyperplanes.  Due to the constraint $x_3 + x_4 \geq l$ the
  intersection $H_{-e_3,0} \cap H_{-e_4,0}$ does not contain any part of
  $P_1$.  The intersection $H_{-e_1,0} \cap H_{-e_2,0} \cap H_{-e_3 - e_4,l} \cap H_{\sum e_i,m}$ is
  empty due to conflicting constraints. Thus any intersection of five
  hyperplanes is either empty or lies outside $P_1$, as a five-fold intersection of the hyperplanes defining $P_1$ involves at least one of these two intersections.  Hence any vertex
  of $P_1$ is contained in at most four facets.

  This leaves $2{4 \choose 3} = 8$ combinations of intersecting four
  hyperplanes to check, each involving exactly one of $H_{-e_3,0}$ or $H_{-e_4,0}$.  These eight intersections are listed above and result in eight
  distinct vertices, each of which is contained in precisely four
  facets.
\end{proof}
We obtain $P_2$ from $P_1$ by intersecting it with the halfspace
$H_{x_3 + x_4 \leq k}$. The facet of $P_2$ defined by this halfspace
 is parallel to the hyperplane
$H_{x_3 + x_4 \geq l}$ that cuts out $P_1$ from $P_0$, and thus the
derivation of $P_2$ follows the same kind of  reasoning as \Fref{lem:polytope-shape}.
\begin{lemma}
 \label{lem:second-polytope-shape}
\index{polytope!$P_2(m, l, k)$}
  Let $0 < l < k < m$ and $m \geq 4$. Then $P_2 = P_2(m, l, k)$ is a
  simple polytope with twelve labeled vertices given by the colums of the matrix
\[
\scriptscriptstyle
\hspace{-0.5em}
\bordermatrix{
 &  1   & 2       & 3         & 4  & 5         & 6        & 7  & 8 & 9              & 10           &  11      &  12 \cr
  & 0   & m - l & 0         & 0  & m -  k &0         & 0 & m - l  & 0               & 0             &  m - k & 0 \cr
  & 0   & 0       & m - l   & 0  & 0         & m - k & 0 & 0        & m - l & 0             &  0        & m - k\cr
  & l    & l        & l          & k  & k         & k        & 0 & 0        & 0               & 0                    &  0        &  0\cr
  & 0   & 0       & 0         & 0  & 0         &  0       & l  & l     & l    & k &   k       & k \cr
 }\displaystyle.
 \]
The vertices are expressed as intersections of hyperplanes in the
following way,
\[
\begin{array}{l}
  H_{-e_1,0} \cap H_{-e_2,0} \cap H_{e_i,0} \cap H_{e_3 + e_4,k} = \{k
  e_j \},  
\\
  H_{-e_1,0} \cap H_{-e_2,0} \cap H_{e_i,0} \cap H_{-e_3 - e_4,l} = \{
  l e_j \},  
\\
  H_{-e_{1 +j_1},0} \cap H_{-e_{3 + j_2},0} \cap H_{e_3 + e_4,k} \cap
  H_{\sum e_i,m} = \{  (m - k) e_{2 - j_1} + k e_{4 - j_2} \},  
\\
  H_{-e_{1 + j_1},0} \cap H_{-e_{3 + j_2},0} \cap H_{-e_3 - e_4,l} \cap
  H_{\sum e_i,m} = \{  (m - l) e_{2 - j_1} + l e_{4 - j_2} \}, 
\end{array}
\]
where $i, j \in \{3, 4\}$, $i \neq j$ and $j_1, j_2 \in \{0, 1\}$.
\end{lemma}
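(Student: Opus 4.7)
The plan is to mirror the proof of \Fref{lem:polytope-shape}, now with the additional facet-defining hyperplane $H_{e_3+e_4,k}$. The polytope $P_2=P_2(m,l,k)$ is the intersection of seven halfspaces, bounded by $H_{-e_i,0}$ for $i=1,2,3,4$, $H_{\sum e_j,m}$, $H_{-e_3-e_4,l}$, and $H_{e_3+e_4,k}$. Every vertex of $P_2$ lies on at least four of these hyperplanes, so the proof reduces to sorting through the $\binom{7}{4}=35$ four-element subsets.

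First I would remove the immediately inadmissible combinations. The pair $H_{-e_3,0}\cap H_{-e_4,0}$ forces $x_3+x_4=0<l$, ruling out $\binom{5}{2}=10$ subsets; symmetrically, $H_{-e_3-e_4,l}\cap H_{e_3+e_4,k}$ is empty because $l<k$, ruling out another $10$. One subset is counted twice, so inclusion--exclusion leaves $19$ exclusions. Two more subsets are inconsistent because $H_{-e_1,0}\cap H_{-e_2,0}\cap H_{\sum e_i,m}$ already forces $x_3+x_4=m$, which is compatible with neither $H_{-e_3-e_4,l}$ nor $H_{e_3+e_4,k}$. Two further subsets, namely $\{H_{-e_1,0},H_{-e_2,0},H_{-e_i,0},H_{\sum e_j,m}\}$ for $i\in\{3,4\}$, produce the points $me_4$ and $me_3$, which lie outside $P_2$ since $m>k$. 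Altogether this is $23$ exclusions, leaving exactly $12$ admissible four-element subsets.

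Each of these $12$ subsets contains precisely one of $\{H_{-e_3,0},H_{-e_4,0}\}$, precisely one of $\{H_{-e_3-e_4,l},H_{e_3+e_4,k}\}$, and two elements of $\{H_{-e_1,0},H_{-e_2,0},H_{\sum e_i,m}\}$. Solving each of the resulting $4\times 4$ linear systems yields one of the twelve vertices listed in the statement, and substituting the solution into the remaining three defining inequalities (using $0<l<k<m$) confirms that the point lies in $P_2$. Since no fifth facet-defining hyperplane can be adjoined to an admissible four-tuple without forcing a coordinate to take two distinct values, every vertex is contained in exactly four facets, and $P_2$ is simple.

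The proof is essentially routine bookkeeping; the main obstacle is verifying that each of the twelve retained four-tuples really defines a transverse intersection, and that the resulting twelve points are pairwise distinct. Both follow from the hypothesis $0<l<k<m$ and from inspection of the seven normal vectors, whose only nontrivial linear dependencies involve configurations already excluded above.
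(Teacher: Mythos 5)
Your proof is correct and follows essentially the same strategy as the paper's: enumerate the $\binom{7}{4}$ candidate four-fold intersections, rule out the inadmissible ones, and identify the remaining twelve as the vertices, each lying in exactly four facets. The paper's own proof is terser — after noting that $H_{-e_3,0}\cap H_{-e_4,0}$ and $H_{-e_1,0}\cap H_{-e_2,0}\cap H_{\sum e_i,m}$ meet $P_2$ in nothing, it dismisses the four-fold intersections avoiding both $H_{-e_3,0}$ and $H_{-e_4,0}$ in a single sentence and jumps directly to the $4\binom{3}{2}=12$ good ones, leaving the reader to infer why the remaining eight bad combinations (six containing both $H_{-e_3-e_4,l}$ and $H_{e_3+e_4,k}$, and two giving the cut-off vertices $me_3$, $me_4$) are excluded. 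Your inclusion--exclusion bookkeeping $(35 - 19 - 2 - 2 = 12)$ makes that accounting explicit and verifies each discarded case directly, which is a modest clarity improvement over the published argument without changing its substance.
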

\begin{proof}
  As in the previous lemma, the
  intersection $H_{-e_3,0} \cap H_{-e_4,0}$ contains no part of $P_2$.
  Likewise, the intersection $H_{-e_1,0} \cap H_{-e_2,0} \cap H_{\sum e_i,m}$ contains
  no vertices due to the conflicting constraint $x_3 + x_4
  \leq k$. Again the implication is that no intersection of five hyperplanes
  contains a vertex of $P_2$.

  Of the four-fold intersections those involving neither of $H_{-e_3,0}$
  nor $H_{-e_4,0}$ are either contained in $H_{e_3 + e_4,k} \cap H_{-e_3 - e_4,l}$ or in $H_{-e_1,0} \cap
  H_{-e_2,0} \cap H_{\sum e_i,m}$, and thus contribute nothing.
  The remaining $4{3 \choose 2} = 12$ options involving exactly one of
  $\{H_{-e_3,0}, H_{-e_4,0}\}$ and exactly one of $\{H_{e_3 + e_4,k}, H_{-e_3 - e_4,l}\}$ all
  contribute a vertex of $P_2$.

\end{proof}

\subsection{Minkowski sum shapes}
\label{sec:minkowski-sum-shape}

We are over halfway in our five step program to proving that there are
at most $(m^4 - 5m^2 + 4m)/4$ squares inscribed on an algebraic plane
curve of degree $m$.   In the previous section we showed that the
Newton polytopes $\newton[g_1]$ are of the types
$P_0$, $P_1$ and $P_2$ defined in \Fref{def:polytopes-pi}. In the
fourth step of our program we show that the Minkowski sum $\lambda_1
\newton[g_1] + \lambda_2 \newton[g_2] + \lambda_3 \newton[g_3] +
\lambda_4 \newton[g_4]$ is itself a $P_2$ type polytope. This result, \Fref{lem:minkowski-normal-fan},
is due to the combination of two
facts: the common refinement of the normal fans of $P_0$, $P_1$
and $P_2$ is the normal fan of $P_2$, and \Fref{lem:cone-refinement},
which states that the normal fan of a Minkowski sum is the common
refinement of the normal fans of the summands.  Knowing the form of
the Minkowski sum enables us to calculate its volume to finally
determine the mixed volume of the $\newton[g_i]$.
\\

As the polytopes $\newton[g_i]$ are of different shape depending on
the parity of $m$, as summarized in \Fref{tab:shape-alternation}, we rewrite the Minkowski sum $\sum \lambda_i \newton[g_i]$ as
$\mu_1 P_1 + \mu_2 P_2 + \lambda_4\newton[g_4]$.  Since $\newton[g_2]
= \newton[g_3]$ one of $\mu_1$ or $\mu_2$ equals $\lambda_2 +
\lambda_3$, while the other coefficient $\mu_i$ is set to $\lambda_1$.
\Fref{tab:mu} summarizes the values of $\mu_1$ and $\mu_2$.
\begin{table}[H]
\[
\begin{array}{ccc}
              & m \text{ even}                 & m \text{ odd} \\
   \mu_1 & \lambda_1                      & \lambda_2 + \lambda_3 \\
   \mu_2 & \lambda_2 + \lambda_3 &  \lambda_1
\end{array}
\]
\captionsetup{width=0.7\textwidth}
\caption{The values of the coefficients $\mu_1$ and $\mu_2$ in \\ the
  expression of~$\sum_{i=1}^4 \lambda_i \newton[g_i] = \mu_1 P_1 +
  \mu_2 P_2 + \lambda_4 \newton[g_4]$.}
\label{tab:mu}
\end{table}
The following lemma from Ziegler's Lectures on Polytopes tells us that
we should look at the normal fans of the Newton polytopes to determine
the normal fan of the Minkowski sum.
\begin{lemma}[{\cite[Proposition
7.12, p198]{ziegler}}]
\label{lem:cone-refinement}
\index{polytope!normal fan}
The normal fan of a Minkowski sum is the common refinement of normal
fans of the summands.
\end{lemma}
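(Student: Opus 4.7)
The plan is to reduce everything to the pointwise face identity
\[
  F_{P+Q}(\alpha) \;=\; F_P(\alpha) + F_Q(\alpha)
\]
holding for every functional $\alpha \in V^*$. I would first verify that $M_{P+Q}(\alpha) = M_P(\alpha) + M_Q(\alpha)$: any $v \in P + Q$ decomposes as $v = p + q$ with $p \in P$, $q \in Q$, giving $\langle \alpha, v \rangle \leq M_P(\alpha) + M_Q(\alpha)$, and summing maximizers of $\alpha$ on $P$ and on $Q$ attains this upper bound. Given this, the equality $\langle \alpha, p + q \rangle = M_{P+Q}(\alpha)$ forces $\langle \alpha, p \rangle = M_P(\alpha)$ and $\langle \alpha, q \rangle = M_Q(\alpha)$ individually, since otherwise one of the per-summand inequalities would be strict and the sum could not reach $M_P(\alpha) + M_Q(\alpha)$. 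The identity follows, and as a byproduct every face of $P + Q$ is already of the form $F_1 + F_2$ for faces $F_1$ of $P$ and $F_2$ of $Q$.

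Next I would translate this into a statement about normal cones. Write $\mathcal{N}_R(F)$ for the set of functionals whose maximizer on a polytope $R$ contains a given face $F$. For faces $F_1, F_2$ of $P, Q$, I expect the identity
\[
  \mathcal{N}_{P+Q}(F_1 + F_2) \;=\; \mathcal{N}_P(F_1) \cap \mathcal{N}_Q(F_2).
\]
One inclusion is the straightforward observation that if $\alpha$ maximizes on $P$ along $F_1$ and on $Q$ along $F_2$, then it maximizes on $P + Q$ along $F_1 + F_2$. The reverse inclusion uses the forced per-summand equality from the first paragraph: picking any $v_1 \in F_1$ and $v_2 \in F_2$, the assumption that $v_1 + v_2$ attains $M_{P+Q}(\alpha)$ forces $\langle \alpha, v_i \rangle = M_{P_i}(\alpha)$ individually, so $\alpha$ lies in both $\mathcal{N}_P(F_1)$ and $\mathcal{N}_Q(F_2)$.

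The lemma then drops out. Every cone of the normal fan of $P + Q$ is a non-empty intersection of a cone from the normal fan of $P$ with a cone from the normal fan of $Q$, and conversely every such non-empty intersection is a cone in the normal fan of $P + Q$ (associated to the Minkowski sum of the corresponding faces). This is precisely the definition of common refinement. The extension from two summands to finitely many follows by induction on the number of summands, using associativity of Minkowski addition.

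The main technical point to handle with care is the distinction between the relatively open normal cone (where $F = F_P(\alpha)$, as in the definition given in the excerpt) and the closed normal cone (where merely $F \subseteq F_P(\alpha)$); the argument above is cleanest in terms of closed cones, but the fans they generate are the same, so this is a cosmetic issue rather than a substantive obstacle. Apart from that, the whole proof is driven by the single algebraic identity for the maximum of a linear functional on a Minkowski sum.
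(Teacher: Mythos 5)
Your proof is correct and follows essentially the same strategy as the paper's: both rest on the observation that a functional attains its maximum on a Minkowski sum exactly when it simultaneously maximizes on each summand, and both then read this off as an intersection of normal cones. The only surface differences are that the paper argues by contradiction with $n$ summands at once while you argue directly for two and then induct, and that you make explicit the decomposition $M_{P+Q}(\alpha)=M_P(\alpha)+M_Q(\alpha)$ and the open-versus-closed normal cone subtlety, neither of which changes the substance.
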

\begin{proof}
Let $P = P_1 + \dots + P_n$ and let $\Gamma$ be a face of $P$. Fix a functional
$\alpha$ in the normal cone of $\Gamma$, that is, $\Gamma$ is precisely
 the subset of $P$
that is maximal under $\alpha$.  Let $\Gamma \ni
v = v_1 + \dots + v_n$.  Suppose that some $v_j$ does not maximize
$\alpha$ in $P_i$. Then there exists a $w_j \in P_j$ such that
\[
   \alpha(v) = \sum \alpha(v_i) < \sum_{i \neq j} \alpha(v_i) +
   \alpha(w_j) = \alpha(v - v_j + w_j).
\]
The vector $v - v_j + w_j$ is an element of  $P$ by definition of the Minkowski sum,
but this contradicts $\Gamma$ being the maximizer of $\alpha$.
Thus the faces of the $P_i$ that are the summands in $\Gamma =
\Gamma_1 + \dots + \Gamma_n$ are themselves maximizers of $P_i$ with respect to
$\alpha$.  The normal cone of $\Gamma$ is then the intersection of the
normal cones of the $\Gamma_i$.
\end{proof}

The normal cone of any face of a polytope is spanned by the
facet normals of the facets said face is contained in. Thus, the normal fan of a polytope is completely determined  by the normal
cones of the vertices of a polytope. The descriptions of the vertices
as intersections of hyperplanes in
\Fref{lem:polytope-shape} and
\Fref{lem:second-polytope-shape}
directly tell us what the normal cones of the vertices of $P_1$ and $P_2$ are.  To show
that $\mu_1 P_1(m, l_1) + \mu_2 P_2(m, l_2, k) + m\Delta$ is of type
$P_2$ we first show that $P_0$, $P_1$ and $P_2$ have normal fans that
successively refine each other.
\begin{lemma}
\label{lem:minkowski-normal-fan}
\index{polytope!normal fan}
The Minkowski sum $\mu_1 P_1(m, l_1) + \mu_2 P_2(m, l_2, k) + m\Delta
= P_2(m', l', k')$ where
\[
\begin{array}{lcr}
  m' = (\mu_1 + \mu_2 + \lambda_4)m, \quad &
  l'   = \mu_1 l_1 + \mu_2 l_2, \quad &
  k'  =  (\mu_1 + \lambda_4)m +\mu_2k \\
\end{array}.
\]
\end{lemma}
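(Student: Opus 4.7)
The plan is to apply Lemma~\ref{lem:cone-refinement} to reduce the statement to identifying the common refinement of the normal fans of the three summands, and then to pin down the parameters $m'$, $l'$, $k'$ by computing support values in three distinguished directions using the additivity $M_{A+B}(\alpha) = M_A(\alpha) + M_B(\alpha)$.

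First I would note that scaling by a positive factor leaves the normal fan unchanged, so the normal fan of the Minkowski sum $\mu_1 P_1(m,l_1) + \mu_2 P_2(m,l_2,k) + \lambda_4 m\Delta$ is the common refinement of the normal fans of $P_1(m,l_1)$, $P_2(m,l_2,k)$, and $P_0(m) = m\Delta$. Reading off from \Fref{def:polytopes-pi} and Lemmas~\ref{lem:polytope-shape} and \ref{lem:second-polytope-shape}, the outward facet normals of $P_0$ are $\{-e_1,-e_2,-e_3,-e_4,\ e_1+e_2+e_3+e_4\}$; $P_1$ adds the extra normal $-(e_3+e_4)$; and $P_2$ adds on top of that the normal $e_3+e_4$. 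Because the facet-normal sets are nested, the normal fan of $P_2$ already refines both that of $P_1$ and that of $P_0$. Hence the common refinement is simply the normal fan of $P_2$, and the Minkowski sum has the combinatorial type of a $P_2$-polytope.

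To recover the defining parameters $m'$, $l'$, $k'$, I would use the additivity of the support function and evaluate at the three facet-normal directions that characterize a $P_2$. At $\alpha = e_1+e_2+e_3+e_4$, the support value is $m$ on each of $P_0$, $P_1$, $P_2$, producing the hyperplane $\sum x_i = m'$ with $m' = (\mu_1+\mu_2+\lambda_4)m$. At $\alpha = -(e_3+e_4)$, the support values are $0$, $-l_1$, $-l_2$ respectively, producing the lower cut $x_3+x_4 \geq l'$ with $l' = \mu_1 l_1 + \mu_2 l_2$. At $\alpha = e_3+e_4$, the values are $m$, $m$, $k$ (since $me_3$ lies in both $P_0$ and $P_1$), producing the upper cut $x_3+x_4 \leq k'$ with $k' = (\mu_1+\lambda_4)m + \mu_2 k$.

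The main obstacle I anticipate is the bookkeeping needed to confirm that each of the seven candidate hyperplanes actually cuts out a facet of the sum, which amounts to verifying the strict inequalities $0 < l' < k' < m'$ in \Fref{def:polytopes-pi}. Given the hypotheses $0 < l_1 < m$ and $0 < l_2 < k < m$, together with positivity of $\mu_1$, $\mu_2$, $\lambda_4$, these chase through directly: $l' = \mu_1 l_1 + \mu_2 l_2 < \mu_1 m + \mu_2 k \leq k'$, and $k' = (\mu_1+\lambda_4)m + \mu_2 k < (\mu_1+\mu_2+\lambda_4)m = m'$. With these confirmed, the Minkowski sum is precisely $P_2(m',l',k')$ as claimed.
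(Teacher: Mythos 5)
Your support-function computation of $m'$, $l'$, $k'$ is a genuine improvement in clarity over the paper's approach: instead of tracing sums of carefully chosen vertices through the three summands, you use the additivity $h_{A+B}(\alpha) = h_A(\alpha) + h_B(\alpha)$ of the support function in the three distinguished directions $\pm(e_3 + e_4)$ and $\sum e_i$, and the arithmetic checks out. The verification of $0 < l' < k' < m'$ is also a nice touch that the paper leaves implicit.

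However, there is a gap in the step that actually carries the structural content of the lemma. You assert that ``because the facet-normal sets are nested, the normal fan of $P_2$ already refines both that of $P_1$ and that of $P_0$.'' Nested facet-normal sets do \emph{not} by themselves imply refinement of normal fans in dimension $\ge 3$: the normal fan is determined not just by which rays appear but by how they group into maximal cones, and adding a halfspace can in principle produce a vertex whose normal cone straddles a wall of the old fan. (In dimension $2$ the implication is automatic because the rays alone partition the plane, but that reasoning does not lift.) The paper's proof supplies exactly the missing ingredient: it observes that passing from $P_i$ to $P_{i+1}$ cuts off precisely the vertices of $P_i$ lying in $H_{-e_3,0} \cap H_{-e_4,0}$ (respectively $H_{-e_1,0} \cap H_{-e_2,0} \cap H_{\sum e_i, m}$), and that the new facet normal $-e_3 - e_4$ (respectively $e_3 + e_4$) lies in the cone spanned by the facet normals already incident to those cut-off vertices. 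That positivity of the conic combination is what guarantees that each new normal cone of $P_{i+1}$ sits inside an old normal cone of $P_i$, hence that $\mathcal{N}(P_{i+1})$ refines $\mathcal{N}(P_i)$. To close the gap you should either reproduce that verification (using the vertex--facet incidences of Lemmas~\ref{lem:polytope-shape} and~\ref{lem:second-polytope-shape}), or invoke the Minkowski-summand characterization of refinement (see Ziegler, Chapter~7) and check that hypothesis directly; the bare nesting of facet-normal sets is not enough to cite.
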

\begin{proof}
  We obtain $P_{i + 1}$ from $P_{i}$ by introducing an additional
  facet-defining hyperplane $H^i$.  As $P_i$ and $P_{i+1}$ are both
  simple, any vertices contained in $H^i$ are contained in three other
  hyperplanes.  The normal cone of a vertex in $H^i$ lies within the
  normal cone of a vertex of $P_{i}$ cut off from $P_{i + 1}$ by
  $H^i$; each vertex cut off lies in an intersection $H_1^i \cap \dots
  \cap H_{r_i}^i$ of hyperplanes whose facet-normals generate a cone
  containing the facet-normal of $H^i$.

  We see from the vertex-facet incidences of \Fref{lem:polytope-shape} and
  \Fref{lem:second-polytope-shape} that the vertices of $P_0$ that are
  cut off from $P_1$ by $H_{- e_3 - e_4,l}$ lie in the intersection $H_{-e_3,0} \cap H_{-e_4,0}$
  and the facet-normal $-e_3 - e_4$ of $H_{-e_3 -e_4,l}$ is the sum of the facet-normals of $H_{-e_3,0}$ and $H_{-e_4,0}$.

  Likewise, the vertices of $P_2$ that are cut off from $P_1$ by
  $H_{e_3 + e_4,k}$ lie in the intersection $H_{-e_1,0} \cap H_{-e_2,0}
  \cap H_{\sum e_i,m}$ and again the facet-normal of $H_{e_3 + e_4,k}$ is the sum
  of the facet normals $e_1 + e_2 + e_3 + e_4$, $-e_1$ and $-e_2$.

  Thus the normal fan of $P_2$ is a refinement of the normal fan of
  $P_1$ which is a refinement of the normal fan of $P_0$; the common
  refinement of the normal fans of $P_0$, $P_1$ and $P_2$ then is
  the normal fan of $P_2$.  By \Fref{lem:cone-refinement} this is also
  the normal fan of the Minkowski sum $\sum_{1}^4 \lambda_i
  \newton[g_i]$.
\\

In particular the Minkowski sum is itself a $P_2(m', l', k')$
polytope for appropriate constants $m'$, $l'$ and $k'$.
We can read off the values of $m'$ and $k'$ from the vertices of
$P_2(m', l', k')$ contained in the intersection of hyperplanes with
normals $(0, 0, 1, 1)$ and $(1, 1, 1, 1)$, for example the vertex $(m'
- k', 0, k', 0)$.  This vertex is the sum of vertices $v_i$ of the
summands of $\mu_1 P_1 + \mu_2 P_2 + P_0$ that have
a normal cone containing its normal cone.

As the normal cone of $H_{-e_1,0} \cap H_{-e_2,0} \cap H_{-e_3,0} \cap H_{\sum e_i,m}$ contains the normal cone of $H_{-e_{1 + j},0} \cap H_{-e_3,0}
\cap H_{\sum e_i,m} \cap H_{e_3 + e_4,k}$, we get the vertex
$
   (\mu_1 + \lambda_4) me_{4} + \mu_2 \left( (m - k)e_{2 - j} +
     ke_{4} \right)
$.
Summing up the coefficients gives $m' = (\mu_1 + \mu_2 + \lambda_4)m$.
The coefficient of $e_{4}$ is $k' = (\mu_1 + \lambda_4)m +
\mu_2k$.

The value of $l'$ can be recovered  from a vertex contained in $H_{-e_3 - e_4,l}$.
As the normal cone of $H_{-e_1,0} \cap H_{-e_2,0} \cap H_{-e_3,0} \cap H_{-e_4,0}$
contains the normal cone of $H_{-e_1,0} \cap H_{-e_2,0} \cap H_{-e_{3},0}
\cap H_{-e_3 - e_4,l}$, we get the vertex $(\mu_1 l_1 + \mu_2 l_2) e_{4}$ of the
Minkowski sum, so $l' = \mu_1 l_1 + \mu_2 l_2$.
\end{proof}

\subsection{Minkowski sum volumes}
\label{sec:minkowski-sum-volumes}

We have one step left of our program towards proving \Fref{thm:mine}.
Recall that \Bernshtein's Theorem uses the
mixed volume $MV(\newton[g_1], \newton[g_2], \newton[g_3],
\newton[g_4])$ to bound the number of isolated solutions in
$\V(g_1, g_2, g_3, g_4) \cap (\mathbb{C} \setminus \{ 0 \})^4$.
The mixed volume, defined in \Fref{def:mixed-volume}, is the coefficient
of the monomial $\lambda_1\lambda_2\lambda_3\lambda_4$ as it appears
in the expression for the volume of the Minkowski sum
$\sum_{i=1}^4 \lambda_i\newton[g_i] $.   In
\Fref{lem:minkowski-normal-fan} we showed that this Minkowski sum can
be expressed as
the polytope $P_2( (\mu_1 + \mu_2 + \lambda_4)m, \mu_1 l_1 + \mu_2 l_2, (\mu_1 + \lambda_4)m + \mu_2k)$.  To complete the final step of our program, we should calculate the volume of a $P_2$ type polytope.

From the halfspace definition in \Fref{def:polytopes-pi} we see that
$P_2(m', l', k')$ is the closure of the set difference ${P_1(m', l') \setminus P_1(m',
  k')}$. Thus the volume of $P_2(m', l', k')$ can be calculated as the
difference in volumes of $P_1(m', l')$ and $P_1(m', k')$.  In turn
we can calculate the volume of $P_1$ as the sum of four simplices that
triangulate $P_1$. The volume of a simplex is straightforward to
calculate by taking the determinant of a matrix whose columnvectors
are the offsets from a distinguished vertex of the simplex to the
other vertices.
For the triangulation of $P_1$ it is convenient to express its facets
in a more combinatorial way.

\begin{corollary}
\label{cor:combinatorial-p1}
Labeling the vertices of $P_1$ by the numbers from one to eight, in the same way as in
  \Fref{lem:polytope-shape}, the combinatorial
facet description of $P_1$ is
\[
\begin{array}{ll}
F_1 = H_{-e_1,0} \cap P_1 = \{1, 3, 4, 5, 7, 8\}  & F_m = H_{\sum e_i,m} \cap P_1 = \{2, 3, 4, 6, 7, 8 \} \\
F_2 = H_{-e_2,0} \cap P_1 = \{1, 2, 4, 5, 6, 8\}  & F_l = H_{-e_3 - e_4,l} \cap P_1 = \{1, 2, 3, 5, 6, 7 \} \\
F_3 = H_{-e_3,0} \cap P_1 = \{5, 6, 7, 8 \}         & F_4 = H_{-e_4,0} \cap P_1 = \{1, 2, 3, 4\}
\end{array}
\]
\end{corollary}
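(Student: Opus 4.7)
The plan is to read the facet descriptions directly off the vertex coordinates provided in \Fref{lem:polytope-shape}. Each facet $F$ of $P_1$ consists of precisely those vertices that lie on the associated facet-defining hyperplane, so for each of the six hyperplanes $H_{-e_1,0}$, $H_{-e_2,0}$, $H_{-e_3,0}$, $H_{-e_4,0}$, $H_{\sum e_i, m}$ and $H_{-e_3 - e_4, l}$, I would test each of the eight vertices against the defining linear equation of the hyperplane and collect those satisfying it.

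Concretely, I would fix the labeling $v_1, \ldots, v_8$ matching the columns of the vertex matrix in \Fref{lem:polytope-shape}. For the four facets of the form $F_i = H_{-e_i,0} \cap P_1$ ($i=1,2,3,4$), the defining condition is simply $x_i = 0$, so I scan the $i$-th coordinate of each vertex and record the indices where it vanishes. For $F_m$ the condition is $\sum_{k=1}^4 x_k = m$, and for $F_l$ it is $x_3 + x_4 = l$; in each case the test reduces to adding a few integer entries and comparing to $m$ or $l$, respectively.

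Since $P_1$ is simple by \Fref{lem:polytope-shape}, each of the eight vertices belongs to exactly four facets, so the six facet lists should together contain $32$ incidences. This gives a convenient sanity check on the enumeration. Moreover, the four-fold intersection description of each vertex in \Fref{lem:polytope-shape} already specifies the four facets containing that vertex, so the claimed vertex-facet incidences can also be obtained by transposing that list rather than recomputing from coordinates; both viewpoints should yield the same combinatorial data.

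There is no real obstacle beyond bookkeeping. The work is a short verification, and the main thing to be careful about is maintaining a consistent vertex labeling between \Fref{lem:polytope-shape} and the corollary so that the two tables of incidences truly match.
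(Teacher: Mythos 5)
Your proposal is correct, and it is essentially the same as the paper's: the paper simply inverts the vertex-as-intersection-of-hyperplanes data from \Fref{lem:polytope-shape} to read off which vertices each facet contains, which you explicitly offer as your alternative "transposing" viewpoint. Your primary route of checking each vertex's coordinates against the facet-defining equations is an equivalent bit of bookkeeping that yields the same incidences, and your $32$-incidence sanity check using simplicity is a nice extra.
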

\begin{proof}
The statements of \Fref{lem:polytope-shape} and
\Fref{lem:second-polytope-shape} express the vertices as intersections
of hyperplanes.  Inverting the relationship and expressing the facets as the set of vertices they
contain ends up with the statement above.
\end{proof}
We triangulate $P_1$ by writing it as the union of four simplices,
each of which is defined by a set of five affinely independent
vertices of $P_1$. As long as these simplices intersect in
lower-dimensional faces we obtain a triangulation of $P_1$.

\begin{corollary}
\label{cor:volume}
The volume of $P_1(m, l)$ is $(m - l)^3(m + 3l)4!$.
\end{corollary}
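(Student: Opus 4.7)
The plan is to apply \Fref{lem:triangulation}, the Cohen--Hickey algorithm, to decompose $P_1(m,l)$ into four $4$-simplices and then to add up their volumes using the determinant formula for simplices recalled in \Fref{sec:background-polytopes}.

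I would begin by picking the vertex $v = 8 = m e_4$ of $P_1(m,l)$. From \Fref{cor:combinatorial-p1} one reads off that exactly two facets of $P_1$ avoid $v$: the facet $F_4 = \{1,2,3,4\}$, which is already a $3$-simplex, and the facet $F_l = \{1,2,3,5,6,7\}$, which is a triangular prism. To triangulate $F_l$ I would apply \Fref{lem:triangulation} recursively based at vertex $7$; its two facets not containing $7$ are the triangle $\{1,2,3\}$ (requiring no further work) and the quadrilateral $\{1,2,5,6\}$, which splits into the two triangles $\{1,2,6\}$ and $\{1,5,6\}$ after one more application of the algorithm. This gives the three tetrahedra $\{1,2,3,7\}$, $\{1,2,6,7\}$ and $\{1,5,6,7\}$ triangulating $F_l$. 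Coning all four resulting tetrahedra to $v = 8$ produces the four $4$-simplices
\[
  \{1,2,3,4,8\}, \quad \{1,2,3,7,8\}, \quad \{1,2,6,7,8\}, \quad \{1,5,6,7,8\}.
\]

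Next I would compute the volume of each $4$-simplex by forming the $4\times 4$ matrix whose columns are the offsets from vertex $1 = l e_3$ to the other four vertices (whose coordinates are given in \Fref{lem:polytope-shape}) and taking $\tfrac{1}{4!}$ of the absolute determinant. The matrix for the first simplex is upper triangular with diagonal $(m-l, m-l, m-l, m)$, so its determinant is $m(m-l)^3$. The three remaining matrices differ from this one in only a couple of entries and, after exploiting the sparsity via cofactor expansion, each evaluates to $\pm l(m-l)^3$. Summing the four contributions gives
\[
   \Vol(P_1(m,l)) \;=\; \frac{m(m-l)^3 + 3\,l(m-l)^3}{4!} \;=\; \frac{(m-l)^3(m+3l)}{4!}.
\]

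The main obstacle is purely bookkeeping. Disjointness of interiors of the four $4$-simplices is automatic from the iterated application of \Fref{lem:triangulation}, so the real work is evaluating four $4\times 4$ determinants; each reduces to a single $2\times 2$ minor once the many zero entries are exploited. A useful sanity check that the final expression is correct is that $l = 0$ recovers $\Vol(m\Delta) = m^4/4!$ and $l = m$ collapses $P_1$ to a facet of volume zero, both consistent with $(m-l)^3(m+3l)/4!$.
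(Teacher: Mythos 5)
Your proposal is correct and follows essentially the same route as the paper: both apply the Cohen--Hickey algorithm of \Fref{lem:triangulation} and arrive at the identical triangulation $\{1,2,3,4,8\}$, $\{1,2,3,7,8\}$, $\{1,2,6,7,8\}$, $\{1,5,6,7,8\}$, then evaluate four $4\times 4$ determinants of offset vectors. The only difference is cosmetic: you seed the recursion at vertex $8$ (with opposing facets $F_4$ and $F_l$), while the paper seeds it at vertex $1$ (with opposing facets $F_3$ and $F_m$); by a pleasant coincidence both choices yield the same four simplices. Your sanity checks at $l = 0$ and $l = m$ are a nice addition that the paper does not carry out.

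One point worth noting: your final expression $\Vol(P_1(m,l)) = (m-l)^3(m+3l)/4!$ has the $4!$ in the denominator, which is the correct Euclidean $4$-volume (indeed $l=0$ recovers $m^4/4!$ for $m\Delta$). The corollary as printed, and the paper's own proof, write the $4!$ as a multiplicative factor; this appears to be a persistent notational slip in the source rather than a mathematical disagreement, since a constant factor cancels when the mixed volume is extracted and the paper does arrive at the correct value $m^4 - 5m^2 + 4m$ downstream. So your formula is the one a careful reader should take away.
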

\begin{proof}
  We shall first triangulate $P_1$, calculating its volume is then a matter of summing the volumes of the triangulating simplices.

  Let $v$ be a vertex of $P_1$. An \lqindex{opposing facet} of $v$ is
  facet of $P_1$ that does not
  contain $v$.  Assume that we have a triangulation
  of every opposing facet of $v$.  The convex hull of $v$ and a simplex in a
  triangulation of an opposing facet is again a simplex. By
  \Fref{lem:triangulation} the simplices thus obtained triangulate $P_1$.
  The Cohen-Hickey algorithm~\cite[Section 3.1]{exact-volumes}
  triangulates a polytope by picking a vertex and recursively
  triangulating its opposing facets.

  From the  combinatorial description of $P_1$ given in
  \Fref{cor:combinatorial-p1} it is easy to read off what the
  facets opposing a vertex are.  In that notation the vertices of
  $P_1$ are labeled $1, \dots, 8$.
  We start the Cohen-Hickey algorithm by selecting as the first vertex
  $v_1 = 1$. Its opposing facets are $F_3$ and $F_m$, the former of
  which is already a simplex (it is three-dimensional on four vertices).

  The next step of the recursion triangulates $F_m$ by picking $v_2 =
  2$.  The facets of $F_m$ that oppose $v_2$ are intersections of
  $F_m$ with facets of $P_1$ that oppose $v_2$, that is, $F_m \cap F_3 = \{6,
  7, 8\}$, a simplex, and $F_m \cap F_1 = \{3, 4, 7, 8\}$.
  At the deepest level of the recursion we triangulate $F_m \cap F_1$ by
  picking $v_3 = 3$ and we find the one-dimensional simplices $F_m
  \cap F_1 \cap F_2 = \{4, 8\}$ and $F_m \cap F_1 \cap F_3 = \{7,
  8\}$.  The triangulation of $F_m \cap F_1$ is depicted in \Fref{fig:P1-triangulation}.

Our application of the Cohen-Hickey algorithm results in the following triangulation of $P_1$: $\{\{1, 5, 6, 7, 8\}$, $\{1, 2, 6,
7, 8\}$, $\{1, 2, 3, 4, 8\}$, $\{1, 2, 3, 7, 8\}\}$.  The volume of $P_1(m, l)$ is the sum of the volumes of the simplices in this triangulation,
\begin{align*}
 \Vol_4(P_1(m, l)) &=
  \begin{vmatrix*}[r]
    0                    & m - l & 0                       & 0 \cr
    0                    & 0                       & m - l & 0 \cr
    l    & 0                       & 0                       & 0 \cr
    -l  & 0                       & 0                       & m - l \cr
  \end{vmatrix*}4!  +
  \begin{vmatrix*}[r]
    m - l  & m - l & 0       & 0    \cr
    0        & 0       & m - l & 0     \cr
    0        & -l      & -l     & -l \cr
    0        & l        & l       & m \cr
  \end{vmatrix*}4!  \\&+
  \begin{vmatrix*}[r]
    m - l  & 0       & 0       & 0    \cr
    0        & m - l & m - l & 0     \cr
    0        & 0       & -l     & -l \cr
    0        & 0       & l       & m \cr
  \end{vmatrix*}4! +
  \begin{vmatrix*}[r]
    m - l  & 0       & 0       & 0    \cr
    0        & m - l & 0       &  0     \cr
    0        & 0       & m - l & -l \cr
    0        & 0       & 0       & m \cr
  \end{vmatrix*}4! \\
&= (m - l)^3 l4! + (m - l)^3 l4! +
(m - l)^3 l4! + (m - l)^3 m4! \\ &= (m - l)^3(m + 3l)4!.
\end{align*}
\end{proof}

\begin{figure}[H]
\centering
  \includegraphics[width=\linewidth]{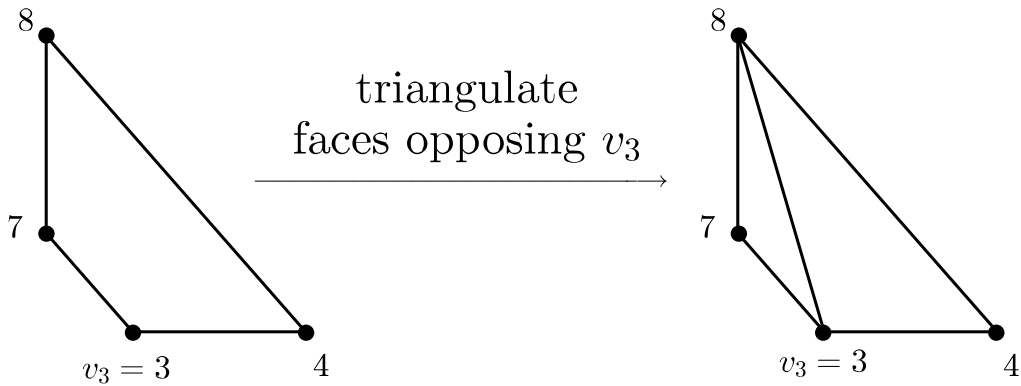}
  \captionsetup{width=0.9\textwidth}
  \caption{Triangulation of the face $F_m \cap F_3$ of $P_1$, as in the proof of \Fref{cor:volume}.}
 \label{fig:P1-triangulation}
\end{figure}

To calculate the volume of the Minkowski sum $\sum \lambda_i \newton[g_1] = \overline{P_1(m', l') \setminus P_1(m', k')}$ we apply \Fref{cor:volume} and subtract the volume of $P_1(m', k')$ from that of $P_1(m', l')$.  The expression for the volume we obtain is
$(m' -l')^3(m' + 3l')4! - (m' - k')^3(m' + 3k')4!$.

The mixed volume of $\newton[g_1]$, $\newton[g_2]$, $\newton[g_3]$, $\newton[g_4]$ can be extracted from the above volume as the coefficient of the monomial $\lambda_1\lambda_2\lambda_3\lambda_4$.
Extracting this coefficient by hand is somewhat tedious; Macaulay2 code that
performs the necessary algebraic manipulations is included in the
appendix, see \Fref{lst:minvol.m2}.   Recall from
\Fref{sec:newton-polytope-shapes} that for degrees two and three the
polytopes $P_1$ and $P_2$ are not both four-dimensional.   For these
two boundary cases the code in \Fref{lst:lowDegreeBKK.m2} uses
the PHCpack~\cite{m2-phcpack} interface from Macaulay2 to calculate the mixed volumes, which conform to the same formula as the $m \geq 4$ case.

At last we see that for all $m \in \N$ the mixed volume of the Newton polytopes $\newton[g_1]$, $\newton[g_2]$, $\newton[g_3]$, $\newton[g_4]$ is $m^4 -
5m^2 + 4m$.

\subsection{Applied BKK bound}

We set out to prove that the number of isolated squares inscribed on
an algebraic plane curve of degree $m$ is bounded by $(m^4 - 5m^2 +
4m)/4$.  In the last five sections we have shown that the variety of
complex squares inscribed on a plane curve $\V(f)$ is defined by four
polynomials $g_i$ with the property that the mixed volume of their
Newton polytopes is $(m^4 - 5m^2 + 4m)$.  An immediate consequence of
\Bernshtein's Theorem applied to these data is that the number of
isolated squares of $\V(g_1, g_2, g_3, g_4)$ that do not lie in a
coordinate hyperplane is bounded by $(m^4 - 5m^2 + 4m)$.  By passing
to a different choice of coordinates we can assume no isolated squares
lie in any coordinate hyperplane.  Finally, as there are four
parametrizations of every square inscribed on $\V(f)$ we divide the
mixed volume by four and have proven \Fref{thm:mine}.

\begin{theorem}
\label{thm:mine}
Let $f \in \mathbb{C}[x, y]$ of degree $m$ define an algebraic plane curve $\V(f)
\subset \mathbb{C}^2$. The number of isolated squares inscribed on
$\V(f)$ is at most $(m^4 - 5m^2 - 4m)/4$.
\end{theorem}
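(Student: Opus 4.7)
The approach is to apply \Bernshtein's Theorem (\Fref{thm:Bernshtein}) to a carefully chosen set of generators of the corner ideal $I_f = \cornerideal$. The naive generators $f(a \pm c, b \pm d)$ and $f(a \pm d, b \mp c)$ each have Newton polytope equal to the full simplex $m\Delta$, so their mixed volume is $4! \cdot \Vol_4(m\Delta) = m^4$, merely recovering the \Bezout{} bound. The key is to replace these four polynomials by linear combinations that force cancellations and thereby shrink the Newton polytopes.

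Concretely, I would introduce $g_1, g_2, g_3, g_4$ as in \Fref{eq:gi}: a symmetric sum with alternating signs, two antisymmetric differences, and one of the original corner evaluations. Since these span the same ideal, $\V(I_f) = \V(g_1, g_2, g_3, g_4)$. The first key step is a parity analysis: expanding each $g_i$ in the monomial basis $a^{\gamma_1} b^{\gamma_2} c^{\gamma_3} d^{\gamma_4}$ and using the binomial expansion of $f$ at the four corners, one reads off from \Fref{eq:presence} that the presence of a monomial depends only on $i$ and on the parity of $\gamma_3 + \gamma_4$ (with an additional cancellation in $g_1$ when $\gamma_3 = \gamma_4$ is even). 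This is summarized in \Fref{tab:monomial-presence}.

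The second step is to identify the resulting Newton polytopes: they are the simplex $m\Delta = P_0$ truncated by one or two hyperplanes parallel to $H_{x_3 + x_4 = \mathrm{const}}$, i.e.\ of type $P_1$ or $P_2$ as in \Fref{def:polytopes-pi}, with the assignment alternating with the parity of $m$ (\Fref{tab:shape-alternation}). The third step uses \Fref{lem:cone-refinement} to argue that, since the normal fans of $P_0, P_1, P_2$ successively refine each other, the normal fan of the Minkowski sum $\sum_i \lambda_i \newton[g_i]$ is that of $P_2$, so the sum itself is a $P_2$-type polytope with explicit parameters $(m', l', k')$ (\Fref{lem:minkowski-normal-fan}). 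The fourth step computes $\Vol_4(P_2(m', l', k'))$ as the difference of two $P_1$ volumes, using the Cohen--Hickey triangulation of \Fref{cor:volume}, and extracts the coefficient of $\lambda_1 \lambda_2 \lambda_3 \lambda_4$. The resulting mixed volume is a polynomial in $m$ equal to $m^4 - 5m^2 + 4m$.

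Finally, to translate this into the theorem's bound on squares, two adjustments remain. First, \Bernshtein's Theorem only counts isolated zeros in $(\C \setminus \{0\})^4$; isolated squares whose parametrization lies in a coordinate hyperplane arise only when the center of the square sits on $\V(xy)$ or a corner coincides with a coordinate of the center, and a generic translation plus rotation of $f$ removes all such coincidences without changing the count of isolated squares (finite irreducible components remain finite). Second, by \Fref{def:complex-square} each geometric square corresponds to exactly four parameter tuples $(a,b,c,d)$, so dividing the mixed volume by four yields the bound of the theorem. The main obstacle is the bookkeeping in steps two and three: rigorously justifying that the Newton polytopes really are of type $P_1$ or $P_2$ rather than smaller (which requires genericity of the $C_{i,j}$ to prevent spurious cancellations) and that the normal-fan refinement argument correctly identifies the Minkowski sum as a single $P_2$-type polytope rather than something combinatorially more complicated.
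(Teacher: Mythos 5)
Your proposal follows the paper's own five-step program essentially verbatim: the same choice of generators $g_i$, the same parity analysis of monomial presence, the identification of the Newton polytopes as $P_1$/$P_2$ truncations of $m\Delta$, the normal-fan refinement argument for the Minkowski sum, the volume computation yielding mixed volume $m^4 - 5m^2 + 4m$, and the same final adjustments (generic translation/rotation to avoid coordinate hyperplanes, division by four for the four-to-one parametrization). This matches the paper's argument, and your closing remarks correctly identify where the genuine bookkeeping effort lies.
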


\section{Experimental evidence for the number of complex squares }
\label{sec:experimental}

How many squares can be inscribed on an algebraic plane curve?
\Fref{thm:mine} states that at most $(m^4 - 5m^2 + 4m)/4$ isolated
squares are inscribed on a plane curve of degree $m$.  Is this bound
sharp, and if so, how often?

\Fref{tab:experiments} tabulates, for degrees three to ten, the number
of squares (possibly with multiplicities) inscribed on the majority of
plane curves from a sample of randomly chosen curves.  The experiments
were carried out using the computer algebra system
Macaulay2~\cite{M2}, the code used is listed in
\Fref{lst:numevidIdeal.m2}. In all the cases the varieties turned out
to be zero-dimensional, in which case all the squares inscribed on a
curve are isolated.  Note that the number of squares found on the
curves of the sample, entered in the third column of
\Fref{tab:experiments}, agrees exactly with the maximum $(m^4 - 5m^2 +
4m)/4$ provided by \Fref{thm:mine}.  Not only is the bound sharp,
these experiments suggest that the bound is attained for \emph{all}
squares inscribed on a generic curve.
Proving this stronger result is out of scope for the current thesis.
\begin{table}
\begin{centering}
\begin{tabular}{|c|c|c|c|c|}
\hline
Degree $m$ &  \# solutions & squares & fraction & field \\
\hline
3  & 48   & 12   & 4991/5000 & $\mathbb{Q}$ \\
4  & 192  & 48   & 4998/5000 & $\mathbb{Q}$ \\
5  & 520  & 130  & 100/100   & $\mathbb{Q}$ \\
6  & 1140 & 285  & 50/50     & $\mathbb{Z}/32479$ \\
7  & 2184 & 546  & 1/1       & $\mathbb{Z}/32479$ \\
8  & 3808 & 952  & 1/1       & $\mathbb{Z}/32479$ \\
9  & 6192 & 1548 & 1/1       & $\mathbb{Z}/32479$ \\
10 & 9540 & 2385 & 1/1       & $\mathbb{Z}/32479$ \\
\hline
\end{tabular}
\captionsetup{width=0.8\textwidth}
\caption{Experimental results for number of complex squares calculated
  using \Fref{lst:numevidIdeal.m2}. The fraction column harbors the
  fraction of the sample of curves that attain the maximal number of squares.}
\label{tab:experiments}
\end{centering}
\end{table}

The curves featuring in \Fref{tab:experiments} were generated by
having Macaulay2 randomly pick the coefficients $c_\gamma$ of $f = \sum_{|\gamma|
  \leq m} c_\gamma x^{\gamma_1}y^{\gamma_2}$ for a fixed degree $m$. As
the degree goes up the memory usage grows. Even a degree six curve
already used more than fourteen gigabytes of memory when working with the
rationals as a base field. Computations for degree seven ran out of
memory after using more than fifty gigabytes.
For this reason finite fields were used in
the calculations with higher degrees.

\section{Illustrative examples of real squares}
\label{sec:illustrative}

The previous section argues that there is not much of interest going
on in the complex case, almost all complex algebraic plane curves inscribe the
maximum number of squares.
For real plane curves, however, we have no
evidence as to what the generic case is.

This section contains selected real plane curves of low degree that
inscribe varying numbers of squares.  The pictures have been plotted
in Maple, using the code from \Fref{lst:drawSquares.mw}, based on
numerical data for the locations of the squares computed by
PHCpack~\cite{m2-phcpack}.  The topology of the curves has been
determined by a manual process: the RAGlib~\cite{raglib} Maple package
provides at least one point on each connected component of a plane
curve, by inspecting the plot and intersecting the curves with
suitably chosen lines we can determine which visible components
connect outside of the plotted range.  The ``realroots.m2''
functionality written by Dan Grayson and Frank
Sottile~\cite{sottile-computations} was used for determining how many
real intersections these lines and the curves have.  The polynomials
that define the curves in the plots are listed in
\Fref{tab:long-polys}.

The maximal number of squares inscribed
on a third degree curve is twelve, according to
\Fref{thm:mine}; the examples in this section show that
a third degree real curve can inscribe any number of squares from zero to twelve, see
\Fref{tab:three-topologies}.
 Two topological types attaining the maximum number
are shown in \Fref{fig:twelve-clear} and
\Fref{fig:twelve-awesome}. Curves of these types look like perturbations of either a)
an oval times a line, or b) the product of three lines.  The
perturbation approach of constructing curves is called the ``marking
method'' by Gudkov~\cite[Section~2.10]{gudkov}.

The proofs of Emch, Jerrard and Stromquist
establish that, generically, on a smooth enough Jordan curve the number of
inscribed squares will be odd.  It is no surprise then that we see
the same behaviour for algebraic plane curves that topologically
speaking are
circles. \Fref{fig:inscribed-zeroOne} shows algebraic Jordan curves
inscribing one, three, five and seven squares.


\begin{figure}
\centering
  \begin{subfigure}[b]{0.44\linewidth}
    \includegraphics[width=\textwidth]{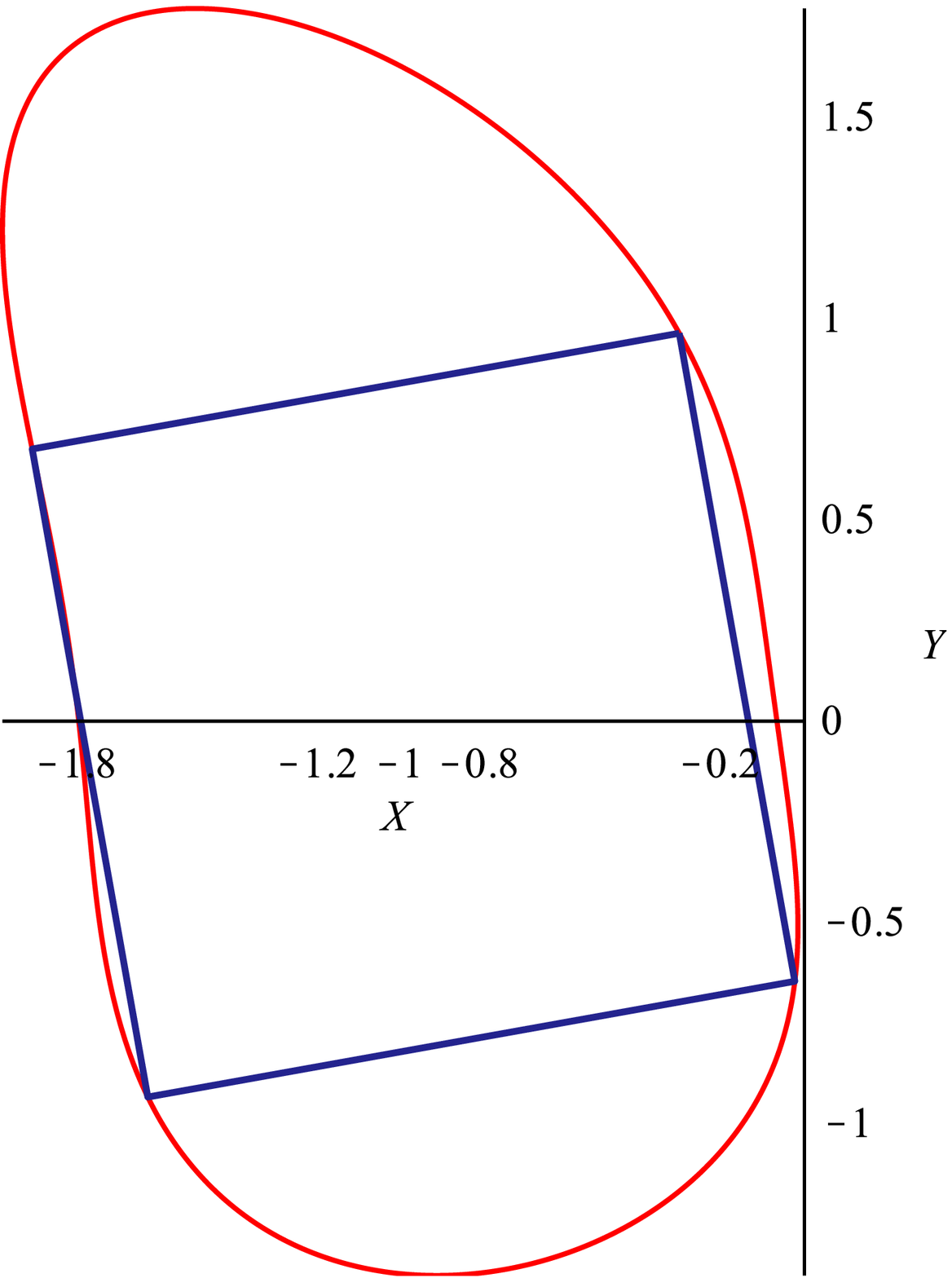}
    \caption{\inscribeOne{One}{30}}
  \end{subfigure}
  \begin{subfigure}[b]{0.44\linewidth}
    \includegraphics[width=\textwidth]{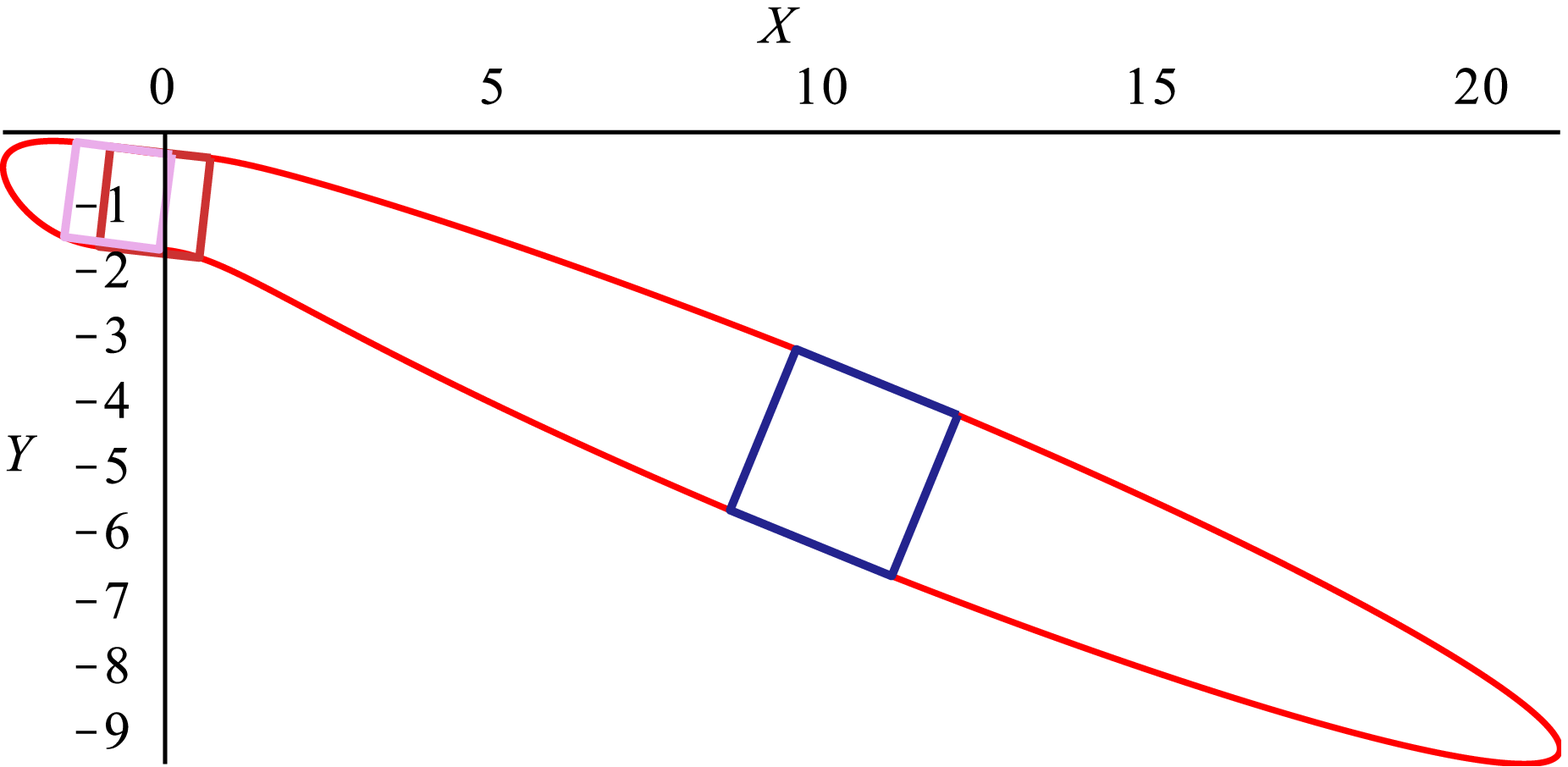}
    \caption{\inscribe{Three}{31}}
  \end{subfigure}
 \begin{subfigure}[b]{0.44\linewidth}
   \includegraphics[width=\textwidth]{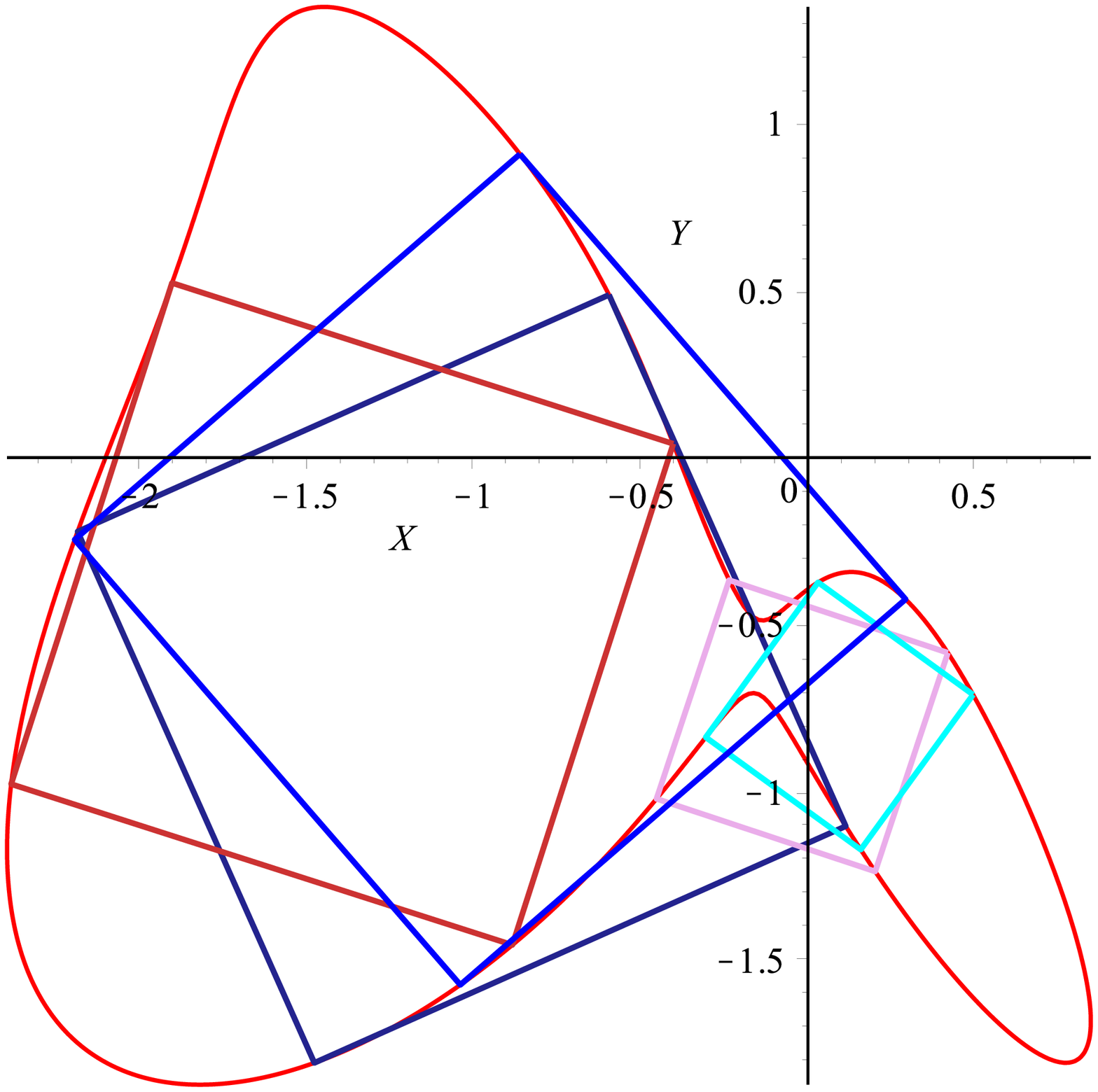}
    \caption{\inscribe{Five}{32}}
  \end{subfigure}
 \begin{subfigure}[b]{0.44\linewidth}
   \includegraphics[width=\textwidth]{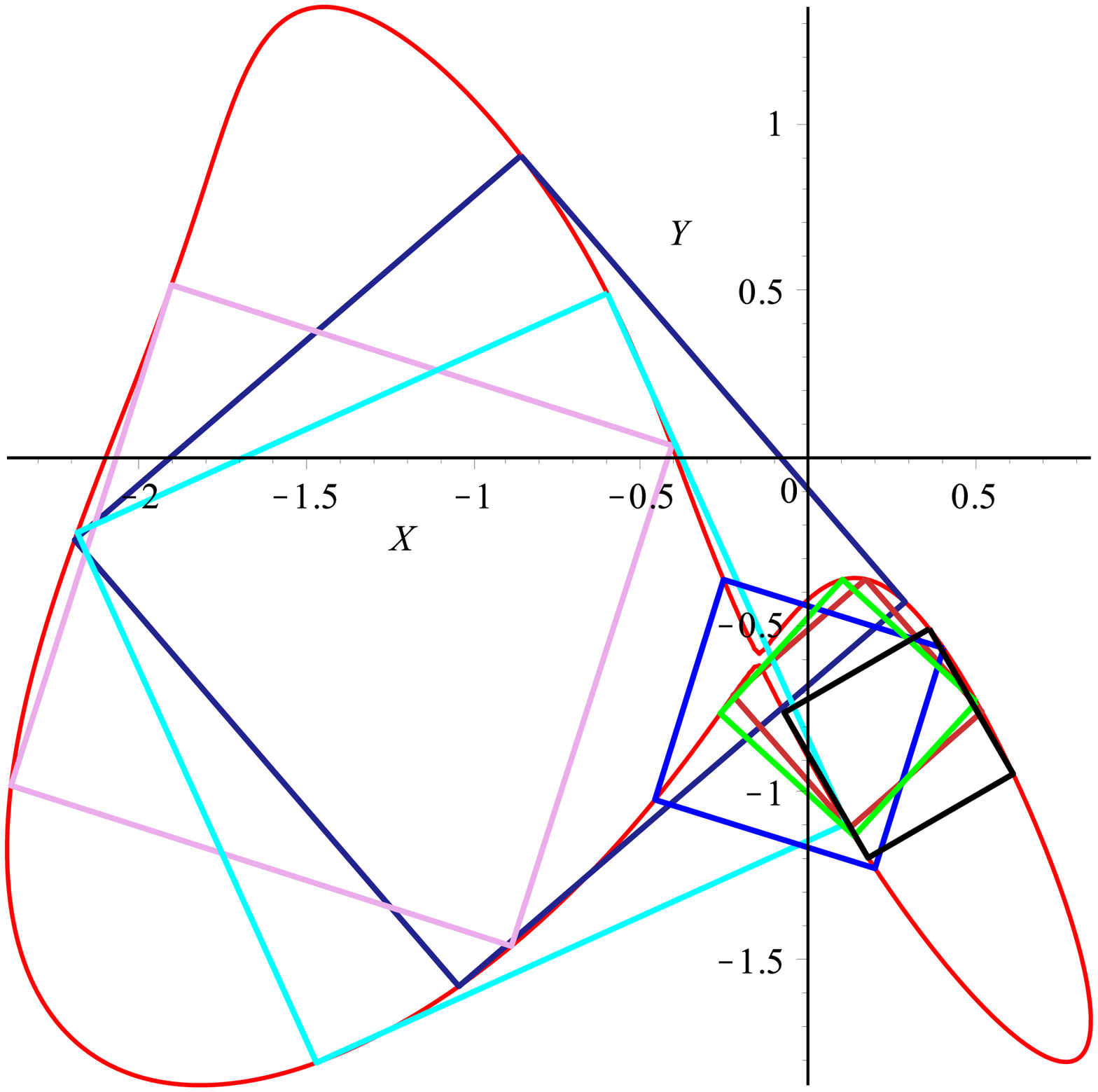}
    \caption{\inscribe{Seven}{33}}
  \end{subfigure}
  \caption{Algebraic Jordan curves inscribing an odd number of
    squares.}
  \label{fig:inscribed-zeroOne}
\end{figure}

Recall that a Jordan curve starts and ends at the same point without
intersecting itself, it is closed and simple.  A Jordan curve has only
one connected component and it is homeomorphic to a circle.  Unlike
Jordan curves, a simple algebraic plane curve can consist of multiple
components, and the components can be homeomorphic to a circle or to
the real line.  \Fref{tab:square-topologies} tabulates the number of
squares found on plane curves computed for this thesis with the code from
\Fref{lst:poging3.m2}; the rows of the table are indexed by the number
of components homeomorphic to the real line, and the columns are
indexed by the number of components homeomorphic to a circle (called
ovals).

The example curves homeomorphic to a real line, as well as some other
topological types of curves, exhibit a parity condition on the number
of inscribed squares just as in the Jordan case, see
\Fref{sec:possible-parity}.  The types for which this occurs have
their entries shaded gray in \Fref{tab:square-topologies}.  Whether
this parity condition is an actual property of these curves or an
artifact of our selection of examples remains to be seen.  Other
topological types have both an odd and an even number of squares,
these are listed in \Fref{sec:no-parity}.


\begin{table}[H]
\centering
\begin{tabular}{l|c|c|c|c|c}
\diaghead{lines ova $i$}{lines $i$}{ovals $j$} & 0 & 1 & 2 & 3 & 4\\
\hline
0 & {} & \cellcolor{gray}{1, 3, 5, 7} & \cellcolor{gray} {0, 2, 4, 6, 16} & & {8} \\
1 & \cellcolor{gray} {0, 2, 4, 6, 12} & {1, $2^*$, 3, 5, 7, 9, 11} & {} &      {} &  \\
2 & {1, 4, 8, 9, 11} & \cellcolor{gray}{3, 5, 7} & {} & {} & \\
3 & {1, 4, 7, 8, 10, 11, 12} & {8, 9, 11} &  {} & {} &  \\
\end{tabular}
    \captionsetup{width=0.97\textwidth}
    \caption{Number of squares inscribed on curves of degree up to five. The
      $(i, j)$-th cell corresponds to curves homeomorphic to $i$
      copies of the real line and $j$ copies of the circle.  The entry
      $2^*$ in the (1, 1) cell corresponds to \Fref{fig:debate}.}
\label{tab:square-topologies}
\end{table}
The $2$ that occurs in the entry for curves that consist of one line
and one oval corresponds to \Fref{fig:debate}. Inclusion of this reducible curve
is debatable.
If one allows reducible curves,
then taking unions of lower degree curves will construct examples
where the total number of inscribed squares is the sum of the squares
inscribed on each curve in the union, each part behaving independently.  At this point it is not clear
to us whether reducible curves should be excluded.
\begin{table}[H]
\footnotesize
\hspace{0.1em}
\begin{subtable}[t]{.45\linewidth}
  \vspace{0pt}
      \centering
      \begin{tabular}{l|c|c}
        & 0 & 1 \\
        \hline
        0 & \\
        1 & \cellcolor{gray}{0, 2, 6, 12} & {1, 2, 3, 5, 7, 9, 11} \\
        2 & \\
        3 & {4, 7, 8, 10, 11, 12} \\
      \end{tabular}
      \caption{Squares inscribed on degree three curves}
      \label{tab:three-topologies}
   \end{subtable}%
\,
    \begin{subtable}[t]{.45\linewidth}
      \vspace{0pt}
    \centering
      \begin{tabular}{l|c|c|c|c|c}
        & 0 & 1 & 2 & 3 & 4\\
        \hline
        0 & {} & \cellcolor{gray}{1, 3, 5, 7} & \cellcolor{gray}{0, 2, 4, 6, 16} & {} & {8} \\
        1 &  &  &  & \\
        2 & {4, 8, 9, 11} & \cellcolor{gray} {3, 5,      7} & & \\
        3 &  &  &  &\\
      \end{tabular}
      \caption{Squares inscribed on degree four curves}
    \end{subtable}
    \captionsetup{width=\linewidth}
    \caption{Number of squares inscribed on curves of degree three and four. The
      $(i, j)$-th cell corresponds to curves homeomorphic to $i$
      copies of the real line and $j$ copies of the circle.
    }
\end{table}

\subsection{Topological types of curves with a possible parity condition on the number of
  inscribed squares}
\label{sec:possible-parity}

\subsubsection{One topological line inscribing an even number of squares}

A straight line does not inscribe any squares. Among the curves
computed for this thesis, all of the curves that consist of one
topological component homeomorphic to the real line inscribe an even
number of squares.  Included are two examples of cubic curves
inscribing the maximal number of twelve squares:
\Fref{fig:twelve-clear} and \Fref{fig:inscribed-oneZero-12-max}.  The
other curves in \Fref{fig:inscribed-oneZero} inscribe zero, two, four
and six squares.

Curves that are homeomorphic to a real line but not neccessarily
algebraic are not restricted by this parity condition of inscribing an
even number of squares. Consider
the curve, displayed in \Fref{fig:geo-zigzag}, consisting of two parallel rays in opposite directions,
connected by a line segment at a fortyfive degree angle to both the
rays.  This curve inscribes one
square, it has the line segment $BC$ as a diagonal.

\begin{figure}[H]
\centering
\includegraphics[width=0.7\linewidth]{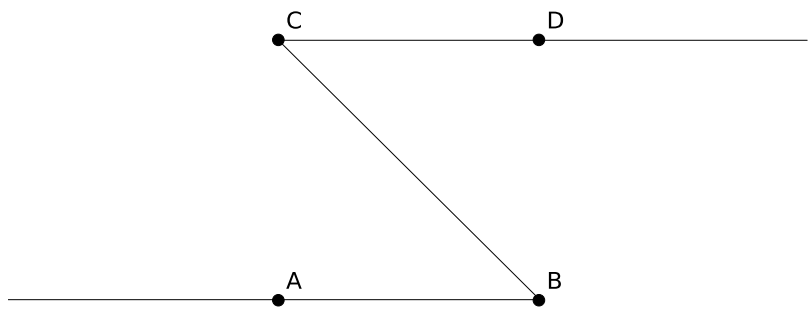}
\caption{A topological line inscribing one square.}
\label{fig:geo-zigzag}
\end{figure}

\begin{figure}
\centering
  \begin{subfigure}[b]{0.45\textwidth}
    \includegraphics[width=\textwidth]{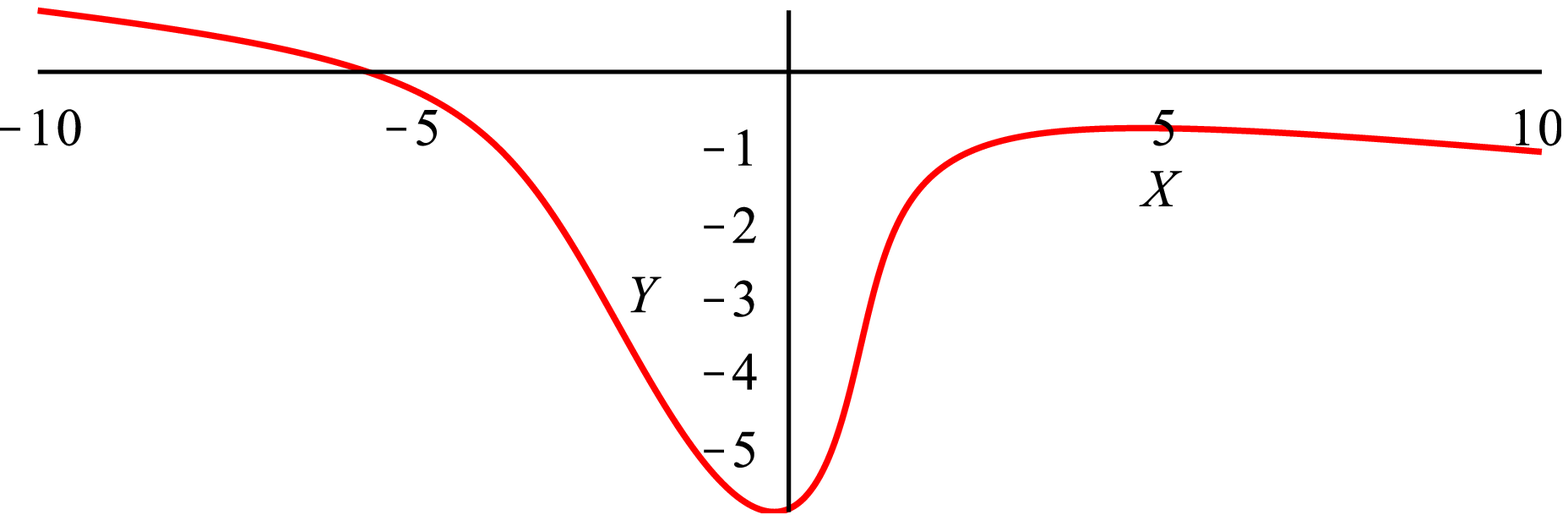}
    \caption{\inscribe{Zero}{1}}
  \end{subfigure}
  \begin{subfigure}[b]{0.45\textwidth}
    \includegraphics[width=\textwidth]{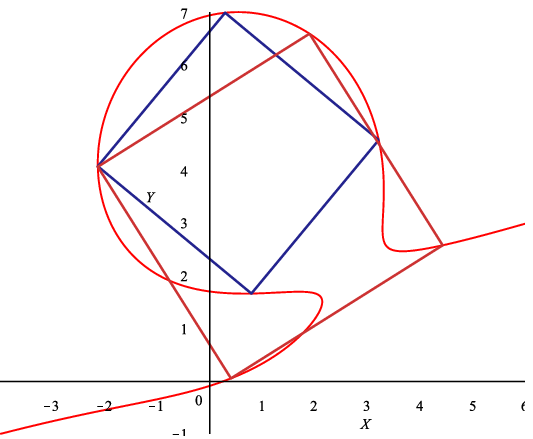}
   \caption{\inscribe{Two}{2}}
  \end{subfigure}
 \begin{subfigure}[b]{0.45\textwidth}
   \includegraphics[width=\textwidth]{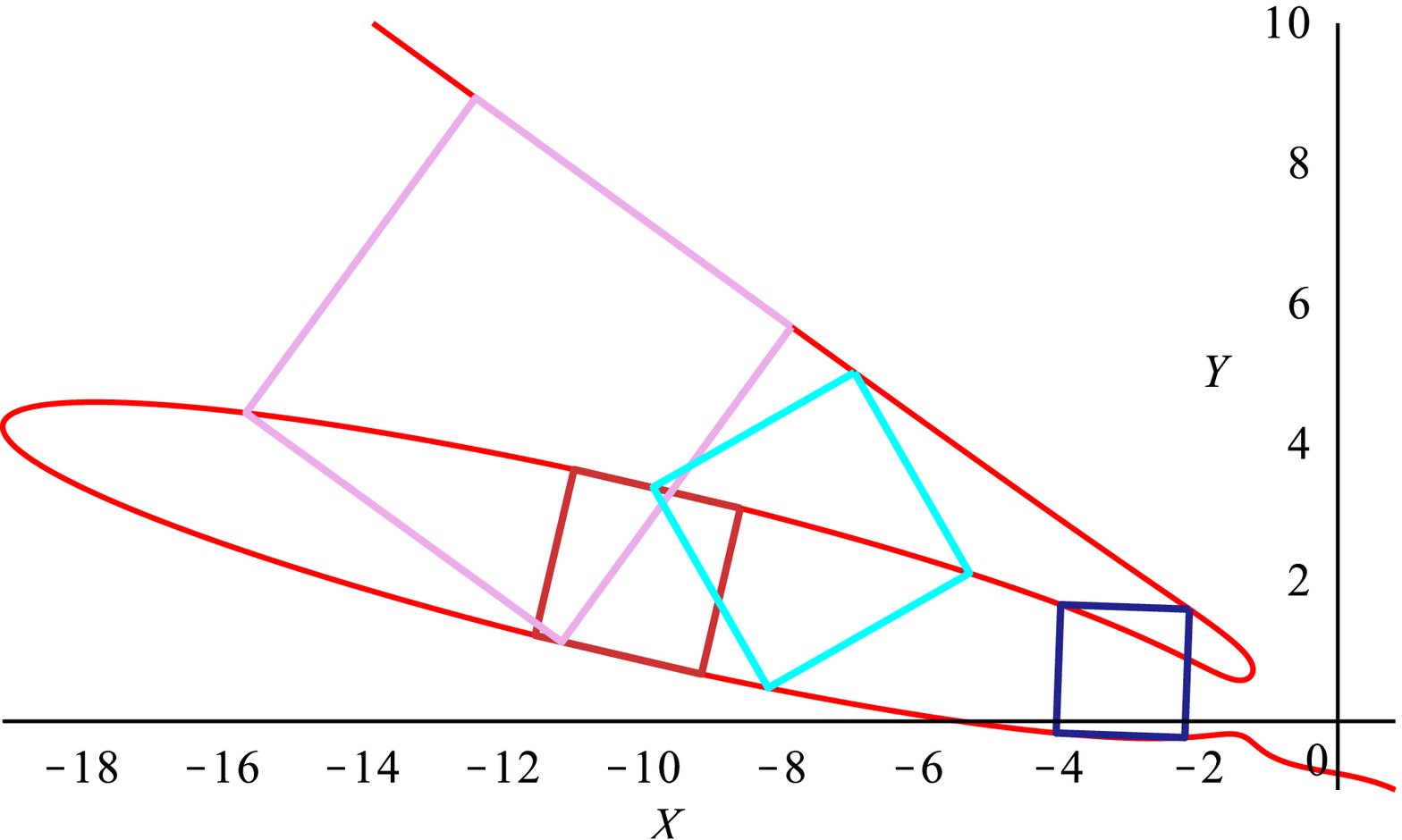}
   \caption{\inscribe{Four}{3}}
  \end{subfigure}
 \begin{subfigure}[b]{0.45\textwidth}
   \includegraphics[width=\textwidth]{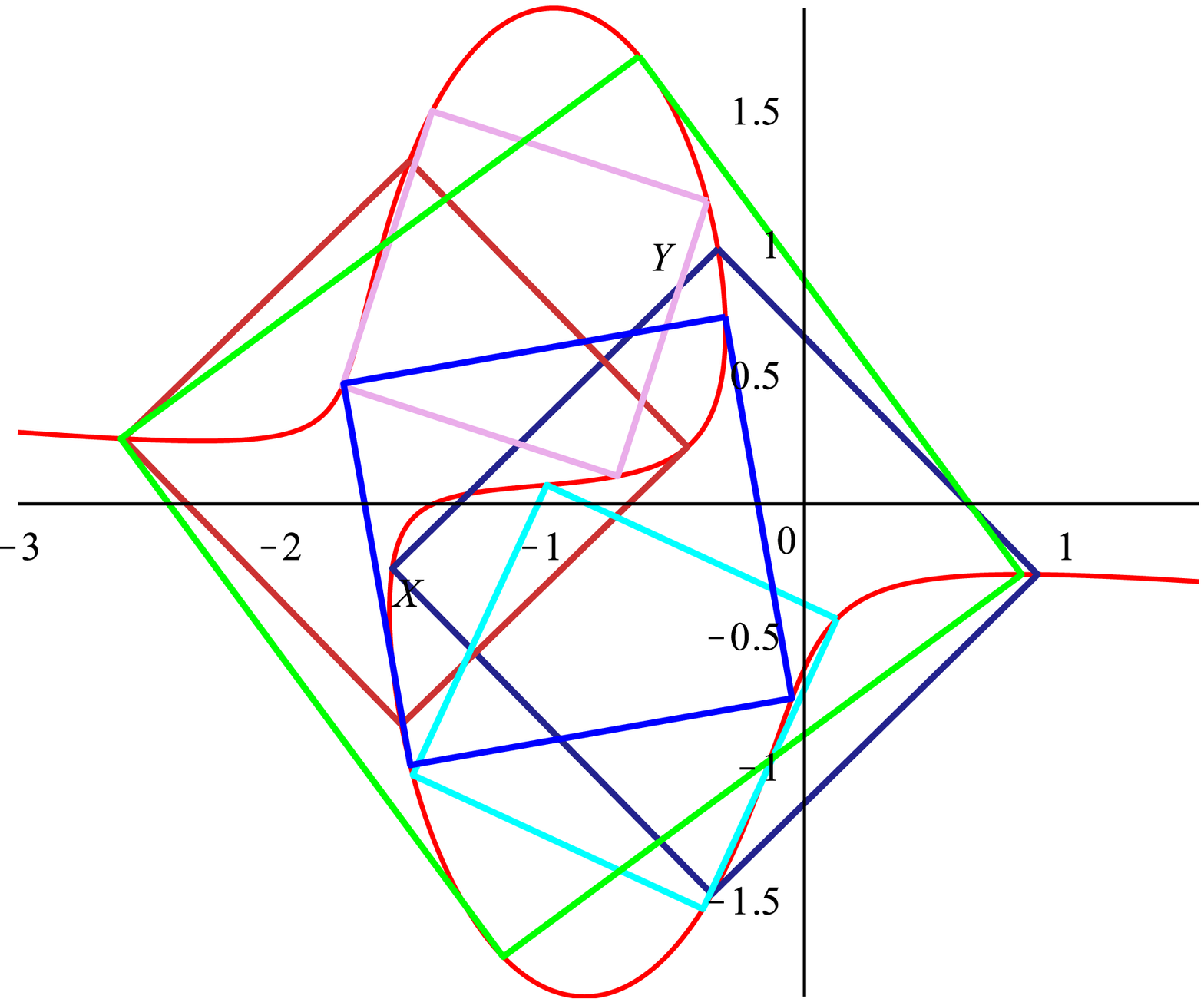}
   \caption{\inscribe{Six}{4}}
  \end{subfigure}
 \begin{subfigure}[b]{0.45\textwidth}
   \includegraphics[width=\textwidth]{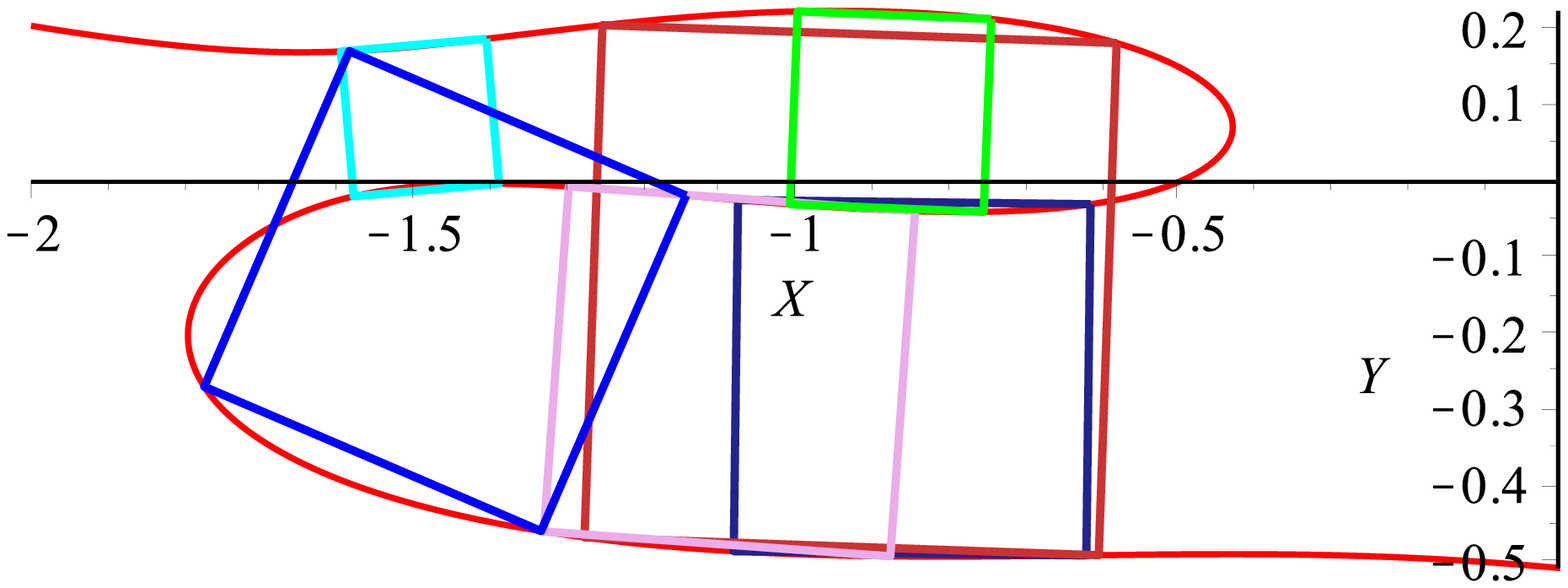}
   \caption{\inscribe{Six}{5}}
  \end{subfigure}
  \begin{subfigure}[b]{0.45\textwidth}
   \includegraphics[width=\textwidth]{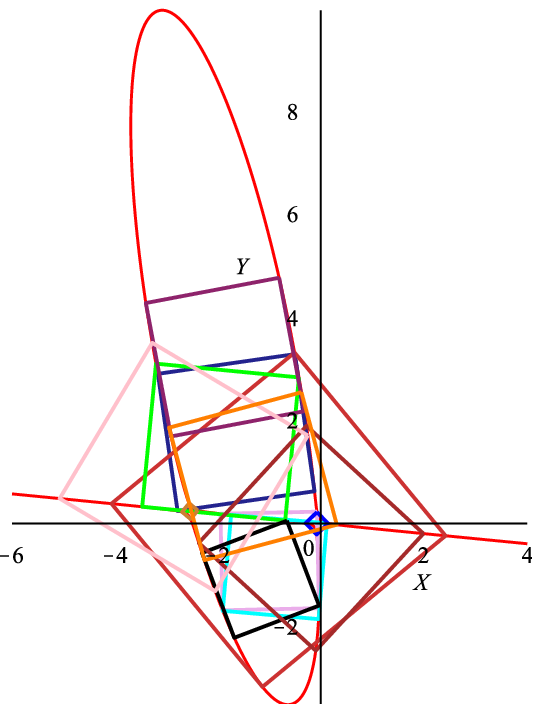}
   \caption{\inscribe{Twelve}{6}}
  \label{fig:inscribed-oneZero-12-max}
  \end{subfigure}
  \caption{An even number of squares inscribed on a line.}
  \label{fig:inscribed-oneZero}
\end{figure}
\begin{figure}
\centering
   \includegraphics[width=0.8\textwidth]{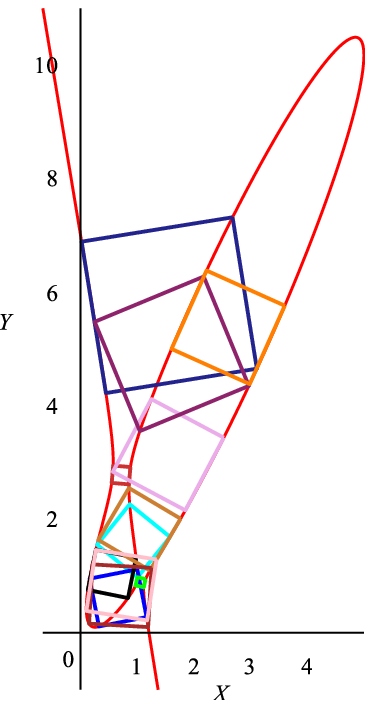}
   \caption{Twelve squares inscribed on
     \protect\hyperlink{poly:f7}{$f_7$} in \Fref{tab:long-polys}}
    \label{fig:twelve-clear}
\end{figure}

\subsubsection{Pairs of ovals inscribing an even number of squares}
The curves in \Fref{fig:inscribed-zeroTwo} consist of two ovals and
inscribe zero, two, four, six and sixteen isolated squares.  The
curves in \Fref{fig:inscribed-zeroTwo-zero} and
\Fref{fig:inscribed-zeroTwo-sixteen} are of the form $(X^2 + Y^2/4 -
1)(X^2/4 + Y^2 - 1) + k$. If $(X, Y)$ lies on such a curve,
then by symmetry it forms one corner of a square centered at the
origin.
The squares depicted in Figures
\ref{fig:inscribed-zeroTwo-zero} and \ref{fig:inscribed-zeroTwo-sixteen} are the squares that do not lie on the
positive-dimensional components of respectively $\V(I_{f_{41}})$ and $\V(I_{f_{45}})$.
\begin{figure}
\centering
  \begin{subfigure}[b]{0.45\textwidth}
    \includegraphics[width=\textwidth]{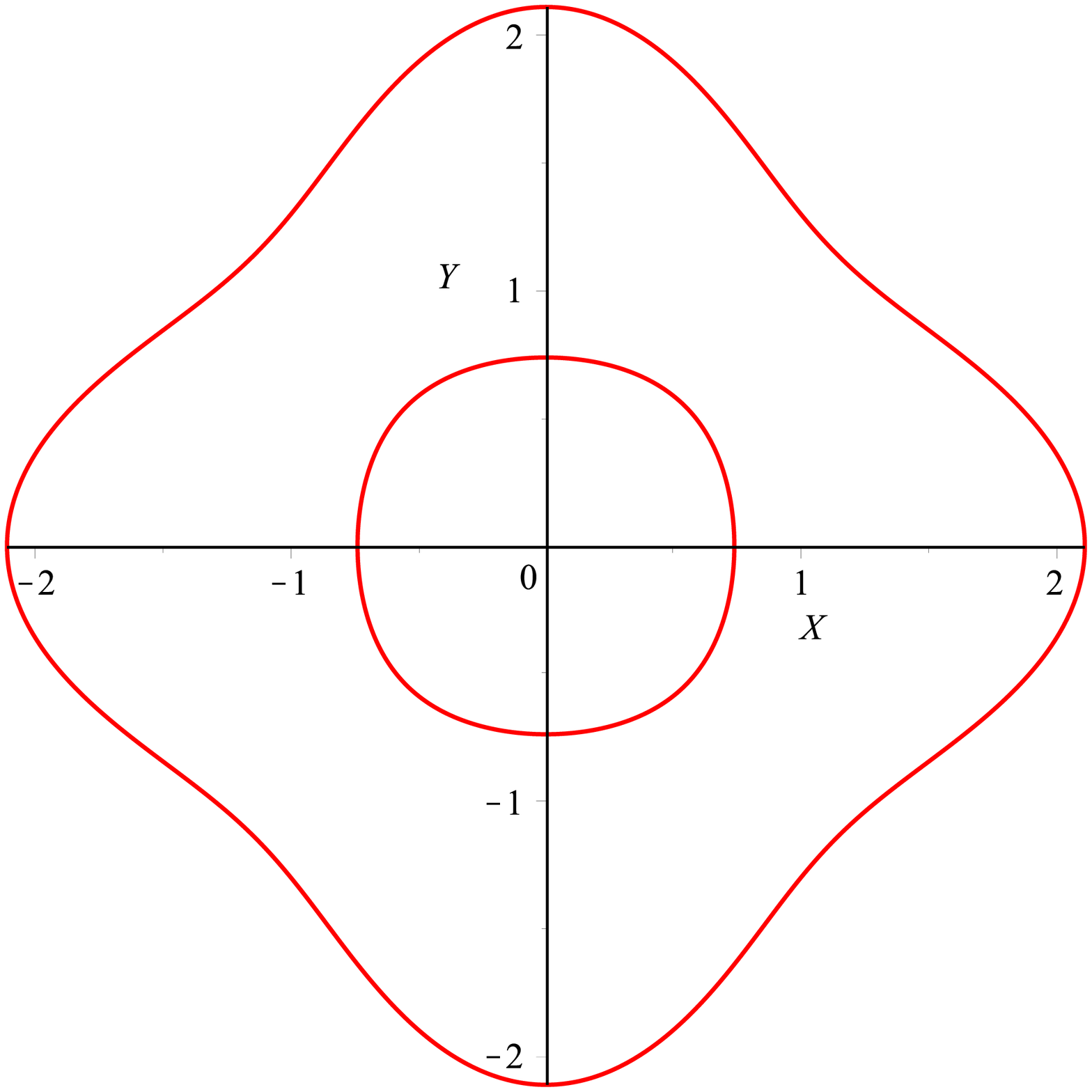}
    \caption{\inscribe{Zero}{41}}
    \label{fig:inscribed-zeroTwo-zero}
  \end{subfigure}
  \begin{subfigure}[b]{0.45\textwidth}
    \includegraphics[width=\textwidth]{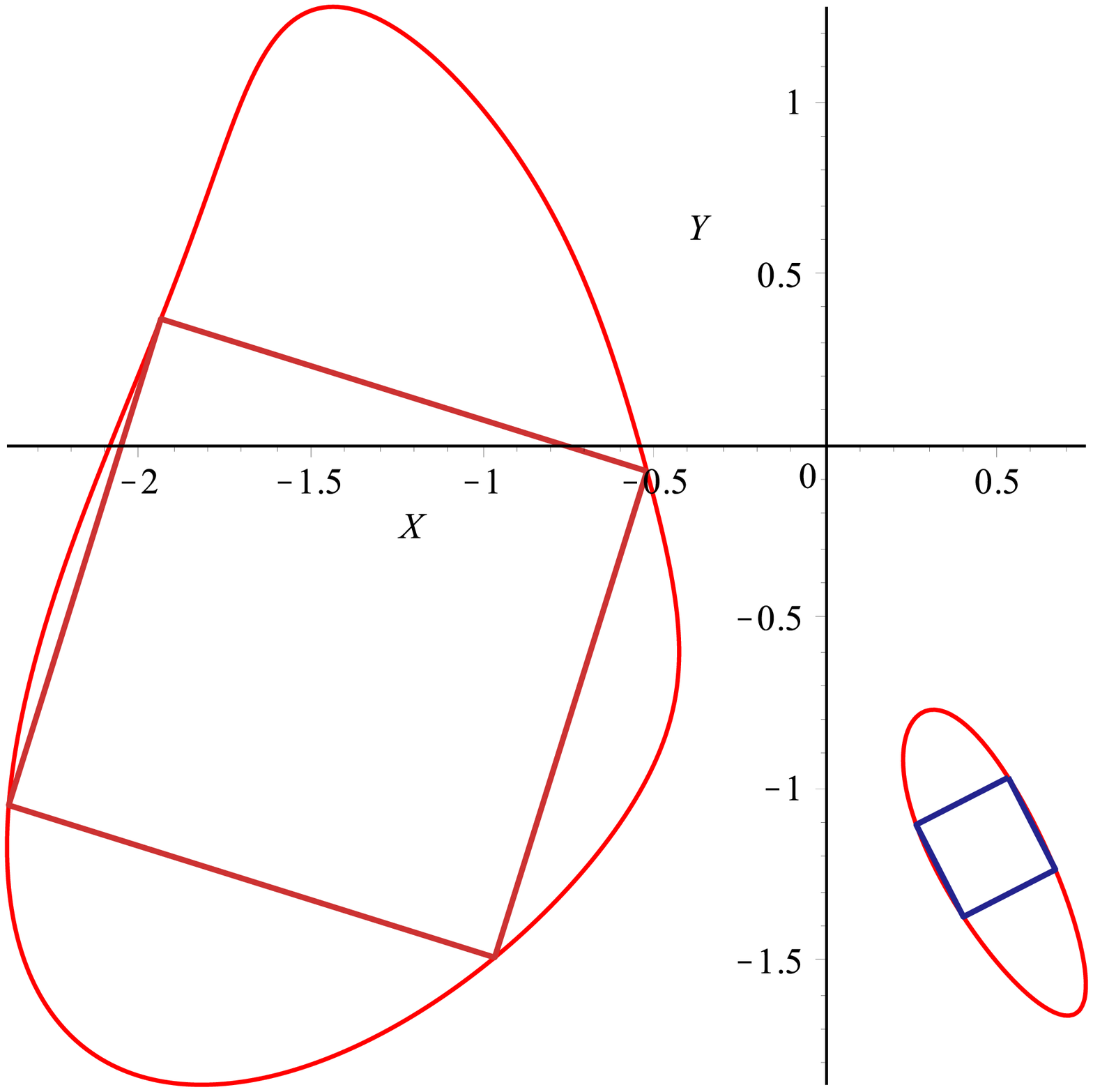}
    \caption{\inscribe{Two}{42}}
  \end{subfigure}
 \begin{subfigure}[b]{0.45\textwidth}
   \includegraphics[width=\textwidth]{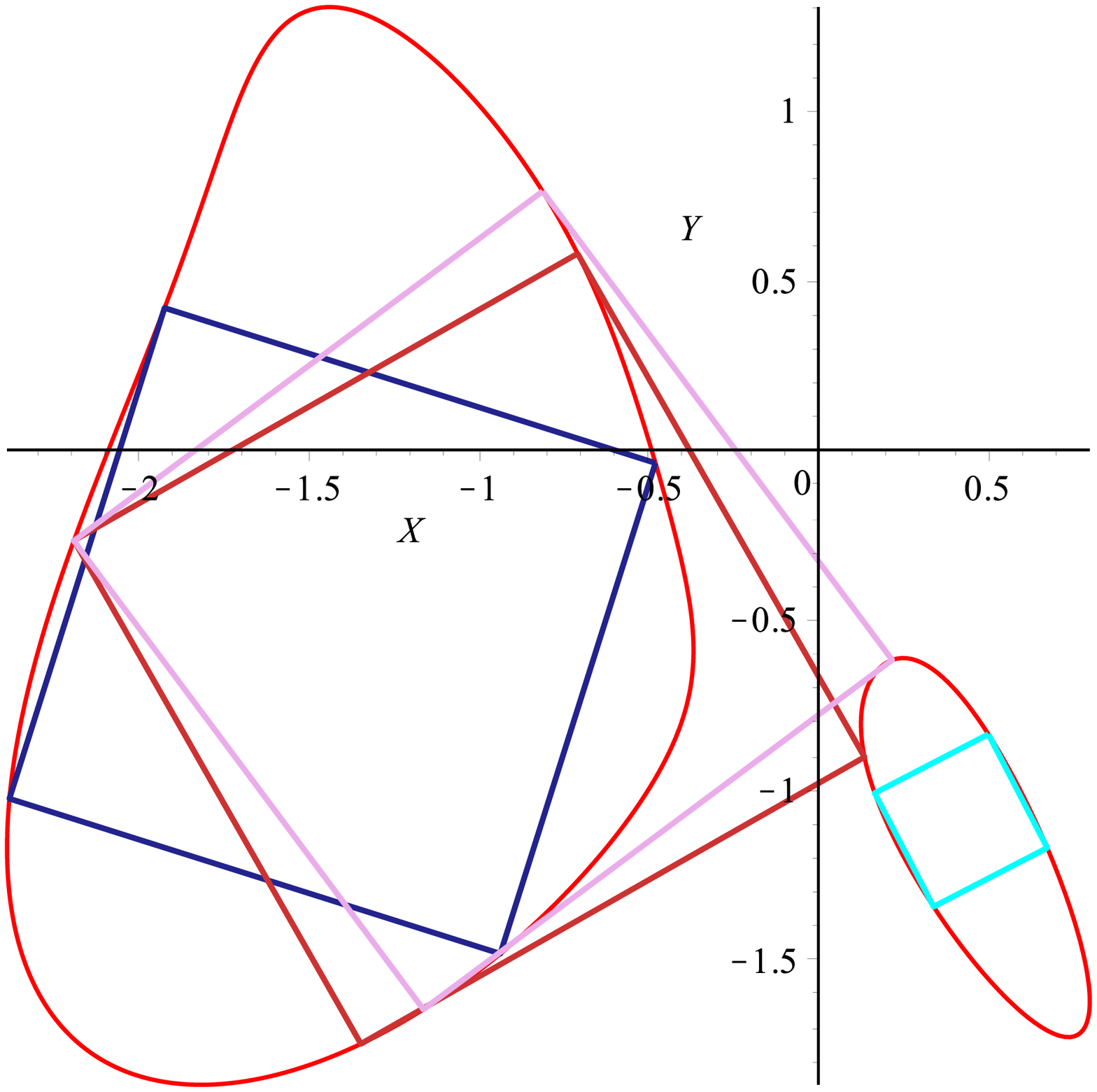}
   \caption{\inscribe{Four}{43}}
  \end{subfigure}
 \begin{subfigure}[b]{0.45\textwidth}
   \includegraphics[width=\textwidth]{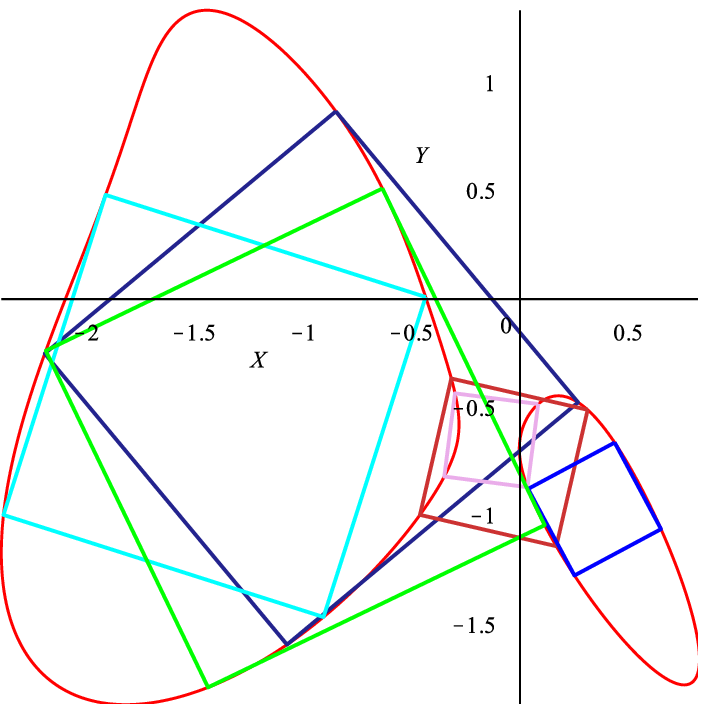}
    \caption{\inscribe{Six}{44}}
  \end{subfigure}
 \begin{subfigure}[b]{0.45\textwidth}
   \includegraphics[width=\textwidth]{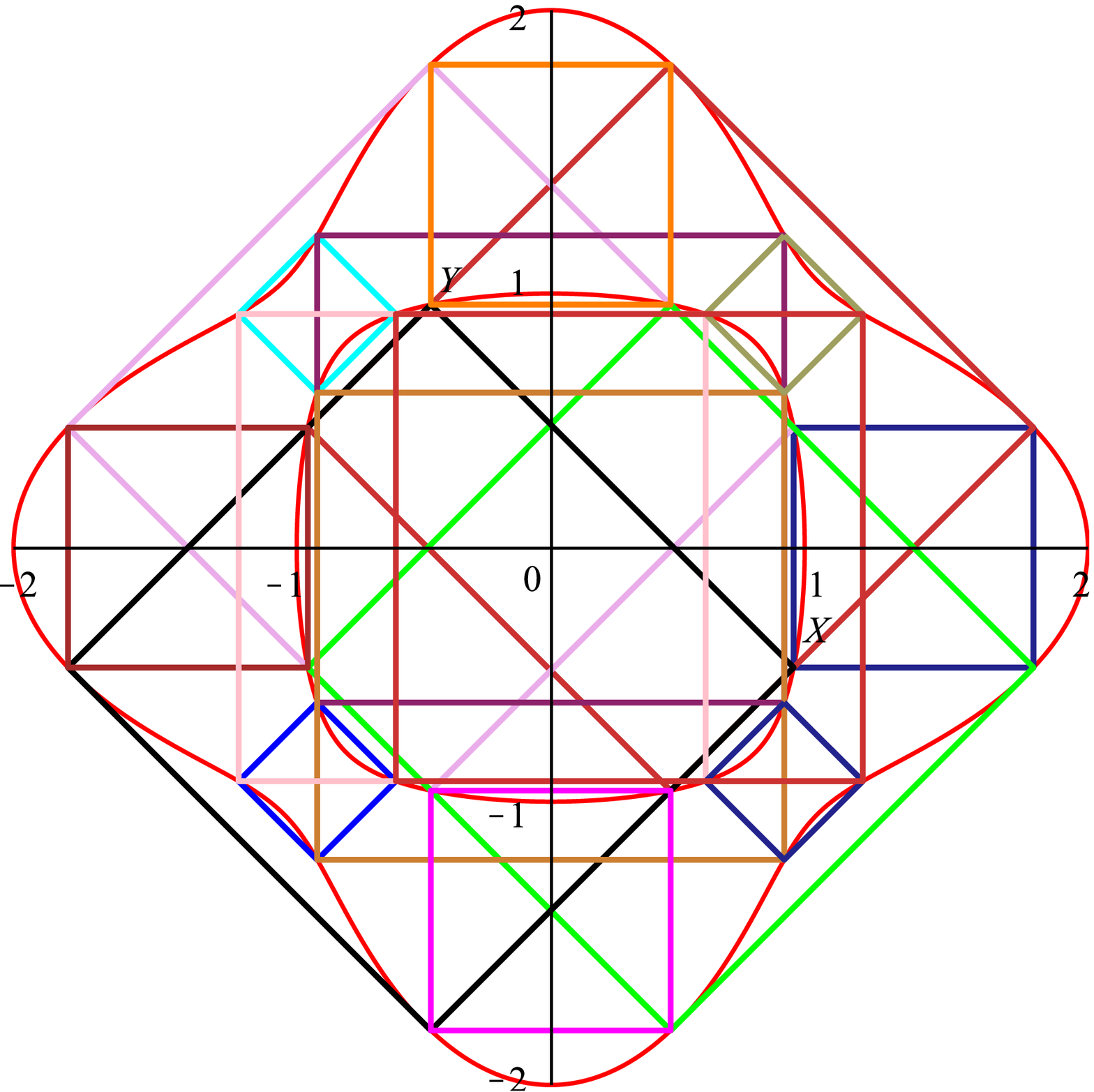}
   \caption{\inscribe{Sixteen}{45}}
    \label{fig:inscribed-zeroTwo-sixteen}
  \end{subfigure}
\begin{subfigure}[b]{0.45\textwidth}
   \includegraphics[width=\textwidth]{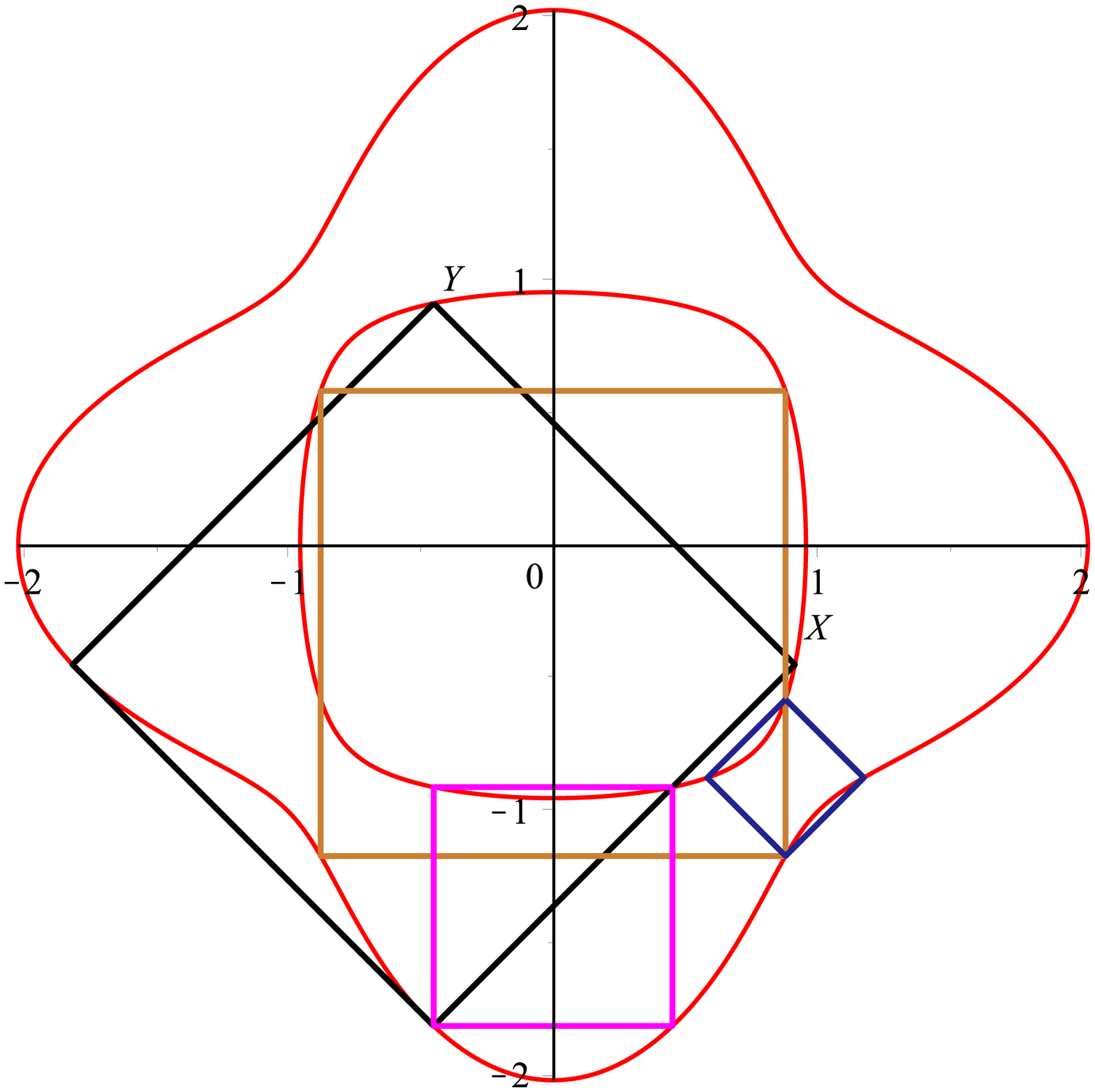}
   \caption{Up to rotational symmetry, four squares inscribed on \hyperlink{poly:f45}{$f_{45}$}}
  \end{subfigure}
  \caption{Two ovals inscribing an even number of squares.}
  \label{fig:inscribed-zeroTwo}
\end{figure}

\subsubsection{An oval and two lines inscribing an odd number of squares}

The curves in \Fref{fig:inscribed-twoOne} inscribe an odd number of squares:
three, five and seven.
\begin{figure}[H]
  \begin{subfigure}[b]{0.5\linewidth}
    \includegraphics[width=\textwidth]{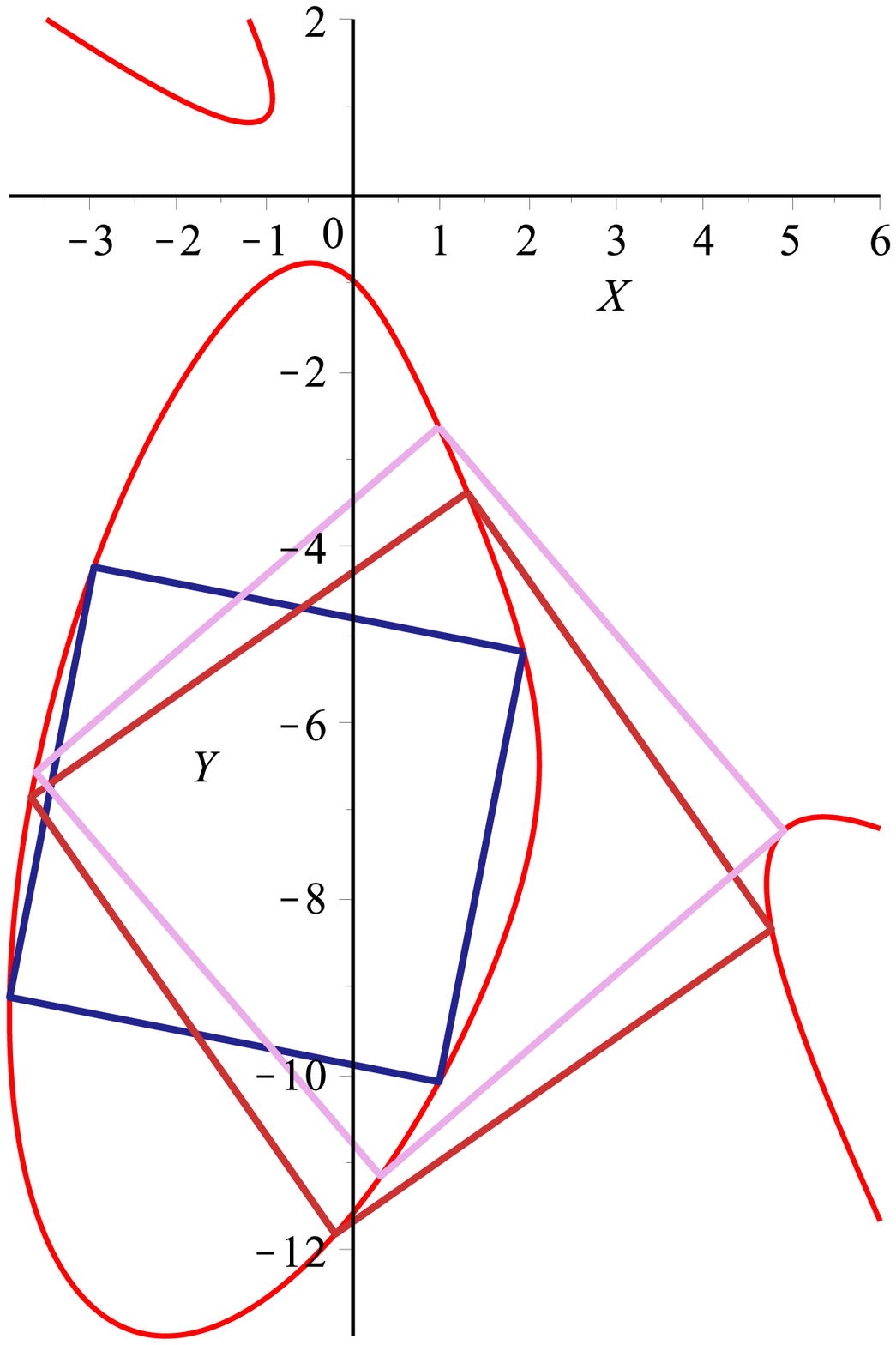}
    \caption{\inscribe{Three}{34}}
  \end{subfigure}
  \begin{subfigure}[b]{0.5\linewidth}
    \includegraphics[width=\textwidth]{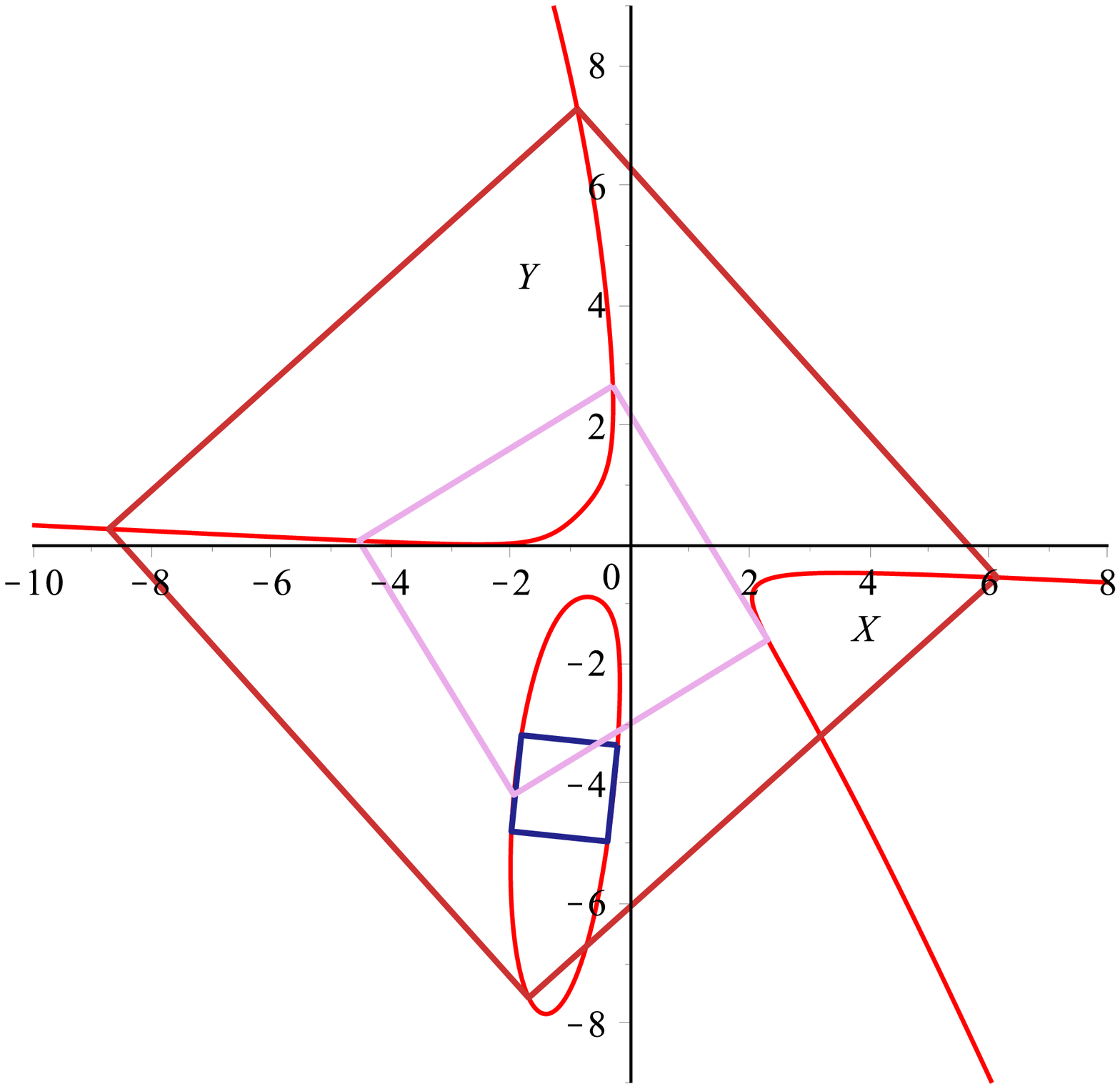}
    \caption{\inscribe{Three}{35}}
  \end{subfigure}
\end{figure}
\begin{figure}[H]
  \begin{subfigure}[b]{0.5\linewidth}
    \includegraphics[width=\textwidth]{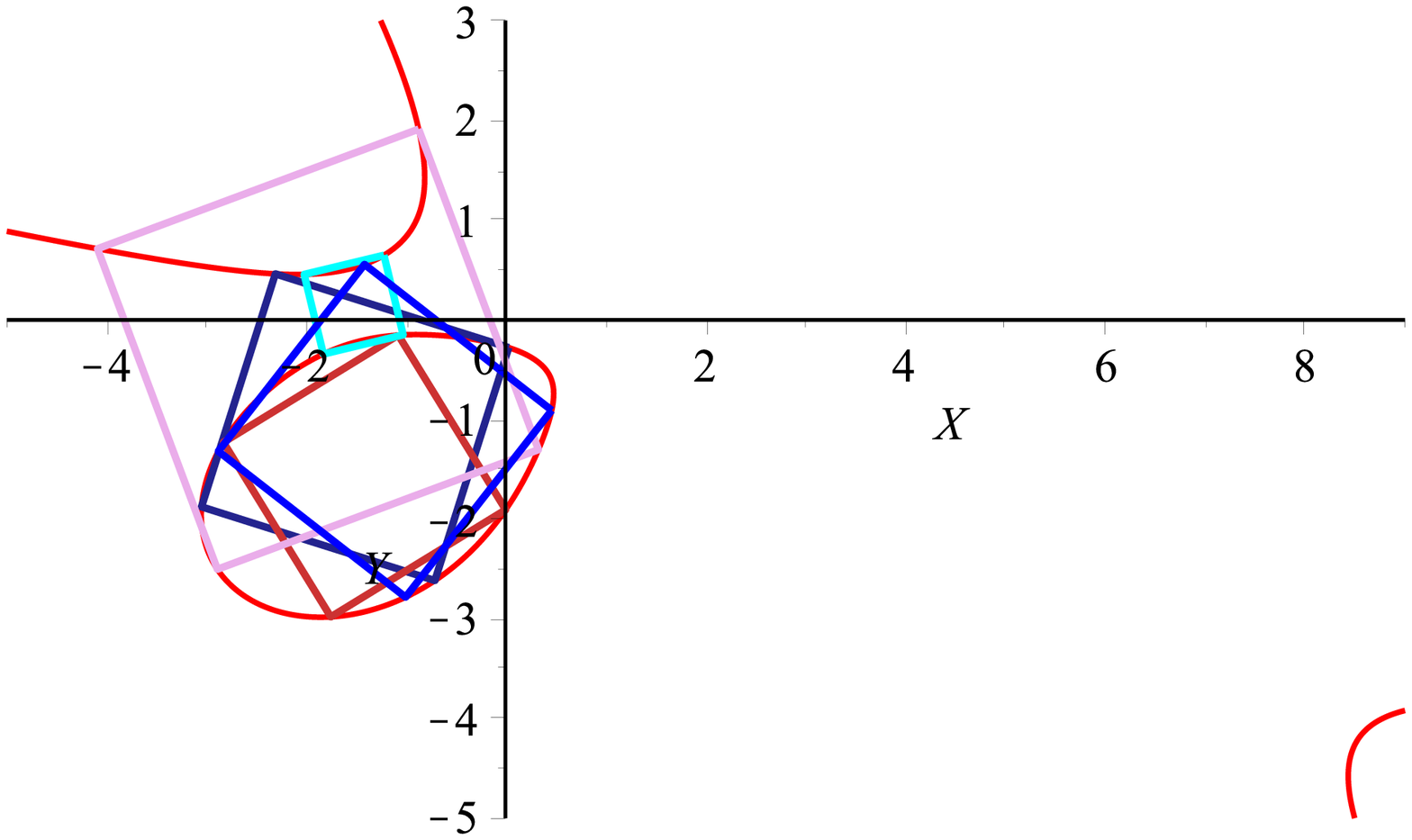}
    \caption{\inscribe{Five}{36}}
  \end{subfigure}
  \begin{subfigure}[b]{0.5\linewidth}
    \includegraphics[width=\textwidth]{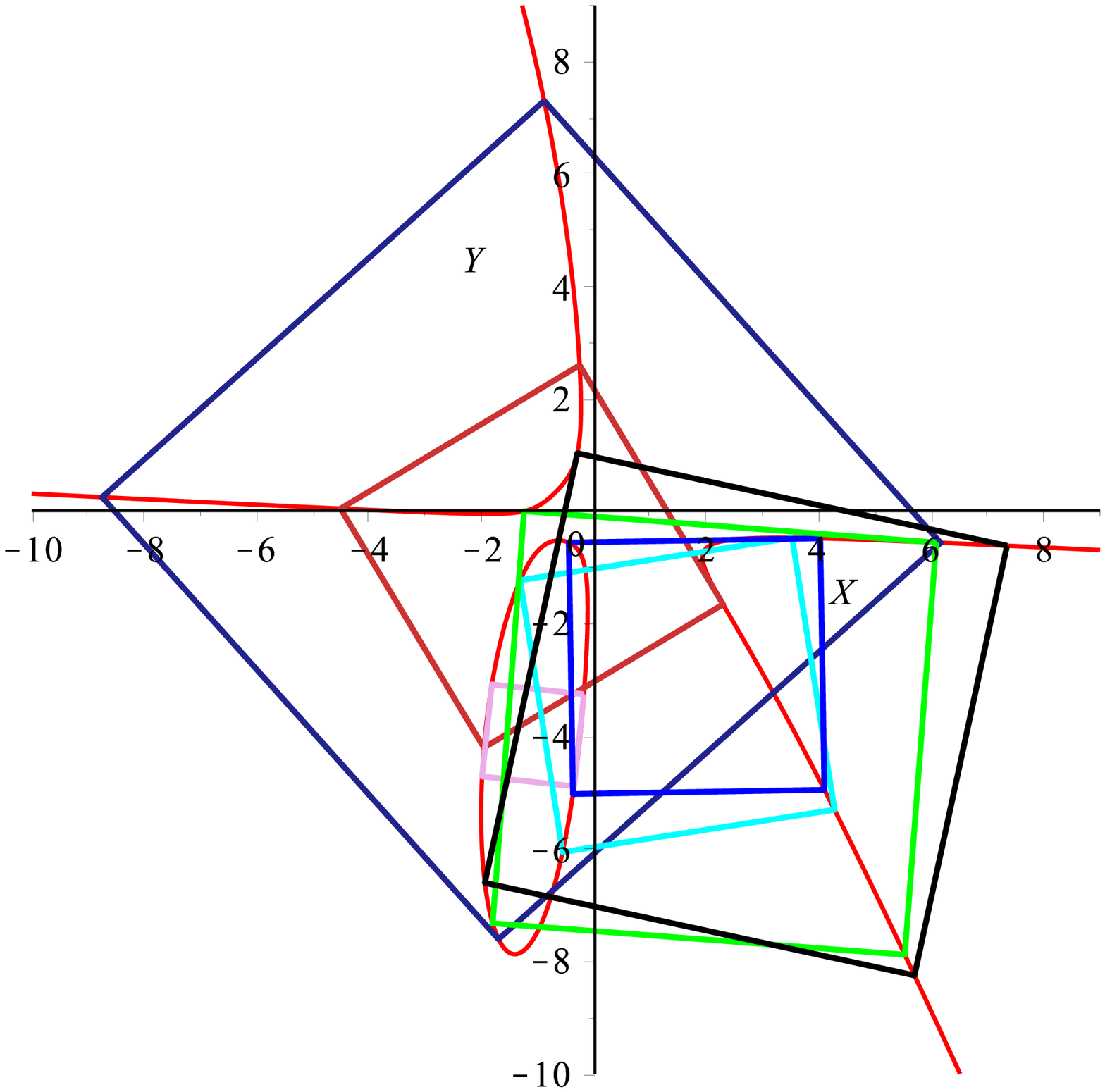}
    \caption{\inscribe{Seven}{37}}
  \end{subfigure}
\caption{An oval and two lines inscribing an odd number of squares.}
\label{fig:inscribed-twoOne}
\end{figure}

\subsection{Topological types of curves lacking a parity condition on
  the number of inscribed squares}
\label{sec:no-parity}

\subsubsection{Squares inscribed on one oval and one line}

The curves in \Fref{fig:inscribed-oneOne} inscribe one, two, three,
five, seven, nine and eleven squares. Note that the curve in Figure \ref{fig:debate} is reducible.

\begin{figure}[H]
\centering
  \begin{subfigure}[b]{0.3\linewidth}
    \includegraphics[width=\textwidth]{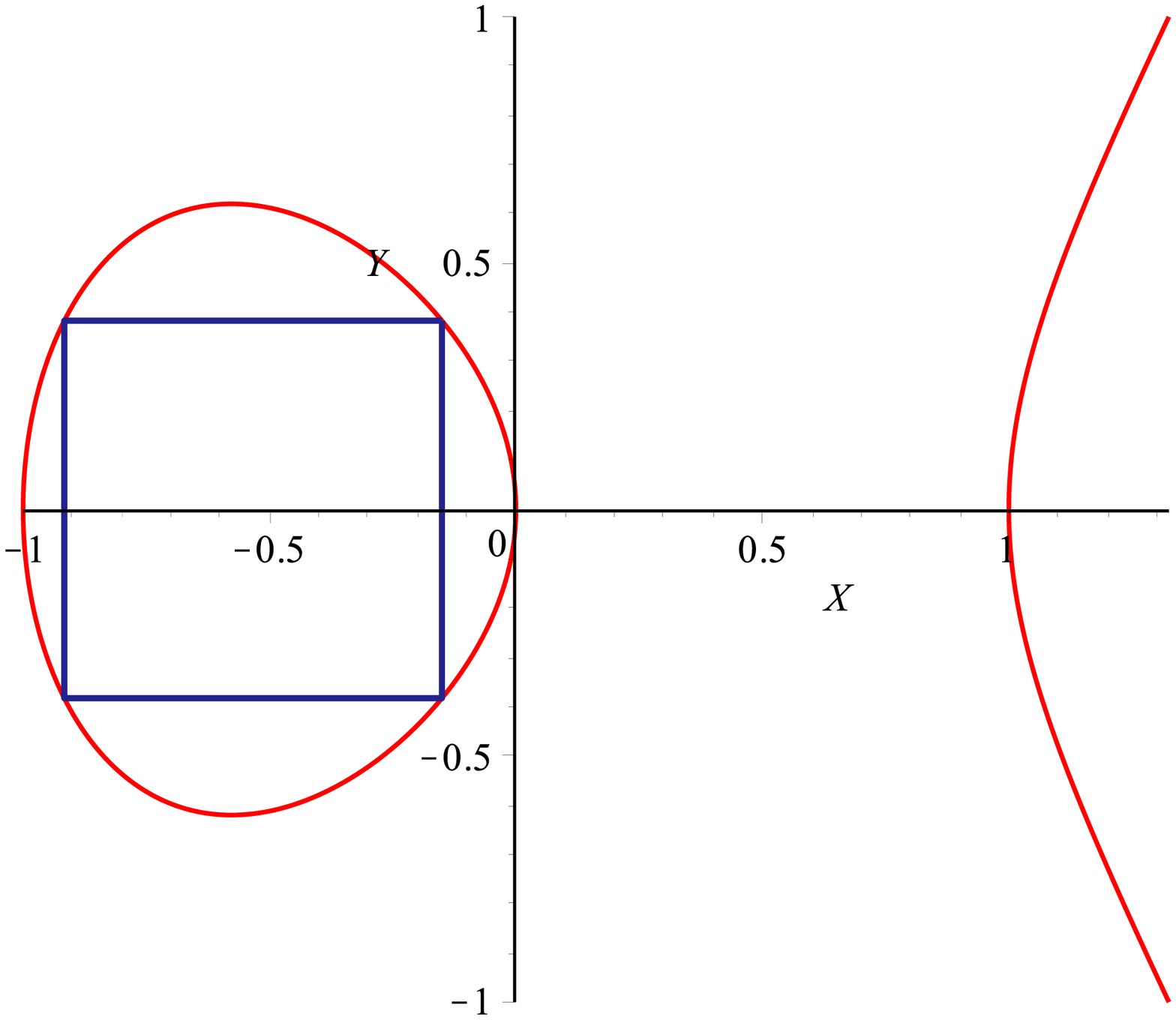}
    \caption{\inscribeOne{One}{18}}
  \end{subfigure}
  \begin{subfigure}[b]{0.3\linewidth}
    \includegraphics[width=\textwidth]{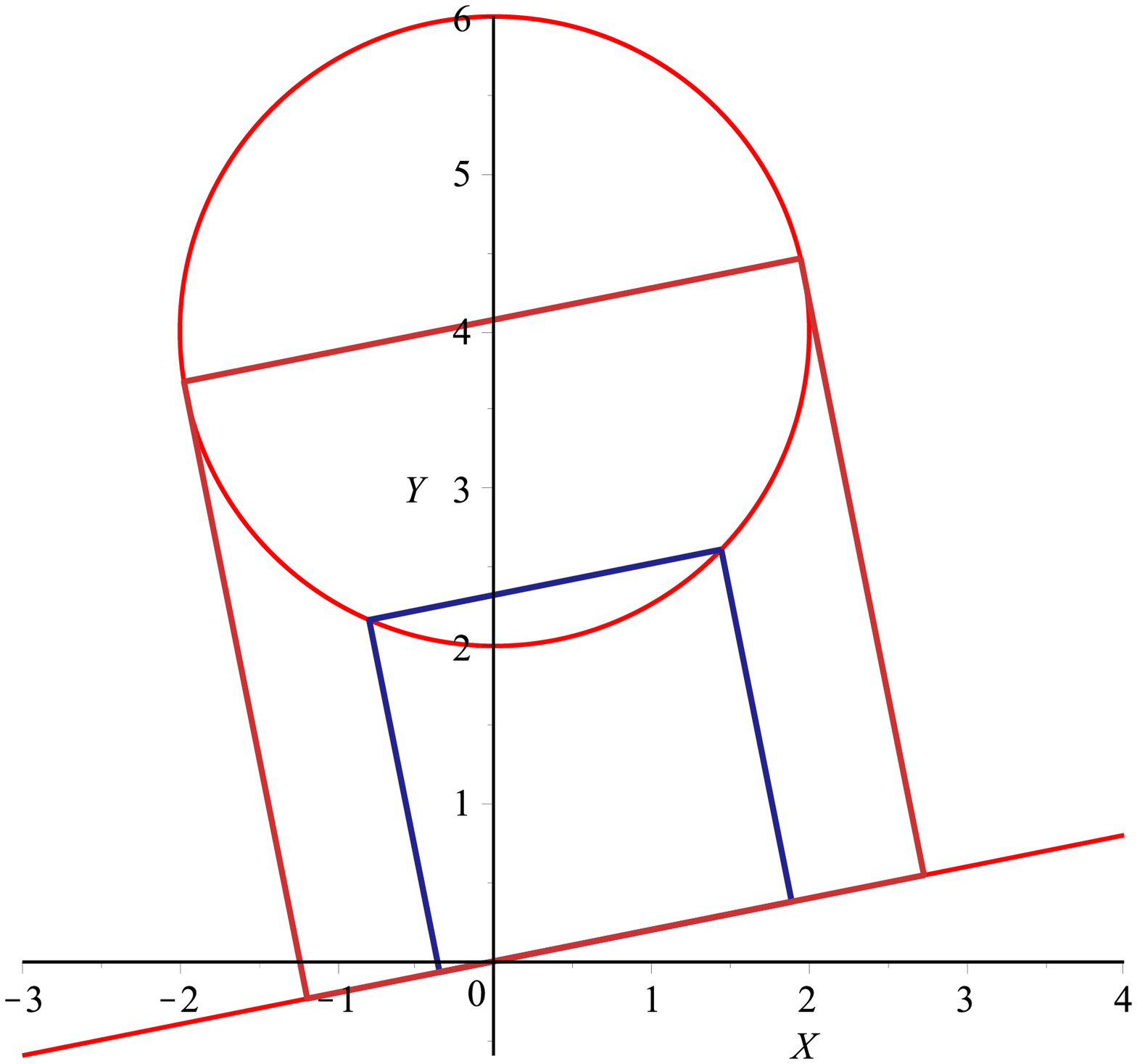}
    \caption{\inscribe{Two}{19}}
    \label{fig:debate}
   \end{subfigure}
  \begin{subfigure}[b]{0.3\linewidth}
    \includegraphics[width=\textwidth]{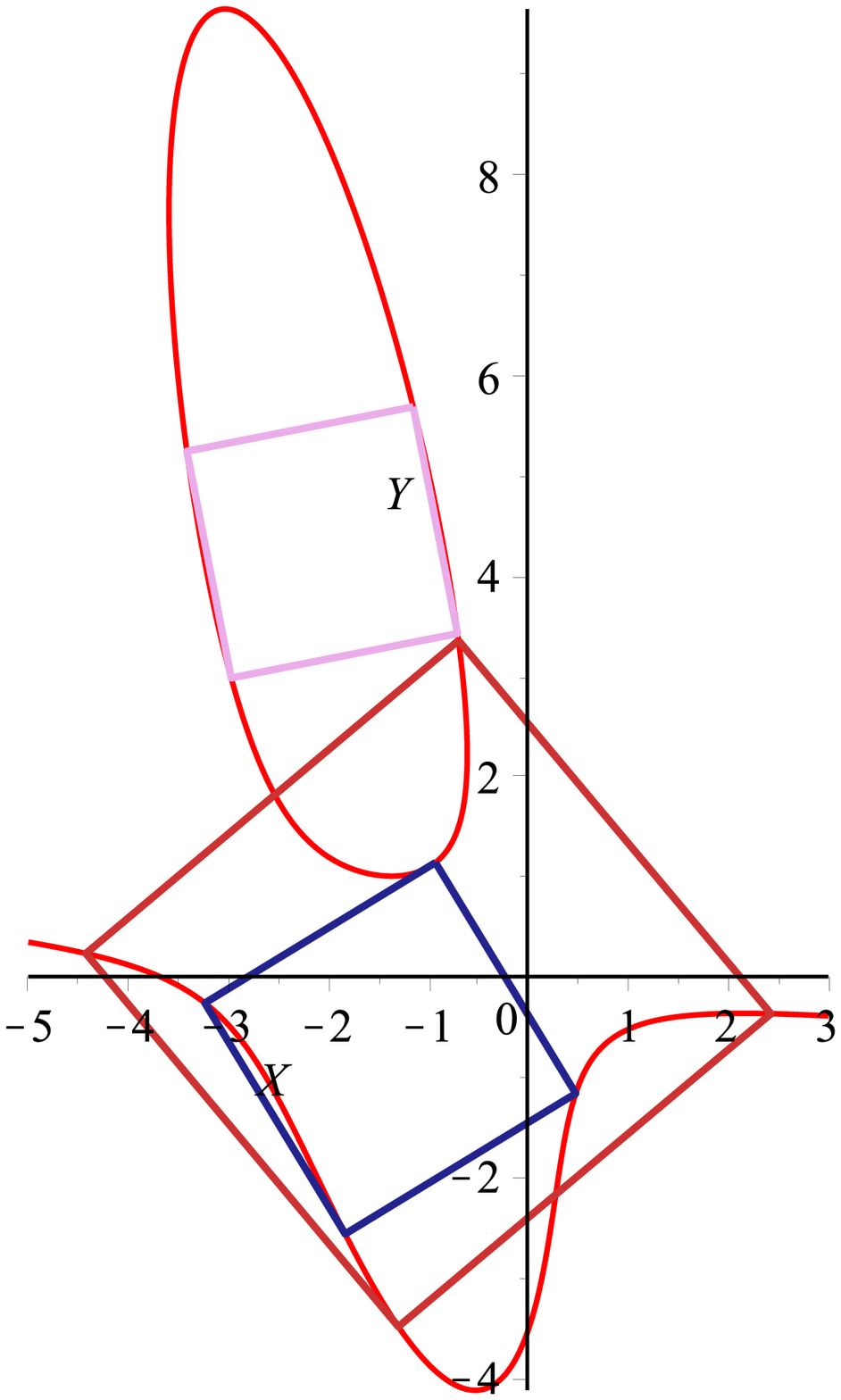}
    \caption{\inscribe{Three}{20}}
  \end{subfigure}
  \begin{subfigure}[b]{0.3\linewidth}
    \includegraphics[width=\textwidth]{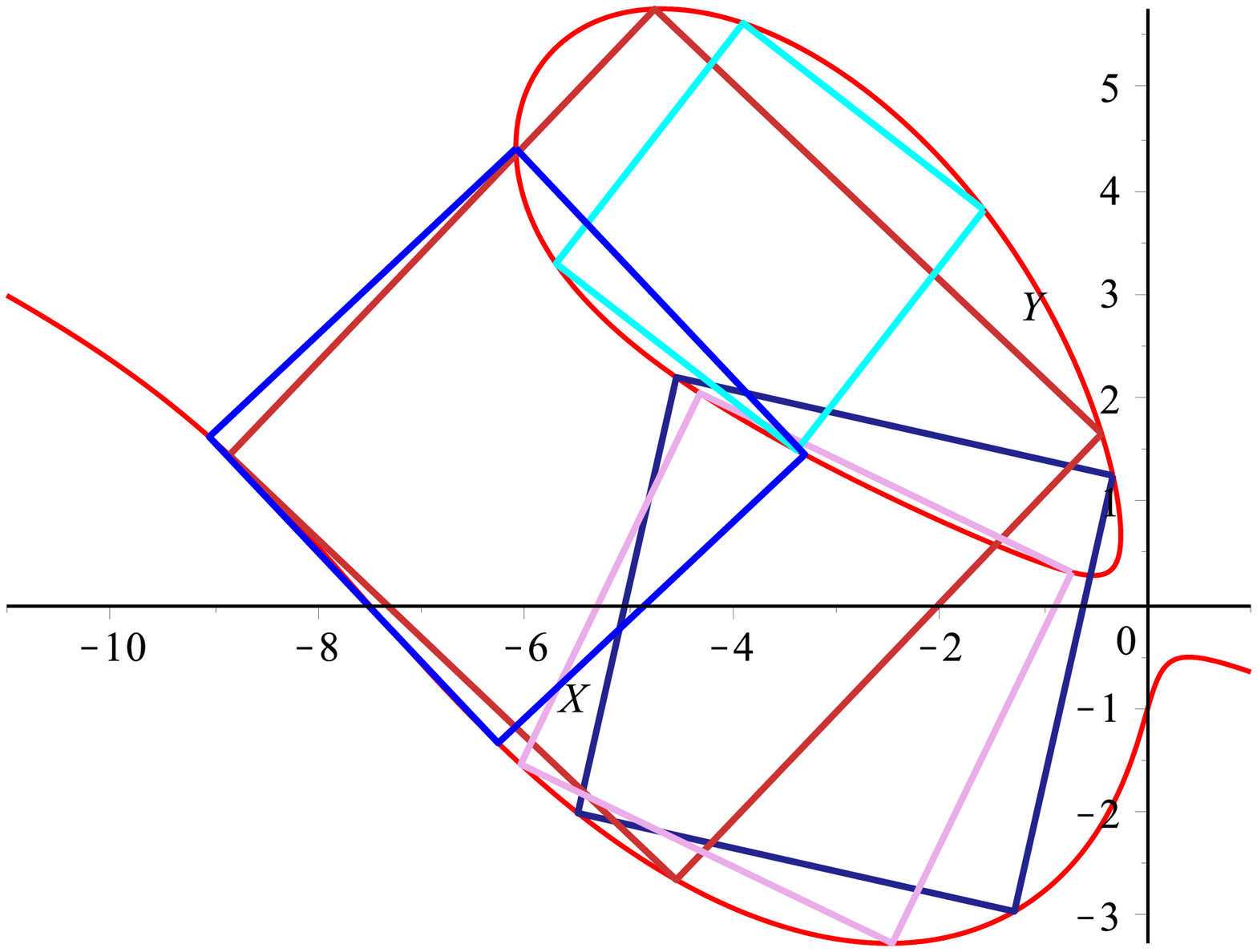}
    \caption{\inscribe{Five}{21}}
  \end{subfigure}
  \begin{subfigure}[b]{0.3\linewidth}
    \includegraphics[width=\textwidth]{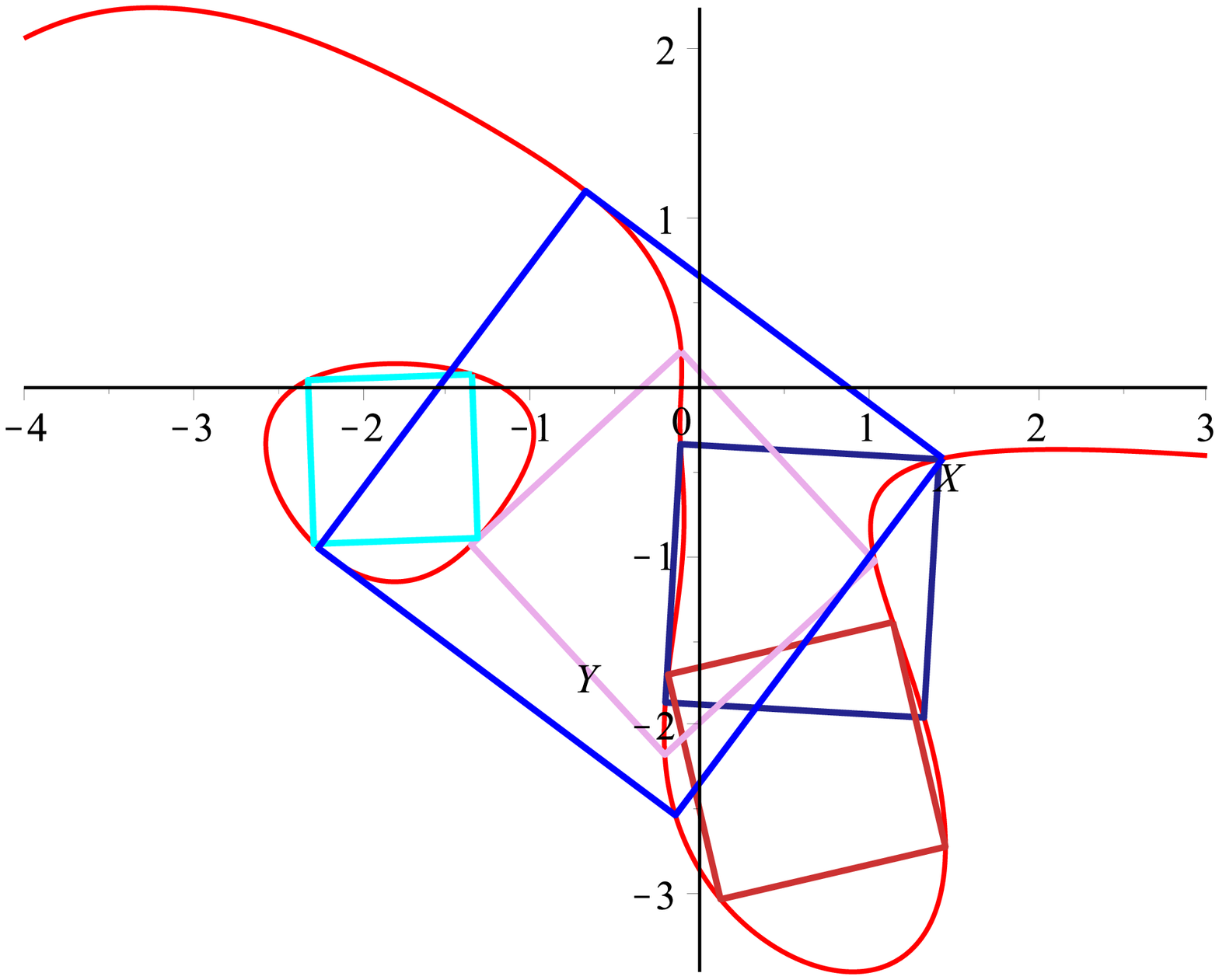}
    \caption{\inscribe{Five}{22}}
  \end{subfigure}
  \begin{subfigure}[b]{0.3\linewidth}
    \includegraphics[width=\textwidth]{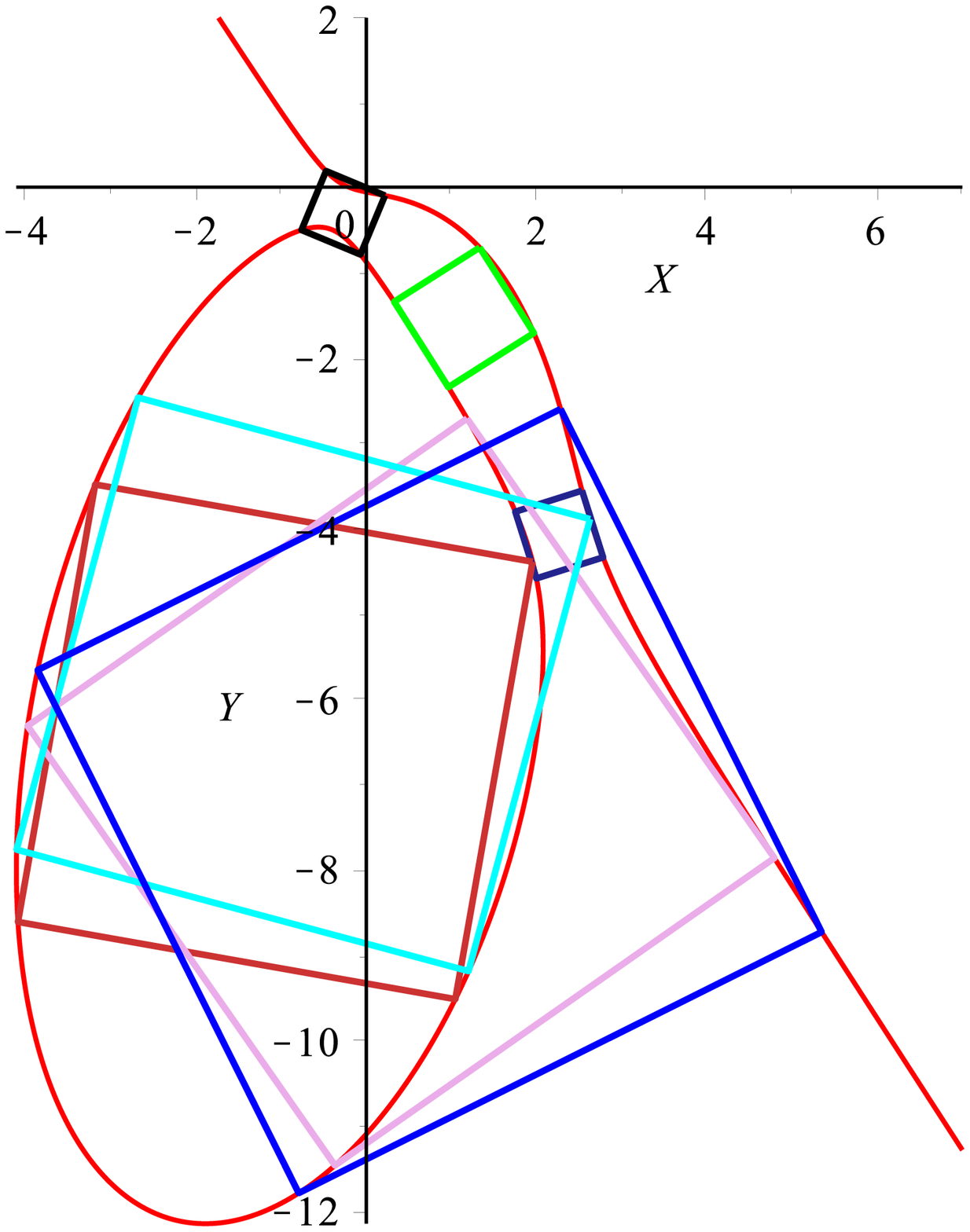}
    \caption{\inscribe{Seven}{23}}
  \end{subfigure}
  \begin{subfigure}[b]{0.3\linewidth}
    \includegraphics[width=\textwidth]{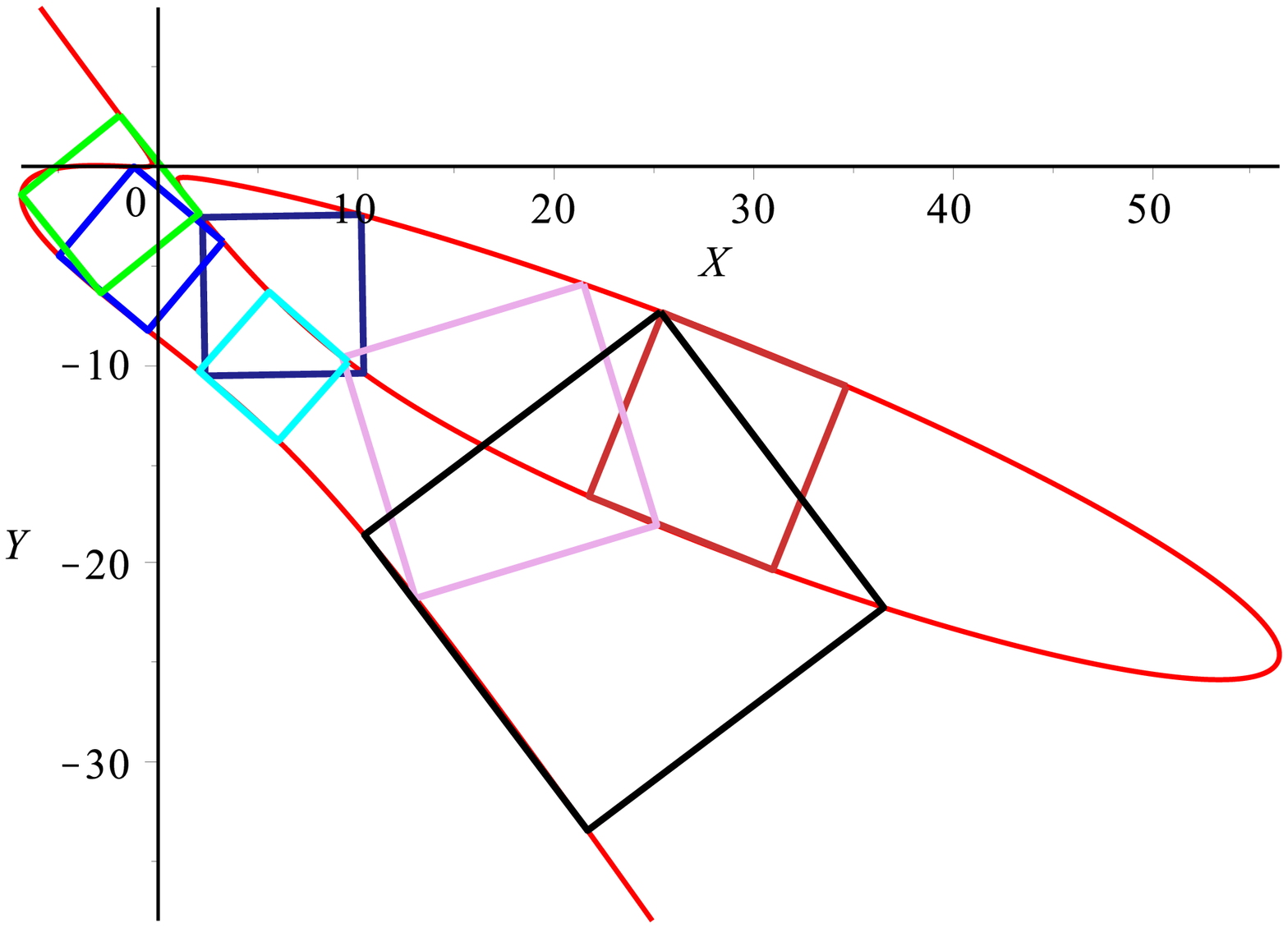}
    \caption{\inscribe{Seven}{24}}
  \end{subfigure}
  \begin{subfigure}[b]{0.3\linewidth}
    \includegraphics[width=\textwidth]{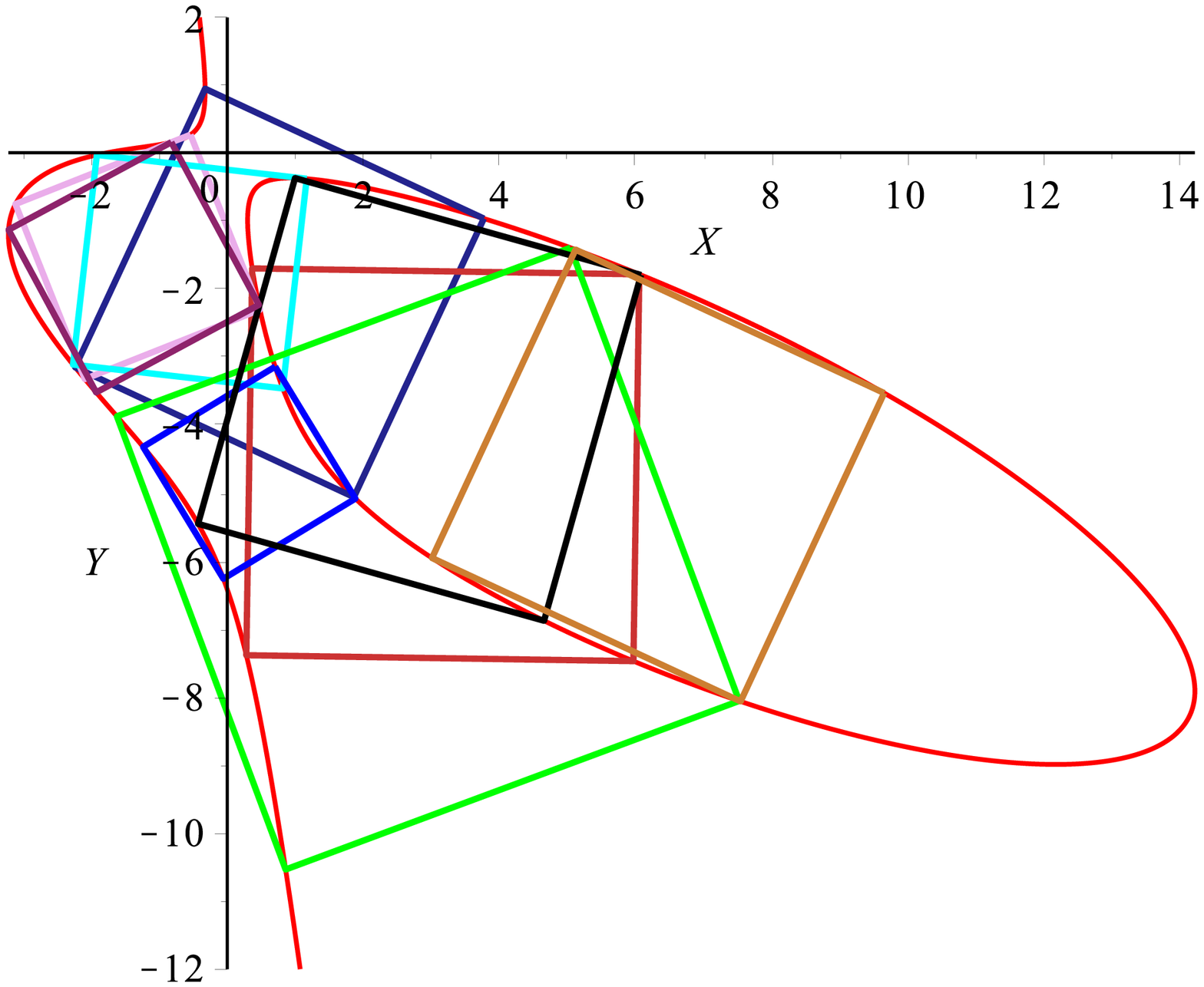}
    \caption{\inscribe{Nine}{25}}
  \end{subfigure}
  \begin{subfigure}[b]{0.3\linewidth}
    \includegraphics[width=\textwidth]{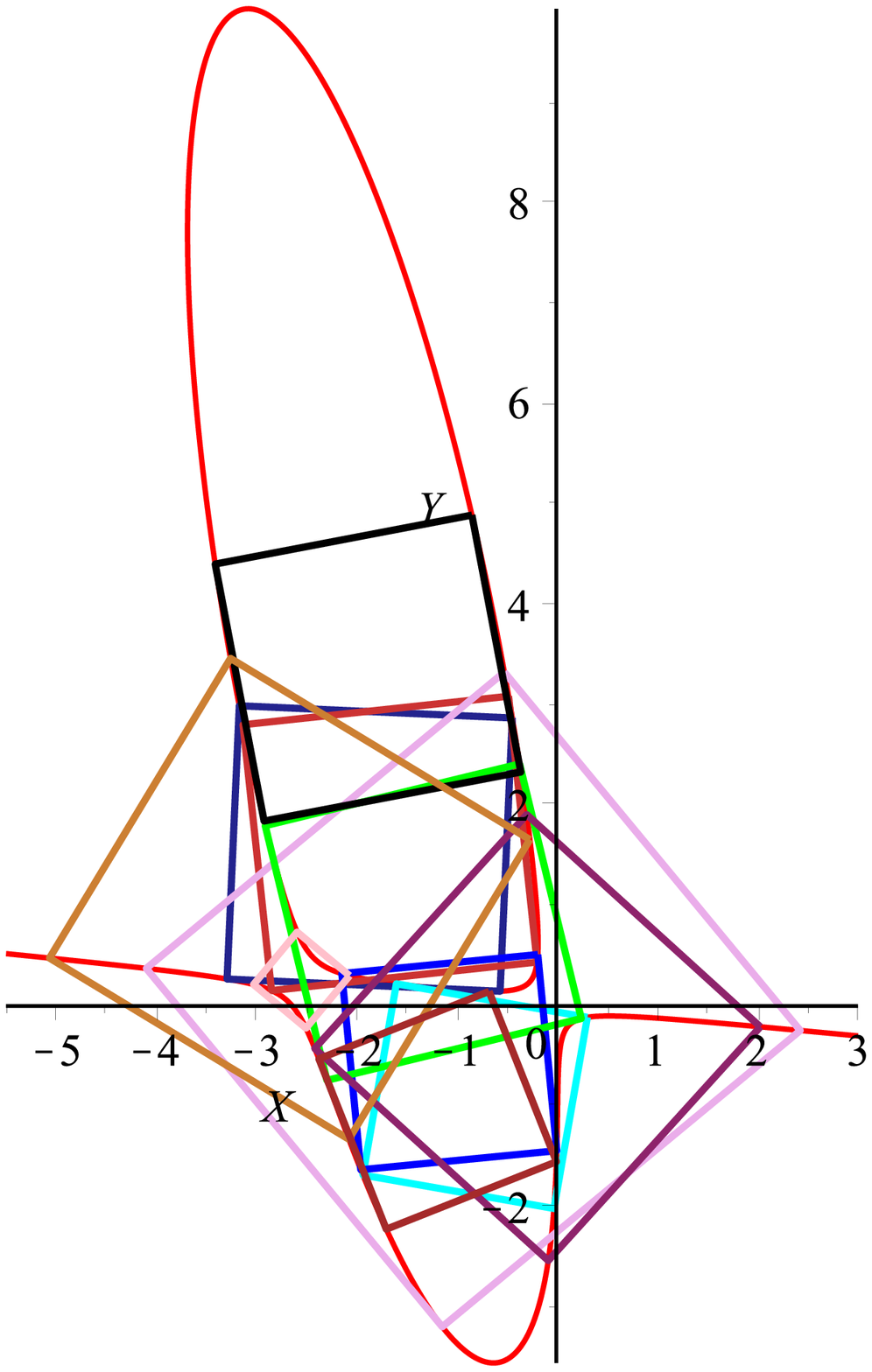}
    \caption{\inscribe{Eleven}{26}}
  \end{subfigure}
  \caption{Squares inscribed on an oval and a line.}
  \label{fig:inscribed-oneOne}
\end{figure}

\subsubsection{Squares inscribed on two lines}


The curves in \Fref{fig:inscribed-twoZero} inscribe one, four, eight,
nine and eleven squares.
\begin{figure}[H]
\centering
  \begin{subfigure}[b]{0.45\linewidth}
    \includegraphics[width=\textwidth]{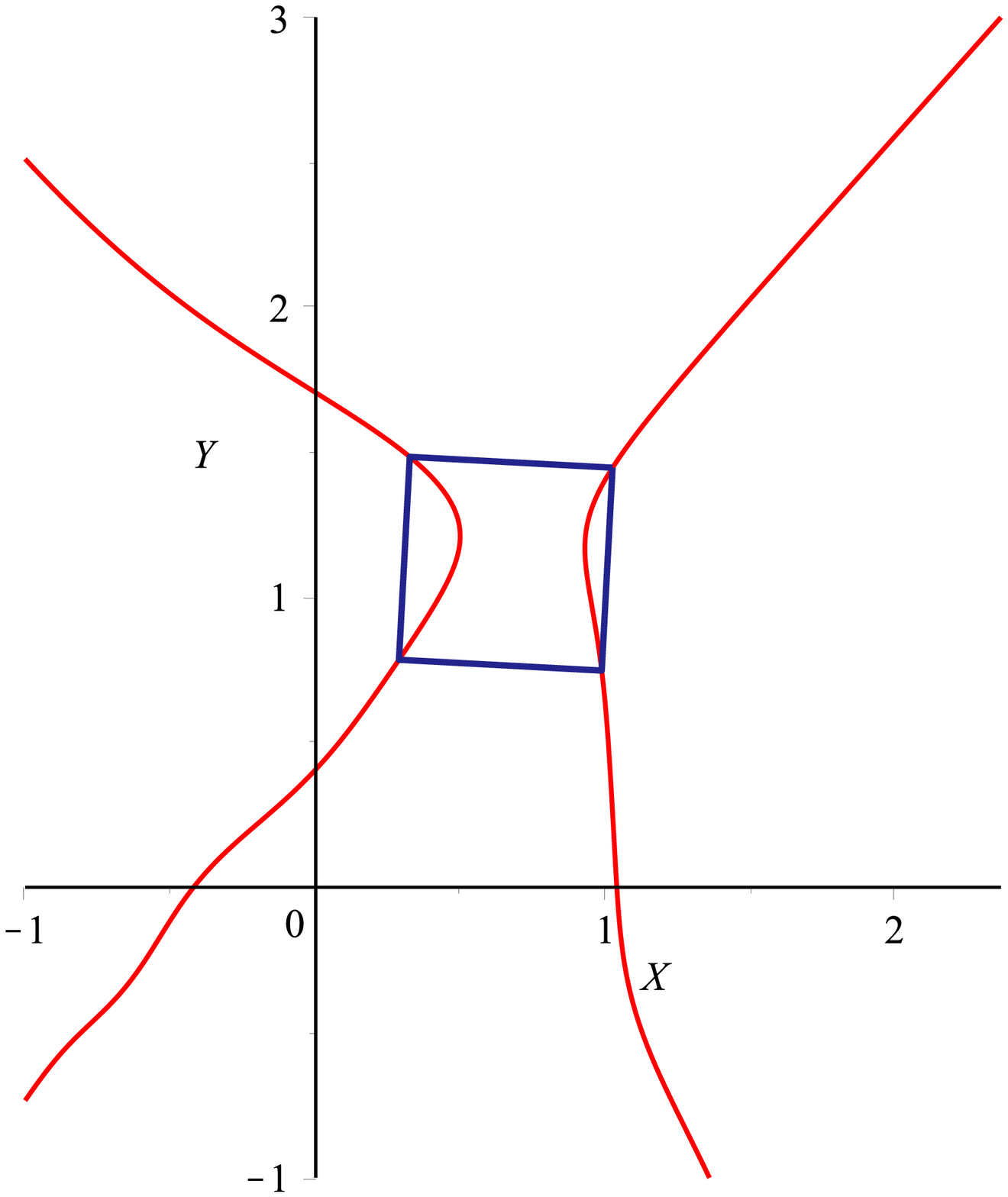}
    \caption{\inscribeOne{One}{13}}
  \end{subfigure}
  \begin{subfigure}[b]{0.45\linewidth}
    \includegraphics[width=\textwidth]{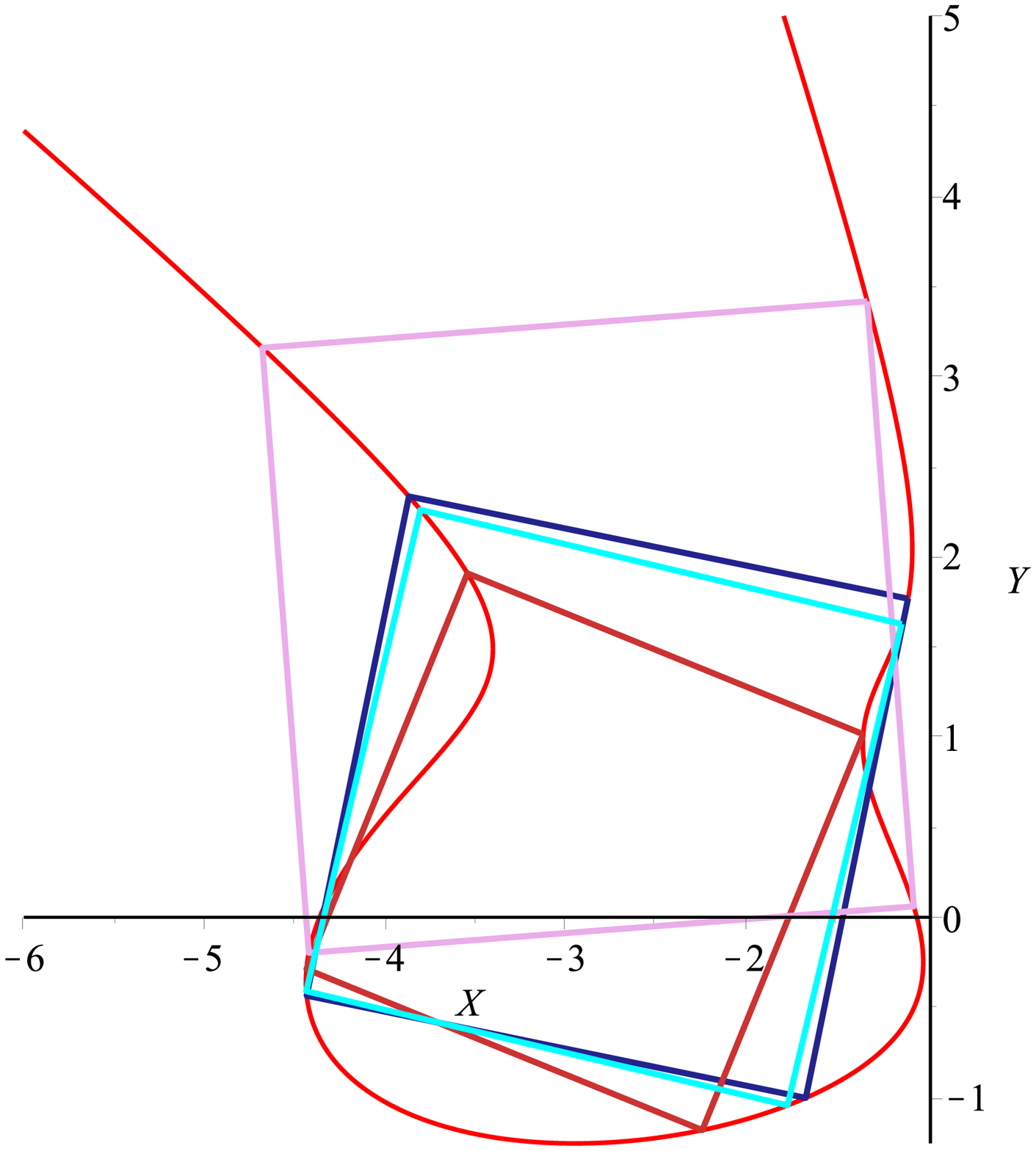}
    \caption{\inscribe{Four}{14}}
  \end{subfigure}
  \begin{subfigure}[b]{0.45\linewidth}
    \includegraphics[width=\textwidth]{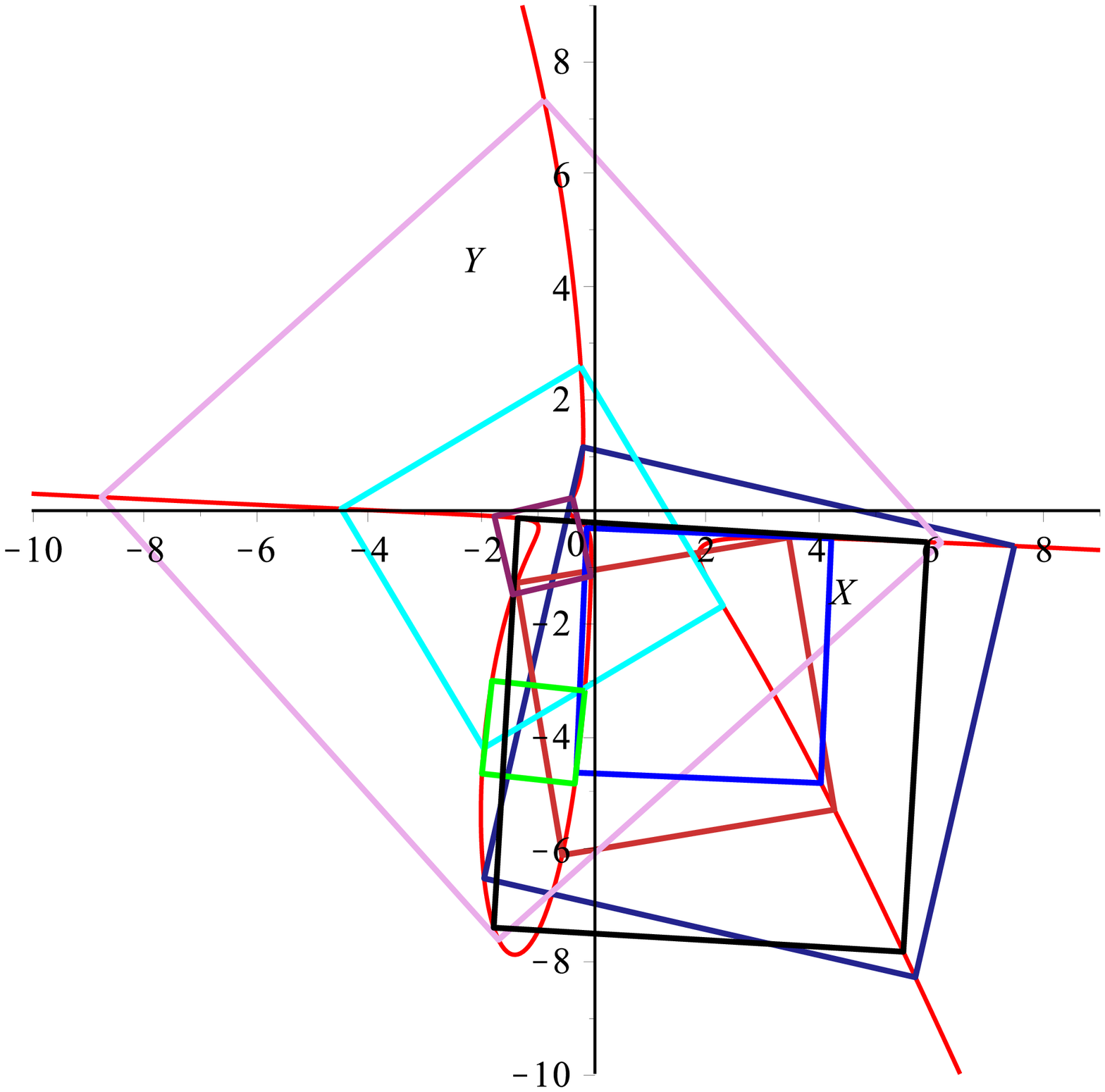}
    \caption{\inscribe{Eight}{15}}
  \end{subfigure}
  \begin{subfigure}[b]{0.45\linewidth}
    \includegraphics[width=\textwidth]{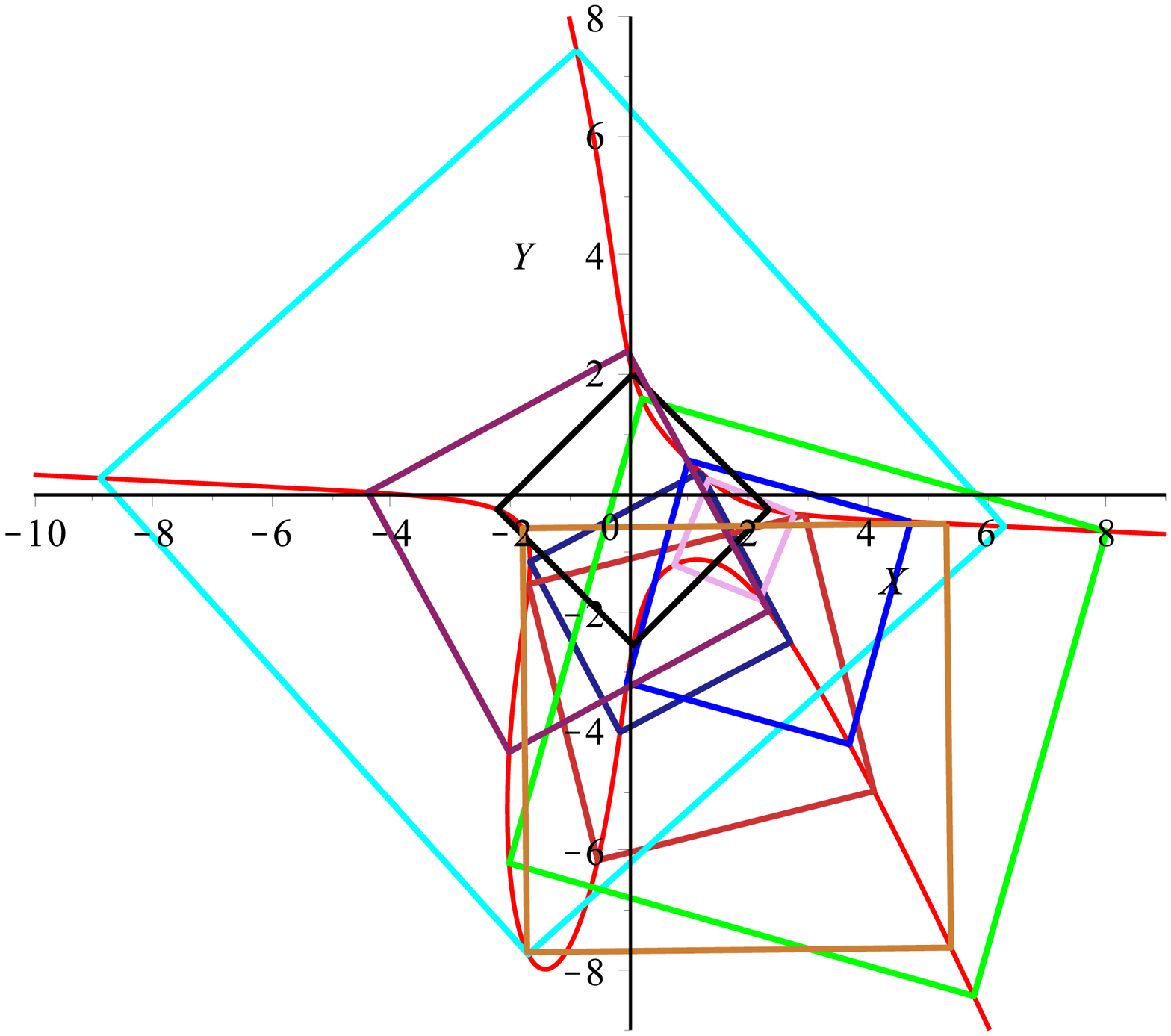}
    \caption{\inscribe{Nine}{16}}
  \end{subfigure}
  \begin{subfigure}[b]{0.45\linewidth}
    \includegraphics[width=\textwidth]{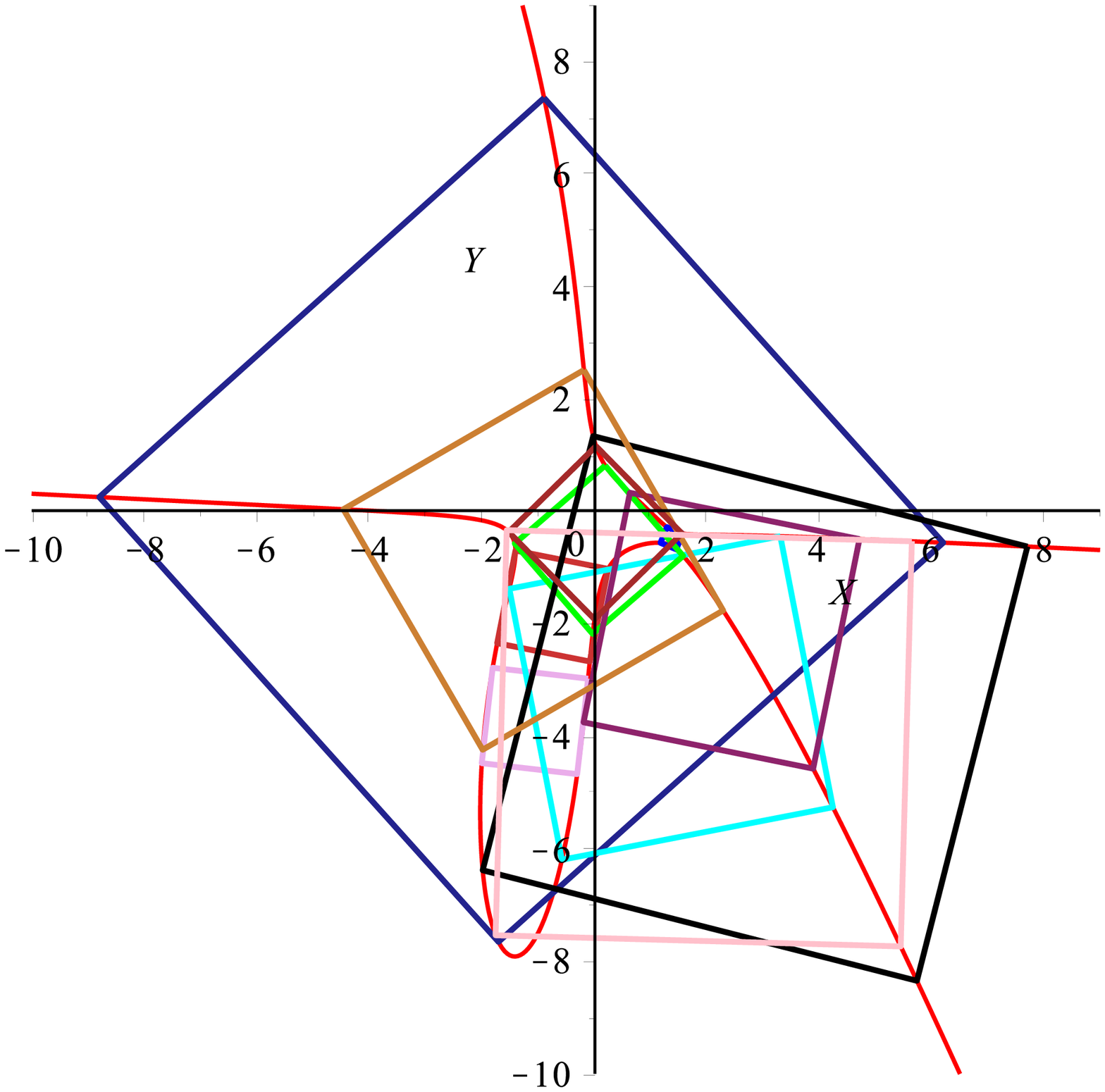}
    \caption{\inscribe{Eleven}{17}}
  \end{subfigure}
  \caption{Squares inscribed on two lines.}
  \label{fig:inscribed-twoZero}
\end{figure}

\subsubsection{Squares inscribed on three lines}

The curves in \Fref{fig:inscribed-threeZero} inscribe one, four,
seven, eight, ten and eleven squares.  \Fref{fig:twelve-awesome}
depicts a third degree curve consisting of three lines inscribing the
maximal number of twelve squares.

\begin{figure}[H]
\centering
  \begin{subfigure}[b]{0.3\linewidth}
    \includegraphics[width=\textwidth]{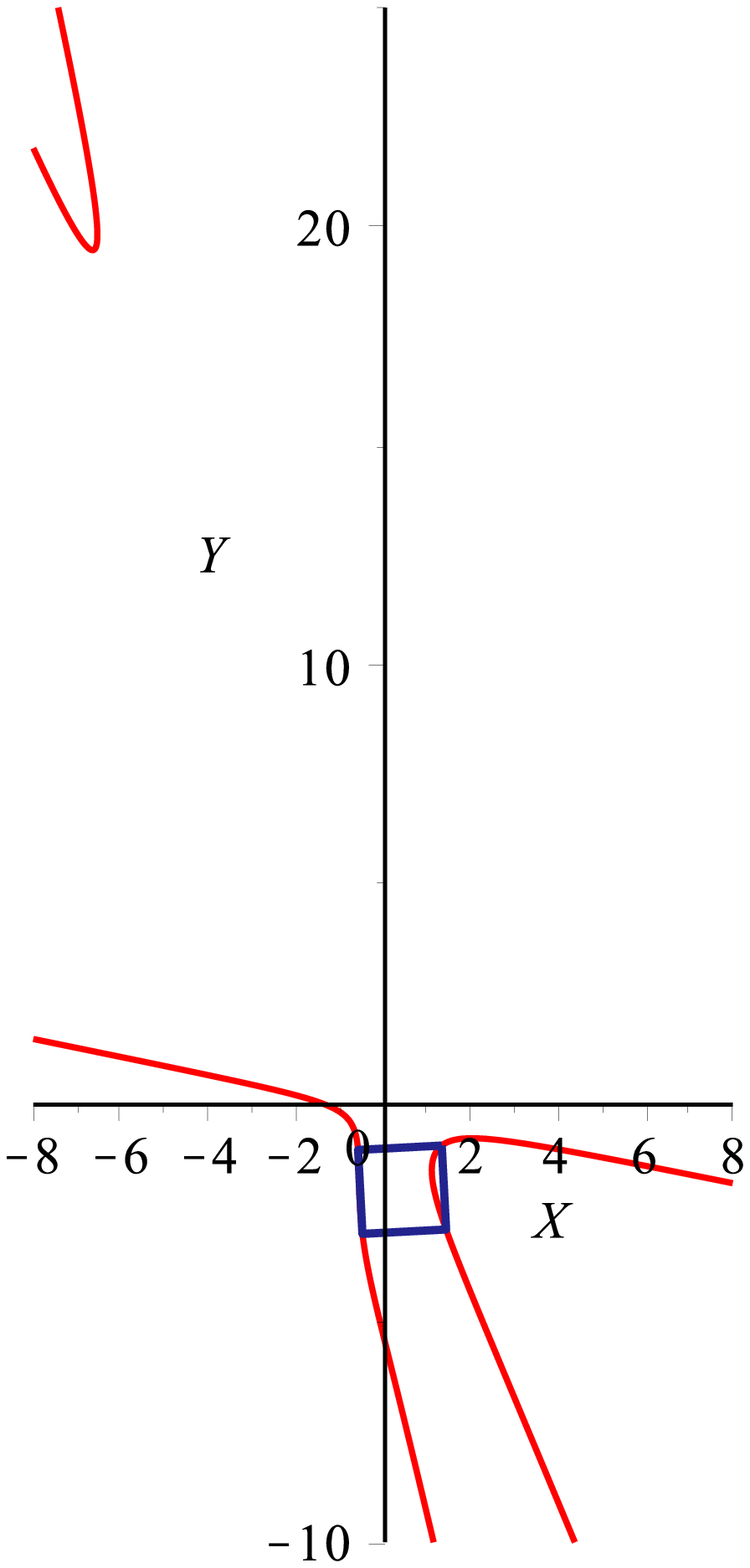}
    \caption{\inscribeOne{One}{8}}
  \end{subfigure}
  \begin{subfigure}[b]{0.3\linewidth}
    \includegraphics[width=\textwidth]{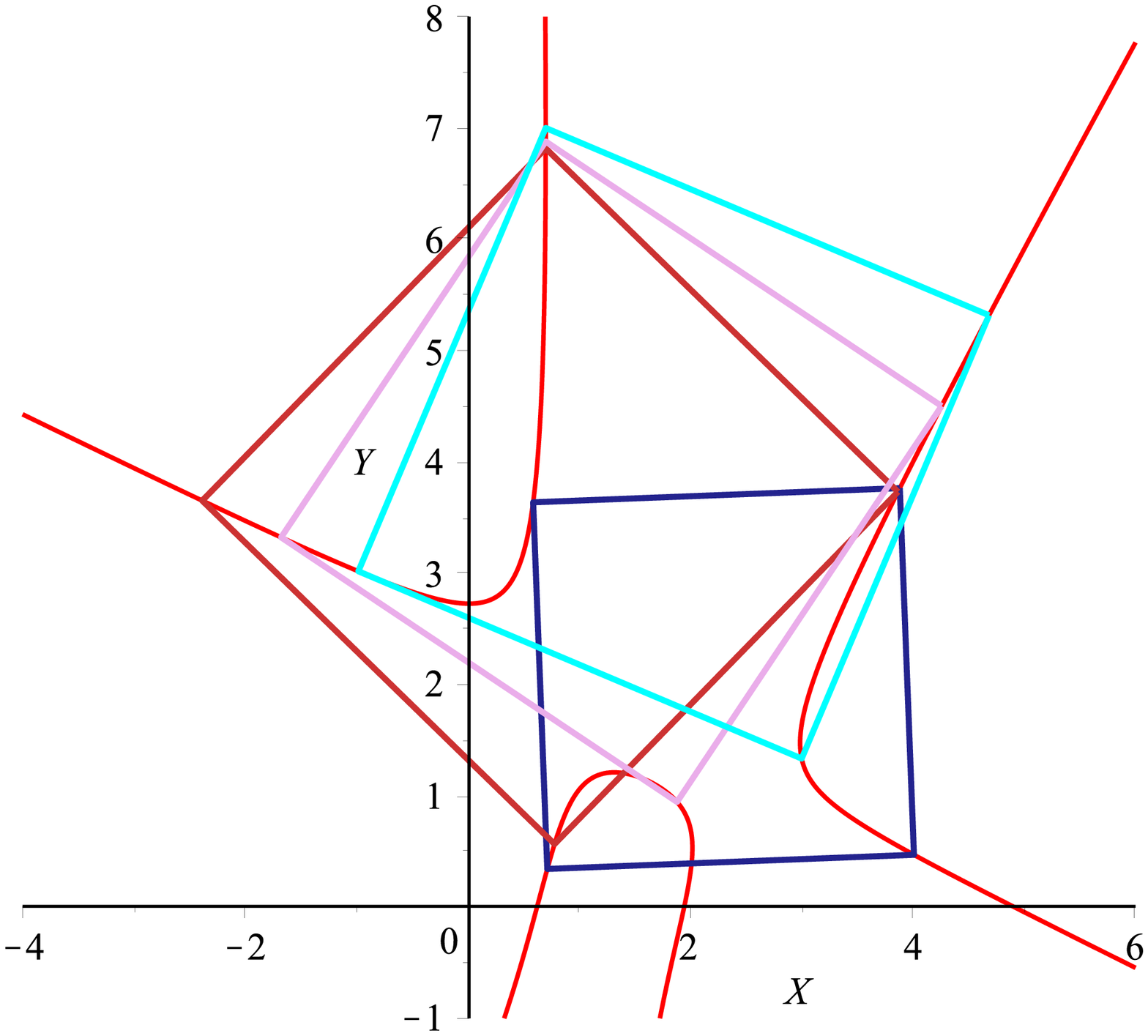}
    \caption{\inscribe{Four}{9}}
  \end{subfigure}
  \begin{subfigure}[b]{0.3\linewidth}
    \includegraphics[width=\textwidth]{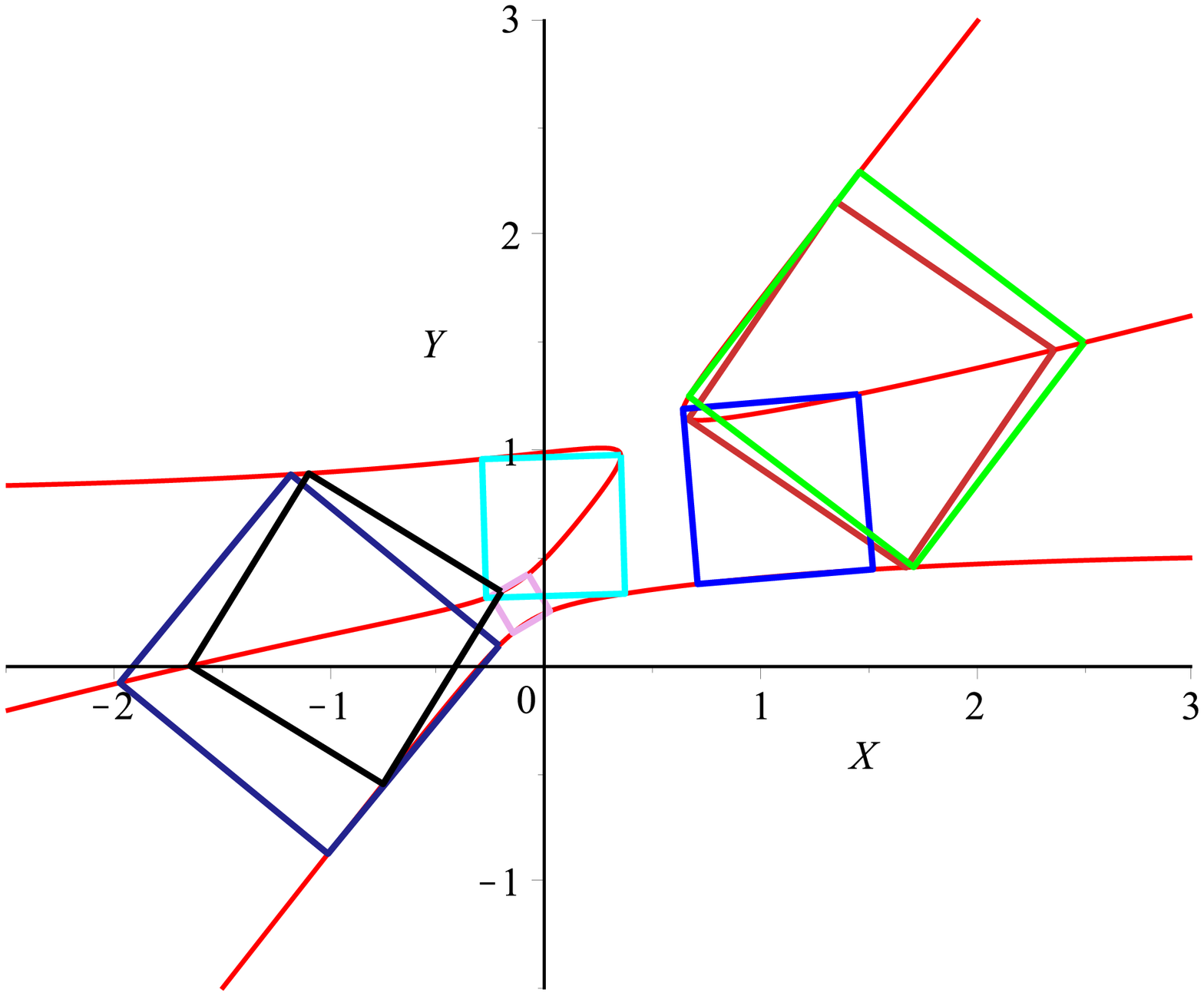}
   \caption{\inscribe{Seven}{10}}
  \end{subfigure}
  \begin{subfigure}[b]{0.3\linewidth}
    \includegraphics[width=\textwidth]{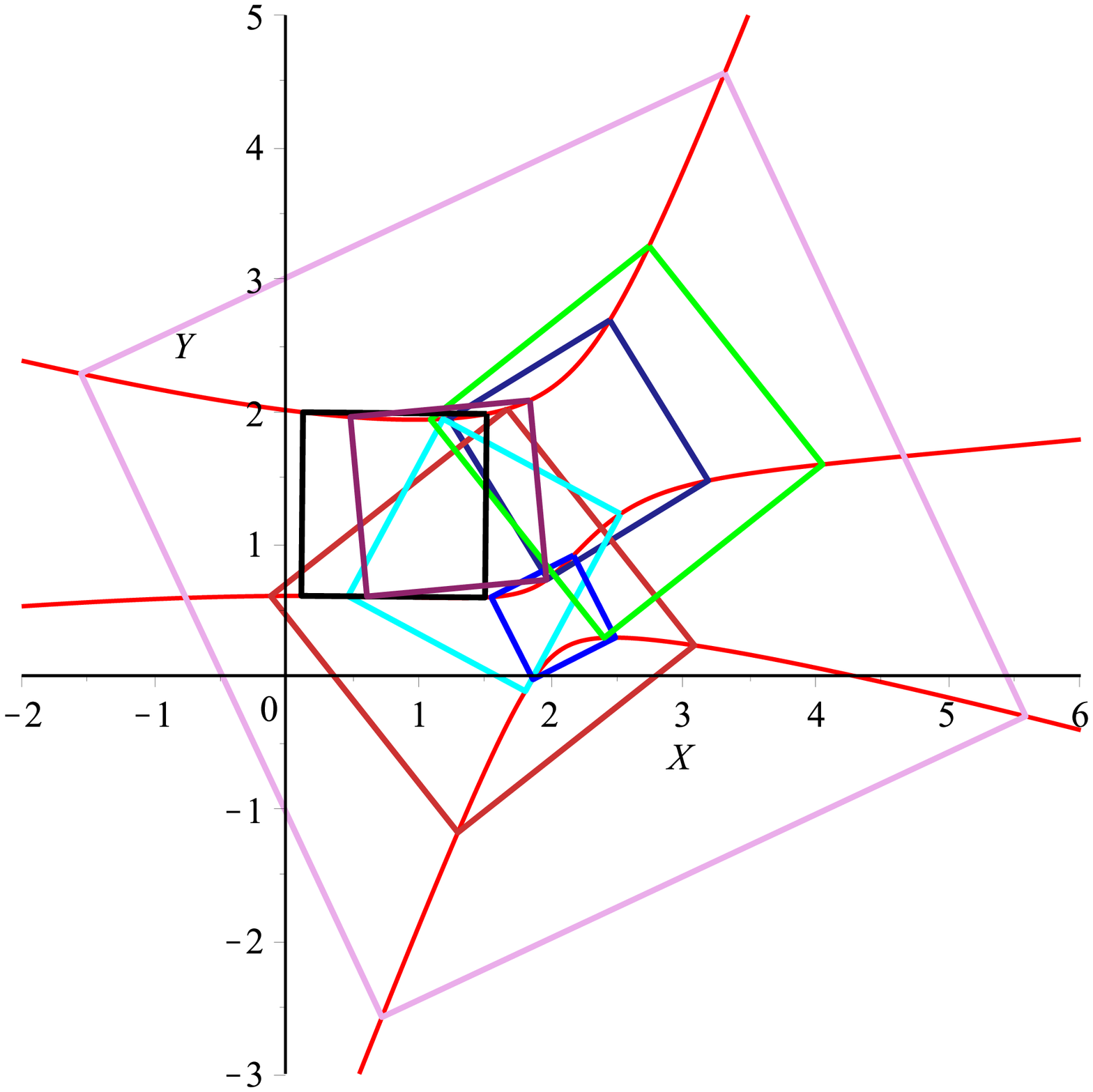}
   \caption{\inscribe{Eight}{11}}
  \end{subfigure}
  \begin{subfigure}[b]{0.3\linewidth}
    \includegraphics[width=\textwidth]{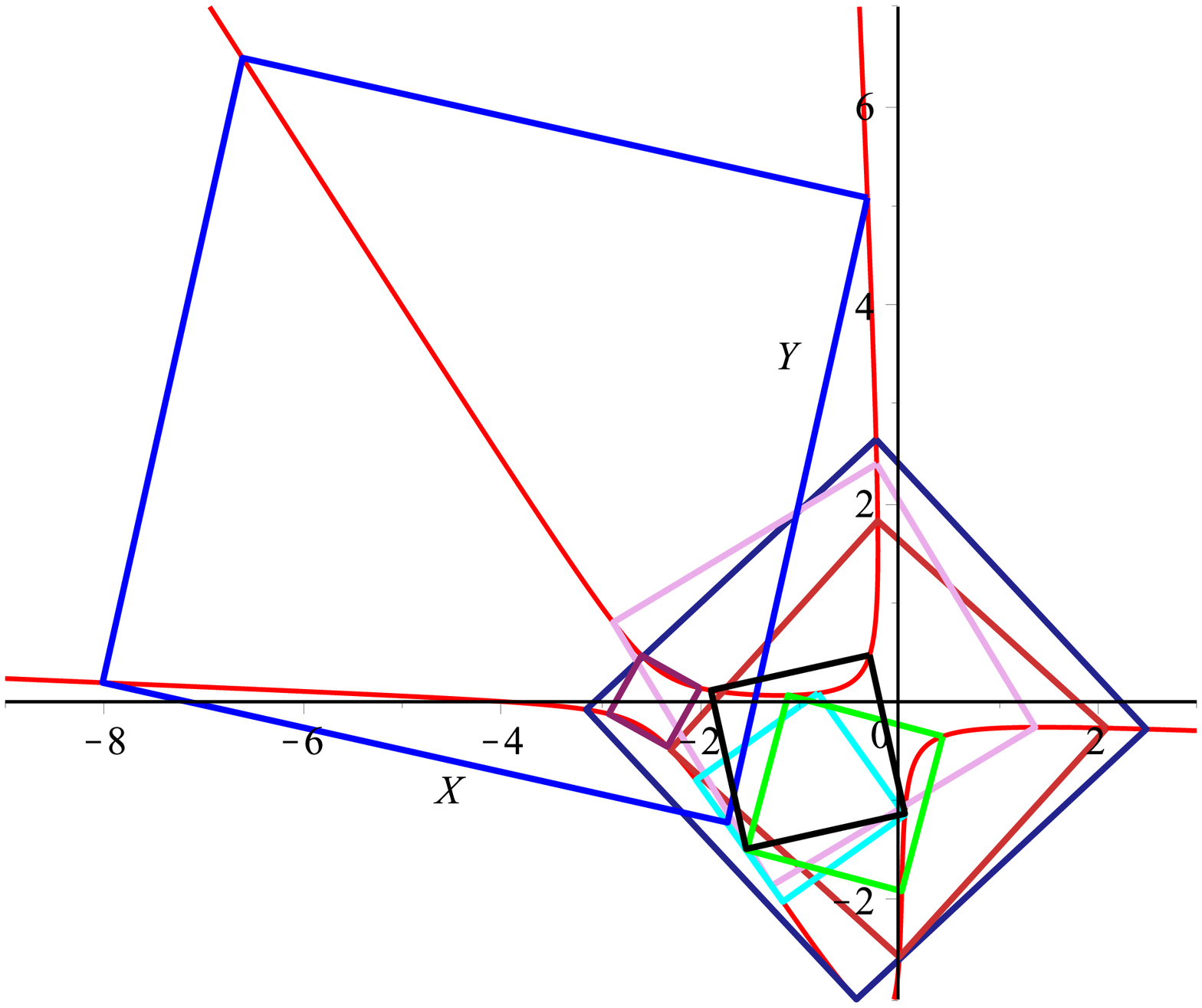}
    \caption{\inscribe{Eight}{38}}
  \end{subfigure}
  \begin{subfigure}[b]{0.3\linewidth}
    \includegraphics[width=\textwidth]{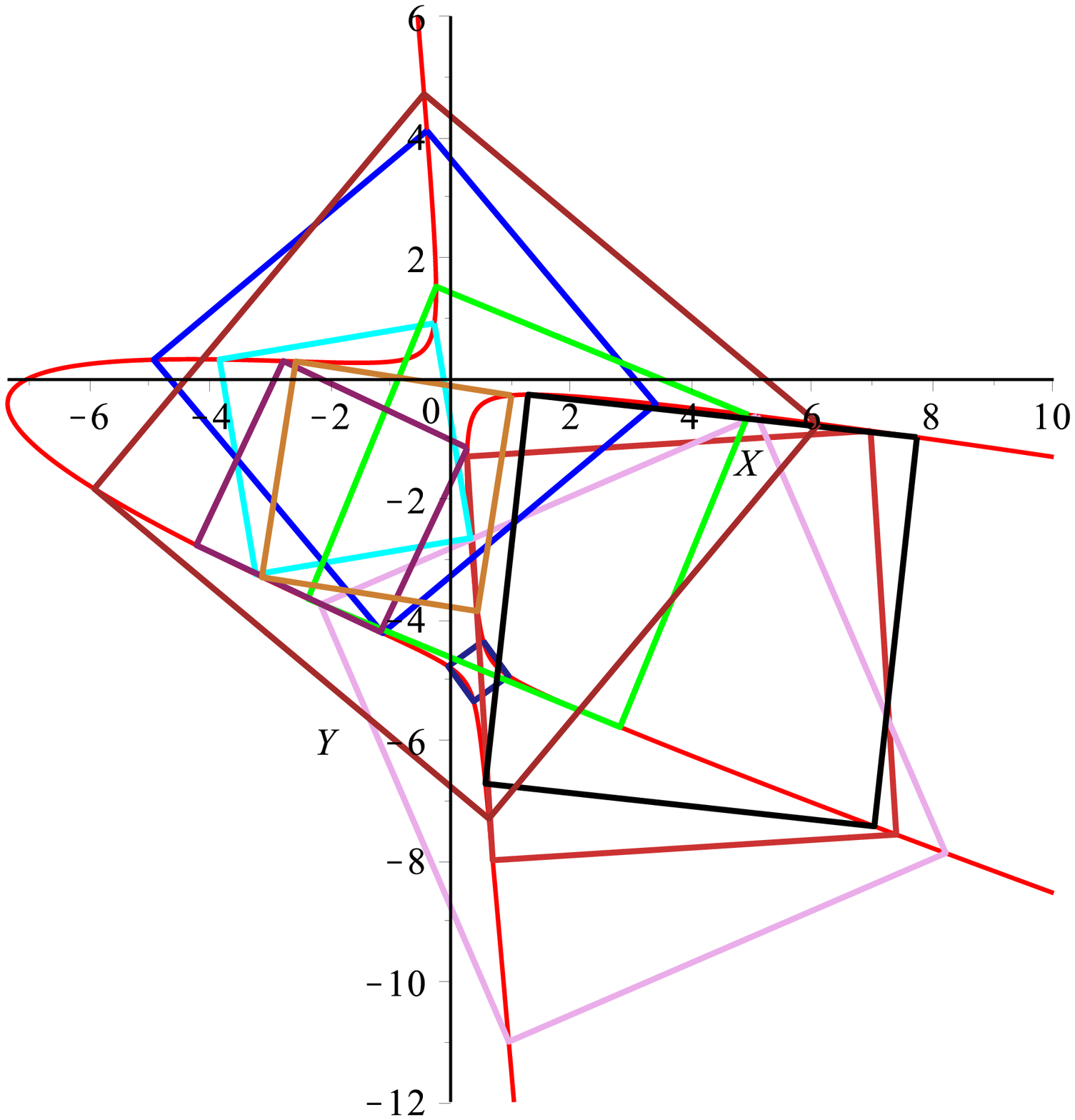}
   \caption{\inscribe{Ten}{39}}
  \end{subfigure}
  \begin{subfigure}[b]{0.3\linewidth}
    \includegraphics[width=\textwidth]{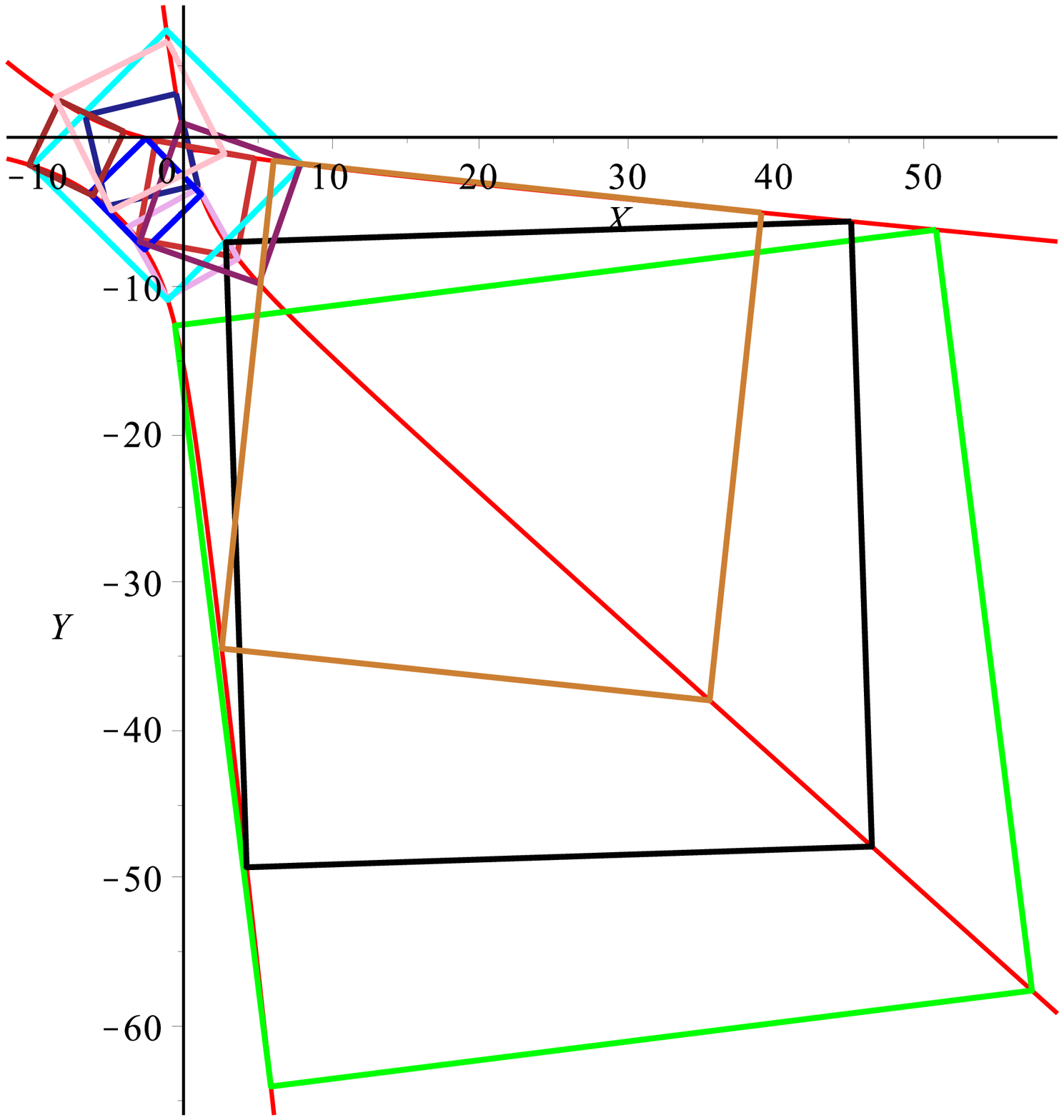}
    \caption{\inscribe{Eleven}{40}}
  \end{subfigure}
  \caption{Squares inscribed on three lines.}
  \label{fig:inscribed-threeZero}
\end{figure}

\begin{figure}[H]
    \includegraphics[width=\textwidth]{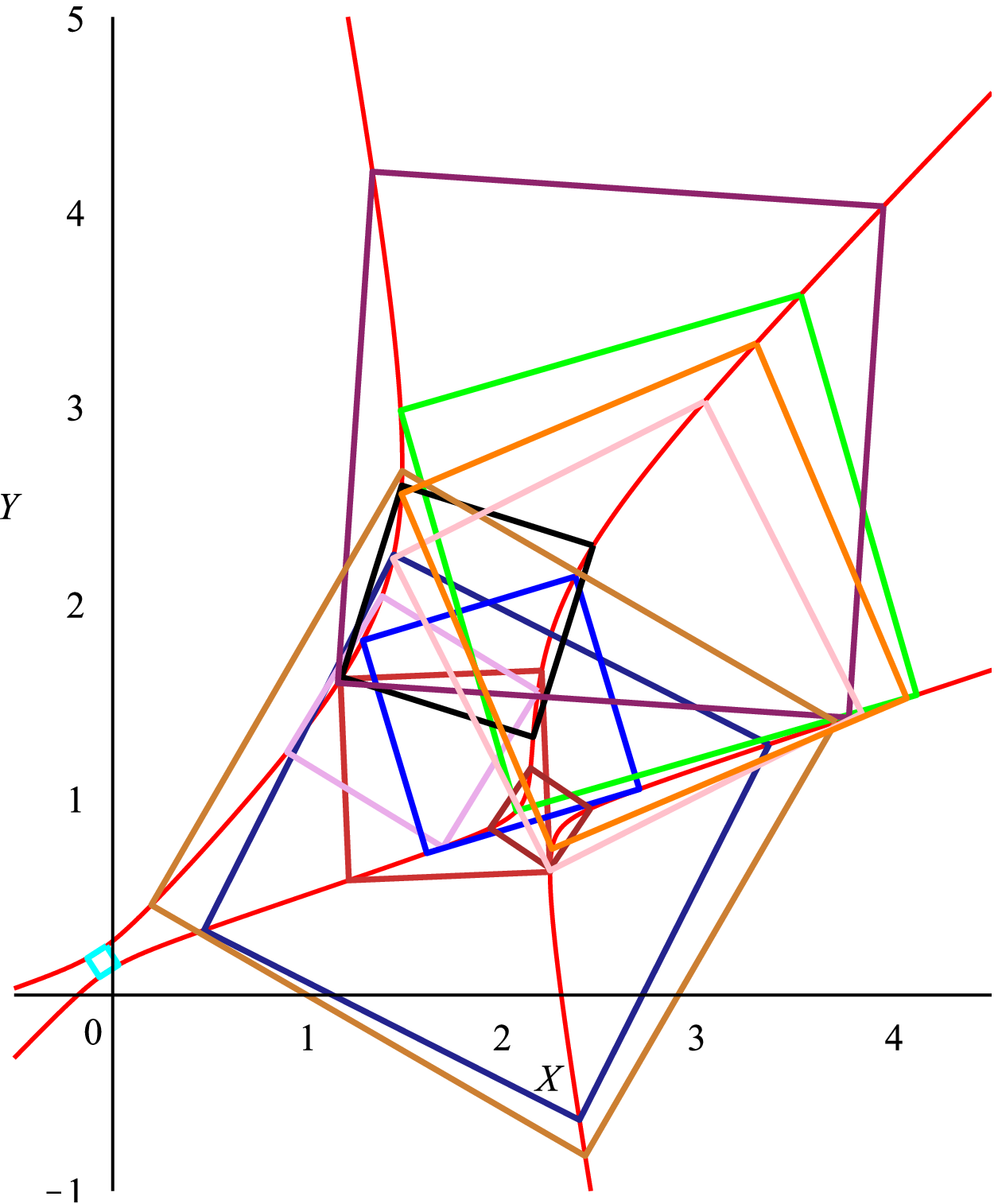}
   \caption{Twelve squares inscribed on
     \protect\hyperlink{poly:f12}{$f_{12}$} in \Fref{tab:long-polys}}
   \label{fig:twelve-awesome}
\end{figure}

\subsubsection{Squares inscribed on an oval and three lines}

The curves in \Fref{fig:inscribed-threeOne} inscribe eight, nine and
eleven squares.
\begin{figure}[H]
  \begin{subfigure}[b]{0.5\textwidth}
    \includegraphics[width=\textwidth]{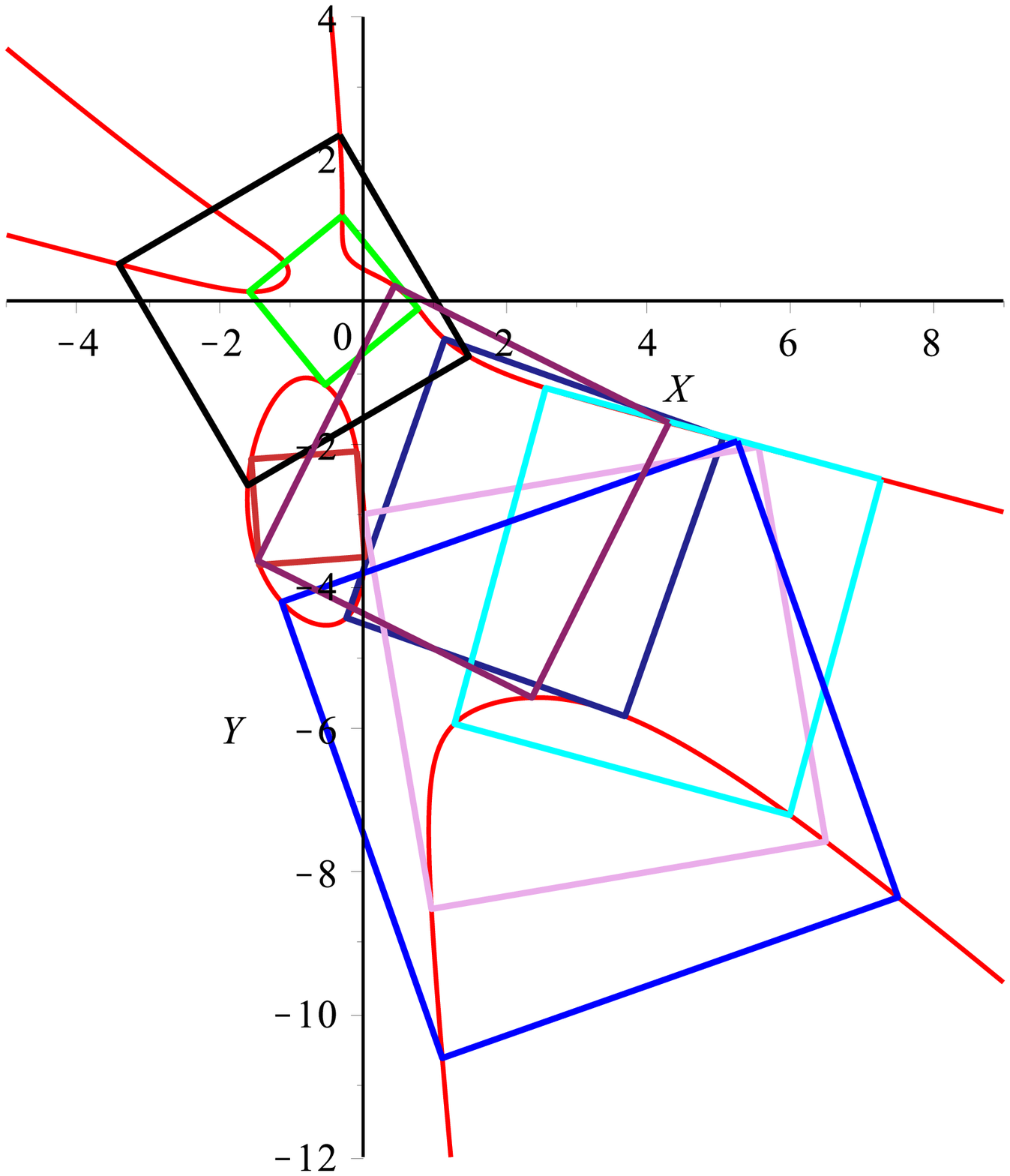}
    \caption{\inscribe{Eight}{27}}
  \end{subfigure}
  \begin{subfigure}[b]{0.5\textwidth}
    \includegraphics[width=\textwidth]{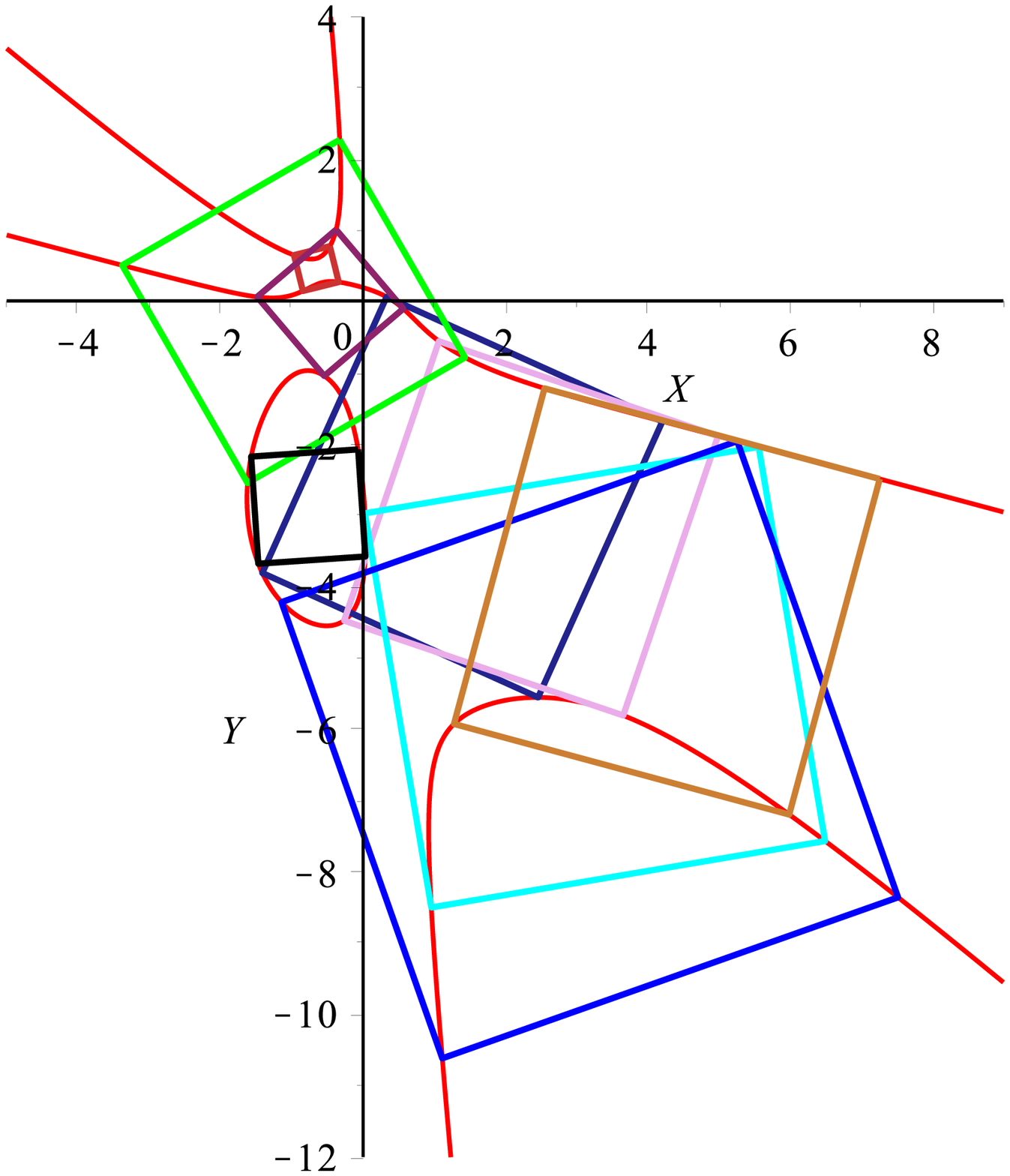}
    \caption{\inscribe{Nine}{28}}
  \end{subfigure}
{\centering
  \begin{subfigure}[b]{0.85\linewidth}
    \includegraphics[width=\textwidth]{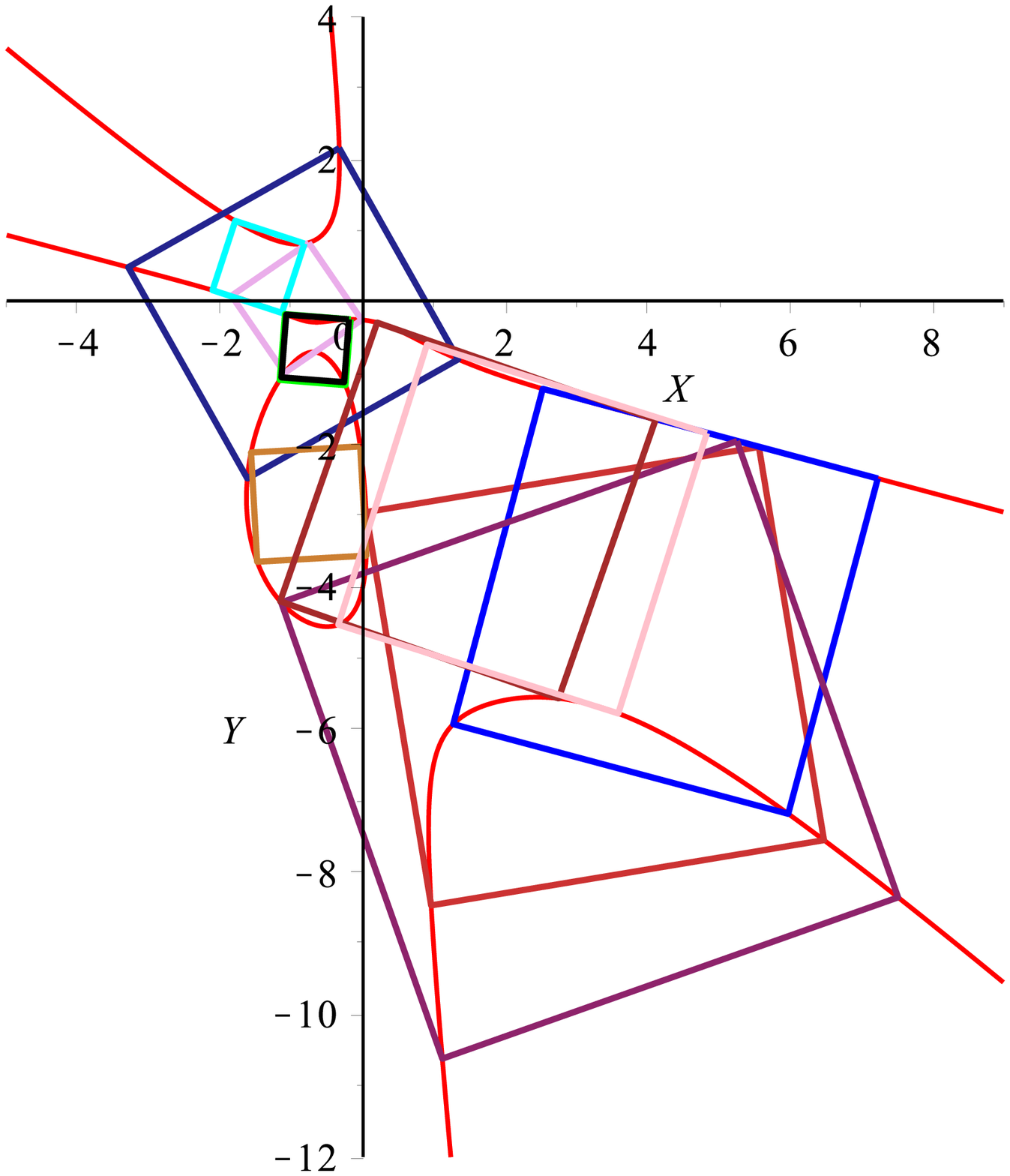}
    \caption{\inscribe{Eleven}{29}}
  \end{subfigure}
}
   \caption{Squares inscribed on an oval and three lines.}
   \label{fig:inscribed-threeOne}
\end{figure}

\section{ Concluding remarks}
\label{sec:conclusions}

The main result of this thesis, \Fref{thm:mine}
in \Fref{sec:upper-bound}, shows that the number of isolated squares inscribed on
a degree $m$ complex algebraic plane curve is at most $(m^4 - 5m^2 +
4m)/4$.   The experimental evidence of \Fref{sec:experimental}
suggests this statement might be strengthened to ``a generic complex
algebraic plane curve inscribes precisely $(m^4 - 5m^2 +
4m)/4$ squares''.  Whether that is true or not, one can ask for any natural
number $m$ what the maximum attainable
number of isolated inscribed squares is on a curve of degree $m$. Can we
construct a curve that attains the theoretical maximum  of $(m^4 - 5m^2 +
4m)/4$?  At least up to degree five any of the curves of
\Fref{tab:experiments} provides a positive answer, but we should aim
for a theoretical argument for all degrees.
Following Rojas~\cite[Section~3.3, p7]{Rojas97toricintersection}, giving the conditions
when the maximum number of solutions is attained might be fruitful.
Intersection theory may also apply to show that the complex squares
from \Fref{tab:experiments} have multiplicity one.
\\

Restricting these questions to real plane curves we can ask again, is
there a real algebraic plane curve that attains the bound of
\Fref{thm:mine}? \Fref{sec:illustrative} includes several positive
examples for degree three.

Certain symmetries in a plane curve give rise to an infinite number of
inscribed squares.  The author is however not aware of a complete
classification of which kinds of curves inscribe an infinitude of squares.

Based on the shaded cells of \Fref{tab:square-topologies} we could
conjecture: Is it true that algebraic plane curves homeomorphic to one of
\begin{enumerate}
  \item the real line
  \item an oval and two lines
  \item two ovals
\end{enumerate}
inscribe respectively an even, odd, and even number of squares?
The other shaded cell corresponds to algebraic Jordan curves, for
which it is already known that this class of curves generically
inscribes an odd number of squares.

Approximating a general Jordan curve with a subclass of curves for
which we know Toeplitz's conjecture to be true may fail to produce an
inscribed square in the limit if the approximating squares degenerate
to a point. Pak~\cite[Section~3.7]{pak} remarks that nonetheless the
limit argument has its use;  for an approximation argument by
algebraic curves we will need to have control over the sizes of the
squares to prevent the squares from degenerating in the limit.

\newpage

\newpage
\section*{Appendix}
\addcontentsline{toc}{section}{Appendix}
\subsection*{Table of polynomials}
\addcontentsline{toc}{subsection}{Table of polynomials}
\label{sec:tables}

\begin{longtable}[H]{rp{0.9\textwidth}}
\caption{Polynomials defining curves}
\endhead
\caption{Polynomials defining curves in \Fref{sec:illustrative}.}\label{tab:long-polys} \\
\endfirsthead
\hypertarget{poly:f1}{$f_1$}&
$\scriptstyle(3/8) x^{3}+4 x^{2} y+(10/7) x y^{2}+(2/7) y^{3}+x^{2}+10 x y+(7/9)
      y^{2}+(1/7) x+(4/5) y+10369/300$ \\
\hypertarget{poly:f2}{$f_2$} & $\scriptstyle-(1013346057932523458320374654611/2350924922880000000000)
     x^{3}+(2584640714944881315625401696659/1959104102400000000000) x^{2}
     y-(24370961833016176942717940959039/58773123072000000000000) x
     y^{2}+(495964933561657788423357606871/489776025600000000000)
     y^{3}-(17651791649159643199956179410837/23509249228800000000000)
     x^{2}+(255915596711949314264306252576989/117546246144000000000000) x
     y-(5664920610070897911630510019033/653034700800000000000)
     y^{2}-(45022793169990743253008147707121/11754624614400000000000)
     x+(659705135608555410904182133087481/58773123072000000000000)
     y+12665836021084318920971168631593/11754624614400000000000$ \\
\hypertarget{poly:f3}{$f_3$} &
$\scriptstyle (1/7) x^{5}+(6/7) x^{4} y+(9/5) x^{3} y^{2}+x^{2} y^{3}+7 x y^{4}+10
      y^{5}+x^{4}+(4/5) x^{3} y+(10/7) x^{2} y^{2}+3 x y^{3}+(7/5) y^{4}+(7/6)
      x^{3}+(1/8) x^{2} y+(3/4) x y^{2}+(1/3) y^{3}+(3/10) x^{2}+(4/5) x y+(5/3)
      y^{2}+(5/3) x+(10/9) y+9/4$ \\
\hypertarget{poly:f4}{$f_4$} &
$\scriptstyle (1/2) x^{3}+5 x^{2} y+(2/9) x y^{2}+(5/6) y^{3}+(9/7) x^{2}+9 x y+(1/9)
      y^{2}+(7/5) x+(10/9) y+5/6$ \\
\hypertarget{poly:f5}{$f_5$} & $\scriptstyle (1/3) x^{3}+x^{2} y+(7/9) x y^{2}+9
      y^{3}+(10/9) x^{2}+2 x y+(7/2) y^{2}+(8/7) x+(1/10) y+1/3$ \\
\hypertarget{poly:f6}{$f_6$} & $\scriptstyle (3/8) x^{3}+4 x^{2} y+(10/7) x y^{2}+(2/7) y^{3}+x^{2}+10 x y+(7/9)
      y^{2}+(1/7) x+(4/5) y-19/600$ \\
\hypertarget{poly:f7}{$f_7$} & $\scriptstyle (32357486150754911/3402639576000000)
      x^{3}
      -(14565996465296101997/2143662932880000000) x^{2}y
      +(93487619285326211413/135050764771440000000) xy^{2}
      +(295881163208333/837368333156250)y^{3} -$ \newline
      $\scriptstyle (16455993365369237399/1071831466440000000)x^{2}$ \newline
      $\scriptstyle+(2262751792681121895697/270101529542880000000) xy$ \newline
      $\scriptstyle-(44377450778015156987/16881345596430000000)y^{2}$ \newline
      $\scriptstyle+(483511249013004548209/90033843180960000000)x$ \newline
      $\scriptstyle+(43079601667153982323/33762691192860000000)y
      -9025382297117723393/11254230397620000000$ \\
\hypertarget{poly:f8}{$f_8$} &
$\scriptstyle (4/3)x^5+7x^4y+(7/3)x^3y^2+(1/2)x^2y^3+(1/2)xy^4+(1/10)y^5+(10/7)x^4+(7/3)x^3y+(2/5)x^2y^2+(2/3)xy^3+(5/9)y^4+(3/2)x^3+3x^2y+xy^2+(1/3)y^3+4x^2+(2/3)xy+(8/9)y^2+(8/3)x+(1/10)y+7/5$
\\
\hypertarget{poly:f9}{$f_9$} &
$\scriptstyle (84600046159243700856114369758453/7304069487211315200000000)x^3+(84129864593783714477250895601927/4869379658140876800000000)x^2y-(92678186386758381841697632332217/7304069487211315200000000)xy^2-(985032890300878882041984922489/2921627794884526080000000)y^3-(8432107925141586913574285861810083/97387593162817536000000000)x^2-(42810357305315843166329246331701/1803473947459584000000000)xy+(798870306331587087351224027449571/58432555897690521600000000)y^2+(841046078607802229000433529244096647/5258930030792146944000000000)x-(50130881628172999018538048620781701/5258930030792146944000000000)y-120544249950526645232049562396939597/1752976676930715648000000000$
\\
\hypertarget{poly:f10}{$f_{10}$} &
$\scriptstyle
(17071630870821024280289/127253121732748247040000000)x^3+(44219727353738152825699/5302213405531176960000000)x^2y-(4775926187801988597243641/127253121732748247040000000)xy^2+(2615354993498783429179/108208436847575040000000)y^3\phantom{break
  here god damn it
     }-(218792069736804757977449/38955037265127014400000000)x^2+(303432548905886033642387/6362656086637412352000000)xy-(15987089135911642991445653/381759365198244741120000000)y^2-(86769535959101859196900919/7635187303964894822400000000)x+(1265378561015612782058837/61081498431719158579200000)y-2225833681103904456175739/763518730396489482240000000$
\\
\hypertarget{poly:f11}{$f_{11}$} &
$\scriptstyle-(107666602244268965505153/34359738368000000000000)x^3+(244020905347080929848137/13743895347200000000000)x^2y\phantom{break
  here god damn it}+(3029447197152010641168729/34359738368000000000000)xy^2-(2494391888436262290669501/68719476736000000000000)y^3\phantom{break
  here god damn it}-(6731424554769315405645039/1374389534720000000000000)x^2-(1119679636867415864847621/4294967296000000000000)xy\phantom{break
  here god damn it}-(88162122657769201785657501/1374389534720000000000000)y^2+(1720365306508271453007846519/13743895347200000000000000)x+(5145387047581092010866673443/13743895347200000000000000)y-676235828568952472903449101/3435973836800000000000000$
\\
\hypertarget{poly:f12}{$f_{12}$} &
 $\scriptstyle-(4963493942513921243/65548320768000000)x^3+(326139891975237682121/1123685498880000000)x^2y-(50931413248303191071/299649466368000000)xy^2-(14263797412722377/339738624000000)y^3+(37805850432694119373/327741603840000000)x^2\phantom{break
  here god damn it}-(19179033623835553860379/31463193968640000000)xy\phantom{break
  here god damn it}+(1018795941059176616167/1997663109120000000)y^2\phantom{break
  here god damn it}+(1330205416456247598397/10487731322880000000)x\phantom{break
  here god damn it}-(2843296777056554250263/13983641763840000000)y+95073566433481051/5202247680000000$
 \\
\hypertarget{poly:f13}{$f_{13}$} &
$\scriptstyle 12415x^8+11377x^7y+15240x^6y^2-451x^5y^3+4672x^4y^4+4256x^3y^5+2937x^2y^6-14392xy^7-11440y^8-1118x^7+8649x^6y+9988x^5y^2+15342x^4y^3-13207x^3y^4+4533x^2y^5+13680xy^6+9917y^7-8343x^6-6757x^5y-8308x^4y^2+7606x^3y^3+3138x^2y^4-5358xy^5+11848y^6+12694x^5+181x^4y+3136x^3y^2-12922x^2y^3-14700xy^4+9107y^5+9973x^4+1173x^3y-15433x^2y^2+2406xy^3-13196y^4-8485x^3-8414x^2y-15263xy^2+15206y^3-7714x^2-7243xy+4230y^2-10183x+5303y-3662$\\
\hypertarget{poly:f14}{$f_{14}}$ &
$\scriptstyle (10/9)x^4+(2/7)x^3y+2x^2y^2+5xy^3+(10/7)y^4+5x^3+(10/3)x^2y+(2/5)xy^2+(1/7)y^3+(1/2)x^2+(10/9)xy+(3/2)y^2+(1/7)x+(5/9)y+4$
\\
\hypertarget{poly:f15}{$f_{15}$} &
$\scriptstyle (1/4)x^4+5x^3y+(5/3)x^2y^2+(1/10)xy^3+(1/9)y^4+x^3+(2/3)x^2y+9xy^2+(1/8)y^3+(7/10)x^2+(1/5)xy+(4/5)y^2+(4/5)x+(5/8)y+3/10$
\\
\hypertarget{poly:f16}{$f_{16}$} &
$\scriptstyle (1/4)x^4+5x^3y+(5/3)x^2y^2+(1/10)xy^3+(1/9)y^4+x^3+(2/3)x^2y+9xy^2+(1/8)y^3+(7/10)x^2+(1/5)xy+(4/5)y^2+(4/5)x+(5/8)y-97/10$
\\
\hypertarget{poly:f17}{$f_{17}$} &
$\scriptstyle (1/4)x^4+5x^3y+(5/3)x^2y^2+(1/10)xy^3+(1/9)y^4+x^3+(2/3)x^2y+9xy^2+(1/8)y^3+(7/10)x^2+(1/5)xy+(4/5)y^2+(4/5)x+(5/8)y-27/10$
\\
\hypertarget{poly:f18}{$f_{18}$} &
$\scriptstyle-x^3+y^2+x$ \\
\hypertarget{poly:f19}{$f_{19}$} &
$\scriptstyle-(1/5)x^3+x^2y-(1/5)xy^2+y^3+(8/5)xy-8y^2-(12/5)x+12y+1/100$
\\
\hypertarget{poly:f20}{$f_{20}$} &
$\scriptstyle (3/8)x^3+4x^2y+(10/7)xy^2+(2/7)y^3+x^2+10xy+(7/9)y^2+(1/7)x+(4/5)y+1687/300$
\\
\hypertarget{poly:f21}{$f_{21}$} &
$\scriptstyle (4/9)x^3+(10/7)x^2y+xy^2+(3/4)y^3+(7/2)x^2+8xy+(4/7)y^2+(4/3)x+(1/2)y+5/7$
\\
\hypertarget{poly:f22}{$f_{22}$} &
$\scriptstyle (1/4)x^5+2x^4y+(8/5)x^3y^2+(7/6)x^2y^3+(2/9)xy^4+(1/2)y^5+(3/5)x^4+8x^3y+5x^2y^2+(9/5)xy^3+2y^4+(7/10)x^3+7x^2y+9xy^2+2y^3+3x^2+4xy+(10/9)y^2+(10/3)x+(1/4)y+1/3$
\\
\hypertarget{poly:f23}{$f_{23}$} &
$\scriptstyle (8/3)x^3+(7/8)x^2y+(1/5)xy^2+(1/2)y^3+(1/2)x^2+8xy+6y^2+(5/4)x+5y+1/5$
\\
\hypertarget{poly:f24}{$f_{24}$} &
$\scriptstyle (1/5)x^3+x^2y+(7/4)xy^2+(4/5)y^3+(9/7)x^2+10xy+7y^2+2x+(7/10)y+5/8$
\\
\hypertarget{poly:f25}{$f_{25}$} &
$\scriptstyle (1/2)x^3+(3/2)x^2y+2xy^2+(2/9)y^3+x^2+9xy+(3/2)y^2+(6/7)x+(2/3)y+5/4$
\\
\hypertarget{poly:f26}{$f_{26}$} &
$\scriptstyle (3/8)x^3+4x^2y+(10/7)xy^2+(2/7)y^3+x^2+10xy+(7/9)y^2+(1/7)x+(4/5)y+1/3$\\
\hypertarget{poly:f27}{$f_{27}$} &
$\scriptstyle (1/2)x^5+(9/4)x^4y+(8/5)x^3y^2+(5/7)x^2y^3+(4/3)xy^4+(1/8)y^5+(4/5)x^4+(2/5)x^3y+(8/5)x^2y^2+7xy^3+(2/3)y^4+(5/8)x^3+(3/7)x^2y+(9/7)xy^2+(3/5)y^3+x^2+(6/7)xy+(1/3)y^2+(1/2)x+(5/2)y-4/3$
\\
\hypertarget{poly:f28}{$f_{28}$} &
$\scriptstyle (1/2)x^5+(9/4)x^4y+(8/5)x^3y^2+(5/7)x^2y^3+(4/3)xy^4+(1/8)y^5+(4/5)x^4+(2/5)x^3y+(8/5)x^2y^2+7xy^3+(2/3)y^4+(5/8)x^3+(3/7)x^2y+(9/7)xy^2+(3/5)y^3+x^2+(6/7)xy+(1/3)y^2+(1/2)x+(5/2)y-8/15$ \\
\hypertarget{poly:f29}{$f_{29}$} &
$\scriptstyle (1/2)x^5+(9/4)x^4y+(8/5)x^3y^2+(5/7)x^2y^3+(4/3)xy^4+(1/8)y^5+(4/5)x^4+(2/5)x^3y+(8/5)x^2y^2+7xy^3+(2/3)y^4+(5/8)x^3+(3/7)x^2y+(9/7)xy^2+(3/5)y^3+x^2+(6/7)xy+(1/3)y^2+(1/2)x+(5/2)y+461/750$
\\
\hypertarget{poly:f30}{$f_{30}$} &
$\scriptstyle
(7/9)x^4+(1/2)x^3y+(7/6)x^2y^2+(4/5)xy^3+(4/3)y^4+(2/7)x^3+(4/7)x^2y+(8/3)xy^2+(1/5)y^3+(7/10)x^2+(3/5)xy+(1/6)y^2+5x+(5/7)y+3/10$ \\
\hypertarget{poly:f31}{$f_{31}$} &
$\scriptstyle (3/10)x^4+(5/4)x^3y+(7/5)x^2y^2+(1/5)xy^3+y^4+(9/10)x^3+4x^2y+(2/9)xy^2+y^3+(3/4)x^2+(3/4)xy+y^2+(1/2)x+(9/2)y+9/8$ \\
\hypertarget{poly:f32}{$f_{32}$} &
{$\scriptstyle 4x^4+(1/2)x^3y+(1/9)x^2y^2+2xy^3+(9/7)y^4+9x^3+5x^2y+(5/3)xy^2+(4/3)y^3+(4/3)x^2+(5/2)xy+y^2+(1/3)x+(7/6)y+71/200$} \\
\hypertarget{poly:f33}{$f_{33}$} &
{$\scriptstyle 4x^4+(1/2)x^3y+(1/9)x^2y^2+2xy^3+(9/7)y^4+9x^3+5x^2y+(5/3)xy^2+(4/3)y^3+(4/3)x^2+(5/2)xy+y^2+(1/3)x+(7/6)y+3/8$} \\
\hypertarget{poly:f34}{$f_{34}$} &
$\scriptstyle (9/4)x^4+3x^3y+(1/7)x^2y^2+(2/7)xy^3+(1/3)y^4+(4/5)x^3+(1/5)x^2y+8xy^2+4y^3+2x^2+(10/9)xy+(5/3)y^2+(1/9)x+(1/5)y+2$ \\
\hypertarget{poly:f35}{$f_{35}$} &
$\scriptstyle (1/4)x^4+5x^3y+(5/3)x^2y^2+(1/10)xy^3+(1/9)y^4+x^3+(2/3)x^2y+9xy^2+(1/8)y^3+(7/10)x^2+(1/5)xy+(4/5)y^2+(4/5)x+(5/8)y+33/10$ \\
\hypertarget{poly:f36}{$f_{36}$} &
$\scriptstyle (1/5)x^4+(7/8)x^3y+(1/2)x^2y^2+(5/4)xy^3+y^4+(1/3)x^3+x^2y+8xy^2+y^3+(3/4)x^2+(5/7)xy+(5/9)y^2+(9/8)x+5y+4/3$ \\
\hypertarget{poly:f37}{$f_{37}$} &
$\scriptstyle (1/4)x^4+5x^3y+(5/3)x^2y^2+(1/10)xy^3+(1/9)y^4+x^3+(2/3)x^2y+9xy^2+(1/8)y^3+(7/10)x^2+(1/5)xy+(4/5)y^2+(4/5)x+(5/8)y+13/10$ \\
\hypertarget{poly:f38}{$f_{38}$} &
$\scriptstyle (1/7)x^3+(7/2)x^2y+(7/3)xy^2+(1/10)y^3+(6/7)x^2+9xy+(1/2)y^2+(7/5)x+y+1$ \\
\hypertarget{poly:f39}{$f_{39}$} &
$\scriptstyle (1/8)x^3+x^2y+2xy^2+(1/6)y^3+(6/7)x^2+9xy+(7/9)y^2+(1/9)x+(2/9)y+8/5$\\
\hypertarget{poly:f40}{$f_{40}$} &
$\scriptstyle (1/10)x^3+(7/6)x^2y+(9/7)xy^2+(1/8)y^3+(9/4)x^2+10xy+2y^2+5x+(3/4)y+1/6$
\\
\hypertarget{poly:f41}{$f_{41}$} &
$\scriptstyle (1/4) x^{4}+(17/16) x^{2} y^{2}+(1/4) y^{4}-(5/4) x^{2}-(5/4)
      y^{2}+4382/7225$ \\
\hypertarget{poly:f42}{$f_{42}$} &
 $\scriptstyle 4 x^{4}+(1/2) x^{3} y+(1/9) x^{2} y^{2}+2 x y^{3}+(9/7) y^{4}+9 x^{3}+5
      x^{2} y+(5/3) x y^{2}+(4/3) y^{3}+(4/3) x^{2}+(5/2) x y+y^{2}+(1/3) x+(7/6)
      y+7/8$ \\
\hypertarget{poly:f43}{$f_{43}$} &
$\scriptstyle 4 x^{4}+(1/2) x^{3} y+(1/9) x^{2} y^{2}+2 x y^{3}+(9/7) y^{4}+9 x^{3}+5
      x^{2} y+(5/3) x y^{2}+(4/3) y^{3}+(4/3) x^{2}+(5/2) x y+y^{2}+(1/3) x+(7/6)
      y+27/40$ \\
\hypertarget{poly:f44}{$f_{44}$} &
$\scriptstyle 4 x^{4}+(1/2) x^{3} y+(1/9) x^{2} y^{2}+2 x y^{3}+(9/7) y^{4}+9 x^{3}+5
      x^{2} y+(5/3) x y^{2}+(4/3) y^{3}+(4/3) x^{2}+(5/2) x y+y^{2}+(1/3) x+(7/6)
      y+19/40$ \\
\hypertarget{poly:f45}{$f_{45}$} &
 $\scriptstyle (1/4) x^{4}+(17/16) x^{2} y^{2}+(1/4) y^{4}-(5/4) x^{2}-(5/4)
      y^{2}+40453/43350$ \\
\end{longtable}

\newpage
\subsection*{Code}
\addcontentsline{toc}{subsection}{Code}

\newcounter{listing}
\refstepcounter{listing}
\VerbatimInput[fontsize=\footnotesize,label=\fbox{Listing \arabic{listing}: \Large
  minvol.m2}\label{lst:minvol.m2},frame=topline,framesep=5mm]{minvol.m2}
\medskip

\refstepcounter{listing}
\VerbatimInput[fontsize=\footnotesize,label=\fbox{Listing \arabic{listing}: \Large lowDegreeBKK.m2}\label{lst:lowDegreeBKK.m2},frame=topline,framesep=5mm]{lowDegreeBKK.m2}
\medskip

\refstepcounter{listing}
\VerbatimInput[fontsize=\footnotesize,label=\fbox{Listing \arabic{listing}: \Large numevidIdeal.m2\label{lst:numevidIdeal.m2}},frame=topline,framesep=5mm]{numevidIdeal.m2}
\medskip

\refstepcounter{listing}
\VerbatimInput[fontsize=\footnotesize,label=\fbox{Listing \arabic{listing}: \Large poging3.m2\label{lst:poging3.m2}},frame=topline,framesep=5mm]{poging3.m2}
\medskip

\refstepcounter{listing}
\VerbatimInput[fontsize=\footnotesize,label=\fbox{Listing \arabic{listing}: \Large preamble.m2},frame=topline,framesep=5mm]{preamble.m2}
\medskip

\refstepcounter{listing}
\begin{Verbatim}[fontsize=\footnotesize,frame=topline,framesep=5mm,label=\fbox{Listing \arabic{listing}: \Large
    drawSquares.mw}\label{lst:drawSquares.mw}]
with(plots):
with(plottools):
with(RAGMaple):
SquarePegs:=module()
option package;
export plotSquare, plotSquaresOnCurve, componentsPoints;
local colorList;

  componentsPoints := (curve) -> (
      seq(point([rhs(P[1]), rhs(P[2])]),
             P in PointsPerComponents([ curve = 0 ], [X, Y]))
  );

  plotSquare := proc(param, kleur)
    local a, b, c, d, p1, p2, p3, p4, line1, line2, line3, line4, plotOpts;
    (a, b, c, d) := op(param);
    plotOpts := thickness=2, color=kleur;
    p1 := [a + c, b + d]:
    p2 := [a - d, b + c]:
    p3 := [a - c, b - d]:
    p4 := [a + d, b - c]:
    display(CURVES([p1, p2, p3, p4, p1]), plotOpts):
  end proc:

 colorList := [
     navy, orange, plum, cyan,
     blue, green, black, maroon,
     gold, brown, pink, coral, magenta,
     khaki
 ];

plotSquaresOnCurve := proc(curve, curveOpts, squares,
                                             showComponents::boolean := true,
                                             showLegend::boolean := true)
  local curvePlot, squaresPlot, setopts, xsX, ysY, passOpts,
          plotList, componentPoints;
  setopts := [seq(lhs(o), o in curveOpts)];
  passOpts := curveOpts;
  if evalb(showComponents) then
       componentPoints := [seq(
          [rhs(P[1]), rhs(P[2])],
          P in PointsPerComponents([curve(X, Y) = 0], [X, Y])
       )];
  else
      componentPoints := [];
  end if;
  if evalb(not X in setopts) then
       xsX := ListTools[Flatten](
           [seq([s[1] + s[3], s[1] + s[4], s[1] - s[3], s[1] - s[4]],
            s in squares)]
       );
       passOpts := [op(passOpts), X=-1+floor(min(xsX, seq(
           P[1], P in componentPoints)))..1
                         +ceil(max(xsX, seq(P[1], P in componentPoints))
       )];
  end if;
  if evalb(not Y in setopts) then
       ysY := ListTools[Flatten]([seq(
           [s[2] + s[3], s[2] + s[4], s[2] - s[3], s[2] - s[4]], s in squares
       )]);
       passOpts := [op(passOpts), Y=-1+floor(min(ysY, seq(
           P[2], P in componentPoints)))..1
                         +ceil(max(ysY, seq(P[2], P in componentPoints))
       )];
  end if;
  if evalb(not gridrefine in setopts) then
     passOpts := [op(passOpts), gridrefine=4];
  end if;
  if evalb(showLegend) then
       curvePlot := implicitplot(curve(X, Y) = 0, op(passOpts),
                     color=red, caption=typeset(curve(x, y), " inscribing ",
                     nops(squares), " squares.")):
  else
       curvePlot := implicitplot(curve(X, Y) = 0, op(passOpts), color=red):
  end if;
  squaresPlot := [seq(plotSquare(squares[1 + i],
              colorList[1 + (i mod nops(colorList))]), i=0..nops(squares) - 1)]:
  if evalb(showComponents) then
      plotList := [curvePlot, op(squaresPlot),
              seq(point(P), P in componentPoints)];
  else
      plotList := [curvePlot, op(squaresPlot)];
  end if;
  display(plotList, scaling=constrained):
end proc:
end module:
\end{Verbatim}


\end{document}